\definecolor{labelkey}{gray}{.8}
\definecolor{refkey}{gray}{.8}
\definecolor{darkblue}{rgb}{0,0,0.7} 
\definecolor{darkgreen}{rgb}{0,0.5,0}
\DeclareFontFamily{U}{mathx}{}
\DeclareFontShape{U}{mathx}{m}{n}{<-> mathx10}{}
\DeclareSymbolFont{mathx}{U}{mathx}{m}{n}
\DeclareMathAccent{\widehat}{0}{mathx}{"70}
\DeclareMathAccent{\widecheck}{0}{mathx}{"71}
\renewcommand{\check}{\widecheck}
\newcommand{\ud}{\;\mathrm{d}}
\providecommand{\s}{\mathcal{S}}
\providecommand{\Reac}{\mathcal{R}}
\providecommand{\Id}{\operatorname{Id}}
\providecommand{\ol}{\overline}
\providecommand{\eps}{\varepsilon}
\providecommand{\Vnl}{V_{\operatorname{nl}}}
\providecommand{\nnl}{n_{\operatorname{nl}}}
\providecommand{\supp}{\operatorname{supp}}
\providecommand{\Vmin}{V_{\operatorname{min}}}
\providecommand{\SI}{S_{\operatorname{P}}}
\providecommand{\bS}{\overline{S}_{\operatorname{P}}}
\providecommand{\F}{\mathcal{F}}
\newtheorem{theorem}{Theorem}[section]
\newtheorem{corollary}[theorem]{Corollary}
\newtheorem{proposition}[theorem]{Proposition}
\newtheorem{definition}[theorem]{Definition}
\newtheorem{lemma}[theorem]{Lemma}
\newtheorem{assumption}[theorem]{Assumption}
\newtheorem{remark}[theorem]{Remark}
\numberwithin{equation}{section}
\numberwithin{theorem}{section}
\newcommand{\Reals}{{\mathbb R}}
\newcommand{\R}{{\mathbb R}}
\newcommand{\Complex}{{\mathbb C\hspace{0.05 ex}}}
\newcommand{\Naturals}{{\mathbb N}}
\renewcommand{\varrho}{{\rho}}
\newcommand{\dd}{\, \mathrm{d}}
\newcommand{\1}{{\mathbbm 1}}
\newcommand{\N}{\mathbb N}
\DeclareMathOperator{\dist}{dist}
\newcommand{\loc}{\mathrm{loc}}
\DeclareMathOperator{\dv}{\mathrm{div}}
\newcommand{\step}[1]{\noindent \textit{Step} #1.}
\title{A fast point charge interacting with the screened Vlasov-Poisson system}{}
\author[1]{Richard M. H\"ofer\thanks{richard.hoefer@ur.de}}
\author[2]{ Raphael Winter\thanks{raphael.elias.winter@univie.ac.at}}
\affil[1]{Faculty of Mathematics, University of Regensburg, Germany}
\affil[2]{University of Vienna, Austria}
\begin{document}

\maketitle

 \begin{abstract}
 We consider the long-time behavior of a fast, charged particle interacting with an initially spatially homogeneous background plasma. The background is modeled by the screened Vlasov-Poisson equations, whereas the interaction potential of the point charge is assumed to be smooth. We rigorously prove the validity of the \emph{stopping power theory} in physics, which predicts a decrease of the velocity $V(t)$ of the point charge given by $\dot{V} \sim  -|V|^{-3} V$, a formula that goes back to Bohr (1915). Our result holds for all initial velocities  larger than a threshold value that is larger than the velocity of all background particles and remains valid until (i) the particle slows down to the threshold velocity, or (ii) the time is exponentially long compared to the velocity of the point charge.
 
The long-time behavior of this coupled system is related to the question of Landau damping which has remained open in this setting so far.
 Contrary to other results in nonlinear Landau damping, the long-time behavior of the system is driven by the non-trivial electric field  of the plasma, and the damping only occurs in regions that the point charge has already passed. 
\end{abstract}

\tableofcontents

 \section{Introduction}
We consider the screened Vlasov-Poisson equation coupled to the motion of a point charge.  Let $F(t,x,v)$ be a phase space density of the plasma on $\Reals^3 \times \Reals^3$ and $X(t), V(t)\in \Reals^3$ be the position and velocity of the point charge. We are interested in the coupled system
    \begin{gather} 
    \begin{aligned}
  \partial_t F + v \cdot \nabla_x F  + E\cdot  \nabla_v F &= -e_0 \nabla \Phi(x-X(t))\cdot  \nabla_v F, \\
  F(0,x,v) &= \mu(v), 
    \end{aligned}   \\
  \begin{aligned}[t] \label{eq:main}
  \rho[F]&=\int_{\Reals^3} F(x,v)\ud{v},\quad  &E(t,x) &= -\nabla \phi *_x \rho[F], \\
  \dot{X}(t)&=V(t),&    X(0)&=0 , \\
  \dot{V}(t)&=  -\alpha e_0 E(t,X(t)),&  V(0)&= V_0 e_1. 
  \end{aligned} 
  \end{gather} 
Here $\mu(v)$ is a probability density, determining the spatially homogeneous initial datum of the density~$F$. Moreover, the initial velocity of the point charge is $V_0>0$, and oriented in direction of the first coordinate vector $e_1$, without loss of generality. The parameter $\alpha>0$ is related to the coupling strength, and  $e_0=\pm 1$ distinguishes whether the interaction of the point charge with the background is attractive or repulsive.

We consider the screened Vlasov-Poisson equation, i.e. $\phi(x)$ is the screened Coulomb potential. Moreover, $\Phi$ is a smooth decaying potential. We refer to Assumption~\ref{ass:phi} for details. The screened potential $\phi$ takes into account the shielding of interactions beyond the Debye length. We refer to \cite{bardos_2018,bouchut_1991,boyd_2003} for details on this mechanism. The assumptions on $\Phi$ are made for technical reasons. 
Note that by considering the screened Coulomb potential, we have $\nabla \phi \in L^1(\R^3)$ such that $E$ is well defined for homogeneous $\rho$ and there is no need to subtract a constant as for the unscreened potential

In this paper we rigorously prove that the large-time behavior of the
system~\eqref{eq:main} are governed by a deceleration of the point charge. More precisely, after some initial layer where the self-consistent field approaches a travelling wave solution, we show that for $|V(t)|$ sufficiently large, the friction force experienced by the point charge is given by
\begin{align}\label{eq:FrictionIntro}
    -e_0 E(t,X(t))  \sim -\frac{V(t)}{|V(t)|^3}.
\end{align}
This means that for large initial velocity of the point charge, i.e. $V_0\gg 1$, the particle decelerates on a slow time scale $V_0^3\tau =t$.

The friction force of order $|V(t)|^{-2}$ can be heuristically understood as follows: the swiftly moving point charge induces a perturbation in the spatial density $\rho[F]$ of the plasma. The perturbation will be asymmetric with respect to the direction of motion, since the particle has affected the region behind it for longer than the region ahead of it. For $e_0=1$, i.e. if the charge attracts plasma particles, $\rho[F]$ will be larger behind the moving charge than in front of it, so that $-e_0 E(t,X(t))$ is a friction force. For $e_0=-1$, the argument is analogous. 

The typical size of the perturbation is proportional to the time spent in a region of order one, i.e. of order $|V(t)|^{-1}$.
On the other hand, the force~\eqref{eq:FrictionIntro} acting on the point charge is of order $|V(t)|^{-2}$, and therefore much smaller. This is due to the fact that $E(t,x)$ can be expressed through $\nabla_{e_1} \rho[F(t)]$. As a result of the swift motion of the charged particle, the characteristic length scale along the  direction of motion is stretched by $|V(t)|$, hence  $|\nabla_{e_1} \rho[F(t)]|\sim |V(t)|^{-2}$. Consequently,  very detailed estimates in the vicinity of the point charge are required in order to make \eqref{eq:FrictionIntro} rigorous.

For a more precise description of~\eqref{eq:FrictionIntro}, we proceed as follows: For $t_\ast > 1$ and $V_\ast:=V(t_\ast) \gg 1$, we show that $F(t_\ast,\cdot)$ is close to a travelling wave solution.  More precisely, we write  $F = \mu + f$ and show that for $|x| \ll V(t_*)$, $\rho[f](t_\ast,X(t_\ast) +x) \approx \lim_{t \to \infty} \rho[h_{V_\ast}](t,x)$, where $h_{V_\ast}$ is the solution to the linearized equation in the inertial frame of the point charge, namely
   \begin{align}\label{eq:TravelingWave}
       \partial_s h_{V_\ast} + (v-V_\ast) \cdot \nabla_x h_{V_\ast} - \nabla (\phi*_x \rho[h_{V_\ast}]) \cdot \nabla_v \mu &= - e_0\nabla \Phi(x) \cdot \nabla_v \mu, \qquad h_{V_\ast}(0,\cdot) = 0.
   \end{align}
This traveling wave solution $h_{V_\ast}$ is explicitly computable in Fourier variables and satisfies the friction relation~\eqref{eq:FrictionIntro}.

The linearization~\eqref{eq:TravelingWave} is only valid over time intervals where $V(t)$ can be approximated by a fixed value~$V_*$. Hence~\eqref{eq:TravelingWave} is only valid as a short-time linearization on a timescale much shorter than the timescale on which we observe deceleration of the point charge. This allows us to get precise information on the response of the plasma to the presence of the point charge.

In order to obtain estimates for the equation on the long timescale, we first perform a  long-time linearization.  Here we cannot approximate the velocity of the point charge by a constant and pass to an inertial frame. This is then only done in the short-time linearization that yields \eqref{eq:TravelingWave}.

We show that the perturbation on the background induced by the point charge is (roughly) of order $|V(t)|^{-1}$ near the point charge and decays algebraically in the distance to the point charge in regions that have not (yet) been penetrated by it. 
In order to bootstrap this argument, we show that in regions the point charge has already passed, Landau damping occurs as a result of dispersion. A precise description of Landau damping is necessary already for the long-time well-posedness of~\eqref{eq:main}, which is a byproduct of our result. 

\subsection{Previous results}

The model~\eqref{eq:main}
and the resulting friction force~\eqref{eq:FrictionIntro} are widely studied in plasma physics to describe the stopping of a fast ion passing through plasma, see for instance \cite{boinefrankenheim_nonlinear_1996,grabowski_molecular_2013,peter_energy_1991}. The formula \eqref{eq:FrictionIntro} (with additional logarithmic corrections accounting for Coulomb interactions) goes back to Bohr \cite{bohr_decrease_1915}.

\medskip

The  Vlasov(-Poisson) equation and its large-time behavior (Landau damping) is the subject of numerous important mathematical works over the last decades. The celebrated paper \cite{mouhot_landau_2011} gave a first proof for Landau damping on the torus, while the analysis on the full space goes back to ~\cite{bardos_global_1985,glassey_time_1994,glassey_time_1995}. 
The analysis has since been significantly extended and refined. For small (absolutely continuous) perturbations of the spatially homogeneous plasma described by the screened Vlasov-Poisson equation, this has been first achieved in~\cite{bedrossian_landau_2018, han-kwan_asymptotic_2021}. Recently, sharp estimates for this problem have been proved in~\cite{huang_sharp_2022,huang_sharp_2022a}.
Moreover, in \cite{ionescu_nonlinear_2022}, the results in \cite{bedrossian_landau_2018, han-kwan_asymptotic_2021} have been extended to the Coulomb case for slowly decaying velocity profile $\mu$.

The presence of a point charge gives rise to additional problems  for the qualitative and  quantitative behavior. In particular, the coupled system enjoys much weaker dispersive properties, since the point charge does not disperse at all. Due to these difficulties and its physical relevance, Vlasov-point charge models have been extensively studied in recent years. Most results concern the coupled system~\eqref{eq:main} with $\phi=\Phi$ given by the Coulomb potential. For existence and growth bounds for plasmas with density decaying for $|x|\rightarrow \infty$, we refer to~\cite{caprino_time_2015,caprino_plasma-charge_2010,chen_asymptotic_2015}. Let us point out that the result in~\cite{caprino_time_2015} does not require the spatial density to be integrable. 
For the case of a plasma with finite mass, existence and growth-bounds for solutions can be found in~\cite{crippa_lagrangian_2018,desvillettes_polynomial_2015,marchioro_cauchy_2011}. Global existence of weak solutions has been shown in~\cite{chen_global_2015} for a finite plasma and attractive Coulomb interaction.

The existing results assume some decay of the initial data  $f_0(x,v)$ for $|x|\rightarrow \infty$ in order to handle the problem explained above. To our knowledge, the long-time existence of~\eqref{eq:main} for homogeneous plasmas remained an open problem so far.

Even less is known on the asymptotic behavior of solutions. The publications~\cite{arroyo-rabasa_debye_2021} and~\cite{pausader_stability_2021} investigate the properties of radially symmetric Vlasov-Poisson systems in interaction with a point charge at rest. For the spatially homogeneous plasma with infinite mass and energy, existence and Debye screening for stationary solutions is shown in~\cite{arroyo-rabasa_debye_2021}.
For small initial data with finite mass and finite energy of the plasma density, the result in~ \cite{pausader_stability_2021} gives a precise characterization of the asymptotic scattering. 
A common feature of the asymptotic results in~\cite{bedrossian_landau_2018,han-kwan_asymptotic_2021,pausader_stability_2021} is the decay of the plasma's electric field for~$t\rightarrow \infty$. 

\medskip

The key novelty and difficulty of the present paper is the analysis of the non-trivial long-time behavior of the self-consistent electric field. This poses major difficulties, both for the long-time well-posedness and the long-time behavior  of the system~\eqref{eq:main}.
The system~\eqref{eq:main} combines the difficulties of lack of dispersion of the point charge, and a plasma of infinite mass and energy. This results in the persistence of the electric field
\begin{align}\label{eq:Eremains}
    \|E_f(t,\cdot)\|_{L^\infty(\Reals^3)} = O(1), \quad \text{for $t \gg 1$},
\end{align}
and a linear growth of the mass of the perturbation
\begin{align}\label{eq:rhogrowth}
    \|\rho[f(t)](\cdot)\|_{L^1(\Reals^3)} = O(t), \quad \text{for $t \gg 1$}.
\end{align}

Due to~\eqref{eq:Eremains} and~\eqref{eq:rhogrowth}, the characteristics of the system do not return to free transport or an explicitly computable ODE for $t \gg 1$. Instead, we derive stronger pointwise estimates (cf.~\eqref{apriori:E}) for the perturbation, which are strongly related to the scattering-geometry of plasma particles by the point charge (cf. Definition~\ref{defi:passage.times}). This allows us to separate characteristics which are close to free transport from those which are non-explicit, see Corollary~\ref{cor:psi}.

\subsection{Statement of the main result} 
\begin{assumption}[Potentials] \label{ass:phi}
   In the following, let $\phi$ be the screened Coulomb potential. More precisely, with the convention \eqref{eq:FourierTF} for Fourier transforms,
    \begin{align} \label{def:phi}
	\hat{\phi}(\xi) = \frac{1}{1+|\xi|^2}.
    \end{align}  
    We assume $\Phi$ satisfies $\hat \Phi > 0$ and, for some constants $C_\Phi, c_\Phi > 0$,
    \begin{align}\label{ass:Phi}
        (|\Phi|+|\nabla \Phi| +|\nabla^2 \Phi|)(x)\leq C_\Phi e^{-c_\Phi|x|}.
    \end{align}
\end{assumption}

\begin{assumption}[Radial symmetry and regularity of $\mu$] \label{Ass:Radial}
	Let $\mu \in C^\infty(\Reals^3)$ be a radially symmetric probability density which satisfies 
	\begin{align} \label{est:nabla.mu}
	    |\nabla^k \mu(v)| \leq C_k e^{-c_k |v|}, 
	\end{align}
	for some $c_k>0$, $C_k>0$.
\end{assumption}
We also assume that the initial distribution $\mu$ is monotone.
\begin{assumption}[Monotonicity of $\mu$] \label{Ass:monotone} 
    We assume that $\mu(v)$ satisfies the monotonicity assumption
    \begin{align} \label{eq:monotone}
        \nabla_v \mu(v) = -v \psi(v),
    \end{align}
    for some nonnegative function $\psi\in C^\infty(\Reals^3)$. 
\end{assumption}
\begin{assumption} [Penrose stability] \label{Ass:Penrose}
    We assume $\mu$ satisfies the Penrose stability criterion. More precisely, let $a(z)$ for  $z\in \Complex$, $\Im(z)\leq 0$
    be defined by
    \begin{align} \label{eq:a}
	    a(z) 				&= -\int_0^\infty e^{-ip z} p \widehat{ \mu}(p e_1) \ud{p}.
	\end{align}
	We then assume that $\mu$ is Penrose stable in the sense that there exists a constant $\kappa>0$ such that
	\begin{align} \label{eq:Penrose}	
	    \inf_{\Im(z)\leq 0, \xi \in \Reals^3}| 1 - \hat{\phi}(\xi) a(z)|\geq \kappa.
	\end{align}
\end{assumption}
Sufficient conditions for Penrose stability for screened Coulomb interactions can be found in~\cite{bedrossian_landau_2018}. Since we consider compactly supported densities $\mu(v)$ in this paper, we include a sufficient criterion for this case, which is an adaptation of Proposition~2.7 in~\cite{bedrossian_landau_2018}. The proof is postponed to Appendix~\ref{App:B}.
\begin{proposition}[Penrose criterion, compactly supported functions] \label{prop:penrose}
    Let $\mu$ satisfy Assumption~\ref{Ass:Radial}. Then there exists a constant $\overline{C}>0$, depending only on the constants $C_k,c_k$, such that 
    \begin{align}
        \mu(v)>0, \quad \text{for all } |v|\leq \overline{C},
    \end{align}
    implies the Penrose stability criterion~\eqref{eq:Penrose} for some $\kappa>0$. 
\end{proposition}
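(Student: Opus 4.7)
The plan is to reduce the Penrose condition $|1 - \hat\phi(\xi) a(z)| \ge \kappa$ to the assertion that the curve $\{a(x) : x \in \R\}$ avoids the ray $[1,\infty)$, exploiting strongly that $\hat\phi(\xi)\le 1$. First I would check that $a$ extends continuously to $\{\operatorname{Im}(z)\le 0\}\cup\{\infty\}$ with $a(\infty)=0$; this follows from the Schwartz decay of $\hat\mu$ granted by Assumption~\ref{Ass:Radial} and the Riemann--Lebesgue lemma. The same decay yields the crude bound
\[
|a(z)| \le A_0 := \int_0^\infty p\,|\hat\mu(p e_1)|\,dp < \infty, \qquad \operatorname{Im}(z)\le 0,
\]
with $A_0$ depending only on the constants $C_k, c_k$. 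For $|\xi|^2 \ge 2A_0$ this gives directly $|\hat\phi(\xi) a(z)| \le 1/2$ and hence $|1 - \hat\phi(\xi) a(z)| \ge 1/2$.

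It then remains to handle $|\xi|^2 \le 2A_0$. By continuity of $(\xi,z)\mapsto 1 - \hat\phi(\xi) a(z)$ on the compact set $\{|\xi|\le\sqrt{2A_0}\}\times(\{\operatorname{Im}(z)\le 0\}\cup\{\infty\})$, a uniform lower bound follows once I prove the pointwise non-vanishing $\hat\phi(\xi) a(z)\ne 1$. Since $\hat\phi(\xi)\in(0,1]$, this is equivalent to $a(z)\notin[1,\infty)$ for every $z$ with $\operatorname{Im}(z)\le 0$. I would now invoke an argument-principle / Nyquist reduction: because $a$ is holomorphic in $\{\operatorname{Im}(z)<0\}$ and continuous up to the boundary with $a(\infty)=0$, the condition $a(z)\ne s$ throughout the closed lower half plane, for fixed $s\in[1,\infty)$, is equivalent to (i) $a(x)\ne s$ for all real $x$, and (ii) the image curve $\{a(x):x\in\R\}$ has winding number zero around $s$.

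Condition (i) becomes a critical-point analysis. By the Plemelj formula,
\[
a(x) = \text{P.V.}\!\int_\R\frac{(\mu^1)'(u)}{u-x}\,du - i\pi (\mu^1)'(x),
\]
where $\mu^1(v_1)=\int_{\R^2}\mu(v_1,v_\perp)\,dv_\perp$ is the first marginal of $\mu$, even and smooth. Hence $a(x)$ is real precisely when $(\mu^1)'(x)=0$, and at any such critical point one needs $\operatorname{Re} a(x) < 1$. At $x=0$ (always a critical point by evenness of $\mu^1$) an integration by parts yields the clean expression $a(0)=\int_\R (\mu^1(v)-\mu^1(0))\,v^{-2}\,dv$. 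Here the hypothesis enters: positivity of $\mu$ on $|v|\le\bar C$, together with the bounds of Assumption~\ref{Ass:Radial}, forces $\mu^1$ to spread out enough that $a(0)<1$ once $\bar C$ is chosen large in a manner depending only on $C_k, c_k$. An analogous estimate handles any further critical points of $\mu^1$. For condition (ii), I would exploit that $\operatorname{Im} a(x)=-\pi(\mu^1)'(x)$ switches sign together with $(\mu^1)'$ and that the image curve starts and ends at $0$; combined with $\operatorname{Re} a < 1$ at the critical points, this forces zero winding around any $s\in[1,\infty)$.

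The main obstacle is the quantitative step of verifying $\operatorname{Re} a(x_0)<1$ at every critical point $x_0$ of $\mu^1$. Without the monotonicity of Assumption~\ref{Ass:monotone} the critical set of $\mu^1$ may be nontrivial, and obtaining a uniform estimate depending only on $C_k, c_k$ requires carefully exploiting the positivity of $\mu$ on a sufficiently large ball: positivity of the marginal together with the $L^1$ normalization force $\mu^1$ to have width of order at least $\bar C$, which makes the relevant principal value integrals small, precisely in the spirit of Proposition~2.7 of~\cite{bedrossian_landau_2018}, of which this proposition is the announced compactly-supported analog.
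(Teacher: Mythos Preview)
Your outline matches the paper's proof in structure: both reduce, via compactness and the argument principle, to showing that the boundary curve $x\mapsto a(x-i0)$ never meets the ray $[1,\infty)$, and both compute this curve via Plemelj's formula. Where you diverge is in the critical-point analysis, and here you have overlooked the identity that makes your ``main obstacle'' evaporate. Because $\mu$ is radial, the marginal $\mu^1(u)=2\pi\int_{|u|}^\infty s\,\mu(s)\,ds$ satisfies
\[
(\mu^1)'(u)=-2\pi\,u\,\mu(|u|),
\]
so $\operatorname{Im} a(x)=-\pi(\mu^1)'(x)$ vanishes exactly where $x=0$ or $\mu(|x|)=0$. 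Under the hypothesis $\mu>0$ on $B_{\bar C}$ the \emph{only} real zero of $\operatorname{Im} a$ in $(-\bar C,\bar C)$ is $x=0$; there are no further critical points to worry about. At $x=0$ your own formula in fact gives
\[
a(0)=\int_\R\frac{\mu^1(v)-\mu^1(0)}{v^{2}}\,dv=-2\pi\int_\R\mu(|u|)\,du<0,
\]
so $a(0)<1$ holds unconditionally and the largeness of $\bar C$ plays no role here. The only place $\bar C$ enters is the decay estimate $|\operatorname{Re} a(x)|<\tfrac12$ for $|x|\ge\bar C$, which follows from Assumption~\ref{Ass:Radial} alone; this is exactly how the paper uses $\bar C$.

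Your separate winding-number check (ii) is also unnecessary: once the closed curve $\{a(x-i0):x\in\R\cup\{\infty\}\}$ avoids the entire ray $[1,\infty)$, its winding number about any $s\ge1$ is automatically zero, since that number equals the signed crossing count with precisely that ray.
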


We will work with strong solutions $F$ to \eqref{eq:main} in the following function space. 
\begin{definition}
For $k>0$, let $C^1_k(\R^3 \times \R^3)$ be the space given by the norm
    \begin{align} \label{def:C1k}
       \|F\|_{C^1_k(\Reals^3 \times \Reals^3)}:=\| \langle v\rangle^k F \|_{L^\infty}+ \| \langle v\rangle^k \nabla _{x,v} F \|_{L^\infty}.
\end{align}
\end{definition}

Our main result is the following theorem.
	\begin{theorem}  \label{thm:main}
	 Let $\phi, \Phi, \mu$ satisfy Assumptions \ref{ass:phi}--\ref{Ass:Penrose}. Then, there exist $n,A_{\min},A_{\max}>0, \bar V$ depending only on the constants in  Assumptions \ref{ass:phi}--\ref{Ass:Penrose} such that for all $V_0> \bar V$ and all $\alpha > 0$, the following holds true: 
	 
	 \medskip 
	    There exists $T > 0$ and a function $F\in C([0,T);C^1_k(\R^3 \times \R^3)) \cap C^1([0,T);C(\R^3 \times \R^3)) $ for all $k>3$ and $X,V\in C^1([0,T))$ uniquely solving the system \eqref{eq:main} on $(0,T)$. Moreover, if $\mu$ has compact support, then for all $8 V_0^{-\frac 3 5} < t < T$
		\begin{align} \label{eq:estimate.force}
			- \frac{\alpha A_{\max}}{|V(t)|} \leq \dot V(t) \cdot V(t) \leq - \frac{\alpha A_{\min}}{|V(t)|}.
		\end{align}
	and on $(0,T)$
		\begin{align} \label{eq:V(t)}
			(V_0^3 -1 - 3 \alpha A_{\max} t)^{1/3} \leq  |V(t)| \leq (V_0^3 +1 - 3 \alpha A_{\min} t)^{1/3}.
		\end{align}

		Furthermore, at time $T$ at least one of the following conditions holds:
		\begin{enumerate}
		    \item $V(T) = \bar V$,
		    \item $V(T) = \log^n V_0$ ,
		    \item $\supp \mu \cap B^c_{V(T)/5} \neq \emptyset$.
		\end{enumerate}
	\end{theorem}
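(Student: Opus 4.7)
The plan is to establish local well-posedness of $(f, X, V)$ with $f := F - \mu$, and then run a continuity argument in which a family of a priori estimates is propagated up to the first time one of the three stopping criteria triggers. For local existence, note that spatial homogeneity of $\mu$ gives $\rho[\mu] \equiv 1$ and the radial symmetry of $\phi$ then forces $\nabla\phi *_x \rho[\mu] \equiv 0$, so $E(t,\cdot) = -\nabla\phi *_x \rho[f](t,\cdot)$ depends only on the perturbation. Using $\nabla\phi \in L^1(\Reals^3)$ (thanks to screening), the smoothness and decay of $\Phi$, and the exponential decay of $\mu$ and its derivatives, a Picard iteration in $C^1_k$ produces a unique local classical solution $F \in C([0,\delta);C^1_k) \cap C^1([0,\delta);C)$ for some small $\delta>0$.

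To reach a long time I would fix constants $C_1,C_2,\ldots$ depending only on the data and let $T$ be the maximal time on which the following a priori estimates all hold with those constants: $\|E(t,\cdot)\|_{L^\infty}\le C_1$, the linear growth $\|\rho[f](t)\|_{L^1}\le C_2 t$ (matching \eqref{eq:Eremains}--\eqref{eq:rhogrowth}), a pointwise bound stating that $\rho[f]$ and $E$ are of order $|V(t)|^{-1}$ near $X(t)$ and decay algebraically in the distance to $X(t)$ outside the region the particle has swept through, and a Landau-damping-type dispersion bound inside that wake. These estimates already imply the friction bound \eqref{eq:estimate.force}; integration of $\dot V\cdot V$ then gives \eqref{eq:V(t)}. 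Conversely, as long as $V(t) > \bar V$, $V(t) > \log^n V_0$, and $\supp \mu \subset B_{V(t)/5}$, the same estimates can be reproduced with strictly improved constants, so by standard continuation the solution extends past $T$, contradicting maximality. Hence if $T<\infty$ one of the three exit criteria must fire.

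The heart is the friction estimate at a fixed time $t_*$ with $V_* := V(t_*)$. On a short window $[t_* - \tau, t_*]$ of length $1 \ll \tau \ll V_*^3$ the velocity is nearly frozen at $V_*$, so passing to the comoving frame $y = x - X(t)$ reduces the equation for $f$ to a perturbation of the linear problem \eqref{eq:TravelingWave} for $h_{V_*}$, initialised essentially at zero at time $t_*-\tau$ (this zero-out being justified by Landau damping in the already-traversed region, itself part of the bootstrap). The linearization admits an explicit Fourier representation, and integrating the associated Volterra equation for $\widehat{\rho[h_{V_*}]}$ yields a long-time limit of the schematic form
\begin{align}
    \widehat{\rho[h_{V_*}^\infty]}(\xi) \;=\; \frac{\hat{\Phi}(\xi)\, K(\xi, V_*)}{1 - \hat{\phi}(\xi)\, K(\xi, V_*)},
\end{align}
where $K$ is a Fourier multiplier built from $\hat\mu$ and $V_*$, and the denominator is uniformly bounded away from zero by the Penrose criterion \eqref{eq:Penrose}. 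Convolving with $-\nabla\phi$, evaluating at the origin, and exploiting the radial symmetry of $\mu$ together with the positivity of $\hat\phi$ and $\hat\Phi$ shows that the friction force $-\alpha e_0 E(t_*, X(t_*))$ is anti-aligned with $V_*$ and has magnitude $\alpha A(V_*)/|V_*|^2$ with $0 < A_{\min} \le A(V_*) \le A_{\max}$, which is \eqref{eq:estimate.force}.

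The main obstacle is proving that the error between the true $E(t_*, X(t_*))$ and its traveling-wave prediction is genuinely $o(|V_*|^{-2})$. Because $\|\rho[f]\|_{L^1}$ grows like $t$ while $E$ is a first derivative along the direction of motion whose characteristic length is stretched by $|V|$, capturing the necessary cancellation requires pointwise, rather than merely integrated, control on $\rho[f]$. I would split the background characteristics according to their passage times with respect to the point charge (Definition~\ref{defi:passage.times}), separating those close to free transport from those which have been genuinely deflected, and combine this with the Landau damping in the wake (cf.\ Corollary~\ref{cor:psi}) to show that old contributions do not re-aggregate at $X(t_*)$. This anisotropic, passage-time-based description of the characteristics is the principal technical point; once it is in place, every a priori constant is strictly improved, the bootstrap closes, and all three conclusions of the theorem follow.
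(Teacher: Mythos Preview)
Your proposal is correct and follows essentially the same architecture as the paper's proof: local well-posedness, a bootstrap in an anisotropic weighted norm built from the passage-time geometry of Definition~\ref{defi:passage.times}, the short-time traveling-wave linearization \eqref{eq:TravelingWave} to identify the friction force, and the characteristic straightening of Corollary~\ref{cor:psi} to control the nonlinear error. The paper packages the bootstrap through the specific norm $\|\cdot\|_{Y_T}$ and the Green's function representation \eqref{eq:rhoRep}, and the logarithmic losses in Corollary~\ref{co:lin.eq} and Proposition~\ref{pro:bootstrap} are precisely what force the exit condition $V(T)=\log^n V_0$, a point you allude to only implicitly.
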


A few comments are in order on the conditions at time $T$. 
\begin{enumerate}
    \item The threshold velocity $\bar V$ is related to the critical velocity of the point charge which is necessary to study the system perturbatively. 

\item  We are only able to bootstrap the estimates as long as the velocity of the point charge still satisfies a lower bound in terms of its initial velocity. This leads to the second condition, $V(T) = \log^n V_0$. The constant $n$ arising from our proof could be made explicit, but we do not pursue to optimize this constant.

\item The third condition,  $\supp \mu \cap B^c_{V(T)/5} \neq \emptyset$, means that the theorem only makes a statement about the deceleration of the point charge as long as the point charge remains faster than all the background particles at time zero.  We remark that the ball $B_{V(T)/5}$  could be replaced by $B_{\theta V(T)}$ for any fixed $\theta < 1$ and we only choose $\theta = 1/5$ for definiteness.

\end{enumerate}

We also remark that the condition $t > 8 V_0^{-3/5}$ for the validity of \eqref{eq:estimate.force} could be improved but we do not pursue this question either. In the initial layer  the velocity of the point charge does not significantly change anyway. In \eqref{eq:V(t)}, this is expressed by the term $\pm 1 \ll V_0^3$ which could be further improved without difficulty.

\medskip
	
We believe that Theorem~\ref{thm:main} remains valid under more general assumptions. First of all, in Assumption \ref{Ass:Radial}, it suffices to assume, for some $n \in \N$ sufficiently large,  $\mu \in C^n(\R^3)$ and the bound \eqref{est:nabla.mu} for all $k \leq n$. All proofs directly apply in this case. 

Weakening the assumption of compact support of $\mu$ should be possible with the methods of this paper, at least to super-exponential decay of $\mu$. The assumption ensures that the collision time between the point particle and background particles is bounded above. For $\mu$ with unbounded support, analogous estimates on the characteristics as in Section~\ref{sec:char} would  grow exponentially in time for  particles with similar velocity to the point charge. Hence our argument can only work if the fraction of such particles is super-exponentially small.

Due to the corresponding Grönwall estimates, it seems difficult to apply the current method for profiles $\mu$ with only exponential or slower decay. 

We assume $\Phi$ to be non-singular at the origin. An appealing, and likely very challenging problem would be the extension of Theorem~\ref{thm:main} to the case where $\phi=\Phi$ are both given by the (screened) Coulomb potential. The main difficulty for treating the Coulomb singularity for $\Phi$ is the lack of an a priori bound for the exchanged momentum between plasma particles and the point charge in a collision. In particular, the deviation of background particles by the point charge cannot be bounded by $|V|^{-1}$. Moreover, the presence of collisions with very small impact parameter formally leads to a logarithmic correction of the timescale of deceleration of the point charge, known as \emph{Coulomb logarithm} (cf. \cite{peter_energy_1991}). An additional difficulty in treating the full Coulomb potential is due to its slow  decay (for both $\phi$ and $\Phi$), and the slow, logarithmic damping of perturbations. We refer to~\cite{bedrossian_linearized_2022,han-kwan_linearized_2021} for results on the linearized problem, and to~\cite{ionescu_nonlinear_2022} for proof of nonlinear Landau damping around equilibria with very slow decay in velocities. 

For the Vlasov-Poisson equation without a point charge, this has recently been treated in \cite{ionescu_nonlinear_2022}. However, this work crucially assumes slow polynomial decay for $\mu$, which is in conflict with our assumptions on $\mu$.

In the case of a radially symmetric plasma with finite mass and energy and a point charge at rest, a stability analysis has been achieved in~\cite{pausader_stability_2021} through a delicate geometric argument.

The fact that we are only able to treat velocities $V(T) \geq \log^2(V_0)$  is related to errors that grow logarithmically in time.
These problem is also present in \cite{han-kwan_asymptotic_2021} which precludes the treatment of the $2$-$d$ case in \cite{han-kwan_asymptotic_2021}. The removal of these logarithmic errors by using suitable H\"older-type norms has been achieved in \cite{huang_sharp_2022} which appeared after the first version of the present  paper. 
In \cite{huang_sharp_2022a}, the authors also deal with the $2$-$d$ case, and these papers thus open a perspective on removing the constraint $V(T) \geq \log^2(V_0)$ in our result.

More precisely, we make use of the fact that the perturbation induced by the moving point charge disperses in the two-dimensional orthogonal complement to its direction of motion. However, the current techniques fail to show global-in-time well-posedness of the screened Vlasov-Poisson equation in two dimensions due to a logarithmic divergence (see~\cite{han-kwan_asymptotic_2021}).

Another challenge consists in the behavior of system \eqref{eq:main} when $V(t)$ becomes of order one. This seems a very hard problem because of the lack of any small parameter that allows for a linearization. For a large range of physically relevant problems, it seems that there is a small parameter in front of the right-hand side in the first line of~\eqref{eq:main}, which corresponds to the ratio of the so-called effective charge of the ion to the Debye number. If this parameter is small, a linearization is again formally possible (see e.g. \cite{boinefrankenheim_nonlinear_1996,peter_energy_1991}), but we are currently not able to treat this case rigorously.

\subsection{Outline of the rest of the paper}

As indicated above, the main challenge of the analysis of the coupled system \eqref{eq:main}
is to rigorously prove nonlinear Landau damping in this setting. 
Our basic strategy is inspired by \cite{han-kwan_asymptotic_2021} where Landau damping is shown using a  Lagrangian approach for the screened Vlasov-Poisson system in the whole space.
The argument in \cite{han-kwan_asymptotic_2021} roughly proceeds as follows: First, the screened Vlasov-Poisson equation is reformulated as a linear system with a solution-dependent source term. In a second step, estimates for the linear system are shown via Fourier analysis. Finally, the solution-dependent source term is estimated by means of a bootstrap argument and a representation of the solution through characteristics. This last step is accomplished by a careful analysis of the characteristics. More precisely, it is shown that the characteristics can be well-approximated by rectilinear trajectories (`straightening') under the bootstrap assumption.

Such a Lagrangian approach seems particularly suitable  for the system \eqref{eq:main} in order to quantify dispersion, which only occurs after the point charge has passed a region and only acts in the directions orthogonal to the trajectory of the point charge. However, our analysis is much more delicate than the one in~\cite{han-kwan_asymptotic_2021} in several ways. For instance, the point charge induces a perturbation which is large in the $L^1$- and $L^\infty$-norms considered in the bootstrap argument of~\cite{han-kwan_asymptotic_2021}~(cf. \eqref{eq:rhogrowth}). Instead we need to consider a solution-dependent weighted norm adapted to the expected dispersive effects.

Moreover, it is not possible to
globally straighten the characteristics as in \cite{han-kwan_asymptotic_2021}: two background particles with the same initial position but different initial velocities might attain the same position at later time due to the influence of the point charge. The straightening argument therefore only applies to background particles that are not scattered too much  by the point charge.

\medskip

The rest of the paper is organized as follows.

In Section \ref{sec:outline}, we collect some key ingredients for the proof of Theorem \ref{thm:main}. The proof itself is given in Section \ref{subsec:proof}. 

In Section \ref{sec:Green}, we provide additional pointwise estimates for the linear equation already studied in~\cite{han-kwan_asymptotic_2021}.

Section \ref{sec:char} is devoted to estimates for the characteristics of the nonlinear equation,
which leads to their straightening in suitable regions in Section \ref{sec:straighening}.

We gather the results of the preceding sections to estimate the source term in the linear formulation (in Section~\ref{sec:source}), as well as the difference of the forces on the point charge corresponding to the system~\eqref{eq:main} and its linearization \eqref{eq:TravelingWave} (in Section~\ref{sec:error.force}).

Finally, in Section  \ref{sec:linear.force}, we show that the force corresponding to the linearized equation, \eqref{eq:TravelingWave} satisfies~\eqref{eq:FrictionIntro}.

In Appendix~\ref{App:B}, we prove Proposition~\ref{prop:penrose}, a Penrose stability criterion for compactly supported velocity distributions. Appendix \ref{App:A} gathers two standard auxiliary Lemmas.

\subsection{Some notation}

To lighten the notation, we will set the constants from Assumptions \ref{ass:phi} and \ref{Ass:Radial} to $1$,
as well as the coupling strength $\alpha$ in  \eqref{eq:main} i.e.,
\begin{align}
    \alpha=C_k = c_k = C_\Phi = c_\Phi = 1.
\end{align}
The value of these constants does not affect any of the proofs.

Throughout the paper we will use the Japanese brackets defined for any $a \in \R^d$, $d > 0$ by
\begin{align}\label{eq:japanese}
	\langle a \rangle &:= \sqrt{1 + |a|^2}.
\end{align}

For $x \in \R^3$, we introduce the orthogonal part $x^\perp \in \R^2$ such that 
\begin{align}
    x=(x_1,x^\perp).
\end{align}

We use the following conventions for the Fourier transform in space and space-time respectively
\begin{align} \label{eq:FourierTF}
	\hat{g}(\xi) = \int_{\Reals^3} e^{-i x\cdot \xi } g(x) \ud{x}, \quad \tilde{h}(\tau,\xi) = \int_{\Reals} 
	 \int_{\Reals^3} e^{-i\tau t} e^{-i x\cdot \xi } h(t,x) \ud{x} \ud{t}	.
\end{align}
For radial functions we will use the convention
\begin{align}\label{eq:radialfunctions}
    g(k) = g(|k|) ,
\end{align}
whenever there is no risk of confusion.

We use $C$ for a constant that may change from line to line and use $A \lesssim B$ for $A \leq C B$.

\section{Outline of the proof of the main result} 
\label{sec:outline}

This section contains the proof of Theorem~\ref{thm:main} and sets the structure of the remainder of the paper. We start by giving estimates on the linearized friction force in Subsection~\ref{subsec:linearized}. We then reformulate~\eqref{eq:main} in terms of the Green's function of the linearized problem in Subsection~\ref{subsec:Greensformulation}. In Subsection~\ref{subsec:bootstrapest}, we introduce scattering variables for the interaction of the plasma with the moving charged particle, as well as associated norms. At this point we also state the estimates which are used for the bootstrap argument and are proved in the remaining sections. Finally, in Subsection~\ref{subsec:proof} we give the proof of Theorem~\ref{thm:main}.

\subsection{The force on the point charge for the linearized equation} \label{subsec:linearized}

As outlined in the introduction, the proof of the main result is based on a rigorous linearization of the equation.

The solution $h_{V_\ast}$ to the linearized equation \eqref{eq:TravelingWave}, 
has an integral representation through the space-time Fourier transformation, which gives access to the the force on the point charge corresponding to $h_{V_\ast}$. 
More precisely, we prove the following Proposition.
\begin{proposition} \label{pro:force.linear}
    Recall the function $h_{V_\ast}$ defined in \eqref{eq:TravelingWave}.
    For any $0\neq V_*\in \Reals^3$, the following limit exists and is negative:
    \begin{align}\label{linearForce}
        \lim_{s\rightarrow \infty}  e_0\nabla \phi *_x \rho[h_{V_\ast}(s,\cdot)](0) \cdot V_*< 0.
    \end{align}
    More precisely, there exists a constant $A>0$ and $c>0$ such that
    \begin{align}\label{eq:linearbounds}
       \lim_{s \to \infty} \left|A +|V_*| e_0\nabla \phi *_x \rho[h_{V_\ast}(s,\cdot)](0) \cdot V_* \right| \lesssim e^{-c|V_*|}.
    \end{align}
\end{proposition}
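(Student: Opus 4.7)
My plan is to pass to Fourier variables, derive an explicit formula for the limiting density, and then analyze the resulting force integral in the large-$|V_\ast|$ regime.

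\medskip
\textit{Step 1 (explicit formula for $\rho_\infty$).} Taking the spatial Fourier transform of \eqref{eq:TravelingWave} yields a $v$-dependent transport equation
\[
\partial_s\hat h_{V_\ast}+i(v-V_\ast)\cdot\xi\,\hat h_{V_\ast} = i\xi\cdot\nabla_v\mu(v)\bigl(\hat\phi(\xi)\hat\rho[h_{V_\ast}](s,\xi)-e_0\hat\Phi(\xi)\bigr),
\]
which I solve by Duhamel. Integrating in $v$ via $\int e^{-iv\cdot\eta}\nabla_v\mu\,dv = i\eta\hat\mu(\eta)$ and substituting $p=|\xi|\tau$ (using radial symmetry of $\hat\mu$) produces a scalar Volterra equation for $\hat\rho[h_{V_\ast}](s,\xi)$ whose Laplace symbol is $\hat\phi(\xi)\,a(z(\xi))$ with $z(\xi):=-V_\ast\cdot\xi/|\xi|$. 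Applying the Laplace transform in $s$, Penrose stability (Assumption~\ref{Ass:Penrose}) keeps $|1-\hat\phi(\xi)a(z)|\geq\kappa$ on $\{\mathrm{Im}\,z\leq 0\}$, and a final-value argument yields
\[
\rho_\infty(\xi):=\lim_{s\to\infty}\hat\rho[h_{V_\ast}(s,\cdot)](\xi) = -\frac{e_0\,\hat\Phi(\xi)\,a(z(\xi))}{1-\hat\phi(\xi)\,a(z(\xi))}.
\]

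\medskip
\textit{Step 2 (sign of the limit force).} Fourier inversion at $x=0$ gives
\[
L_{V_\ast}:=\lim_{s\to\infty}e_0\nabla\phi*_x\rho[h_{V_\ast}(s,\cdot)](0)\cdot V_\ast = -\frac{1}{(2\pi)^3}\int_{\R^3}i(\xi\cdot V_\ast)\hat\phi(\xi)\hat\Phi(\xi)\frac{a(z(\xi))}{1-\hat\phi(\xi)a(z(\xi))}\,d\xi.
\]
Using $\xi\mapsto-\xi$ together with $a(-z)=\overline{a(z)}$ for real $z$ (and evenness of $\hat\phi,\hat\Phi$), the integrand at $-\xi$ is the complex conjugate of the one at $\xi$, so $L_{V_\ast}\in\R$. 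A direct algebraic computation yields $\mathrm{Im}(a/(1-\hat\phi a))=\mathrm{Im}\,a/|1-\hat\phi a|^2$ and $\mathrm{Im}\,a(z)=\pi M'(-z)$ with $M(v_1):=\int_{\R^2}\mu(v_1,v^\perp)\,dv^\perp$. Radial symmetry of $\mu$ gives $M'(w) = -2\pi w\,\mu(|w|)$, leading to the manifestly negative representation
\[
L_{V_\ast} = -\frac{1}{(2\pi)^2}\int_{\R^3}\frac{\hat\phi(\xi)\hat\Phi(\xi)\,(V_\ast\cdot\xi)^2\,\mu(|V_\ast\cdot\xi|/|\xi|)}{|\xi|\,|1-\hat\phi(\xi)a(z(\xi))|^2}\,d\xi\ <\ 0,
\]
where positivity of $\hat\phi,\hat\Phi$ comes from Assumption~\ref{ass:phi}.

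\medskip
\textit{Step 3 ($1/|V_\ast|$-scaling and exponential remainder).} Since $\supp\mu\subset\overline{B_R}$, integration is effectively restricted to the thin cone $\{|V_\ast\cdot\xi|\leq R|\xi|\}$. Decomposing $\xi=(\xi_1,\xi^\perp)$ with $\xi_1=\xi\cdot V_\ast/|V_\ast|$ and rescaling $u:=|V_\ast|\xi_1$ (Jacobian $|V_\ast|^{-1}$), and using $|\xi|=\sqrt{u^2/|V_\ast|^2+|\xi^\perp|^2}\to|\xi^\perp|$ as $|V_\ast|\to\infty$, dominated convergence yields $|V_\ast|L_{V_\ast}\to -A$ with
\[
A := \frac{1}{(2\pi)^2}\int_{\R^2}\int_\R\frac{\hat\phi(0,\xi^\perp)\hat\Phi(0,\xi^\perp)\,u^2\,\mu(|u|/|\xi^\perp|)}{|\xi^\perp|\,|1-\hat\phi(0,\xi^\perp)a(-u/|\xi^\perp|)|^2}\,du\,d^2\xi^\perp\ >\ 0.
\]
The main obstacle is to upgrade the pointwise Taylor error---only $O(|V_\ast|^{-2})$ since the first-order term vanishes by evenness of $\hat\phi,\hat\Phi$---to the claimed exponential bound $\lesssim e^{-c|V_\ast|}$. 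To achieve this I would exploit the analyticity of $\hat\Phi$ in a strip $|\mathrm{Im}\,\xi|\leq c_\Phi$ (from the exponential decay of $\Phi$ in Assumption~\ref{ass:phi}) and of $\hat\mu$ (entire, by compact support of $\mu$) to deform the $\xi_1$-contour into the complex plane, producing the suppression factor $e^{-c|V_\ast|}$ from the decay of $\hat\Phi$ along the deformed contour. The central technical difficulty is to verify a uniform Penrose-type bound $|1-\hat\phi a|\geq\kappa'$ on the deformed contour (so that the denominator remains controlled) and to handle the non-analyticity of the integrand at the boundary of $\supp\mu$ coming from $\mu(|u|/|\xi|)$.
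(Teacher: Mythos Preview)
Your Steps 1 and 2 are essentially correct and coincide with the paper's approach (space-time Fourier transform, final-value/Laplace argument, Plemelj to extract the imaginary part). Your formula for $L_{V_\ast}$ matches the paper's up to the identity $\varphi(u)=2\pi\mu(|u|)$.

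Step 3, however, has a genuine gap. You propose a contour deformation in the $\xi_1$-variable to upgrade the algebraic error to an exponential one, and correctly identify that this would require a Penrose bound off the real axis and control of non-smoothness of $\mu$; you do not resolve either. (You also invoke compact support of $\mu$, which is \emph{not} part of the hypotheses of Proposition~\ref{pro:force.linear}; only exponential decay from Assumption~\ref{Ass:Radial} is available.) The difficulty is self-inflicted: in your Cartesian splitting $\xi=(\xi_1,\xi^\perp)$, the rescaling $u=|V_\ast|\xi_1$ leaves a residual $|V_\ast|$-dependence in the integrand through $|\xi|=\sqrt{u^2/|V_\ast|^2+|\xi^\perp|^2}$, forcing you to compare two different integrands.

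The paper's fix is elementary and avoids all of this: pass to spherical coordinates $k=r\hat k$ and use the angular variable $u=\hat k\cdot\hat V_\ast$. Since every factor in your formula depends on $\hat k$ only through $\hat k\cdot V_\ast$ (this is where radial symmetry is used), the $k$-integral factorizes as $\int_0^\infty r^2\,dr\int_{-1}^1 du$. After the substitution $U=u|V_\ast|$ one obtains
\[
|V_\ast|\,L_{V_\ast}=-\frac{1}{4\pi}\int_0^\infty\int_{-|V_\ast|}^{|V_\ast|}\frac{\hat\Phi(r)\,r^3\,\hat\phi(r)\,U^2}{|\eps(-i0^+,r,U)|^2}\,\varphi(U)\,dU\,dr,
\]
with \emph{no} residual dependence on $|V_\ast|$ in the integrand. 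The error relative to the $|V_\ast|\to\infty$ limit is simply the tail $\int_{|U|>|V_\ast|}(\cdots)\,dU$, which is $\lesssim e^{-c|V_\ast|}$ by the exponential decay of $\varphi$ (equivalently, of $\mu$). No analyticity or contour shift is needed.
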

The proof of Proposition \ref{pro:force.linear} will be given in Section~\ref{sec:linear.force}.

Although the short-time linearized equation \eqref{eq:TravelingWave} in the inertial frame of the point charge is very practical for computing this force, we rewrite it in the original coordinate frame to compare with the nonlinear equation \eqref{eq:main}.
It is then convenient to introduce the parameter $R>0$ that will play the role of a large time and consider the 
short-time linearized equation where the charged particle starts at position $X_\ast - R V_\ast$ at time zero and moves with constant velocity $V_\ast$. More precisely, we define 
for $R>0$, $X_\ast \in \R^3$
\begin{align}
    g_{R,X_\ast,V_\ast}(t,x,v) = h_{V_\ast}(t,x-X_\ast + (R-t) V_\ast,v).
\end{align}
Observe that $g_{R,X_\ast,V_\ast}$  solves
\begin{align} \label{eq:g_ast}
\begin{aligned} 
       \partial_s g_{R,X_\ast,V_\ast} + v\cdot  \nabla_x g_{R,X_\ast,V_\ast}- \nabla (\phi*_x \rho[g_{R,X_\ast,V_\ast}]) \cdot \nabla_v \mu &= -e_0\nabla \Phi((x-X_\ast+(R-s)V_\ast) \cdot \nabla_v \mu, \qquad \\
       g_{R,X_\ast,V_\ast}(0,\cdot) &= 0.
\end{aligned}  
\end{align}
Then, on the one hand,
 we have the following relation of the forces
\begin{align}
    \lim_{s\rightarrow \infty} \nabla \phi *_x \rho[h_{V_\ast}(s,\cdot)](0) = \lim_{R\rightarrow \infty} \nabla \phi *_x \rho[g_{R,X_\ast,V_\ast}(R,\cdot)](X_\ast).
\end{align}
On the other hand, for $t_\ast \gg V_{\min}^{-1}$ and $t \approx t_\ast$ such that  $V(t) \approx V_\ast$,  we expect $g_{t_\ast,X(t_\ast),V(t_\ast)}(t,\cdot)$ to be close to the solution $f = F - \mu$ of \eqref{eq:main}.


\subsection{Representation of the solution through a Green's function} \label{subsec:Greensformulation}

Let $F$ be a solution to \eqref{eq:main}. We decompose $F$ as
\begin{align} \label{def:f}
    F(t,x,v) = \mu(v) + f(t,x,v).
\end{align}
Then $f$ solves the following equation
  \begin{equation} \label{eq:coupledVlasov}
  \begin{aligned}
  \partial_t f + v \cdot \nabla_x f+\nabla_x (e_0  \Phi(\cdot-X(t)) -& (\phi*_x \rho[f])) \cdot \nabla_v f   = \nabla_x( (\phi*_x \rho[f])- e_0  \Phi(\cdot-X(t))) \cdot \nabla_v \mu, \\
  \quad  f(0,\cdot) &=f_0, \\
  \dot{X}(t)=V(t), \quad \dot{V}(t)&= e_0  \nabla (\phi *_x \rho[f])(X(t)), \quad X(0)=0, \, V(0)= V_0,
  \end{aligned}
  \end{equation}
  with $f_0 = 0$.

Since $\mu(v)$ is spatially homogeneous, the self consistent force field $E$ in~\eqref{eq:main} can be expressed as
\begin{align}\label{eq:E}
    E(t,x)= - (\nabla \phi *_x \rho[f(t,\cdot)])(x).
\end{align}
We introduce the total force $\ol{E}$, defined by 
\begin{align}
    \ol{E}(t,x) = E(t,x) + e_0 \nabla \Phi(x-X(t)).
\end{align}
Let $X_{s,t}$, $V_{s,t}$ be the characteristics associated to $\ol{E}$. More precisely, for $x,v\in \R^3$, $0\leq s\leq t$,
 \begin{align}
	\frac{d}{ds} X_{s,t}(x,v) &= V_{s,t}(x,v), && X_{t,t}(x,v) = x, \label{def:X} \\
	\frac{d}{ds} V_{s,t}(x,v) &= \ol{E}(s,X_{s,t}(x,v)) , && V_{t,t}(x,v) = v. \label{def:V} 
 \end{align}
 Then, if $f$ is sufficiently regular
\begin{align}
    f(t,x,v) = -\int_0^t E(s,X_{s,t}(x,v)) \cdot \nabla_v \mu( V_{s,t}(x,v))-\int_0^t e_0 \nabla \Phi(X_{s,t}(x,v)-X(s))\cdot  \nabla_v \mu( V_{s,t}(x,v)).
\end{align}
This we can rewrite as 
\begin{align}
    f(t,x,v) = &\int_0^t  (\nabla \phi \ast \rho[f])(s,x-(t-s)v) \cdot \nabla \mu (v)  \dd s  \\
    +&\int_0^t  E(s,x-(t-s)v) \cdot \nabla \mu (v)  \dd s-\int_0^t E(s,X_{s,t}(x,v)) \cdot \nabla_v \mu( V_{s,t}(x,v))\\
    -&\int_0^t e_0 \nabla \Phi(X_{s,t}(x,v)-X(s)) \cdot \nabla_v \mu( V_{s,t}(x,v)),
\end{align}
and therefore the density $\rho[f]$ solves the following integral equation for $t \geq 0$
\begin{align} \label{eq:solution.linear}
 \rho[f](t,x) = \int_0^t \int_{\R^3} (\nabla \phi \ast \rho[f])(s,x-(t-s)v) \cdot \nabla \mu (v) \dd v \dd s + \s (t,x),
\end{align}
where $\s$ for $t \geq 0$ is given by
\begin{align} \label{def:S}
	\s (t,x) &=  \Reac(t,x) + \SI(t,x), \\
	\Reac(t,x)&=\int_0^t \int_{\R^3} \left(E(s,x - (t-s) v) \cdot \nabla_v \mu (v) -  E(s,X_{s,t}(x,v)) \cdot \nabla_v \mu(V_{s,t}(x,v)) \right) \dd v \dd s, \label{def:Reaction} \\
	\SI&=- \int_0^t \int_{\R^3} e_0 \nabla \Phi(X_{s,t}(x,v)-X(s)) \cdot \nabla_v \mu( V_{s,t}(x,v)) \ud{v} \ud{s} \label{def:SCharge}.
\end{align}
We will call $\s$ the source term, $\Reac$ the reaction term, and $\SI$ the contribution of the point charge. 

We can write the solution to the short-time linearization of the equation in \eqref{eq:g_ast} analogously.
Then, for $t \geq 0$, the density $\rho[g_{R,X_\ast,V_\ast}]$ satisfies the equation
\begin{align}
     \rho[g_{R,X_\ast,V_\ast}](t,x) &= \int_0^t \int_{\R^3} (\nabla \phi \ast  \rho[g_{R,X_\ast,V_\ast}])(s,x-(t-s)v) \cdot \nabla \mu (v) \dd v \dd s + S_{R,X_\ast,V_\ast}(t,x), \label{eq:rho.g.S}\\
     S_{R,X_\ast,V_\ast}(t,x) &= -\int_{0}^t \int_{\Reals^3} \nabla \Phi(x-(t-s)v-(X_\ast-(R-s)V_\ast))\cdot  \nabla_v \mu(v) \ud{v} \ud{s}. \label{eq:S_R}
\end{align}
We extend both $\s$ and $S_{R,X_\ast,V_\ast}$ by $0$ for negative times.

Following \cite{han-kwan_asymptotic_2021}, we obtain a representation of $\rho[f]$ and $\rho[g_{R,X_\ast,V_\ast}]$ through a Green`s function $G$ of the form
\begin{align} \label{eq:rhoRep}
    \rho(t,x) &= G *_{t,x} S + S
\end{align}
with $\rho = \rho[f]$ and $S = \mathcal S$, respectively, $\rho = \rho[g_{R,X_\ast,V_\ast}]$ and $S = S_{R,X_\ast,V_\ast}$.
More precisely, corresponding to  \cite[Theorem 2.1]{han-kwan_asymptotic_2021}, we have the following Proposition. In addition to \cite[Theorem 2.1]{han-kwan_asymptotic_2021}, we show also pointwise estimates for $G$ that will be needed later on.
\begin{proposition} \label{pro:G}
	Let $\mu$ satisfy Assumptions \ref{Ass:Radial} and \ref{Ass:Penrose} and let $\phi$ be given as in Assumption  \ref{ass:phi}. Then, for all $S \in L^1_{loc}(\R ;L^2(\R^3) \cap L^\infty(\R^3))$, there exists a unique solution $\rho \in L^1_{\loc}(\R;L^2(\R^3))$ to \eqref{eq:solution.linear}  that can be expressed through \eqref{eq:rhoRep} with a kernel $G:\R \times \R^3 \to \R$ that satisfies $G(t,\cdot) = 0$ for $t < 0$ and, for $t \geq 0$
	\begin{align} \label{eq:G_L^1}
	\|G(t,\cdot)\|_{L^1} \leq \frac{C}{1+t}.
\end{align}

	Moreover, for all $t \geq 0$ and $x \in \R^3$, $G$ satisfies the pointwise estimates
	\begin{align} \label{eq:Gpoint}
			|G(t,x)| &\lesssim  
			 				 \frac{1}{t^4+ |x|^4} , \\
			|\nabla G(t,x)|&\lesssim 
			\frac{1}{t^5+ |x|^5}.\label{eq:nablaGpoint}
	\end{align}
\end{proposition}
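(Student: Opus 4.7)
The plan is to diagonalise \eqref{eq:solution.linear} via the space-time Fourier transform, obtain an explicit formula for $\tilde G$, and then derive the pointwise estimates through integration by parts in the inverse Fourier integral, with the $L^1$ bound essentially recalled from \cite{han-kwan_asymptotic_2021}.

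First I take the spatial Fourier transform of \eqref{eq:solution.linear}. Using $\int_{\R^3} e^{-i(t-s)v\cdot\xi}\,\nabla_v \mu(v)\, dv = i(t-s)\xi\, \hat\mu((t-s)\xi)$, the equation becomes
\begin{align}
\hat\rho(t,\xi) = -\hat\phi(\xi)|\xi|^2\int_0^t (t-s)\hat\mu((t-s)\xi)\hat\rho(s,\xi)\,ds + \hat S(t,\xi).
\end{align}
Extending by zero for $t<0$ and taking the temporal Fourier transform, the substitution $p=(t-s)|\xi|$ together with radial symmetry of $\mu$ converts the time-convolution kernel into $\hat\phi(\xi)a(\tau/|\xi|)$, with $a$ as in \eqref{eq:a}. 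The Penrose condition \eqref{eq:Penrose} yields $|1-\hat\phi(\xi)a(\tau/|\xi|)|\geq \kappa$ on $\R\times\R^3$, so Plancherel gives a unique $\rho\in L^1_{\loc}(\R;L^2(\R^3))$ for each $S\in L^1_{\loc}(\R;L^2(\R^3))$, together with
\begin{align}
\tilde G(\tau,\xi) = \frac{\hat\phi(\xi)a(\tau/|\xi|)}{1-\hat\phi(\xi)a(\tau/|\xi|)}.
\end{align}
The $L^1$ bound \eqref{eq:G_L^1} is then essentially \cite[Theorem 2.1]{han-kwan_asymptotic_2021}; its proof exploits the analyticity of $z\mapsto a(z)$ on $\{\Im z\leq 0\}$ (granted by the exponential decay of $\mu$, Assumption \ref{Ass:Radial}) and the Penrose lower bound.

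For the pointwise bounds I start from the Fourier inversion formula
\begin{align}
G(t,x) = \frac{1}{(2\pi)^4}\int_{\R}\int_{\R^3} e^{i(\tau t + \xi\cdot x)}\tilde G(\tau,\xi)\,d\xi\,d\tau
\end{align}
and split according to whether $t\geq |x|$ or $|x|\geq t$. In the first regime I integrate by parts four (resp.\ five) times in $\tau$, gaining $t^{-4}$ (resp.\ $t^{-5}$); each $\partial_\tau$ hits $a$, which is Schwartz as a function of its argument, so the resulting $\xi$-integral is absolutely convergent uniformly in $\tau$. In the regime $|x|\geq t$ I integrate by parts four (resp.\ five) times in a dominant coordinate $\xi_j$ with $|x_j|\gtrsim |x|$, gaining $|x|^{-4}$ (resp.\ $|x|^{-5}$). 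The bounded regime $t+|x|\lesssim 1$ is treated directly from the $L^1$ bound. The estimate on $\nabla G$ differs only by one additional integration by parts.

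The main obstacle is controlling the $\xi$-derivatives of $\tilde G$, since differentiating $a(\tau/|\xi|)$ in $\xi$ produces factors $\tau/|\xi|^2$ that look singular at $\xi=0$. This is overcome by a dyadic decomposition in $|\xi|$ combined with the Schwartz decay of $a^{(k)}$ at infinity: each factor $|\tau|/|\xi|^2$ is compensated when $|\tau|/|\xi|\gg 1$, while for $|\tau|/|\xi|\lesssim 1$ both $a$ and $\hat\phi$ are smooth at the relevant point and the integrand is harmless after Taylor expansion. Leibniz applied to $(1-\hat\phi a)^{-1}$ against the uniform Penrose lower bound keeps the denominator under control after differentiation.
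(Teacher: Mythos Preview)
Your approach has a genuine gap at the origin in frequency space. First, the assertion that $a$ is Schwartz is false: by Lemma~\ref{lem:a} (or direct integration by parts in \eqref{eq:a} using $\hat\mu(0)=1$), one has $a(r)=r^{-2}+O(r^{-3})$ as $|r|\to\infty$, so $a$ decays only like $r^{-2}$. This alone is not fatal, since $a^{(k)}(r)\sim r^{-2-k}$ is still integrable for $k\geq 0$, but it invalidates the sentence ``which is Schwartz as a function of its argument''.

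The real obstruction is the behaviour of $\tilde G$ at $\xi=0$. After inverting only in $\tau$ one finds $\hat G(t,\xi)=\hat\phi(\xi)\,|\xi|\,\hat\psi_\xi(t|\xi|)$ for a function $\hat\psi_\xi$ that is smooth and rapidly decaying in its second argument; the factor $|\xi|$ is explicit and only Lipschitz at the origin. In your $\tau$-scheme, four integrations by parts produce $\hat\phi(\xi)|\xi|^{-4}\Psi_\xi^{(4)}(\tau/|\xi|)$; substituting $u=\tau/|\xi|$, the absolute $(\tau,\xi)$-integral factorises as $\bigl(\int|\Psi_\xi^{(4)}(u)|\,du\bigr)\int_{\R^3}\hat\phi(\xi)|\xi|^{-3}\,d\xi$, and the second factor diverges logarithmically at $\xi=0$. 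Your $\xi$-scheme fails for the same reason: four $\xi$-derivatives of $a(\tau/|\xi|)$ generate terms of size $(\tau/|\xi|^2)^4\,a^{(4)}(\tau/|\xi|)\sim |\xi|^{-2}\tau^{-2}$ on $\{|\tau|\geq|\xi|\}$, and integrating over $(\tau,\xi)$ again produces $\int\hat\phi(\xi)|\xi|^{-3}d\xi$. Dyadic localisation does not remove this: each shell $|\xi|\sim 2^{-j}$ contributes a fixed constant and the sum diverges.

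The paper circumvents this by first inverting in $\tau$ to expose the factor $|\xi|$, then splitting off the explicit piece $\eta(|\xi|^2)\,|\xi|e^{-|\xi|}\hat\psi_0(0)$, whose inverse Fourier transform is computed directly as $c\,\Delta_x(1+|x|^2)^{-1}\lesssim(1+|x|^4)^{-1}$. The remaining pieces gain an extra factor of $|\xi|$ near the origin (via the Taylor expansion $\hat\psi_\xi(t|\xi|)-\hat\psi_\xi(0)=t|\xi|\,h_\xi(t|\xi|)$, respectively $\hat\phi(\xi)\hat\psi_\xi(0)-e^{-|\xi|}\hat\psi_0(0)=O(|\xi|^2)$), after which four honest $\xi$-integrations by parts are legitimate. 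For $t\geq 1$ the argument is repeated after the rescaling $\xi\mapsto\xi/t$. This explicit treatment of the $|\xi|$ singularity is what your outline is missing; without it the integration-by-parts programme does not close.
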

This proposition is proved in Section~\ref{subsec:proG}.

\subsection{Bootstrap estimates} \label{subsec:bootstrapest}

The proof of long-time well-posedness of the solution to \eqref{eq:coupledVlasov} relies on local well-posedness and a bootstrap argument.
We start by stating the local well-posedness result, the proof of which is a standard fixed point argument and will be omitted for the sake of conciseness. 
 \begin{theorem}[Local well-posedness] \label{thm:local.wellposedness}
    Let $\mu$, $\phi$ and $\Phi$ satisfy the Assumptions~\ref{ass:phi},\ref{Ass:Penrose} and \ref{ass:Phi} respectively and let $V_0>0$. Further let  $k>3$, and $f_0\in C^1_k(\R^3 \times \R^3)$ (cf.~\eqref{def:C1k}).
    
     Then there exists a time $T_\ast>0$, a function $f\in C([0,T_\ast);C^1_k(\R^3 \times \R^3)) \cap C^1([0,T_\ast);C(\R^3 \times \R^3)) $ and $X,V\in {C^1}([0,T_\ast))$ uniquely solving the system \eqref{eq:coupledVlasov}.  Moreover, $T_\ast=\infty$ or 
        \begin{align} \label{eq:blowup.criterium}
	\limsup_{t \to T_\ast}\| \rho(t,\cdot) \|_{W^{1,\infty}} =  \infty.
	\end{align}
	Furthermore, for all $0 \leq t < T_\ast$
	\begin{align} \label{eq:straight.line}
	    X(t) \in \mathrm{span}\{e_1\}.
	\end{align}
 \end{theorem}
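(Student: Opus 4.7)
My plan is a standard Picard-type fixed point argument on a short time interval $[0,T_*]$, followed by a continuation/blow-up argument, and a symmetry observation for \eqref{eq:straight.line}. I would reformulate \eqref{eq:coupledVlasov} as a fixed point problem on pairs $(E, X) \in C([0,T_*]; W^{1,\infty}(\R^3;\R^3)) \times C^1([0,T_*]; \R^3)$. Given such a pair, solve the point-charge ODE $\dot X' = V'$, $\dot V'(t) = e_0 E(t, X(t))$ with $X'(0)=0$, $V'(0)=V_0 e_1$; form the total force $\bar E(t,x) = E(t,x) + e_0 \nabla \Phi(x-X(t))$, which by Assumption~\ref{ass:phi} is bounded and Lipschitz in $x$ uniformly in $t$; define the phase-space flow $(X_{s,t}, V_{s,t})$ by \eqref{def:X}--\eqref{def:V}; and set
\begin{equation}
    f(t,x,v) = f_0(X_{0,t}(x,v), V_{0,t}(x,v)) - \int_0^t \bar E(s, X_{s,t}(x,v)) \cdot \nabla_v \mu(V_{s,t}(x,v)) \dd s,
\end{equation}
$\rho(t,\cdot) = \int f \dd v$, and $E'(t,\cdot) = -\nabla \phi *_x \rho(t,\cdot)$. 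This defines the iteration map $\Psi: (E, X) \mapsto (E', X')$ whose fixed point is a solution of \eqref{eq:coupledVlasov}.

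The contraction estimate rests on three ingredients: Gronwall bounds for $(X_{s,t}, V_{s,t})$ and their spatial derivatives in terms of $\|\bar E\|_{L^\infty_t W^{1,\infty}_x}$, which with the decay of $\nabla^k \mu$ (Assumption~\ref{Ass:Radial}) yield $\|f(t,\cdot)\|_{C^1_k} \leq C(1 + \|f_0\|_{C^1_k})$; the smoothing bound $\|E'\|_{W^{1,\infty}} \leq \|\nabla\phi\|_{L^1}\|\rho\|_{W^{1,\infty}}$ from Young's inequality, using the crucial property $\nabla\phi \in L^1(\R^3)$ coming from the screened form \eqref{def:phi} of $\phi$; and the elementary embedding $\|\rho\|_{W^{1,\infty}} \lesssim \|f\|_{C^1_k}$ valid for $k > 3$. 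The Lipschitz dependence on $(E,X)$ of each step carries a prefactor that is a positive power of $T_*$, so for $T_*$ sufficiently small $\Psi$ is a strict contraction on a closed ball of radius $M = 2(\|f_0\|_{C^1_k} + V_0 + 1)$ and has a unique fixed point. Higher regularity in time follows from the characteristic representation and from the equation itself. For the blow-up criterion I argue by contradiction: if $T_* < \infty$ and $\|\rho(t,\cdot)\|_{W^{1,\infty}}$ stays bounded on $[0,T_*)$, then by the smoothing inequality $\|E\|_{L^\infty_t W^{1,\infty}_x}$ is bounded, so $\bar E$ is uniformly Lipschitz; hence the Gronwall bounds on the flow give a uniform $C^1_k$ bound on $f$ and bounds on $(X, V)$ on $[0,T_*)$, which lets me restart local existence from a time $t < T_*$ sufficiently close to $T_*$, extending the solution past $T_*$ and contradicting maximality.

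For the axial property \eqref{eq:straight.line}, the system \eqref{eq:coupledVlasov} is equivariant under rotations $R$ about the $e_1$-axis, since $\mu$ is radial by Assumption~\ref{Ass:Radial}, $\phi$ is radial by \eqref{def:phi}, and the initial values $X(0)=0, V(0)=V_0 e_1$ are fixed by $R$. Provided $\Phi$ and $f_0$ are axisymmetric about $e_1$, which is the relevant case for Theorem~\ref{thm:main} where $f_0 \equiv 0$ and $\Phi$ is radial, applying $R$ to any solution produces another solution with the same initial data, so uniqueness forces $RX(t) = X(t)$ for all $R$, i.e.\ $X(t) \in \mathrm{span}\{e_1\}$. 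The main obstacle I anticipate is pinning down the correct function space: the $C^1_k$ bound for $f$ requires differentiating the characteristics, which forces control of $\nabla \bar E$, which in turn requires $\nabla^2 \Phi \in L^\infty$ (Assumption~\ref{ass:phi}) and $\nabla E = -\nabla\phi*_x\nabla\rho \in L^\infty$, making $W^{1,\infty}$ for the force and $C^1_k$ for $f$ the natural spaces. Once these spaces are fixed, the remaining estimates are routine bookkeeping with Gronwall and Young.
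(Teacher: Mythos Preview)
Your proposal is correct and matches the paper's own treatment: the paper explicitly omits the proof as ``a standard fixed point argument'' and says only that \eqref{eq:straight.line} ``follows immediately from symmetry considerations.'' Your Picard iteration on $(E,X)$, the Gronwall/Young bookkeeping in $C^1_k$ with $k>3$, the continuation argument for the blow-up criterion, and the rotational equivariance argument for the axial property are exactly the standard ingredients the paper has in mind. Your remark that \eqref{eq:straight.line} actually requires $\Phi$ and $f_0$ to be axisymmetric about $e_1$ is a valid caveat; the paper uses radial symmetry of $\Phi$ elsewhere (e.g.\ in Section~\ref{sec:linear.force}) without stating it in Assumption~\ref{ass:phi}, and the only application is $f_0=0$, so the point is harmless but worth noting.
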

 The relation \eqref{eq:straight.line} follows immediately from symmetry considerations.

The bootstrap argument consists in estimating  $\rho$ by $S$ and vice versa in a weighted $W^{1,\infty}$-norm which is adapted to the scattering of the plasma by a fast moving charged particle. More precisely, we will assign weights that reflect
that we expect the following decay of both $S$ and $\rho$
\begin{itemize}
    \item For regions with a large component orthogonal to the particle trajectory (i.e. $x^\perp \gg 1$): decay in $x^\perp$ because the charged particle never reaches these regions.
    \item For regions in front of the charged particle (i.e. $x_1 > X_1(t)$): decay in terms of the distance $x_1 - X_1(t)$; the charged particle has not yet significantly disturbed these regions.
    \item For regions behind the charged particle (i.e. $x_1 < X_1(t)$): decay in terms of the time passed since $x_1 = X_1(s)$ due to dispersion.
\end{itemize}

In order to formalize this decay, we introduce several parameters that depend on the trajectory of the charged particle. Because this trajectory is a priori only defined for short times, we first introduce the following linear extension.
First, for $T< T_\ast$, we define the minimum of the first component of the velocity of the charged particle
 \begin{align} \label{def:Vmin}
     \Vmin(T) := \min_{t \in [0,T]} V_1(t).
 \end{align}

\begin{definition} \label{def:extension.X}
    Let $0 < T < T_\ast$ where $T_\ast$ is the maximal existence time from Theorem \ref{thm:local.wellposedness} and assume that $\Vmin(T)>0$. Then, we define
    \begin{align} \label{apriori:X} 
    X^T(t)            := 
    \begin{cases}
        X(t) & \qquad \text{in } [0,T], \\
        X(T) + (t-T)V(T) & \qquad \text{in } [T,\infty), \\
       t V_0 & \qquad \text{in } (-\infty,0].
    \end{cases}
\end{align}
\end{definition}

\begin{figure}
    \centering
    \includegraphics[scale=1.15]{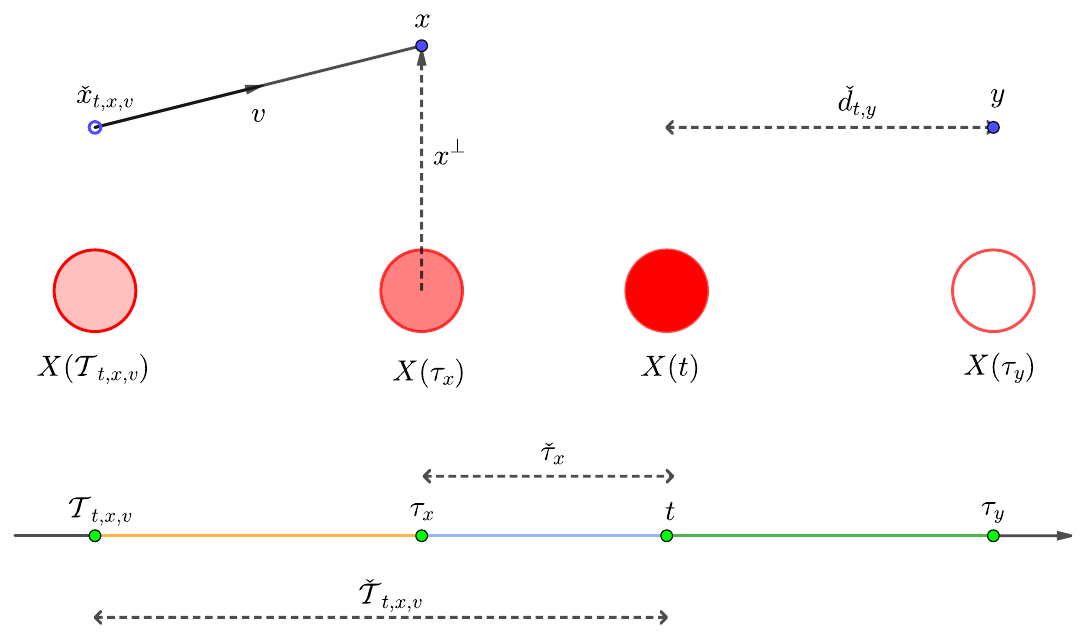}
    \caption{The quantities $\tau_{x}$, $\check \tau_{t,x}$, $\mathcal T_{t,x,v}$, $\check {\mathcal T}_{t,x,v}$, $\check x_{t,x,v}$ and $\check d_{t,x}$}
    \label{fig}
\end{figure}

\begin{definition} \label{defi:passage.times}
    Given $T>0$, and $X^T$ as in Definition \ref{def:extension.X} we introduce the following.
    \begin{enumerate}[(i)]
        \item Let $x \in \R^3$. Then, there exists a unique $\tau_x := \tau_{x_1} \in (-\infty,\infty)$ such that
    \begin{align} \label{def:tau}
        X_1^T(\tau_{x_1}) = x_1,
    \end{align}
    which we call the \emph{passage time} at $x_1$. We also define
    \begin{align} \label{def:taucheck}
        \check \tau_{t,x} = \check \tau_{t,x_1} :=[t - \tau_{x_1}]_+.
    \end{align}
\item   For $x \in  \R^3 ,s \in \R$ we denote the distance to the approaching charged particle with respect to the first component by
    \begin{align} \label{def:d_t,x_1}
	\check d_{s,x} = \check d_{s,x_1} := [x_1 - X_1^T(s)]_+.
    \end{align}
\item For  $\Psi\in L^1_\loc((0,T);W^{1,1}_\loc(\R^3))$ we define the weighted norm
\begin{align} \label{def:norm.Y}
    \|\Psi\|_{Y_T} = \sup_{s\in [0,T],x\in \R^3} |\Psi(s,x)| \langle \check \tau_{s,x}^2+\check d_{s,x}^2+ |x^\perp|^2 \rangle   +|\nabla \Psi(s,x)| \langle \check \tau_{s,x}^3+\check d_{s,x}^3+ |x^\perp|^3\rangle. \quad 
 \end{align}
    \end{enumerate}
\end{definition}

The quantities $\tau_{x}$, $\check \tau_{t,x}$ and $\check d_{s,x}$ are visualized in Figure \ref{fig} (together with further quantities that will be defined later on):
Point $x$ lies \emph{behind} and point $y$ \emph{in front} of the point charge at time $t$ which is located at $X(t)$.
Times $\tau_x$ and $\tau_y$ are then the times when the point charge has passed by $x$ and will pass by $y$, respectively.

On the one hand, the distance $\check d_{t,y}$ is the distance between the point charge at time~$t$ and the first coordinate of the position $y$. Note that by definition $\check d_{t,x} = 0$ as $x$ lies \emph{behind} the point charge at time~$t$.
On the other hand $\check \tau_{t,x} = t - \tau_x$ is the time difference between the present time~$t$ and the  passage time $\tau_x$, i.e. the time  the point charge needs to travel from the  $X(\tau_x)$ to $X(t)$.
Note that by definition $\check \tau_{t,y} = 0$ as $y$ lies \emph{in front} of the point charge at time~$t$.

We point out that in Figure \ref{fig} both time and the first space coordinate is visualized on the horizontal line. However, since the point charge is very fast, the relative times are much smaller than the relative distances, i.e. $\check \tau_{t,x} \ll X_1(t) - X_1(\tau_x)$.

We mark quantities with $\check \cdot$ to indicate that they are differences of quantities inherent to the point charge and external quantities. Consequently,  those quantities appear in the norm $\|\cdot\|_{Y_T}$ which anticipates the expected decay of the perturbation of the background density.
For consistency, one could also replace $x^\perp$ by $\check x^\perp = x^\perp - X^\perp(\tau_x)$ in that norm, but we prefer not to further complicate the notation in this way.

Note that $\tau_{x}$, $\check \tau_{t,x}$ and $\check d_{s,x}$ all implicitly depend on $T$. Since the time $T$ will always be fixed when dealing with these quantities, no confusion will arise from this implicit dependence.

We will for simplicity mostly write $\tau_{x}$, $\check \tau_{t,x}$ and $\check d_{s,x}$ and only use $\tau_{x_1}$, $\check \tau_{t,x_1}$ and $\check d_{s,x_1}$ when we want to emphasize that these quantities only depend on $x_1$.

The quadratic and cubic weight in the definition of $\Psi$ are dictated by the expected dispersion. Indeed, since the fast charged particle effectively creates a disturbance on the whole line, the dispersion only takes place with respect to the orthogonal direction. Since the orthogonal space is $2$-dimensional, this gives rise to $\check \tau_{t,x}^2$ and $\check \tau_{t,x}^3$ in \eqref{def:norm.Y}. The pointwise decay of the Green's function dictates the powers in $\check d_{s,x}$ and $x^\perp$ to be the same.

\medskip

A consequence of Proposition \ref{pro:G} are the following estimates for the linear equation \eqref{eq:rhoRep}.

\begin{corollary}  \label{co:lin.eq}
    There exists a constant $C>0$ with the following property. Let $T>0$, and $X^T$ as in Definition \ref{def:extension.X} and assume in addition that $\Vmin(T) \geq 1$. Then, for all $S \in L^1_{loc}(\R ;L^2(\R^3) \cap L^\infty(\R^3))$ the unique solution $\rho \in L^1_\loc(\R ;L^2(\R^3)$ to \eqref{eq:solution.linear} satisfies
    \begin{align}
        \|\rho\|_{Y_T} \leq C \log^2(2+T) \|S\|_{Y_T}.
    \end{align}
\end{corollary}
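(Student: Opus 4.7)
Proposition~\ref{pro:G} provides the representation $\rho = G *_{t,x} S + S$, so the task reduces to bounding the convolution term in the weighted norm $Y_T$, since $S$ itself contributes at most $\|S\|_{Y_T}$. Writing $w(t,x) := \langle \check\tau_{t,x}^2 + \check d_{t,x}^2 + |x^\perp|^2\rangle$, it suffices to show the two pointwise bounds
\[
 w(t,x)\,|(G*S)(t,x)| + w(t,x)^{3/2}\,|\nabla(G*S)(t,x)| \lesssim \log^2(2+T)\,\|S\|_{Y_T}.
\]

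The first step is to establish a quasi-triangle inequality for the weight. Because $\Vmin(T)\geq 1$, the extended trajectory $X_1^T$ is strictly increasing and bi-Lipschitz with Lipschitz constants in $[1,V_0]$, so the passage time $x_1\mapsto \tau_{x_1}$ is Lipschitz with constant at most $1$. Combining this with the trivial bound $|x^\perp|\leq |y^\perp|+|x-y|$ and splitting into the cases where $x_1,y_1$ lie ahead of or behind the trajectory, I would derive
\begin{equation}\label{eq:qtr}
 \sqrt{w(t,x)} \;\lesssim\; \sqrt{w(s,y)} + (t-s) + |x-y| + 1, \qquad 0\leq s\leq t.
\end{equation}

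With \eqref{eq:qtr} at hand, I would split the convolution $(G*S)(t,x)$ according to whether $w(s,y)\geq w(t,x)/4$ (the \emph{near region}) or $w(s,y)<w(t,x)/4$ (the \emph{far region}). On the near region, $|S(s,y)|\leq 4\|S\|_{Y_T}/w(t,x)$, and the $L^1$-bound \eqref{eq:G_L^1} gives
\[
\int_0^t \|G(t-s,\cdot)\|_{L^1}\,\frac{4\|S\|_{Y_T}}{w(t,x)}\,ds \;\leq\; \frac{C\log(2+T)\,\|S\|_{Y_T}}{w(t,x)},
\]
accounting for one logarithm. On the far region, \eqref{eq:qtr} forces $(t-s)+|x-y|\gtrsim \sqrt{w(t,x)}$, which I would exploit through a dyadic decomposition $(t-s)\sim 2^j$, $|x-y|\sim 2^k$, using the sharper pointwise estimate $|G(t-s,x-y)|\lesssim 1/(2^{4j}+2^{4k})$ together with the bound $|S|\leq \|S\|_{Y_T}/w(s,y)$. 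The integral of $1/w(s,y)$ over a shell at scale $R$ produces a factor of order $R$ (since $1/w$ is concentrated in a tube of cross-section $R^2$ and length $\lesssim V_0 R$ around the trajectory), and the sum over dyadic scales up to $T$ contributes the second logarithmic factor.

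The gradient estimate is analogous: for small $t-s$ one writes $\nabla(G*S)=G*\nabla S$ and uses the cubic-weight decay $|\nabla S(s,y)|\leq \|S\|_{Y_T}/w(s,y)^{3/2}$ together with \eqref{eq:G_L^1}, while for large $t-s$ one uses $\nabla(G*S)=\nabla G*S$ with the sharper pointwise bound \eqref{eq:nablaGpoint}, which is integrable at infinity and provides the extra factor of $w(t,x)^{1/2}$ needed in the weight. The main technical obstacle is the bookkeeping in the far region: the pointwise decay of $G$ alone is too crude (it yields $T$-dependent bounds when combined with the non-integrability of $1/w$ in $\R^3$), and one must carefully balance the decay of $G$ against the tube-like localization of $1/w(s,y)$ around the trajectory. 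This balancing is exactly what produces, through dyadic summation, the $\log^2(2+T)$ factor rather than a polynomial loss.
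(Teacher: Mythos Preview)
Your strategy matches the paper's: the representation $\rho=G\ast S+S$, the $1$-Lipschitz bound on $x_1\mapsto\tau_{x_1}$ giving the quasi-triangle inequality, the near-region estimate via $\|G(t,\cdot)\|_{L^1}\lesssim 1/(1+t)$, and for the gradient the switch from $G\ast\nabla S$ (near) to $\nabla G\ast S$ (far). The paper packages the near/far split as $B_{t,x}=\{|t-s|+|x-y|\le\tfrac12\max(1,\check\tau_{t,x},\check d_{t,x},|x^\perp|)\}$ versus its complement, and on $B_{t,x}^c$ handles the gradient by an integration by parts, which produces a boundary term on $\partial B_{t,x}$ controlled via the pointwise bound on $G$.

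There is one point where your far-region sketch can go wrong. You bound $|S(s,y)|\le\|S\|_{Y_T}/w(s,y)$ and invoke a tube of ``length $\lesssim V_0R$''. If you carry the full weight $w(s,y)$ through, the $y_1$-integral over the behind region really does pick up a factor of the speed (since $|\partial_{y_1}\tau_{y_1}|=1/V_1$ can be as small as $1/V_0$, the peak of $1/w$ in $y_1$ has width $\sim V_0$), and the resulting constant $C$ would depend on the initial velocity. The Corollary requires a universal $C$. The paper sidesteps this by using only the perpendicular part of the $S$-weight in the far region, namely $|S(s,y)|\le\|S\|_{Y_T}/(1+|y^\perp|^2)$, and letting the residual kernel bound $|G|\lesssim w(t,x)^{-1}\cdot(1+(t-s)^2+|x-y|^2)^{-1}$ (one factor of $w(t,x)^{-1}$ extracted from the fourth-power decay on $B_{t,x}^c$) absorb the $(t-s,y_1)$-integration. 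A direct splitting in $|y^\perp|$ relative to $|x^\perp|$ then gives the $\log^2(2+t)$ factor with no dyadics and no $V_0$-dependence. Replacing your full-weight bound by this perpendicular bound fixes the issue and is actually simpler than the bookkeeping you anticipate.
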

This corollary is proved in Section~\ref{subsec:colineq}. 

\medskip

To close a bootstrap argument, we need to estimate $\mathcal S$ (cf. \eqref{def:S}) in terms of $\rho$. This is the content of the following Proposition
which contains two estimates: \eqref{est:Source} gives control of the source term $\mathcal{S}$ on the long time scale $V_0^3 \tau = t$, and the estimate implicitly takes into account the trajectory of the charged particle $X(t)$. 
On the other hand, \eqref{est:force.difference} allows us to approximate the force $E(T,X(T))$ on the charged particle with the short-time linearization of the force. Note that, compared to \eqref{est:Source},  this requires a much finer estimate of the error in the vicinity of $X(T)$.

\begin{proposition} \label{pro:bootstrap}
    There exists $0 <\delta_0< 1,\Vnl > 0,\nnl>0,C>0$  such that the following holds true. Let $T$ and $X^T$ be as in Definition \ref{def:extension.X}.
    Then, if $\Vmin(T) \geq \Vnl$, $\supp \mu \subset B_{\Vmin(T)/5}(0)$, $\Vmin(T)^{-1}<\delta < \delta_0$, $\|\rho[f]\|_{Y_T} < \delta$ and $\log^{\nnl} (2+T) < \delta^{-1}$, then 
    \begin{enumerate}[(i)]
        \item The source $\mathcal{S}$ can be estimated by    \label{case:A}
        \begin{align} \label{est:Source}
        \|\mathcal S\|_{Y_T} \leq C  \Vmin(T)^{-1} + C \delta^{\frac 3 2} .
    \end{align}
        \item If in addition $T \geq 8 \Vmin(T)^{-\frac35}$. Then \label{case:B}
    \begin{align} \label{est:force.difference}
        \lim_{s \to \infty} |(\nabla \phi *\rho[h_{V(T)}])(s,0) + E(T,X(T))| \leq C \delta^{\frac{13}6}. 
    \end{align}
    \end{enumerate}
\end{proposition}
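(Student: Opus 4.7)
I would decompose $\s = \Reac + \SI$ and estimate each term in the $Y_T$-norm using the characteristic bounds of Sections \ref{sec:char} and \ref{sec:straighening}. For the point-charge contribution $\SI$, the exponential decay of $\Phi$ forces the integrand in \eqref{def:SCharge} to be negligible unless $|X_{s,t}(x,v) - X(s)| \lesssim 1$. By the straightening estimates (applicable since $|v|\leq \Vmin(T)/5$ on $\supp \nabla_v\mu$ while $|V(r)|\geq \Vmin(T)$ by \eqref{def:Vmin}), this condition essentially reduces to $|x-(t-s)v - X(s)|\lesssim 1$, a constraint which pins $s$ to a time window of length $\lesssim \Vmin(T)^{-1}$. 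A careful accounting shows that the weights $\langle \check\tau_{t,x}^2+\check d_{t,x}^2+|x^\perp|^2\rangle$ from \eqref{def:norm.Y} are absorbed by the exponential decay of $\Phi$ together with the rapid decay of $\nabla_v\mu$, yielding $\|\SI\|_{Y_T} \lesssim \Vmin(T)^{-1}$. For $\Reac$, I would Taylor-expand the integrand of \eqref{def:Reaction} in the deviations $X_{s,t}(x,v)-(x-(t-s)v)$ and $V_{s,t}(x,v)-v$; by the straightening estimates these deviations are controlled by time integrals of $|E|$, which in turn is controlled by $\delta$ through $\|\rho[f]\|_{Y_T}<\delta$ and the pointwise decay of $\nabla\phi$. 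The logarithmic bound $\log^{\nnl}(2+T)<\delta^{-1}$ absorbs the time growth coming from Duhamel iteration, and careful bookkeeping with the $Y_T$-weights improves the naive $\delta^2$ bound to the claimed $\delta^{3/2}$.

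\textbf{Plan for (ii).}
I would apply $\nabla\phi*$ to both $\rho[f] = G*_{t,x}\s + \s$ and the analogous representation $\rho[g_{T,X(T),V(T)}] = G*_{t,x} S_{T,X(T),V(T)} + S_{T,X(T),V(T)}$. Noting from \eqref{eq:TravelingWave}--\eqref{eq:g_ast} that
\begin{equation*}
\lim_{s\to\infty}(\nabla\phi*\rho[h_{V(T)}])(s,0) = \lim_{R\to\infty}(\nabla\phi*\rho[g_{R,X(T),V(T)}])(R,X(T)),
\end{equation*}
the error in \eqref{est:force.difference} reduces to two pieces: (a) the convolution $(\nabla\phi*G)*(\s - S_{T,X(T),V(T)})$ evaluated at $(T,X(T))$, and (b) the rate of convergence as $R\to\infty$ of $(\nabla\phi*\rho[g_{R,X(T),V(T)}])(R,X(T))$, which I would bound by combining the $L^1$-estimate \eqref{eq:G_L^1} with the exponential localization of $\nabla\Phi$ in \eqref{eq:S_R}. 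For (a), I would split the time integration at $s = T - \Vmin(T)^{-3/5}$: on the short window $[T-\Vmin(T)^{-3/5},T]$ the trajectories $X(s)$ and $X(T)-(T-s)V(T)$ agree up to a quadratic error $|X(s)-X(T)+(T-s)V(T)|\lesssim \delta(T-s)^2$, producing a small difference in the two source terms; on the long window, the sharp pointwise decay of $\nabla\phi*G$ derived from \eqref{eq:nablaGpoint} renders the contribution negligible.

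\textbf{Main obstacle.}
The principal difficulty is the sharp exponent $\delta^{13/6}$ in \eqref{est:force.difference}. A naive pointwise estimate relying only on $\|\s - S_{T,X(T),V(T)}\|_{Y_T}$ together with \eqref{est:Source} yields at best $\delta^{3/2}$; to improve this one must simultaneously exploit the quadratic smallness of the trajectory deviation on the short window $[T-\Vmin(T)^{-3/5},T]$ and the sharp transversal decay \eqref{eq:nablaGpoint} of $\nabla G$, finely balanced against the splitting scale $\Vmin(T)^{-3/5}$. The exponent $13/6$ is the optimal outcome of this balancing, and it is here that the technical analysis will concentrate.
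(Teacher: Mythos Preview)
Your plan for (i) has the right architecture but glosses over the two technical devices that actually do the work. First, the estimate of $\Reac$ is not a Taylor expansion: the paper performs the change of variables $v\mapsto \Psi_{s,t}(x,v)$ (Corollary~\ref{cor:psi}) to straighten the characteristics inside $R_{NL}$, and the integrand becomes a genuine difference only where $\Psi$ exists. One must split $[0,t]\times\R^3_v$ into regions (the sets $\mathcal A,\mathcal K,\mathcal F$ of Corollary~\ref{cor:psi}) where straightening works and a small ``bad'' remainder where it does not; on the bad set one estimates $R_L$ and $R_{NL}$ separately using smallness in measure. Your description ``Taylor expand in the deviations'' does not capture this, and would not close because for characteristics with small impact parameter the deviation $\tilde Y_{s,t}$ is \emph{not} uniformly small over $[0,t]$. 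Second, for $\nabla\Reac$ the paper needs a third power of decay, obtained by first changing variables $\omega=\check x_{t,x,v}$ (or $\omega=x-\langle x^\perp\rangle v$) \emph{before} taking $\nabla_x$, which converts $\nabla_x$ into $\frac{1}{\check\tau}\nabla_v$ plus lower-order terms (Proposition~\ref{pro:gradR}). Also, your phrasing is inverted: $\delta^{3/2}$ is weaker than $\delta^2$, not an improvement; the paper obtains $\delta^{2-M\beta}$ and simply states $\delta^{3/2}$.

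For (ii) there is a genuine gap. Writing $\s-S_R=(\SI-S_R)+\Reac$, your splitting at $T-\Vmin^{-3/5}$ and the quadratic trajectory error correctly handle $\SI-S_R$ (this is the paper's $\mathcal E_1^R$, Lemmas~\ref{lem:S.R.S.bar}--\ref{lem:S.bar.S}). But the contribution of $\Reac$ to the force, namely $(\nabla\phi*\Reac)(T,X(T))$ and $(\nabla\phi*G*\Reac)(T,X(T))$, is \emph{not} handled by your scheme: the $Y_T$-bound $\|\Reac\|_{Y_T}\lesssim\delta^{3/2}$ gives only $\delta^{3/2}$ at $X(T)$, and the time-splitting and decay of $G$ cannot help since $\Reac$ is large precisely near the point charge. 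The paper gains the missing factor $\Vmin^{-1/2}$ by a separate argument (Lemma~\ref{lem:E2}): integrate by parts in $v$ to replace $\nabla_v\mu$ by $(t-s)\,\dv E$, then use the identity $\dv E=-(\rho*\phi-\rho)$ together with a spatial localization $|x-X(T)|\leq\Vmin^{1/2}$. This step is invisible in your outline and is where the exponent $13/6$ actually comes from; the $\Vmin^{-3/5}$ balancing you describe produces the $\mathcal E_1$ contribution $\delta\Vmin^{-6/5}\lesssim\delta^{11/5}$, not the $\Reac$ contribution.
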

The proof of Proposition~\ref{pro:bootstrap} is the main technical part of the paper. Part \eqref{case:A} will be shown in Subsection~\ref{subsec:PropA}. The proof of Proposition~\ref{pro:bootstrap} \eqref{case:B} can be found in Subsection~\ref{subsec:PropB}. 

The powers of $\delta$ in \eqref{est:Source} and \eqref{est:force.difference} are probably not optimal, neither the restriction $T \geq 8 \Vmin(T)^{-\frac35}$. Indeed, the two terms on the right-hand side of \eqref{est:Source} correspond to the reaction term $\mathcal R$ and the contribution of the point charge $\SI$ respectively (cf. \eqref{def:S}--\eqref{def:SCharge}). The estimate of the contribution of $\SI$ is given in Subsection \ref{sec:SI}. It makes rigorous the heuristics that the point charge only induces a perturbation of order $\Vmin$ since this is the order of the time that it can interact with any given point $x$ where it passes by.
On the other hand,  one could expect $\mathcal R$ to be of order  $\delta^2$. Indeed, the self consistent force field $E$ is bounded by $\|\rho[f]\|_{Y_T} < \delta$ (see Lemma \ref{lem:est.E} below). Additionally, $\mathcal R$ 
is given by the difference obtained by evaluating the same function on the true characteristics $(X_{s,t}(x,v),V_{s,t}(x,v))$ and on their rectilinear counterparts $x - (t-s)v,v$. Since the error in the rectilinear approximation is due to the total force field $\bar E = E + e_0 \nabla \Phi(\cdot - X)$, this yields additional smallness of order $\delta + \Vmin^{-1} \lesssim \delta$.

Estimates on the characteristics will be given in Section \ref{sec:char}. 
These allow us to straighten the characteristics (see Section \ref{sec:straighening}), except on a very small set.

Note that the forces whose difference is estimated in \eqref{est:force.difference} can be rewritten as
\begin{align}
    E(T,X(T)) &= -(\nabla \phi \ast(G \ast \mathcal S + \mathcal S))(t,X(t)), \\
    \lim_{s\rightarrow \infty} \nabla \phi *\rho[h_{V(T)}])(s,0) &= \lim_{R\rightarrow \infty} (\nabla \phi \ast(G \ast S_{R,X(T),V(T)} + S_{R,X(T),V(T)})(R,X(T)). \label{id:hS}
\end{align}
 
Therefore, it is enough to analyze the source term $\mathcal S$ and its counterpart $S_{R,X_\ast,V_\ast}$ for the second part of the above Proposition, too. The estimate \eqref{est:force.difference} cannot be  valid for very small times, since the force $E(T,X(T))$ vanishes at $T = 0$. Instead, we can at best expect the estimate to hold for times that are large compared to the timescale of the point charge, i.e. $T \gg V(0)^{-1}$.

Once we have established estimates for $\nabla \rho$, we immediately obtain estimates for the force field $E$ and its derivative. This convolution estimate is stated in the following Lemma, the proof is elementary and will be skipped. 
\begin{lemma} \label{lem:est.E}
    The force field $E$ given by~\eqref{eq:E} and $\nabla E$ can be estimated by
    \begin{align}\label{eq:Ebyrho}
        |E(t,x)|+|\nabla E(t,x)|\lesssim \frac{1}{ 1+\check \tau_{t,x}^3+\check d_{t,x}^3+ |x^\perp|^3} \left(\sup_{x\in \R^3}|\nabla \rho(t,x)| (1+\check \tau_{t,x}^3+\check d_{t,x}^3+ |x^\perp|^3)  \right) . \qquad
    \end{align}
\end{lemma}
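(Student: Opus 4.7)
The strategy rests on two ingredients: an integration by parts that moves the derivative in $E = -\nabla\phi * \rho$ onto $\rho$, and the exponential decay of the screened Coulomb kernel. Since $\phi(x) \lesssim e^{-|x|}/|x|$ and $|\nabla\phi(x)| \lesssim e^{-|x|}(|x|^{-1}+|x|^{-2})$ in three dimensions, both $\phi$ and $\nabla\phi$ lie in $L^1(\R^3)$, so for sufficiently regular $\rho$ one can write
\begin{align}
    E(t,x) = -\int_{\R^3}\phi(x-y)\,\nabla\rho(t,y)\,\dd y, \qquad \nabla E(t,x) = -\int_{\R^3}\nabla\phi(x-y)\,\nabla\rho(t,y)\,\dd y.
\end{align}
This is the crucial step: it replaces $\rho$ (which may grow polynomially in time, cf.\ \eqref{eq:rhogrowth}) by the pointwise-controlled $\nabla\rho$ that appears on the right-hand side of \eqref{eq:Ebyrho}.

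Write $w(t,y) := 1+\check\tau_{t,y}^3+\check d_{t,y}^3+|y^\perp|^3$ and $M := \sup_y |\nabla\rho(t,y)|\,w(t,y)$. Under the standing assumption $\Vmin(T)\geq 1$, each of the quantities $|y^\perp|$, $\check d_{t,y}$, $\check\tau_{t,y}$ is $1$-Lipschitz in $y$: the first two are obvious, and for $\check\tau$ one uses that $X_1^T$ has derivative bounded below by $\Vmin(T)\geq 1$ on $[0,T]$ and is linear elsewhere. Taking the cube-root of the three contributions and applying the triangle inequality yields
\begin{align}
    w(t,y)^{1/3} \geq w(t,x)^{1/3} - c|x-y|,
\end{align}
so that $w(t,y)\gtrsim w(t,x)$ whenever $|x-y|\leq \tfrac{1}{4}w(t,x)^{1/3}$.

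The estimate then follows by splitting the convolution at $r := \tfrac{1}{4}w(t,x)^{1/3}$. On the local region $|x-y|\leq r$ the comparability of the weights together with $\phi,\nabla\phi\in L^1(\R^3)$ gives a contribution bounded by $CM/w(t,x)$. On the tail $|x-y|>r$, the exponential decay yields $\int_{|z|>r}|\phi(z)|\dd z \lesssim r^2 e^{-r}$ (and similarly for $\nabla\phi$), hence using only $w(t,y)\geq 1$ this part is bounded by $CM r^2 e^{-r}$, which for $r$ large is absorbed into $CM/w(t,x)$; for $w(t,x) = O(1)$ it is trivially $\leq CM/w(t,x)$ after adjusting constants. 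The bound on $\nabla E$ is identical with $\phi$ replaced by $\nabla\phi$.

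There is no real obstacle here; the only non-routine point is confirming the Lipschitz property of $y \mapsto \check\tau_{t,y}$, which is why the uniform lower bound on $\Vmin(T)$ is used. All other steps are elementary convolution estimates.
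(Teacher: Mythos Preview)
Your proof is correct and is precisely the elementary convolution estimate the paper has in mind (the paper omits the proof entirely, remarking only that it is elementary and follows once one has pointwise control on $\nabla\rho$). The key observations---moving the gradient onto $\rho$ via $E=-\phi*\nabla\rho$, $\nabla E=-\nabla\phi*\nabla\rho$ using $\phi,\nabla\phi\in L^1(\R^3)$, the Lipschitz continuity of $\check\tau_{t,\cdot}$ from \eqref{est.nabla.tau} under $\Vmin(T)\geq 1$, and the near/far splitting of the convolution against the exponentially decaying kernel---are exactly right.
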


\subsection{Proof of Theorem \ref{thm:main}} \label{subsec:proof}

\begin{proof}[Proof of Theorem \ref{thm:main}]

Let $f$, $(X,V)$ be the solution of~\eqref{eq:coupledVlasov} with maximal time of existence $T_\ast > 0$.

Recall the maximal existence time $T_\ast$ from Theorem~\ref{thm:local.wellposedness} and the constants $\nnl,\delta_0, \Vnl $ from Proposition~\ref{pro:bootstrap}. Define $n = \max\{\nnl,8\}$, $V_\mu := 5 \sup_{v \in \supp \mu} |v|$ and
 $\delta(t) := C_0 \Vmin^{\frac{-n+2}n}(t)$. Then, for all $C_0> 0$ there exists $V_{C_0}$ such that  $\Vmin(t)^{-1} < \delta(t) < \delta_0 $ provided $\Vmin(t) \geq V_{C_0}$. Let $\bar{V} \geq \max\{\Vnl,V_{C_0},1\}$. The constants $ C_0, \bar V$ will be chosen later. 

Consider the time $T>0$ given by
 \begin{align}
    T:= \sup \left\{t\in [0,T_\ast): \|\rho\|_{Y_t} \leq  \delta(t), \Vmin(t) \geq \max\{\bar{V}, V_\mu\}\right\} .
 \end{align}
 Then, by Corollary~\ref{co:lin.eq} and Proposition \ref{pro:bootstrap}, for $0\leq t < \min\{T,e^{\delta^{-1/n}(t)} -2\} $
 \begin{align}
    \|\rho\|_{Y_t} &\leq C \log^2(2+t) \|\mathcal S\|_{Y_t} \leq C \log^2(2+t)  \Vmin(t)^{-1} + C \log^2(2+t)  \|\rho\|^{3/2}_{Y_t} \\
    &\leq  C \delta^{-\frac 2 n}(t)  \Vmin^{-1}(t) + C \delta^{-\frac 2 n}(t)  \delta^{3/2}(t) \\
    &\leq  \frac{C}{C_0} \delta(t) + C  \delta^{5/4}(t).
 \end{align}
 Now pick $C_0 = 4 C $ and choose $\bar{V}$ large enough such that the above estimate implies
 \begin{align}
     \|\rho\|_{Y_t} \leq \frac{\delta(t)}2, \quad \text{for  } t <  \min\{T,e^{\delta^{-1/n}(t)} -2\}.
 \end{align}
 By continuity of $\|\rho\|_{Y_t}$ and the blow-up criterion \eqref{eq:blowup.criterium}, we infer that one of the following statements holds (after possibly further increasing $\bar V$)
 \begin{itemize}
     \item $T \geq e^{\delta^{-1/n}(T)} -2 \geq  e^{\Vmin^{1/(2n)}(T)}$,
     \item $\Vmin(T) = \bar{V}$,
     \item $\Vmin(T) = V_\mu$.
 \end{itemize}

\medskip
It remains to show \eqref{eq:estimate.force}-\eqref{eq:V(t)}, and that $T \geq e^{\Vmin^{1/(2n)}(T)}$ implies $V(T) \leq \log^{3n} V_0$. We first show the latter assuming that \eqref{eq:V(t)} holds.
Indeed, by \eqref{eq:V(t)}, if $T \geq e^{\Vmin^{1/(2n)}(T)}$, then
\begin{align}
    0 \leq \Vmin(T) = V(T) \leq \left(2 V_0^3 - A_{\min} e^{\Vmin^{1/(2n)}(T)}\right)^{1/3},
\end{align}
and thus (after possibly increasing $\bar V$)
\begin{align}
    V(T) \leq (C + 3 \log V_0)^{2n} \leq \log^{3n} V_0 .
\end{align}

It remains to show \eqref{eq:estimate.force}-\eqref{eq:V(t)}. We will first show the  validity of~\eqref{eq:V(t)} for $t\in[0,\min\{8V_0^{-3/5},T\}]$. Then, if $T < 8V_0^{-3/5}$, the proof is complete. Otherwise, we show the validity of \eqref{eq:estimate.force} on $[8 V_0^{-\frac 3 5},T]$ and how that implies \eqref{eq:V(t)} on in $[8 V_0^{-\frac 3 5},T]$ to conclude.

From~\eqref{eq:Ebyrho} we get for all $0 \leq t < T$ (after possibly further increasing $\bar V$)
\begin{align}
            |\dot V(t)| \leq C \delta(t) \leq 1 .
\end{align}
In particular, we can estimate the velocity of the charged particle by
\begin{align} \label{est:V.initially}
    V_0 - t \leq V_1(t) \leq V_0 + t .
\end{align}
This implies the validity of~\eqref{eq:V(t)} for $t\in[0,\min\{8V_0^{-3/5},T\}]$.
Moreover, we infer (after possibly further increasing $\bar V$ and thus $V_0$)
\begin{align}
    t \geq 4 \Vmin^{-3/5}(t) \qquad \text{for all } t \in [8 V_0^{-\frac 3 5},T],
\end{align}
where we first observe that the estimate follows immediately from the definition of $T$ and $\bar V \geq 4$ if $t \geq 1$, and from the first inequality in \eqref{est:V.initially} for $t \in [8 V_0^{-{\frac35}},1]$.
Thus, combining  Propositions \ref{pro:bootstrap} and
 \ref{pro:force.linear} yields \eqref{eq:estimate.force}.
Moreover, \eqref{est:V.initially} and \eqref{eq:estimate.force}
yield that \eqref{eq:V(t)} holds on $[0,T]$.
\end{proof}

\section{Estimates for the Green's function} 
\label{sec:Green}

In this section, we give the proofs of Proposition \ref{pro:G} and Corollary \ref{co:lin.eq}.

\subsection{Proof of Proposition~\ref{pro:G}} \label{subsec:proG}

We start with a simple estimate for the function $a$ defined in~\eqref{eq:a}.
\begin{lemma} \label{lem:a}
	The function $a(r)$  defined in \eqref{eq:a} satisfies $a \in C^\infty (\Reals)$
	and for all $j \in \N$
	\begin{align}
		|a^{(j)}(r) - \frac{d^j}{d^j r}(1/r^2)|	\leq \frac{C_j}{|r|^{3+j}}.
	\end{align}
\end{lemma}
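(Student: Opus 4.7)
I would begin by noting that $a \in C^\infty(\R)$ by dominated convergence (since $\hat\mu$ is Schwartz), which also gives the formula
\begin{equation}
a^{(j)}(r) = -(-i)^j \int_0^\infty e^{-ipr} p^{j+1} \hat\mu(pe_1) \, dp.
\end{equation}
Setting $g(p) := \hat\mu(pe_1)$, radial symmetry of $\mu$ makes $\hat\mu$ radial and real, so $g$ is an even Schwartz function on $\R$ with $g(0) = 1$; in particular $g'(0) = 0$, a fact that will be the crucial input below.

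My plan is then to integrate by parts $j+3$ times in $p$, applied to $F(p) := p^{j+1} g(p)$ and using $e^{-ipr} = \partial_p\!\bigl(\tfrac{e^{-ipr}}{-ir}\bigr)$. All boundary contributions at $p = +\infty$ vanish by Schwartz decay, producing
\begin{equation}
\int_0^\infty F(p) e^{-ipr} \, dp \;=\; \sum_{m=0}^{j+2} \frac{F^{(m)}(0)}{(ir)^{m+1}} \;+\; \frac{1}{(ir)^{j+3}} \int_0^\infty F^{(j+3)}(p) e^{-ipr} \, dp.
\end{equation}
By the Leibniz rule one checks that $F^{(m)}(0) = 0$ for $m < j+1$, that $F^{(j+1)}(0) = (j+1)!\,g(0) = (j+1)!$, and crucially that $F^{(j+2)}(0) = (j+2)!\,g'(0) = 0$. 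Combined with the outer prefactor $-(-i)^j$ and the identity $(-i)^j / i^{j+2} = -(-1)^j$, the single surviving boundary term produces
\begin{equation}
-(-i)^j \cdot \frac{(j+1)!}{(ir)^{j+2}} \;=\; \frac{(-1)^j (j+1)!}{r^{j+2}} \;=\; \frac{d^j}{dr^j}\Bigl(\frac{1}{r^2}\Bigr),
\end{equation}
i.e. exactly the quantity to be subtracted off.

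For the remainder, the prefactor $(ir)^{-j-3}$ supplies the desired $|r|^{-j-3}$ decay, and the oscillatory integral is bounded uniformly in $r$ by $\|F^{(j+3)}\|_{L^1(\R_+)}$, which is finite because $F^{(j+3)}$ is a finite linear combination of terms of the form $p^{j+1-\ell} g^{(j+3-\ell)}(p)$ with $\ell \leq j+1$, all in $L^1$. This gives the claimed estimate for $|r| \geq 1$; for $|r| \leq 1$ it is trivial since $a^{(j)}$ is bounded on compacts while $|r|^{-j-3}$ blows up, so $C_j$ can simply be enlarged. The main (and really only) subtle point is the cancellation $F^{(j+2)}(0) = 0$ coming from $g'(0) = 0$: without the radial symmetry of $\mu$ the next-to-leading boundary term would survive and the error would only be $O(|r|^{-j-2})$. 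Radial symmetry is precisely what buys the extra power of $|r|^{-1}$.
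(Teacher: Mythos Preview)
Your argument is correct and follows exactly the approach the paper indicates (integration by parts using $\hat\mu(0)=1$). In fact you are more careful than the paper's one-line sketch: you correctly identify that the extra power of $|r|^{-1}$ in the remainder requires the vanishing of the next boundary term $F^{(j+2)}(0)=(j+2)!\,g'(0)$, which in turn uses the evenness of $g(p)=\hat\mu(pe_1)$ coming from the radial symmetry of $\mu$ --- a point the paper's proof does not make explicit.
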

The proof is simply integration by parts and making use of $\hat{\mu}(0)=1$ since $\mu$ is a probability density by assumption.

\begin{proof}[Proof of Proposition~\ref{pro:G}]

   The first part of the assertion, including the $L^1$-estimate \eqref{eq:G_L^1}, is taken from Theorem 2.1 in~\cite{han-kwan_asymptotic_2021}.

    It remains to prove  \eqref{eq:Gpoint} and \eqref{eq:nablaGpoint}. We only present the proof of \eqref{eq:Gpoint}. The proof of~\eqref{eq:nablaGpoint} is similar and will be skipped for the sake of conciseness. 
    
    \step 1 Formulation in space-time Fourier transform
    
	We start with the Fourier representation taken from (c.f. \cite{han-kwan_asymptotic_2021}[Equation (2.9)]) 
	\begin{align}
	\tilde{K}(\tau,\xi) &:= \hat{\phi}(\xi) \int_0^\infty e^{-i\tau t} i \xi \cdot \widehat{\nabla \mu}(t\xi) \ud{t}=\hat{\phi}(\xi) a(\tau/|\xi|), \\
		\tilde{G}(\tau,\xi) &= \frac{\tilde{K}(\tau,\xi)}{1-\tilde{K}(\tau,\xi)},
	\end{align}
	where we used the rotational symmetry of $\mu$.	Now we define $\Psi_\xi(r)$ by
	\begin{align} \label{eq:psi}
		\Psi_\xi(r) = \frac{a(r)}{1-\hat{\phi}(\xi)a(r)}, \quad r\in \Reals,
	\end{align}
	and take $\hat \psi_\xi(p) := (\mathcal F_r^{-1} \Psi_\xi(r))(p)$ the Fourier-Transform in $r$.
	
 Relying on Assumption \ref{Ass:Penrose} and the smoothness of $a$, we observe that $\psi$ as defined in \eqref{eq:psi}  satisfies:
	\begin{align}\label{est:hatPsi}
	|\nabla^j_\xi\frac{d^\ell}{d^\ell p}\hat{\psi}_\xi(p)| \lesssim_{j,l,M}
	\begin{cases}
		\frac{1}{1+|\xi|^{2+j}}\frac{1}{1+|p|^M}, \quad &\text{for any $M \in \Naturals,j\geq 1$}, \\
		\frac{1}{1+|p|^M}\quad &\text{for any $M \in \Naturals,j=0 $}.
	\end{cases}
	\end{align}	
	This allows us to rewrite 
	\begin{align} \label{eq:GConv}
		\hat{G}(t,\xi) &= \hat{\phi}(\xi)|\xi| \hat{\psi}_\xi(t|\xi|).
	\end{align}
	
	
	\step{2} Case $t\leq 1$. 
	
	We take $\eta(\xi)$ a nonnegative bump function which takes value 1 on $B_\frac12$ and vanishes outside of $B_1$. We decompose $\hat{G}$ into
	\begin{align}
		\hat{G}(t,\xi) 
		&= \eta(|\xi|^2) \hat{\phi}(\xi)|\xi| \left( \hat{\psi}_\xi(t|\xi|)-\hat{\psi}_\xi(0)\right)+ \eta(|\xi|^2) \hat{\phi}(\xi)|\xi|  \hat{\psi}_\xi(0)+ (1-\eta(|\xi|^2)) \hat{\phi}(\xi)|\xi| \hat{\psi}_\xi(t|\xi|)\\
						&= R^{(a)}(\xi, t|\xi|)+R^{(b)}(\xi) + R^{(c)}(\xi, t|\xi|).
	\end{align}
	
	We have
	\begin{align}
		 R^{(a)}(\xi, t|\xi|) = \eta(|\xi|^2) \hat{\phi} (\xi) t|\xi|^2 \frac{\hat{\psi}_\xi(t|\xi|)-\hat{\psi}_\xi(0)}{t|\xi|} = \eta(|\xi|^2)\hat{\phi} (\xi) t|\xi|^2 h_\xi( t|\xi|),
	\end{align}
	where $h$ is a smooth function with bounded derivatives to any order. Hence, $\nabla_\xi^4 (R^{(a)}(\xi,t|\xi|)) \in L^1(\R^3)$, uniformly for $t \leq 1$, and thus we can bound 
	\begin{align}
		|\F^{-1}_{\xi} [R^{(a)}(\xi,t|\xi|) ] (x)|\lesssim \frac{1}{1+|x|^4}, \quad \text{for $t\leq 1$}. 
	\end{align}
	Next,
	\begin{align} \label{R^b}
	    R^{(b)}(\xi) =  \eta(|\xi|^2) |\xi| e^{-|\xi|}\hat{\psi}_0(0)+\eta(|\xi|^2) \left( \hat{\phi}(\xi)|\xi|  \hat{\psi}_\xi(0)-  |\xi| e^{-|\xi|}\hat{\psi}_0(0) \right).
	\end{align}
	The Fourier transform of the first term can be explicitly estimated using 
	\begin{align}
	    |\F^{-1}_{\xi} \left(|\xi| e^{-|\xi|} \right)(x)|  \lesssim  |\Delta_x \left(\frac{1}{1+|x|^2}\right)| \lesssim \frac{1}{1+|x|^4}.
	\end{align}
	We then use \eqref{est:hatPsi} and proceed similarly as for $R^{(a)}$ to estimate the second term on the right-hand side of \eqref{R^b} and obtain a total bound
	\begin{align}
		|\F^{-1}_{\xi} [R^{(b)}(\xi) ] (x)| \lesssim \frac{1}{1+|x|^4}. 
	\end{align}
	The contribution of $R^{(c)}(\xi,t|\xi|)$ can be estimated by
	\begin{align}
		|\F^{-1}_{\xi} [R^{(c)}(\xi,t|\xi|) ] (x)| &\lesssim \frac{1}{|x|^4} \|\nabla^4_\xi \left((1-\eta(|\xi|^2))  \hat{\phi}(\xi)|\xi| \hat{\psi}_\xi(t|\xi|)\right) \|_{L^1_\xi}  \lesssim \frac{1}{|x|^4,}
	\end{align}
	due to \eqref{est:hatPsi}. Similarly, we obtain the bound
	\begin{align}
		|\F^{-1}_{\xi} [R^{(c)}(\xi,t|\xi|) ] (x)| &\lesssim  \| \left((1-\eta(|\xi|^2))  \hat{\phi}(\xi)|\xi| \hat{\psi}_\xi(t|\xi|)\right) \|_{L^1_\xi}  \lesssim \frac{1}{t^4},
	\end{align}
	and we obtain the claim, \eqref{eq:Gpoint}, for $t \leq 1$.
	
	\medskip
	
	\step{3} Case $t\geq 1$. 
	
	We rewrite $\hat{G}$ as
	\begin{align}
		\hat{G}(t,\xi) =\frac{\hat{\phi}(\xi)}{t} \zeta_\xi(t|\xi|), \qquad
		\zeta_\xi(p)	= |p| \hat{\psi}_\xi (|p|). 
	\end{align}
	The Fourier transformation of $\zeta$ in $\xi$ can be rewritten as 
	\begin{align}
		|\F^{-1}_\xi \left[ \zeta_\xi (t|\xi| )\right](x)|= \frac{1}{t^3} |\F_\xi \left[ \zeta_{\xi/t} (|\xi| )\right](x/t)|.
	\end{align} 
	Similarly as above, we now decompose the function $\zeta$ further into:
	\begin{align} 
		\zeta_{\xi/t} (|\xi|) &= |\xi| \left(\hat{\psi}_{\xi/t} (|\xi|)-\hat{\psi}_{\xi/t} (0)\right) \eta (|\xi|^2) +|\xi| \hat{\psi}_{\xi/t} (0) \eta (|\xi|^2)+ |\xi|\hat{\psi}_{\xi/t} (|\xi|) (1-\eta (|\xi|^2 )) \\
				&= R^{(a)}_{\xi/t}(|\xi|) + R^{(b)}_{\xi/t} (|\xi|)+ R^{(c)}_{\xi/t} (|\xi|).
	\end{align}
    Arguing as in Step~{2}, we obtain
	\begin{align}
		|\F^{-1}_\xi( \zeta_{\xi/t} (|\xi|))(x)|\lesssim \frac{1}{1+ |x|^4}.
	\end{align}
	We conclude
	\begin{align}
		|G(t,x)| \lesssim  \frac{1}{t} \left|\frac{e^{-|x|}}{|x|} *_x \F_\xi \left[ \zeta_\xi (t|\xi| )\right](x)\right|\lesssim \frac{1}{t^4+|x|^4} \quad t\geq1, x\in \Reals^3,
	\end{align}
	as claimed.
\end{proof}

\subsection{Proof of Corollary~\ref{co:lin.eq}} \label{subsec:colineq} 

We start with a  simple observation on the passage time $\tau_x$ that we will  use frequently.

\begin{lemma} 
Let $T < T_\ast$ from Theorem~\ref{thm:local.wellposedness}. Recall the passage time  $\tau_x$ introduced in Definition~\ref{defi:passage.times}.
    Then, we have  for all $x \in \R^3$
        \begin{align} \label{est.nabla.tau}
        |\nabla_x \tau_x| &\leq \frac 1 {\Vmin(T)}. 
        \end{align}
\end{lemma}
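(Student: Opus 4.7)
The plan is to prove this by implicit differentiation of the defining equation $X_1^T(\tau_x) = x_1$, after observing that $\tau_x$ only depends on $x_1$. Indeed, by Definition~\ref{defi:passage.times}(i), the passage time is defined purely in terms of $x_1$, so $\nabla_x \tau_x = (\partial_{x_1} \tau_{x_1}) e_1$ and it suffices to estimate $\partial_{x_1}\tau_{x_1}$.

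Next, I would verify that the extension $X^T$ from Definition~\ref{def:extension.X} is globally $C^1$ and strictly increasing in the first component, so that the passage time $\tau_{x_1}$ is indeed uniquely defined. Strict monotonicity follows from the case analysis: on $(-\infty,0]$ we have $\dot X_1^T = V_0$, on $[0,T]$ we have $\dot X_1^T = V_1$ which is bounded below by $\Vmin(T) > 0$, and on $[T,\infty)$ we have $\dot X_1^T = V_1(T) \geq \Vmin(T) > 0$. Combined with the observation that $V_0 = V_1(0) \geq \Vmin(T)$ by the very definition \eqref{def:Vmin}, we get the uniform lower bound $\dot X_1^T(s) \geq \Vmin(T)$ for every $s \in \R$.

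With this in hand, the implicit function theorem applied to $X_1^T(\tau_{x_1}) = x_1$ yields
\begin{equation}
    \partial_{x_1}\tau_{x_1} = \frac{1}{\dot X_1^T(\tau_{x_1})},
\end{equation}
and therefore $|\nabla_x \tau_x| = |\partial_{x_1}\tau_{x_1}| \leq \Vmin(T)^{-1}$, as claimed.

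There is essentially no technical obstacle here; the only point that requires a touch of care is the handling of the piecewise extension $X^T$ at the joining times $s = 0$ and $s = T$, but since $X^T$ is constructed to be $C^1$ with derivative $V_0$ and $V(T)$ respectively at those points, and since both values dominate $\Vmin(T)$, the uniform lower bound on $\dot X_1^T$ goes through without issue.
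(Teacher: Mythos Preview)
Your proof is correct and follows exactly the approach the paper has in mind; the paper simply states that the estimate ``follows immediately from the definition of $\Vmin(T)$ and $\tau_x$'' without spelling out the implicit differentiation and the uniform lower bound $\dot X_1^T \geq \Vmin(T)$ that you carefully verify.
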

The proof follows immediately from the definition of $\Vmin(T)$ and $\tau_x$.

\begin{proof}[Proof of Corollary \ref{co:lin.eq}]
By definition of $\varrho$ (cf. \eqref{eq:rhoRep}) we have
\begin{align}
    \varrho(t,x)= S(t,x) + \int_0^t \int_{\Reals^3} G(t-s,x-y) S(s,y) \ud{y} \ud{s}.
\end{align}
To prove the assertion, it suffices to estimate the convolution. 
We first consider the integral over the region
     \begin{align}
        B_{t,x} :=\{(s,y) \in (0,t) \times \R^3 : |t-s| + |x-y|\leq \frac12 \max\{1,\check \tau_{t,x}, \check d_{t,x},|x^\perp|\} \}
        \label{est:S.small}.
     \end{align}
We observe that for $s \leq t$, and all $x,y \in \R^3$
     \begin{align}
         |x^\perp| - |y^\perp| \leq |t-s| + |x-y|, &&  \check \tau_{t,x} - \check \tau_{s,y}  \leq |t-s| + |x-y|, &&
         \check d_{t,x} - \check d_{s,y} \leq |t-s| + |x-y|.
     \end{align}
     Indeed, the first inequality follows immediately from the reverse triangle inequality. For the second inequality, we use in addition \eqref{est.nabla.tau} and that $\Vmin \geq 1$ by assumption. For the third inequality, we use in addition that $\check d_{t,x} \leq \check d_{s,x}$ for $s \leq t$ since $\dot X_1 > 0$ by assumption. 
        Thus, 
    \begin{align}
        1 +\check \tau_{s,y}+\check d_{s,y} + |y^\perp|\geq \frac12 \max\{\check \tau_{t,x},\check d_{t,x},|x^\perp|\} \qquad \text{in } B_{t,x}. \label{est:in-B_t,x}
    \end{align}
     Combining this with \eqref{eq:G_L^1} and the definition of $\|\cdot\|_{Y_t}$ (cf.  \eqref{def:norm.Y}) yields
     \begin{align}
         \left| \int_{B_{t,x}} G(t-s,x-y) S(s,y) \ud{s}\ud{y}\right|\lesssim \frac{\log(2+t) \|S\|_{Y_t}}{1+\check \tau_{t,x}^2+\check d_{t,x}^2+ |x^\perp|^2}.
     \end{align}
     It remains to estimate the excluded regions of the integral. By  \eqref{eq:Gpoint} we can estimate
     \begin{align}
         &\left| \int_{B_{t,x}^c} G(t-s,x-y) S(s,y) \ud{s}\ud{y}\right|\\
         \lesssim &\frac{ \|S\|_{Y_t}}{1+\check \tau^2_{t,x}+d^2_{t,x}+|x^\perp|^2} \int_0^t \int_{\Reals^3} \frac{\1_{|y^\perp|\leq \frac12 |x^\perp|}+\1_{\frac12 |x^\perp|\leq |y^\perp| \leq 2 |x^\perp| }+\1_{|y^\perp|\geq 2 |x^\perp|}}{(1+(t-s)^2 + |x-y|^2)(1+|y^\perp|^2)}  \ud{y} \ud{s}\\
         \lesssim &\frac{ \|S\|_{Y_t}}{1+\check \tau^2_{t,x}+d^2_{t,x}+|x^\perp|^2} \left(\int_0^t \frac{2\log(2+ |x^\perp|)}{1+(t-s)+|x^\perp|} \dd s +\int_0^t \int_{\R^2} \frac{1}{(1+s+|y^\perp|)(1+|y^\perp|^2)}dy^\perp ds \right) \\
         \lesssim &\frac{\log^2(2+t) \|S\|_{Y_t}}{1+\check \tau_{t,x}^2+\check d_{t,x}^2+ |x^\perp|^2}.
     \end{align}
     The last inequality follows by separating the cases $|x^\perp|\leq t$, $|x^\perp|\geq t$ for the first term and the regions $|y^\perp|\leq s$, $|y^\perp|\geq s$ for the second term.
     Hence we have 
     \begin{align}
        | \varrho(t,x)|\lesssim \frac{ \log^2(2+t) \|S\|_{Y_t}}{1+\check \tau_{t,x}^2+\check d_{t,x}^2+ |x^\perp|^2}.
     \end{align} 
     For the gradient  $\nabla_x \varrho$ we observe again that by \eqref{eq:G_L^1}
      \begin{align}
         \left| \int_{B_{t,x}} G(t-s,x-y) \nabla S(s,y) \ud{s}\ud{y}\right|\lesssim \frac{ \log(2+t)\|S\|_{Y_t}}{1+\check \tau_{t,x}^3+\check d_{t,x}^3+ |x^\perp|^3}.
     \end{align}
     Again, it remains to estimate the convolution on the excluded region. 
     Integrating by parts yields 
     \begin{align}
          &\left| \int_{B^c_{t,x}} G(t-s,x-y) \nabla S(s,y) \ud{s}\ud{y}\right|
          \leq  \left| \int_{B^c_{t,x}} \nabla G(t-s,x-y)  S(s,y) \ud{s}\ud{y}\right| \\
          &+\left| \int_0^t \int_{|t-s| + |x-y|= \frac12 \max\{1,\check \tau_{t,x},\check d_{t,x},|x^\perp| \}}  G(t-s,x-y)  S(s,y) \ud{y}\ud{s}\right|.
     \end{align}
     The first term on the right-hand side is estimated exactly as above. Moreover, by \eqref{eq:Gpoint} and \eqref{est:in-B_t,x}
     \begin{align}
         & \left| \int_0^t \int_{|t-s| + |x-y|= \frac12 \max\{1,\check \tau_{t,x},\check d_{t,x},|x^\perp| \}}  G(t-s,x-y)  S(s,y) \ud{s}\ud{y}\right| \\
         & \lesssim \int_0^t \int_{|t-s| + |x-y|= \frac12 \max\{1,\check \tau_{t,x},\check d_{t,x},|x^\perp| \}} \frac{\|S\|_{Y_t}}{1+\check \tau_{t,x}^6+\check d_{t,x}^6+ |x^\perp|^6} \dd y\dd s \lesssim \frac{\|S\|_{Y_t}}{1+\check \tau_{t,x}^3+\check d_{t,x}^3+ |x^\perp|^3}.
     \end{align}
     This finishes the proof. 
\end{proof}

\section{Estimates on the characteristics} 
\label{sec:char}
  In this and the following sections we will always work under the following bootstrap assumptions: We consider $T>0$, $\delta>0$ as in Proposition \ref{pro:bootstrap}.
More precisely,  recalling the maximal existence time $T_\ast$ from Theorem \ref{thm:local.wellposedness}, and the notation $\Vmin$ and $\|\cdot\|_{Y_T}$ from 
\eqref{def:Vmin} and Definition \ref{defi:passage.times}, respectively, we assume
\begin{align}
    T&<T_\ast,  \label{eq:B1} \tag{B1} \\
    \Vmin^{-1}(T) < \delta &< \min\{\delta_0, \log^{-n}(2+T)\}, \label{eq:B2} \tag{B2}\\
       \|\rho[f]\|_{Y_T} &< \delta, \label{eq:B2.2} \tag{B3} \\
    \supp \mu &\subset  B_{\Vmin(T)/5}(0),\label{eq:B3} \tag{B4}
\end{align}
for some constants $\delta_0 , n> 0$ to be chosen later.
We will refer to \eqref{eq:B1}--\eqref{eq:B3} as the bootstrap assumptions.
In the following, we will often write $\Vmin$ instead of $\Vmin(T)$.

\noeqref{eq:B2} \noeqref{eq:B2.2}

We recall from Lemma \ref{lem:est.E} that $\|\rho[f]\|_{Y_T} < \delta$ implies  for all $0 \leq t \leq T$ and all $x \in \R^3$
\begin{align}\label{apriori:E}
	|E(t,x)| + |\nabla E(t,x)| \leq \frac{\delta }{1 + \check \tau^3_{t,x} + \check d_{t,x}^3  + |x^\perp|^3}.
\end{align}

The objective of this section is to derive estimates for the characteristics defined in \eqref{def:X}--\eqref{def:V}
 which in integrated form read
 \begin{align} \label{eq:charInt}
	X_{s,t}(x,v) &= x-(t-s)v + \int_s^t (\sigma-s) \ol{E}(\sigma,X_{\sigma,t}(x,v)) \ud{\sigma}, \\
	V_{s,t}(x,v) &= v-\int_s^t \ol{E}(\sigma, X_{\sigma,t}(x,v)).
\end{align}
 
 \begin{definition} \label{def:Y.W.tilde}
 We define $\tilde{W}_{s,t}$, $\tilde{Y}_{s,t}$ as the functions given by
 \begin{align}
    V_{s,t}(x,v) &= v + \tilde W_{s,t}(x,v) ,\label{def:Wtilde} \\
    X_{s,t}(x,v) &= x-(t-s)v+\tilde{Y}_{s,t}(x,v). \label{def:Ytilde}
 \end{align}
  \end{definition}

  We are interested in the backwards characteristics, i.e., $0 \leq s \leq t < T$. We distinguish estimates for initial positions $x$ \enquote{in front} and \enquote{behind} the point charge which are characterized by $\check d_{t,x} > 0$ and $t \geq \tau_{x}$, respectively.

We start by giving estimates for the characteristics for background particles which are in front of the point charge at time $t$. This is the easiest case, since those particles stay in front of the point charge along the backwards characteristics.

\subsection{Estimates on the characteristics for particles in front of the point charge} 

\begin{proposition}  \label{pro:char.front}
For all
$\delta_0,n > 0$ sufficiently small and large, respectively,  we have under the bootstrap assumptions \eqref{eq:B1}--\eqref{eq:B3} for all $0 \leq s \leq t \leq T$ and  all $x,v\in \Reals^3$ with $|v|\leq \frac12 \Vmin$ and $x_1>X_1(t)$
    \begin{align}
        |\tilde Y_{s,t}(x,v)| + |\nabla_x \tilde Y_{s,t}(x,v)| &\lesssim \frac{\delta (t-s)}{1+\check d_{t,x}^2+ |x^\perp|^2},  \\
        |\nabla_v \tilde Y_{s,t}(x,v)| &\lesssim  \frac{\delta (t-s)}{1+\check d_{t,x}+ |x^\perp|},\\
        |\tilde W_{s,t}(x,v)| + |\nabla_x \tilde W_{s,t}(x,v)| &\lesssim   \frac{\delta }{1+\check d^2_{t,x}+ |x^\perp|^2}, \\
        |\nabla_v \tilde W_{s,t}(x,v)| &\lesssim  \frac{\delta}{1+\check d_{t,x}+ |x^\perp|}.
    \end{align}
\end{proposition}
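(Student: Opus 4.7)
The plan is to run a continuity/bootstrap argument backwards in time starting from $\sigma=t$, where $\tilde Y_{t,t}=\tilde W_{t,t}=0$. Let $s^\ast\in[0,t]$ be the smallest value so that on $[s^\ast,t]$ both $|\tilde Y_{\sigma,t}(x,v)|\leq 1$ and $X_{1,\sigma,t}(x,v) > X_1(\sigma)$ hold. The key geometric observation is that under these bootstrap hypotheses the backward trajectory stays strictly in front of the point charge with a linearly growing gap: combining $|v|\leq \Vmin/2$, the lower bound $V_1\geq \Vmin$, and the bound on $\tilde Y$ gives
\[
X_{1,\sigma,t}(x,v)-X_1(\sigma) \;\geq\; \check d_{t,x} + \tfrac{\Vmin}{2}(t-\sigma) - 1,
\]
while $|X^\perp_{\sigma,t}(x,v)-x^\perp|\leq (t-\sigma)\Vmin/2 + 1$. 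Distinguishing the two cases $(t-\sigma)\Vmin\lessgtr |x^\perp|$ then yields
\[
1 + \check d_{\sigma, X_{\sigma,t}}^3 + |X^\perp_{\sigma,t}|^3 \;\gtrsim\; 1+ \check d_{t,x}^3 + |x^\perp|^3 + ((t-\sigma)\Vmin)^3,
\]
and $\check\tau_{\sigma,X_{\sigma,t}}=0$ throughout because the trajectory lies in front of the charge.

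Combining this geometric bound with the a priori pointwise estimate \eqref{apriori:E} and the exponential decay of $\nabla\Phi$ gives
\[
|\bar E(\sigma,X_{\sigma,t}(x,v))| + |\nabla\bar E(\sigma,X_{\sigma,t}(x,v))| \;\lesssim\; \frac{\delta}{A + ((t-\sigma)\Vmin)^3},\qquad A := 1 + \check d_{t,x}^3 + |x^\perp|^3.
\]
Plugging this into the integrated characteristic equations \eqref{eq:charInt} and substituting $u = (t-\sigma)\Vmin$ reduces everything to the scalar integrals $\int_0^\infty (A+u^3)^{-1}\,du\lesssim A^{-2/3}$ and $\int_0^\infty u(A+u^3)^{-1}\,du\lesssim A^{-1/3}$. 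The first immediately gives $|\tilde W_{s,t}|\lesssim \delta\Vmin^{-1}A^{-2/3}$, and with the additional prefactor $(\sigma-s)$ also $|\tilde Y_{s,t}|\lesssim \delta\Vmin^{-1}(t-s)A^{-2/3}$; using $A^{-2/3}\sim (1+\check d_{t,x}^2 + |x^\perp|^2)^{-1}$ and $\Vmin\geq 1$ recovers the stated bounds for $\tilde Y$ and $\tilde W$. Spatial derivatives are handled by differentiating \eqref{eq:charInt} in $x$ and closing a Gronwall loop on $|\nabla_x X_{\sigma,t}-\Id|$ against the same pointwise bound applied to $\nabla\bar E$. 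For $\nabla_v$ the first line of \eqref{eq:charInt} contributes $-(t-s)\Id$, whose derivative introduces an extra factor $(t-\sigma)$ inside the integral; this invokes the second of the two scalar integrals and produces the weaker $A^{-1/3}\sim (1+\check d_{t,x}+|x^\perp|)^{-1}$ decay.

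Finally, the resulting bounds are strictly stronger than the bootstrap hypotheses $|\tilde Y|\leq 1$ and the front-of-charge condition, thanks to $\delta\ll 1$ and $\Vmin^{-1}\leq \delta$, so continuity yields $s^\ast=0$. The main obstacle I anticipate is the Gronwall step for $\nabla_x X_{\sigma,t}$: the bound on $|\nabla\bar E|$ must be evaluated along the actual perturbed trajectory, and the geometric lower bound on $\check d_{\sigma, X_{\sigma,t}}$ needs to be maintained uniformly throughout the bootstrap; once this is in place, the remaining computations are direct integrations.
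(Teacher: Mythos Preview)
Your proposal is correct and follows essentially the same approach as the paper's proof: a continuity argument on $|\tilde Y|$, a geometric lower bound showing the backward trajectory stays in front of the charge with distance growing like $\check d_{t,x}+(t-\sigma)\Vmin+|x^\perp|$, and then direct integration of the pointwise bound on $\bar E$. The paper is terser---it only bootstraps on $|\tilde Y|\leq\tfrac12$ and bounds the sum $\check d_{\sigma,X_{\sigma,t}}+|X^\perp_{\sigma,t}|$ directly rather than splitting into cases---but your explicit inclusion of the front-of-charge condition in the bootstrap and your case analysis for $x^\perp$ are harmless variants of the same argument.
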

\begin{proof}
    Since all the estimates are analogous, we only give the proof of for the estimate of $\tilde{Y}$. By a continuity argument, we have $|\tilde{Y}|\leq \frac12$ for $\delta$ sufficiently small, i.e. for $\delta_0$ in \eqref{eq:B2} sufficiently  small. Therefore,  using $|v|\leq \frac12 \Vmin$ , for all $\sigma \leq t$
    \begin{align}
        \langle \check d_{\sigma,X_{\sigma,t}(x,v)} + |X^\perp_{\sigma,t}(x,v)|\rangle  &= \langle |x_1-(t-\sigma) v_1 -X_1(\sigma)  + (\tilde Y_{\sigma,t})_1(x,v)| + |x^\perp - (t-\sigma)v^\perp + Y^\perp_{\sigma,t}(x,v)| \rangle \\
       & \gtrsim \langle |x_1 - X_1(t)| + \tfrac14(t-\sigma) \Vmin + \tfrac 1 2 |x^\perp| -\tfrac34 \rangle \\
       &\gtrsim \langle  \check d_{t,x} + (t-\sigma) \Vmin  + |x^\perp| \rangle .
    \end{align}
    Starting from the definition~\eqref{def:Ytilde} of $\tilde{Y}$ and using \eqref{apriori:E}, we estimate:
    \begin{align}
        |\tilde{Y}_{s,t}(x,v)| &= \left| \int_{s}^t (\sigma-s) \ol{E}(\sigma,X_{\sigma,t}(x,v)) \ud{\sigma} \right| \\
        &\lesssim \int_{s}^t (\sigma-s) \left(\frac{\delta}{\langle \check d_{t,x}+(t-\sigma) \Vmin+  x^\perp \rangle^3 } + e^{-c\langle \check d_{t,x}+(t-\sigma) \Vmin+|x^\perp|\rangle}\right) \ud{\sigma} \\
        &\lesssim  \frac{\delta (t-s)}{1+\check d_{t,x}^2+ |x^\perp|^2},
    \end{align}
    where we used $\Vmin^{-1} < \delta$ by \eqref{eq:B2}. 
\end{proof}

\subsection{Estimate on the characteristics for particles behind the point charge}

For estimating the characteristics behind the point charge we introduce further notation.

\begin{definition}\label{def:Parameters2}
Let $T$ be as in \eqref{eq:B1}, $t\in [0,T]$, $x \in \R^3$ and $v \in B_{\Vmin/2}(0)$.
\begin{enumerate}
    
    \item Recalling the definition $X^T$ from Definition \ref{def:extension.X}, we define the \emph{collision time} $\mathcal T_{t,x,v} :=\mathcal T_{t,x_1,v_1}$ to be the unique solution to
    \begin{align} \label{def:T}
        X_1^T(\tau) = x_1 - (t-\tau)v_1.
    \end{align}
    We also define 
    \begin{align} \label{eq:Tcheck} 
        \check {\mathcal T}_{t,x,v} :=\check {\mathcal T}_{t,x_1,v_1}:=[t-\mathcal T_{t,x_1,v_1}]_+.
    \end{align}

    \item
    Finally,  we introduce 
\begin{align} \label{def:xcheck}
    \widecheck x_{t,x,v} := x- \widecheck {\mathcal T}_{t,x,v} v,
\end{align}
and we will call $\widecheck x_{t,x,v}^\perp$ the {\emph{impact parameter}} of the collision, following the convention in collisional kinetic theory.
\end{enumerate}
\end{definition}

The quantities $\mathcal T_{t,x,v}$, $ \check {\mathcal  T}_{t,x,v}$ and $\check x_{t,x,v}$ correspond to $\tau_x, \check \tau_{t,x}$ and $x$ (cf. Definition \ref{defi:passage.times}): Instead of considering relations between the point charge and a fixed point in space $x$, these new quantities are the corresponding relations to the straight characteristic $x -(t-s) v$ for a given velocity $v$. The quantities 
$\mathcal T_{t,x,v}$, $ \check {\mathcal  T}_{t,x,v}$ and $\check x_{t,x,v}$ are also visualized in Figure \ref{fig}: The collision time $\mathcal T_{t,x,v}$ is the time where the characteristic \enquote{collides} with the point charge with respect to the first coordinate. This is the time where the distance between the point charge and the characteristic is minimized.  The difference $\check {\mathcal  T}_{t,x,v}$ is the time passed since this collision. We emphasize that  the collision time $\mathcal T_{t,x,v}$ can lie before or after the passage time of $\tau_x$ depending on the sign of $v_1$. Moreover, if $v_1 \ll \Vmin$, then, contrary to the visualization in Figure \ref{fig}, the passage time and collision time are close in relation to the difference to the present time $t$, i.e. $|\tau_x - \mathcal T_{t,x,v}| \ll |t -  \tau_x|$. In particular, $\check \tau_x $ and $\mathcal T_{t,x,v}$ are of the same order (see \eqref{eq:tau.T.comparable} below.

Finally, $\check x_{t,x,v}$
is the  position of the characteristic at  the collision time. In particular $(\check x_{t,x,v})_1 = X_1(\mathcal T_{t,x,v)}$ and therefore we will be mostly interested in the  impact parameter $\check x^\perp_{t,x,v}$.

Note that the collision time and impact parameters are defined with respect to the straight characteristics. These will turn out to be sufficiently good approximations for the collision time and impact parameters for the true characteristics for our purposes.

In order to estimate the error of the backwards characteristics to the straight characteristics for particles in front of the point charge, it is suitable to consider the following error functions $W, Y$. Their definition is inspired by the intuition that the error can be best expressed in terms of the particle positions at the ``collision''.

\begin{definition} \label{def:Y,W}
For $0\leq s\leq t<T$, with $T$ as in \eqref{eq:B1}, and $(x,v) \in \R^3 \times \R^3$ with $\tau_x \leq t$, we define the error functions $Y$ and $W$ by
    \begin{align}
	    W_{s,t}(x-\check {\mathcal T}_{t,x_1,v_1} v,v) &= V_{s,t}(x,v)-v, \\
	    Y_{s,t}(x-\check {\mathcal T}_{t,x_1,v_1} v,v) &= X_{s,t}(x,v) - (x-(t-s)v). 
    \end{align}
\end{definition}

Using \eqref{eq:Ttau}, we infer the representation
\begin{align}
    Y_{s,t}(x,v) = X_{s,t}(x + \check \tau_{t,x} v,v) - (x+ (s - \tau_{x})v),
\end{align}
and hence 
\begin{align} \label{eq:repr.Y}
	Y_{s,t}(x,v) = \int_s^t (\sigma-s) \ol{E}(\sigma ,x+( \sigma - \tau_{x}) v+ Y_{\sigma,t}(x,v)) \ud{\sigma }.
\end{align}

Before we proof estimates for $Y$ and $W$, we give some basic facts regarding the passage time, the impact parameter and the collision time.

\begin{lemma} \label{lem:passage.times}
Recall the quantities $\tau_x$, ${\mathcal T}_{t,x,v}$ introduced in Definition~\ref{defi:passage.times} and Definition~\ref{def:Parameters2}. 
    Then, we have the following identities for all $0 \leq s \leq t \leq T$ with $T$ as in \eqref{eq:B1} and all $x,v \in \R^3$ with $|v| \leq \Vmin/2$ provided $\Vmin(T) \geq 4$:
    \begin{align}
        \tau_x &=  {\mathcal T}_{t,x,0},\quad 
        \check \tau_{t,x} = \check{\mathcal T}_{t,x,0} , \label{eq:Ttau.0}\\
        {\mathcal T}_{t,x,v} &= \tau_{x - \check {\mathcal T}_{t,x,v} v} \quad \text{provided } \check {\mathcal T}_{t,x,v}  > 0. \label{eq:Ttau}
    \end{align}
    Moreover, we can estimate
\begin{align}
 \label{eq:tau.T.comparable}
        \frac 1 2 \check \tau_{t,x} \leq \check {\mathcal T}_{t,x,v} &\leq 2 \check \tau_{t,x},
        \end{align}
    and if $\check {\mathcal T}_{t,x,v}  > 0$
        \begin{align}
        \label{est.nabla_x.T}
    |\nabla_x \check {\mathcal{T}}_{t,x,v}| &\leq \frac 2 {\Vmin}, \\
    \label{est:nabla_v.T}
    |\nabla_ v \check {\mathcal{T}}_{t,x,v}| &\lesssim \frac{\check {\mathcal{T}}_{t,x,v}}{\Vmin }.    
    \end{align}
  Furthermore, we have the lower bound 
    \begin{align} \label{est:tau.T}
           \langle \check \tau_{s,x-(t-s)v}\rangle \gtrsim \langle s - \mathcal T_{t,x,v} \rangle \quad \text{for all } \check {\mathcal T}_{t,x,v} > 0 \quad \text{and } s \geq  \mathcal T_{t,x,v} - 5.
    \end{align}
     Finally, we have for $s\leq t$
   \begin{align} \label{est:d_sigma}
       \langle (\tau_{x}-s)\Vmin +  |x^\perp| \rangle &\lesssim \langle \check d_{s,x-( \tau_{x}-s)v} + |x^\perp - (\tau_{x}-s) v^\perp| \rangle  \qquad &&\text{for } s \leq \tau_{x}, \\
       \label{est:d_sigma.2}
       \langle (\mathcal T_{t,x,v} - s)\Vmin +  |\check x_{t,x,v}^\perp| \rangle &\lesssim\langle \check d_{s,x-(t - s)v} + |x^\perp - (t - s) v^\perp| \rangle   \qquad &&\text{for } s \leq \mathcal T_{t,x,v} \leq t, \qquad  \\
         \label{est:d_sigma.3}
       \langle \check d_{t,x} +  (t-s)\Vmin +  |x^\perp| \rangle &\lesssim\langle \check d_{s,x-(t - s)v} + |x^\perp - (t - s) v^\perp| \rangle \qquad &&\text{for }    \check d_{t,x} > 0.
   \end{align}
\end{lemma}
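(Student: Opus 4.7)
The identities \eqref{eq:Ttau.0} and \eqref{eq:Ttau} are immediate from the defining equations: setting $v = 0$ in \eqref{def:T} reduces it to \eqref{def:tau}, and if $\check{\mathcal T}_{t,x,v} > 0$ one rewrites $X_1^T(\mathcal T_{t,x,v}) = x_1 - \check{\mathcal T}_{t,x,v} v_1 = (x - \check{\mathcal T}_{t,x,v} v)_1$, which by uniqueness of the passage time is the definition of $\tau_{x - \check{\mathcal T}_{t,x,v} v}$.

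For \eqref{eq:tau.T.comparable}, the key observation is that under the hypothesis $|v| \leq \Vmin/2$, the function $\sigma \mapsto X_1^T(\sigma) - x_1 + (t-\sigma)v_1$ has derivative $\dot X_1^T(\sigma) - v_1 \geq \Vmin/2 > 0$ and so is strictly increasing. Plugging in $\sigma = \tau_x$ shows this function takes the value $(t-\tau_x) v_1$, from which one reads off the signs and concludes $\check \tau_{t,x} = 0 \iff \check{\mathcal T}_{t,x,v} = 0$. When both are positive, apply the mean value theorem to $X_1^T(\mathcal T_{t,x,v}) - X_1^T(\tau_x) = -(t-\mathcal T_{t,x,v}) v_1$: since $|\dot X_1^T| \geq \Vmin$ and $|v_1| \leq \Vmin/2$, this gives $|\mathcal T_{t,x,v} - \tau_x| \leq \check{\mathcal T}_{t,x,v}/2$. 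Writing $A = \check\tau_{t,x}$, $B = \check{\mathcal T}_{t,x,v}$, the inequality $|A-B| \leq B/2$ yields $\tfrac{2A}{3} \leq B \leq 2A$, which is stronger than claimed. The bounds \eqref{est.nabla_x.T}--\eqref{est:nabla_v.T} follow by implicit differentiation of \eqref{def:T}: for $j = 1,2,3$,
\begin{align}
(\dot X_1^T(\mathcal T) - v_1) \partial_{x_j} \mathcal T = \delta_{j1}, \qquad (\dot X_1^T(\mathcal T) - v_1)\partial_{v_j} \mathcal T = -(t - \mathcal T) \delta_{j1},
\end{align}
and the lower bound $\dot X_1^T - v_1 \geq \Vmin/2$ gives the claim.

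For \eqref{est:tau.T}, set $y = x - (t-s)v$ and compute $X_1^T(\tau_y) - X_1^T(\mathcal T_{t,x,v}) = (s - \mathcal T_{t,x,v}) v_1$ by comparing the defining equations of $\tau_y$ and $\mathcal T_{t,x,v}$. The mean value theorem and $|v_1|/\Vmin \leq 1/2$ give $|\tau_y - \mathcal T_{t,x,v}| \leq |s - \mathcal T_{t,x,v}|/2$. For $s \geq \mathcal T_{t,x,v}$ this yields $\check\tau_{s,y} \geq (s - \mathcal T_{t,x,v})/2$, hence the claim; for $s \in [\mathcal T_{t,x,v} - 5, \mathcal T_{t,x,v}]$ one has $\langle s - \mathcal T_{t,x,v}\rangle \leq \sqrt{26}$, which is absorbed into the implicit constant.

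Finally, for \eqref{est:d_sigma}--\eqref{est:d_sigma.3}, in each case let $w$ denote the shifted point on the right-hand side and split the estimate into a first-coordinate part and a perpendicular part. For the first coordinate one uses $X_1^T(\tau_x) = x_1$ (resp.\ the defining equation of $\mathcal T_{t,x,v}$, resp.\ the definition of $\check d_{t,x}$) together with $\dot X_1^T \geq \Vmin$ and $|v_1| \leq \Vmin/2$ to obtain $\check d_{s,w} \geq (\tau_x - s)\Vmin/2$ (resp.\ $(\mathcal T_{t,x,v} - s)\Vmin/2$, resp.\ $\check d_{t,x} + (t-s)\Vmin/2$). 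For the perpendicular part the triangle inequality gives $|x^\perp| \leq |w^\perp| + (\Delta t)|v^\perp| \leq |w^\perp| + (\Delta t) \Vmin/2$ (analogously for $|\check x_{t,x,v}^\perp|$), and combining with the $\check d_{s,w}$ bound absorbs the $|v^\perp|$ term. The proof is then a straightforward bookkeeping; none of the steps involves a genuine obstacle beyond the careful use of $|v| \leq \Vmin/2$ and monotonicity of $X_1^T$.
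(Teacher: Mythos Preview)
Your proof is correct and follows essentially the same approach as the paper: both rely on the monotonicity of $X_1^T$, the defining equations \eqref{def:tau} and \eqref{def:T}, and the constraint $|v|\le \Vmin/2$ to run mean-value-theorem and implicit-differentiation arguments, with the perpendicular parts handled by the triangle inequality. The only cosmetic difference is that the paper derives \eqref{est:nabla_v.T} via the chain rule from \eqref{eq:Ttau} and \eqref{est.nabla.tau}, whereas you differentiate \eqref{def:T} directly; and in \eqref{est:d_sigma.2} the paper combines the first-coordinate and perpendicular estimates using $|v_1|+|v^\perp|\le \sqrt{2}|v|$ rather than bounding each component by $\Vmin/2$ separately.
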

\begin{proof}
The identities \eqref{eq:Ttau.0}--\eqref{eq:Ttau} follow immediately from the definition of these quantities,
and \eqref{est.nabla.tau}--\eqref{est.nabla_x.T} are a consequence of $\dot X_1 \geq \Vmin \geq 2 |v_1|$.
Estimate \eqref{est:nabla_v.T} follows from the identity \eqref{eq:Ttau}, estimate \eqref{est.nabla.tau} and the chain rule.

For \eqref{eq:tau.T.comparable}, we first observe that $\check \tau_{t,x} = 0$ if and only if $\check { \mathcal T}_{t,x,v} = 0$. Otherwise, \eqref{eq:tau.T.comparable} follows from \eqref{eq:Ttau} and \eqref{est.nabla.tau}.
Regarding \eqref{est:tau.T}, we observe that the estimate trivially holds for $ \mathcal T_{t,x,v} - 5 \leq s \leq \mathcal T_{t,x,v}$. For $s \geq \mathcal T_{t,x,v}$ use once again \eqref{eq:Ttau} and \eqref{est.nabla.tau} to find
\begin{align}
    [s -  \tau_{x-(t-s)v}]_+ \geq  [s - \mathcal T_{t,x,v}]_+ - \frac {|v|}  {\Vmin} |s - \mathcal T_{t,x_1,v_1}| \geq \frac 1 2 (s - \mathcal T_{t,x,v}).
\end{align}
Finally, we turn to \eqref{est:d_sigma}--\eqref{est:d_sigma.3}. Observe that~\eqref{est:d_sigma} follows from~\eqref{est:d_sigma.2} by choosing $t=\tau_x$. For the proof of~\eqref{est:d_sigma.2}, we insert the definition of $\check x_{t,x,v}$ (cf.~\eqref{def:xcheck}) to rewrite
\begin{align}
    x-(t-s)v= \check x_{t,x,v} -(\mathcal T_{t,x,v}-s)v.
\end{align}
Therefore, using the definition of $\mathcal T_{t,x,v}$ and $V_{\min}$,
\begin{align}
     (x-(t-s)v-X(s))_1 = (s - \mathcal T_{t,x,v}) v_1 + X_1(\mathcal T_{t,x,v})-X_1(s) \geq (V_{\min} -v_1)(\mathcal T_{t,x,v} - s).
\end{align}

Since $|v_1| + |v^\perp| \leq \sqrt 2 |v| \leq \frac 1 {\sqrt 2} \Vmin$, this implies
\begin{align}
    \check d_{s,x-(t - s)v} + |x^\perp - (t - s) v^\perp| \geq (\mathcal T_{t,x,v}-s)\Vmin + |\check x_{t,x,v}^\perp|- \frac{1}{\sqrt{2}} (\mathcal T_{t,x,v}-s)\Vmin ,
\end{align}
which proves~\eqref{est:d_sigma.2}.
The estimate~\eqref{est:d_sigma.3} is shown analogously.
\end{proof}

\begin{proposition} \label{prop:char}
For all
$\delta_0,n > 0$ sufficiently small and large, respectively, the following estimates hold   under the bootstrap assumptions \eqref{eq:B1}--\eqref{eq:B3}  for all $0 \leq s \leq t \leq T$, $x,v\in \Reals^3$ such that $|v|\leq \frac12 \Vmin $ and $-\infty < \tau_x \leq t$:
    \begin{align}
        (|Y_{s,t}| + |\nabla_x Y_{s,t}|)(x,v) &\lesssim \delta  \min \left\{ \frac{1}{\langle \check\tau_{s,x}\rangle + \frac{\langle x^\perp \rangle}{\langle |v^\perp|\rangle}}, \frac{ t-s}{ \langle \check\tau_{s,x}\rangle^2  + \frac{\langle x^\perp \rangle^2}{\langle |v^\perp|^2\rangle}} \right\} 
        &&\text{ for } s\geq \tau_x -5\label{eq:YBehind}, \qquad \\
         (|Y_{s,t}| + |\nabla_x Y_{s,t}|)(x,v) &\lesssim \frac{\delta  }{1+\frac{\langle x^\perp \rangle}{\langle v^\perp \rangle} }\left( \frac{ \tau_x -s }{1+\frac{\langle x^\perp \rangle}{\langle v^\perp \rangle}}+ \min\{1,\frac{\check \tau_{t,x} }{1+\frac{\langle x^\perp \rangle}{\langle v^\perp \rangle} }\} \right)  &&\text{ for } s\leq \tau_x, \label{eq:YPassed} \\
     |\nabla_v Y_{s,t}(x,v)| &\lesssim  \log(2+t) \delta  \min \left\{1, \frac{ t-s}{ \langle \check\tau_{s,x}\rangle  + \frac{\langle x^\perp \rangle}{\langle |v^\perp|\rangle}} \right\}  &&\text{ for } s \geq \tau_x-5, \label{eq:nablaYBehind}\\
     |\nabla_v Y_{s,t}(x,v)| &\lesssim \log(2+t) \delta\left( \frac{ \tau_x -s }{1+\frac{\langle x^\perp \rangle}{\langle v^\perp \rangle}}+ \min\{1,\frac{\check \tau_{t,x} }{1+\frac{\langle x^\perp \rangle}{\langle v^\perp \rangle} }\} \right)  &&\text{ for } s\leq \tau_x. \label{eq:nablaYPassed}
\end{align}
Moreover,
\begin{align}
     |W_{s,t}(x,v)|+|\nabla_x W_{s,t}(x,v)| &\lesssim \frac{\delta}{\langle \check\tau_{s,x}\rangle^2 + \frac{\langle x^\perp \rangle^2}{\langle v^\perp\rangle^2}}
        &&\text{ for } s\geq 0, \label{eq:WBehindPassed}  \\
      |\nabla_v W_{s,t}(x,v)| &\lesssim  \frac{\delta }{\langle \check\tau_{s,x}\rangle + \frac{\langle x^\perp \rangle}{\langle v^\perp\rangle}} , &&\text{ for } s\geq 0 .\label{eq:WvBehindPassed} 
\end{align}
\end{proposition}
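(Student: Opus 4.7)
The plan is to use the integral representation \eqref{eq:repr.Y} for $Y$, and the analogous
\[ W_{s,t}(x,v) = \int_s^t \ol{E}(\sigma, x + (\sigma - \tau_x) v + Y_{\sigma,t}(x,v))\,d\sigma \]
for $W$ (without the weight $\sigma - s$), combined with the pointwise decay \eqref{apriori:E} of $\ol{E}$ and the geometric estimates in Lemma \ref{lem:passage.times}. I run a continuity argument in $s$, going backwards from $s=t$ where $Y=W=0$, which allows me to assume throughout the bootstrap that $|Y_{\sigma,t}|+|\nabla Y_{\sigma,t}|+|W_{\sigma,t}|+|\nabla W_{\sigma,t}| \leq 1$ on $[s,t]$; the perturbed trajectory $z(\sigma) := x + (\sigma - \tau_x) v + Y_{\sigma,t}(x,v)$ therefore remains an $O(1)$-perturbation of the rectilinear trajectory $x + (\sigma - \tau_x) v$. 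Since the bounds to prove are all $O(\delta)$, they close the bootstrap provided $\delta_0$ is small. The crucial pointwise estimate is the following: for $\sigma \geq \tau_x$ the perturbed trajectory lies behind the point charge, and combining \eqref{eq:tau.T.comparable}--\eqref{est:tau.T} with the identity $z^\perp(\sigma) = x^\perp + (\sigma - \tau_x) v^\perp + O(1)$ gives
\[ |\ol{E}(\sigma, z(\sigma))| \lesssim \frac{\delta}{1 + (\sigma - \tau_x)^3 + |x^\perp + (\sigma - \tau_x) v^\perp|^3} + e^{-c|z(\sigma) - X(\sigma)|}, \]
the last, exponentially decaying, term from $\nabla\Phi$ being harmless. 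For $\sigma \leq \tau_x$ the particle is in front of the point charge and \eqref{est:d_sigma} yields the much stronger bound with $(\tau_x - \sigma)\Vmin$ in place of $(\sigma - \tau_x)$ in the denominator.

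Inserting these pointwise bounds into \eqref{eq:repr.Y} and changing variables $r = \sigma - \tau_x$ reduces the \enquote{behind} case $s \geq \tau_x - 5$ of \eqref{eq:YBehind} to
\[ |Y_{s,t}(x,v)| \lesssim \delta \int_{\check\tau_{s,x}}^{\check\tau_{t,x}} \frac{(r - \check\tau_{s,x})\,dr}{1 + r^3 + |x^\perp + r v^\perp|^3} + O(\delta e^{-c\Vmin}), \]
and the two bounds in the min follow respectively from $(r - \check\tau_{s,x}) \leq t-s$ and $(r - \check\tau_{s,x}) \leq r$ in the integrand. The \enquote{front} case $s \leq \tau_x$ of \eqref{eq:YPassed} is obtained by splitting at $\sigma = \tau_x$: on $[s, \tau_x]$ the front pointwise bound produces the first term of \eqref{eq:YPassed}, while on $[\tau_x, t]$ the behind analysis applied from $s = \tau_x$ produces the second. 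The $W$ estimate \eqref{eq:WBehindPassed} follows by the same procedure without the $(\sigma - s)$ weight.

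For the derivatives I differentiate the integral equations. Since $|\nabla_x \tau_x| \leq 1/\Vmin \leq \delta$ by \eqref{est.nabla.tau}, the $x$-derivative chain rule contributes only an $O(1)$-multiplicative factor, and since $\nabla\ol{E}$ decays at the same rate as $\ol{E}$ by Lemma \ref{lem:est.E}, the bootstrap on $\nabla_x Y$ and $\nabla_x W$ closes the same integral estimates as for $Y$ and $W$ themselves. The $v$-derivative instead produces the large multiplicative factor $(\sigma - \tau_x) I + \nabla_v Y_{\sigma,t}$: the integrand gains one power of $r$. For $\nabla_v W$ this still yields an integrable integrand producing \eqref{eq:WvBehindPassed}; for $\nabla_v Y$, however, the additional $(\sigma - s)$ weight makes the integrand marginally integrable, and the resulting logarithmic divergence accounts for the $\log(2+t)$ factor in \eqref{eq:nablaYBehind}--\eqref{eq:nablaYPassed}. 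The nonlinear contribution from $\nabla_v Y_{\sigma,t}$ is absorbed via a Gr\"onwall argument. The main technical obstacle is the sharp integration with the impact parameter $|x^\perp + r v^\perp|$, which must produce the factor $\langle x^\perp\rangle/\langle v^\perp\rangle$ in the final bound: the naive estimate $|x^\perp + r v^\perp| \geq ||x^\perp| - r|v^\perp||$ is too lossy, so one instead analyzes the quadratic $r\mapsto|x^\perp + r v^\perp|^2$, whose minimum is attained at $r^\ast = -(x^\perp\cdot v^\perp)/|v^\perp|^2$ with value equal to the genuine impact parameter, and case-splits according to whether $|v^\perp|$ is small or large and whether $r^\ast$ lies inside or outside the integration window.
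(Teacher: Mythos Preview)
Your approach is essentially the paper's, but there is one genuine slip and one unnecessary complication.

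The slip: your continuity hypothesis $|Y_{\sigma,t}| \leq 1$ does \emph{not} close in the front case $s \leq \tau_x$. The bound \eqref{eq:YPassed} scales like $\delta(\tau_x - s)$, which exceeds $1$ once $\tau_x - s \gtrsim 1/\delta$, and nothing in the bootstrap assumptions prevents this ($T$ may be exponentially large in $1/\delta$). Your assertion that ``the bounds to prove are all $O(\delta)$'' is therefore false for \eqref{eq:YPassed} and \eqref{eq:nablaYPassed}. The paper uses instead the hypothesis $|Y_{\sigma,t}| + |\nabla_x Y_{\sigma,t}| \leq 1 + (\tau_x - \sigma)$ in this regime, which still suffices for the pointwise bound on $\ol{E}$ (the distance $\check d_{\sigma, z(\sigma)}$ grows at the much faster rate $(\tau_x - \sigma)\Vmin$, so an $O(\tau_x - \sigma)$ perturbation is harmless) and which does close since the final estimate is $\lesssim \delta(1 + (\tau_x - s))$.

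The complication: your quadratic analysis of $r \mapsto |x^\perp + r v^\perp|^2$ with case splits on the location of $r^\ast$ is not needed. The paper simply observes
\[
1 + r + |x^\perp + r v^\perp| \gtrsim 1 + r + \frac{\langle x^\perp\rangle}{\langle v^\perp\rangle}
\]
via a two-case dichotomy: if $|x^\perp| \geq 2r|v^\perp|$ then the reverse triangle inequality (precisely the ``naive estimate'' you dismiss as too lossy) gives $|x^\perp + rv^\perp| \geq |x^\perp|/2$; if $|x^\perp| \leq 2r|v^\perp|$ then $r$ itself already dominates $\langle x^\perp\rangle/\langle v^\perp\rangle$. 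Inserting this single lower bound into the cubic-decay integrand produces all the claimed factors in one stroke.
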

\begin{proof}
   We observe that for $s \in [\tau_x -5, \tau_x]$, \eqref{eq:YBehind} and \eqref{eq:nablaYBehind} follow from \eqref{eq:YPassed} and \eqref{eq:nablaYPassed}, respectively.
   
   We prove \eqref{eq:YBehind} for $s \geq \tau_x$. For $\delta>0$ sufficiently small, the right-hand side of \eqref{eq:YBehind} is bounded by one. 
   By a standard continuity argument we can therefore use $|Y_{s,t}|+|\nabla_x Y_{s,t}|\leq 1$ for $\tau_x\leq s\leq t $. 
    We use $|v|\leq \frac12 \Vmin$
   and \eqref{est.nabla.tau} to find for all $\sigma \in [s,t]$
   \begin{align}  \label{est.nested.tau}
        1+ \check \tau_{\sigma,x + \check \tau_{\sigma,x} v+ Y_{\sigma,t}(x,v)} \gtrsim 1+\check \tau_{\sigma,x}. 
   \end{align}
   Moreover, $|v|\leq \frac12 \Vmin $ and $|X_1(\sigma) - x_1| \geq \Vmin \check \tau_{\sigma,x}$ implies 
   \begin{align} \label{est:distance.ion.char}
   \begin{aligned}
       |x + \check \tau_{\sigma,x}v - X(\sigma)| &\geq \frac 1 4 |x^\perp + \check \tau_{\sigma,x}v^\perp| +  \frac 1 2  |x_1+ \check \tau_{\sigma,x}v_1 - X_1(\sigma)|   \\
      & \geq \frac  1 4 |x^\perp| + \frac 1 8 \check \tau_{\sigma,x} \Vmin.
      \end{aligned}
   \end{align}
   Resorting to \eqref{eq:repr.Y} and using estimates \eqref{apriori:E}, \eqref{ass:Phi}, \eqref{est.nested.tau} and \eqref{est:distance.ion.char}, we deduce
   \begin{align}
      |Y_{s,t}(x,v)| &= \left|\int_s^t (\sigma - s) \ol{E}(\sigma,x+\check \tau_{\sigma,x}v+ Y_{\sigma,t}(x,v)) \ud{\sigma }\right|\\
      &\lesssim \left|\int_s^t  (\sigma - s)  \left(\frac{\delta }{1 + \check \tau_{\sigma,x}^3  + |x^\perp+\check \tau_{\sigma,x}v^\perp |^3}+ e^{-\frac18 (|x^\perp|+\check \tau_{\sigma,x} \Vmin )} \right) \ud{\sigma }\right|.
   \end{align}
   Observing that for $\tau_x\leq \sigma \leq t$ (by distinguishing the cases $|x^\perp| \geq 2 \check \tau_{\sigma,x}|v^\perp|$ and $|x^\perp| \leq 2 \check \tau_{\sigma,x}|v^\perp|$)
   \begin{align} \label{eq:perplower}
       1 + \check\tau_{\sigma,x}  + |x^\perp+\check \tau_{\sigma,x}v^\perp | \gtrsim 1+\check\tau_{\sigma,x}+ \frac{\langle x^\perp \rangle }{\langle v^\perp \rangle },
   \end{align}
   we obtain 
   \begin{align} \label{est.Y.integral}
      |Y_{s,t}(x,v)| &\lesssim \int_s^t  (\sigma - s) \left(\frac{\delta }{1 + \check\tau^3_{\sigma,x}  + \frac{\langle x^\perp \rangle^3}{\langle v^\perp \rangle^3}} +  e^{-\frac18 (|x^\perp|+\Vmin\check \tau_{\sigma,x})} \right) \dd \sigma.
   \end{align}
   To conclude the estimate \eqref{eq:YBehind} for $|Y|$, we use 
   \begin{align}
       \int_s^t  (\sigma - s) e^{-\frac18 (|x^\perp|+\Vmin\check \tau_{s,x})} \dd \sigma  &\lesssim  e^{-\frac18 (|x^\perp|+\Vmin\check \tau_{s,x})}  \int_s^t (\sigma -s) e^{-\frac18 (\sigma - s) \Vmin} \dd \sigma \\
       &\lesssim \min\left\{\frac{1}{\Vmin^2}, \frac{t-s}{\Vmin}\right\} e^{-\frac18 (|x^\perp|+\Vmin\check \tau_{s,x})} ,
   \end{align}
    similar considerations for the first term in \eqref{est.Y.integral} and we recall that $\Vmin^{-1} \leq \delta$ by \eqref{eq:B2}.
    The estimate of $|\nabla_x Y|$ is analogous. 
   
   For the proof of \eqref{eq:YPassed}, the continuity argument shows $|Y_{s,t}|+|\nabla_x Y_{s,t}|\leq 1 +(\tau_x-s)$. We then split the integral
   \begin{equation}
   \begin{aligned} \label{eq:Ysplit} 
       |Y_{s,t}(x,v)| \leq  &\left|\int_s^{\tau_x}  (\sigma - s)  \ol{E}(\sigma,x+(\sigma -  \tau_{x})v+ Y_{\sigma,t}(x,v)) \ud{\sigma }\right|\\
       + &\left|\int_{\tau_x}^t  (\sigma - s)  \ol{E}(\sigma,x+\check \tau_{\sigma, x}v+ Y_{\sigma,t}(x,v)) \ud{\sigma }\right|.
   \end{aligned}
   \end{equation}
   Arguing similarly as for \eqref{est:d_sigma}, we have for $\sigma \leq \tau_x$
   \begin{align} 
       \langle \check d_{\sigma,x+(\sigma - \tau_{x})v+ Y_{\sigma,t}(x,v)} + |x^\perp + (\sigma - \tau_{x}) v^\perp| \rangle  &\gtrsim  \langle (\tau_{x}-\sigma) \Vmin + |x^\perp|\rangle.
   \end{align}
   Using this, we can bound the first term in \eqref{eq:Ysplit} as
   \begin{align}
        &\left|\int_s^{\tau_x} (\sigma-s) \ol{E}(\sigma,x+(\sigma - \tau_{x})v+ Y_{\sigma,t}(x,v)) \ud{\sigma }\right|\\
        \lesssim &(\tau_x-s) \int_s^{\tau_x} \frac{\delta }{\langle x^\perp\rangle^3 + ((\tau_x-\sigma)\Vmin)^3  } + e^{-\frac18 ( |x^\perp|+(\tau_x-\sigma) \Vmin )} \ud{\sigma} \\
        \lesssim  & \delta \frac{\tau_x-s }{\langle x^\perp \rangle^2 } .
   \end{align}
   Therefore and using again \eqref{est.nested.tau} and \eqref{est:distance.ion.char}, we can bound $|Y|$ by 
   \begin{align}
       |Y_{s,t}(x,v)| \lesssim &\left|\int_{\tau_x}^t \delta \frac{(\tau_x-s) }{1 + \check\tau^3_{\sigma,x}  + |x^\perp+\check \tau_{\sigma,x}v^\perp |^3} \ud{\sigma }\right|+ \left|\int_{\tau_x}^t \delta  \frac{(\sigma-\tau_x)}{1 + \check\tau^3_{\sigma,x}  + |x^\perp+\check \tau_{\sigma,x}v^\perp |^3} \ud{\sigma }\right|\\
      & +\int_{\tau_x}^t  (\sigma-s)e^{-\frac18 (|x^\perp|+ \check \tau_{\sigma,x} \Vmin )} \ud{\sigma} +\delta \frac{\tau_x-s }{\langle x^\perp \rangle^2 }   .
   \end{align}
   Using the inequality \eqref{eq:perplower} as above to bound the remaining integrals yields the desired estimate.

   The remaining inequalities are proved analogously. 
\end{proof}

In the following it will sometimes be convenient to use the estimates above for the functions $\tilde{Y}$, $\tilde{W}$ instead. They satisfy the relations
\begin{align}
    \tilde{W}_{s,t}(x,v) &= W_{s,t}(x-\check {\mathcal{T}}_{t,x,v}v,v) ,\\
    \tilde{Y}_{s,t}(x,v) &= Y_{s,t}(x-\check {\mathcal{T}}_{t,x,v}v,v).
\end{align}

Using Lemma~\ref{lem:passage.times}, we then obtain the following corollary. 
\begin{corollary} \label{co:char.tilde}

    Recall the notation $\check x_{t,x,v}$ from \eqref{def:xcheck}. 
    For all
$\delta_0,n > 0$ sufficiently small and large, respectively, the following estimates hold   under the bootstrap assumptions \eqref{eq:B1}--\eqref{eq:B3}  for all $0 \leq s \leq t \leq T$, $x,v\in \Reals^3$ such that $|v|\leq \frac12 \Vmin $ and $-\infty < \tau_x \leq t$:
\begin{itemize}
    \item If $s \geq  \mathcal{T}_{t,x,v} -5$, 
        \begin{align}
        |\tilde Y_{s,t}(x,v)| + |\nabla_x \tilde Y_{s,t}(x,v)| &\lesssim \delta \min \Biggl\{\frac{1}{\langle s - \mathcal T_{t,x,v}\rangle + \frac{\langle \check x_{t,x,v}^\perp\rangle}{\langle v^\perp \rangle}}, \frac{t-s}{\langle s - \mathcal T_{t,x,v} \rangle^2 + \frac{\langle \check x_{t,x,v}^\perp\rangle^2}{\langle v^\perp \rangle^2}} \Biggr\},  \\
         |\nabla_v \tilde  Y_{s,t}(x,v)| &\lesssim \delta \log(2+t) \min \Biggl\{1, \frac{t-s}{\langle s- \mathcal{T}_{t,x,v} \rangle+\frac{\langle \check x_{t,x,v}^\perp \rangle}{\langle v^\perp \rangle}} \Biggr\} + \check{ \mathcal T}_{t,x,v} |\nabla_x \tilde Y_{s,t}( x,v)|.
        \end{align}
        \item        If  $ s \leq \mathcal{T}_{t,x,v}$, \label{tilde.Y.passed}
        \begin{align}
            |\tilde Y_{s,t}(x,v)| + |\nabla_x \tilde  Y_{s,t}(x,v)|&\lesssim \frac{\delta  }{1+\frac{\langle \check x_{t,x,v}^\perp \rangle}{\langle v^\perp \rangle}} \Biggl( \frac{\mathcal{T}_{t,x,v} -s }{1+\frac{\langle \check x_{t,x,v}^\perp\rangle}{\langle v^\perp \rangle}}+ \min\Biggl\{1,\frac{ \check{\mathcal{T}}_{t,x,v} }{1+\frac{\langle \check x_{t,x,v}^\perp \rangle}{\langle v^\perp \rangle} }\Biggr\} \Biggr)  
 \\
     |\nabla_v \tilde  Y_{s,t}(x,v)| \lesssim \delta \log(2+t) &\Biggl( \frac{\mathcal T_{t,x,v} - s }{1+\frac{\langle \check x_{t,x,v}^\perp \rangle}{\langle v^\perp \rangle}}+ \min\Biggl\{1,\frac{\check {\mathcal T}_{t,x,v}}{1+\frac{\langle \check x_{t,x,v}^\perp \rangle}{\langle v^\perp \rangle}}\Biggr\} \Biggr) + \check{ \mathcal T}_{t,x,v} |\nabla_x \tilde Y_{s,t}( x,v)| .
\end{align}
\item For the function $\tilde  W$ we obtain  for all $0 \leq s \leq t$
\begin{align}
     (|\tilde{W}_{s,t}|+|\nabla_x \tilde{W}_{s,t}|)(x,v) &\lesssim    \frac{\delta}{\langle [s - \mathcal T_{t,x,v}]_+\rangle^2 + \frac{\langle \check x_{t,x,v}^\perp \rangle^2}{\langle v^\perp \rangle^2}},
         \label{eq:tildeWBehindPassed}  \\
      |\nabla_v \tilde{W}_{s,t}(x,v)| &\lesssim  \frac{\delta}{\langle [s - \mathcal T_{t,x,v}]_+\rangle + \frac{\langle \check x_{t,x,v}^\perp \rangle}{\langle v^\perp \rangle}}.  \label{eq:tildeWvBehindPassed} 
\end{align}
\end{itemize}

\end{corollary}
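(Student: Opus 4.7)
The plan is to deduce the corollary directly from Proposition \ref{prop:char} via the substitution $x \mapsto \check x_{t,x,v} = x - \check{\mathcal T}_{t,x,v}\, v$, combined with the chain rule. Indeed, by Definition \ref{def:Y,W} we have the defining identities $\tilde Y_{s,t}(x,v) = Y_{s,t}(\check x_{t,x,v},v)$ and $\tilde W_{s,t}(x,v) = W_{s,t}(\check x_{t,x,v},v)$. The first step is to translate the geometric quantities $\tau_{\check x_{t,x,v}}$, $\check\tau_{s,\check x_{t,x,v}}$ and $(\check x_{t,x,v})^\perp$ appearing on the right-hand side of Proposition \ref{prop:char} into the new scattering variables. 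Since by the very definition of $\mathcal T_{t,x,v}$ we have $(\check x_{t,x,v})_1 = X_1^T(\mathcal T_{t,x,v})$, one obtains
\begin{align}
    \tau_{\check x_{t,x,v}} = \mathcal T_{t,x,v}, \qquad \check\tau_{s,\check x_{t,x,v}} = [s - \mathcal T_{t,x,v}]_+, \qquad (\check x_{t,x,v})^\perp = \check x_{t,x,v}^\perp,
\end{align}
so that the case distinctions $s \geq \tau_x - 5$ and $s \leq \tau_x$ in Proposition \ref{prop:char} become exactly $s \geq \mathcal T_{t,x,v} - 5$ and $s \leq \mathcal T_{t,x,v}$. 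Substituting these identities into the bounds of Proposition \ref{prop:char} yields the claimed estimates for $|\tilde Y_{s,t}|$ and $|\tilde W_{s,t}|$ without any further work.

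The second step is to upgrade these pointwise bounds to bounds on the derivatives. For the spatial gradient the chain rule gives $\nabla_x \tilde Y_{s,t}(x,v) = (\nabla_x Y_{s,t})(\check x_{t,x,v},v) \cdot (I - v \otimes \nabla_x \check{\mathcal T}_{t,x,v})$, and Lemma \ref{lem:passage.times}, inequality \eqref{est.nabla_x.T}, together with $|v|\leq \Vmin/2$ bounds the Jacobian uniformly by a constant; hence $|\nabla_x \tilde Y_{s,t}|$ inherits the same bound as $|\nabla_x Y_{s,t}|$ with the substitution described above. For $\nabla_v$ one has instead
\begin{align}
    \nabla_v \tilde Y_{s,t}(x,v) = (\nabla_v Y_{s,t})(\check x_{t,x,v},v) - (\nabla_x Y_{s,t})(\check x_{t,x,v},v)\bigl(\check{\mathcal T}_{t,x,v}\, I + v \otimes \nabla_v \check{\mathcal T}_{t,x,v}\bigr),
\end{align}
and by \eqref{est:nabla_v.T} together with $|v|\leq \Vmin/2$ the second factor is bounded by $\check{\mathcal T}_{t,x,v}$ up to a multiplicative constant. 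This produces exactly the extra term $\check{\mathcal T}_{t,x,v} |\nabla_x \tilde Y_{s,t}|$ in the $\nabla_v$ estimates of the corollary, while the first summand directly inherits the bound from \eqref{eq:nablaYBehind}--\eqref{eq:nablaYPassed}. The estimates for $\tilde W$ are obtained in exactly the same way from \eqref{eq:WBehindPassed}--\eqref{eq:WvBehindPassed}, noting that in the $\tilde W$ case no splitting between the two regimes is needed since the right-hand sides of \eqref{eq:WBehindPassed}--\eqref{eq:WvBehindPassed} are already uniform in $s$.

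Essentially nothing in this argument is genuinely hard; the only point to check with some care is the bookkeeping of the chain-rule factors, in particular verifying via Lemma \ref{lem:passage.times} that $\nabla_x \check x_{t,x,v}$ is $O(1)$ and that $\nabla_v \check x_{t,x,v}$ is controlled by $\check{\mathcal T}_{t,x,v}$, since it is precisely the velocity derivative of the collision time $\check{\mathcal T}_{t,x,v}$ that is responsible for the supplementary term appearing in the $\nabla_v$ bounds. I would present this as one short substitution-and-chain-rule computation, treating the $\tilde Y$ and $\tilde W$ cases in parallel.
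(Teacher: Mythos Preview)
Your proposal is correct and matches the paper's approach exactly: the paper's entire proof is the single sentence ``Using Lemma~\ref{lem:passage.times}, we then obtain the following corollary,'' and your elaboration via the substitution $x\mapsto \check x_{t,x,v}$, the identity $\tau_{\check x_{t,x,v}}=\mathcal T_{t,x,v}$ from \eqref{eq:Ttau}, and the chain rule with the Jacobian bounds \eqref{est.nabla_x.T}--\eqref{est:nabla_v.T} is precisely what is intended. One small point worth making explicit in your write-up: for $\nabla_v\tilde W$ the chain rule also produces the term $\check{\mathcal T}_{t,x,v}\,|(\nabla_x W)(\check x_{t,x,v},v)|$, and you should either record it (as in the $\tilde Y$ estimates) or note that in every later use the $\nabla_v\tilde W$ bound is only applied alongside the stronger $\nabla_v\tilde Y$ bound, which already carries this extra term.
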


\subsection{Some direct consequences of the error estimates of the characteristics}

As a first consequence of the estimates above, we deduce the following inequalities.

\begin{corollary} \label{cor:char.inequalities}
   For all
$\delta_0,n > 0$ sufficiently small respectively large,  under the bootstrap assumptions \eqref{eq:B1}--\eqref{eq:B3} the following holds true for all $x \in \R^3$, $0 \leq s \leq t \leq T$ and all $v \in B_{\Vmin/2}(0)$ 
    \begin{align}
        |V_{s,t}(x,v) - v| \leq  1,  \qquad \langle V_{s,t}(x,v)\rangle  \geq \frac 1 2 \langle v \rangle, \\ \label{eq:X_s,t.comparable} 
        \langle X_{s,t}(x,v)-X(s) \rangle \geq \frac12 \langle x-(t-s)v-X(s) \rangle,  \\
       \langle \check d_{s,X_{s,t}(x,v)} \rangle \gtrsim \langle \check d_{s,x-(t-s)v}\rangle, \\
       \langle \check \tau_{s,X_{s,t}(x,v)} \rangle \gtrsim \langle \check \tau_{s,x-(t-s)v} \rangle, \\ ||X_{s,t}^\perp(x,v)|  -|x^\perp - (t-s)v^\perp| |  \lesssim  \delta \langle [t \wedge \mathcal T_{t,x,v} -s]_+ \rangle.
    \end{align}
\end{corollary}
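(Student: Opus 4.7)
The plan is to deduce each of the five inequalities from the bounds on the error functions $\tilde Y_{s,t}$ and $\tilde W_{s,t}$ established in Proposition~\ref{pro:char.front} (for $x_1>X_1(t)$) and Corollary~\ref{co:char.tilde} (for $\tau_x\le t$), together with the Lipschitz properties collected in Lemma~\ref{lem:passage.times} and the bootstrap assumption $\Vmin^{-1}<\delta$. I will repeatedly invoke the elementary observation that $\langle a+b\rangle\ge\tfrac12\langle a\rangle$ whenever either $|b|\le 3/4$ or $|b|\le|a|/2$ (a short direct computation with $\langle\cdot\rangle^2$).

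The velocity estimates are immediate: since $V_{s,t}-v=\tilde W_{s,t}$ and the cited estimates both yield $|\tilde W_{s,t}|\lesssim\delta$, we obtain $|V_{s,t}-v|\le 1$ for $\delta_0$ small enough, and $\langle V_{s,t}\rangle\ge\tfrac12\langle v\rangle$ follows from the elementary observation. For the position inequality I will set $A=x-(t-s)v-X(s)$, $B=\tilde Y_{s,t}$, so that $X_{s,t}-X(s)=A+B$. When $|B|\le 3/4$ the elementary observation applies directly. When $|B|$ is larger, I will verify $|B|\le|A|/2$ by distinguishing whether $\mathcal T_{t,x,v}\le t$ (the \enquote{behind} case) or $\mathcal T_{t,x,v}>t$ (the \enquote{in front} case). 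In the former, Corollary~\ref{co:char.tilde} gives $|B|\lesssim\delta(\mathcal T_{t,x,v}-s+1)$, while the monotonicity $\dot X_1\ge\Vmin\ge 2|v_1|$ and a mean-value argument (the content of~\eqref{est:d_sigma.2}) yield $|A_1|\gtrsim(\mathcal T_{t,x,v}-s)\Vmin$; the bootstrap assumption $\Vmin^{-1}<\delta$ then gives $|B|/|A|\lesssim\delta^2\ll 1$. The \enquote{in front} case is handled analogously with Proposition~\ref{pro:char.front} and the lower bound $|A_1|\gtrsim\check d_{t,x}+(t-s)\Vmin$.

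The inequalities involving $\check d$ and $\check\tau$ follow from Lipschitz properties: since $[\cdot]_+$ is $1$-Lipschitz and the map $y_1\mapsto\tau_{y_1}$ is $\Vmin^{-1}$-Lipschitz by~\eqref{est.nabla.tau},
\[
\bigl|\check d_{s,X_{s,t}(x,v)}-\check d_{s,x-(t-s)v}\bigr|\le|(\tilde Y_{s,t})_1|,\qquad \bigl|\check\tau_{s,X_{s,t}(x,v)}-\check\tau_{s,x-(t-s)v}\bigr|\le\Vmin^{-1}|(\tilde Y_{s,t})_1|.
\]
Combined with the lower bounds $\check d_{s,x-(t-s)v}\gtrsim(\mathcal T_{t,x,v}-s)\Vmin$ and $\check\tau_{s,x-(t-s)v}\sim[s-\mathcal T_{t,x,v}]_+$ extracted from Lemma~\ref{lem:passage.times}, the same case split as for the position estimate closes the argument. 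Finally, the perpendicular estimate reduces to $\bigl||X^\perp_{s,t}|-|x^\perp-(t-s)v^\perp|\bigr|\le|(\tilde Y_{s,t})^\perp|\le|\tilde Y_{s,t}|$, and I will verify $|\tilde Y_{s,t}|\lesssim\delta\langle[t\wedge\mathcal T_{t,x,v}-s]_+\rangle$ by direct inspection of the relevant case in Proposition~\ref{pro:char.front} (when $\mathcal T_{t,x,v}>t$, using $t\wedge\mathcal T_{t,x,v}=t$) or Corollary~\ref{co:char.tilde} (when $\mathcal T_{t,x,v}\le t$, subdivided according to whether $s\ge\mathcal T_{t,x,v}-5$ or not).

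The main obstacle will be the regime where $|\tilde Y_{s,t}|$ is not uniformly small but grows proportionally to $\mathcal T_{t,x,v}-s$. This is precisely where the gain $\Vmin^{-1}<\delta$ becomes essential: without it, the error $|B|$ could be comparable to the main quantity $|A|\sim\Vmin(\mathcal T_{t,x,v}-s)$ and the comparison arguments sketched above would fail to close.
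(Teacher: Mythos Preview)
Your proposal is correct and follows exactly the route the paper intends: the corollary is stated there without proof as a direct consequence of Proposition~\ref{pro:char.front}, Corollary~\ref{co:char.tilde} and Lemma~\ref{lem:passage.times}, and your argument simply spells out those straightforward comparisons. The key mechanism you identify---that in the only regime where $|\tilde Y_{s,t}|$ is not uniformly $O(\delta)$ one has $|\tilde Y_{s,t}|\lesssim\delta(\mathcal T_{t,x,v}-s)$ while the reference quantity carries a factor $\Vmin(\mathcal T_{t,x,v}-s)$, so $\Vmin^{-1}<\delta$ closes the comparison---is precisely the point.
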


As a second consequence, we deduce that the support of $f(t,x,\cdot)$ remains contained in $B_{\Vmin/2}$ under the bootstrap assumptions \eqref{eq:B1}--\eqref{eq:B3}.

\begin{corollary} \label{cor:propagation.support}
    For all
$\delta_0,n > 0$ sufficiently small respectively large,  under the bootstrap assumptions \eqref{eq:B1}--\eqref{eq:B3}  we have for all  $ 0 \leq s \leq t \leq T$, $x,v \in \R^3$
\begin{align}
     |V_{s,t}(x,v)|\geq \frac15 \Vmin,   \qquad \text{for all } |v|\geq \frac14 \Vmin.
\end{align}
In particular, $\supp f(t,x,\cdot) \subset B_{\Vmin/4}(0)$.
\end{corollary}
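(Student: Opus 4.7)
The plan is to reduce the support statement to the velocity lower bound via the transport representation of $F := \mu + f$, and then prove the latter by a continuity/semigroup argument anchored on Corollary \ref{cor:char.inequalities}. Since $F$ solves the Vlasov equation $\partial_t F + v \cdot \nabla_x F + \ol{E} \cdot \nabla_v F = 0$ (whose characteristics are the $(X_{s,t},V_{s,t})$ of \eqref{def:X}--\eqref{def:V}) and $F(0,\cdot)=\mu$ (because $f_0 = 0$), $F$ is constant along characteristics, giving $F(t,x,v) = \mu(V_{0,t}(x,v))$ for all $0\leq t \leq T$ and $(x,v) \in \R^3 \times \R^3$. For $|v| \geq \Vmin/4 > \Vmin/5$, assumption \eqref{eq:B3} forces $\mu(v)=0$, so $f(t,x,v) = \mu(V_{0,t}(x,v))$. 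Once the first inequality $|V_{0,t}(x,v)| \geq \Vmin/5$ is established, the right-hand side vanishes by \eqref{eq:B3} once more, yielding the support claim.

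To prove the velocity bound, fix $(x,v)$ with $|v|\geq \Vmin/4$ and $s \in [0,t]$, and assume for contradiction that $|V_{s,t}(x,v)| < \Vmin/5$. Since $|V_{t,t}(x,v)| = |v| \geq \Vmin/4 > \Vmin/5$ and $\sigma \mapsto V_{\sigma,t}(x,v)$ is continuous on $[s,t]$, the intermediate value theorem produces some $s_1 \in (s,t]$ with $|V_{s_1,t}(x,v)| = \Vmin/4$ (taking $s_1=t$ if $|v|=\Vmin/4$). Set $(x_1,v_1) := (X_{s_1,t}(x,v), V_{s_1,t}(x,v))$; the semigroup property of backward characteristics gives $V_{s,t}(x,v) = V_{s,s_1}(x_1,v_1)$. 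Since $|v_1| = \Vmin/4 \leq \Vmin/2$, the first bound of Corollary \ref{cor:char.inequalities} applies to $(x_1,v_1)$ with final time $s_1$ and yields $|V_{s,s_1}(x_1,v_1) - v_1| \leq 1$. Hence
\[
|V_{s,t}(x,v)| \geq |v_1| - 1 = \frac{\Vmin}{4} - 1 > \frac{\Vmin}{5}
\]
as soon as $\Vmin \geq 21$, which is guaranteed by choosing $\delta_0$ small in \eqref{eq:B2}. This contradicts the standing assumption.

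The one delicate point is that Corollary \ref{cor:char.inequalities} only controls $|V_{\sigma,t}(x,v)-v|$ for $|v|\leq \Vmin/2$, so a direct application would fail whenever $|v| > \Vmin/2$. The anchor time $s_1$ is precisely what shifts the application of the Corollary from $(x,v)$ to $(x_1,v_1)$ with $|v_1| = \Vmin/4 \leq \Vmin/2$, bringing the estimate into the Corollary's range of validity; no analog of the characteristic straightening needs to be done separately for very fast particles.
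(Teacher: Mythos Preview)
Your argument is correct and follows the same route as the paper: a continuity/semigroup reduction to a point $(x_1,v_1)$ with $|v_1|=\Vmin/4$, then Corollary~\ref{cor:char.inequalities} to obtain $|V_{s,s_1}(x_1,v_1)-v_1|\leq 1$ and hence a contradiction for $\Vmin$ large. Your added derivation of the support statement via $F(t,x,v)=\mu(V_{0,t}(x,v))$ makes explicit the step the paper leaves implicit in the phrase ``In particular''.
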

\begin{proof}
    Assume the contrary, i.e. $|v|\geq \frac14 \Vmin$ and $|V_{s,t}(x,v)|\leq \frac15 \Vmin$. By continuity, there exists $s' \in [s,t]$ and $v' \in \partial {B_{\Vmin/4}(0)}$ such that 
    \begin{align}
        V_{s',t}(x,v)=v', \quad X_{s',t}(x,v)=x', \quad V_{s,t}(x,v) = V_{s,s'}(x',v').
    \end{align}
    In particular we know that
    \begin{align}
        |V_{s,s'}(x',v') - v'| \geq  ||V_{s,s'}(x',v')| - |v'|| \geq \frac 1 {20} \Vmin.
    \end{align}
    However by the corollary above, we have
    \begin{align}
        |V_{s,s'}(x',v')-v'|\leq 1,
    \end{align}
    which is a contradiction for $\Vmin^{-1} < \delta_0$ sufficiently small.
\end{proof}

To future reference, we summarize the decay of the background field in the following lemma.

\begin{lemma} \label{lem:E.char} Under the bootstrap assumptions \eqref{eq:B1}--\eqref{eq:B3} with  $\delta_0,n > 0$ sufficiently small we have for all $0 \leq s \leq t \leq T$ and all $x \in \R^3$, $v \in B_{\Vmin/2}$
      \begin{align} \label{est:mu(V)}
       |\nabla_v \mu (V_{s,t}(x,v))| + |\nabla^2_v \mu (V_{s,t}(x,v))| \lesssim e^{-|v|}.
   \end{align}
   Moreover,
   \begin{enumerate}[(i)] 
       \item \label{it:E.front} For $\check d_{t,x} > 0$
       \begin{align}
             &|E(s,x-(t-s)v)| + |\nabla E(s,x-(t-s)v)| + |E(s,X_{s,t}(x,v))| + |\nabla E(s,X_{s,t}(x,v))| \\
       & \lesssim \frac{\delta }{\langle \check d_{t,x} +  (t - s)V_{\min} + |x^\perp|\rangle^3}.
       \end{align}
       \item \label{it:E.post.collision} For $\check d_{t,x} = 0 $ and $ s \geq \mathcal T_{t,x,v} - 5 $
       \begin{align} 
           &|E(s,x-(t-s)v)| + |\nabla E(s,x-(t-s)v)| +  |E(s,X_{s,t}(x,v))| + |\nabla E(s,X_{s,t}(x,v))| \\
       & \lesssim \frac{\delta}{\langle s - \mathcal T_{t,x,v} \rangle^3 + \langle x^\perp - (t-s) v^\perp \rangle^3} .
       \end{align}
              \item \label{it:E.pre.collision} For $\check d_{t,x} = 0$ and $ s \leq \mathcal T_{t,x,v}$
   \begin{align}
        &|E(s,x-(t-s)v)| + |\nabla E(s,x-(t-s)v)| +  |E(s,X_{s,t}(x,v))| + |\nabla E(s,X_{s,t}(x,v))| \\
       & \lesssim \frac{\delta}{\langle \Vmin (\mathcal T_{t,x,v} - s) \rangle^3 + \langle \check x_{t,x,v}^\perp \rangle^3}.
   \end{align}
   \end{enumerate}
\end{lemma}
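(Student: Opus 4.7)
The proof is a direct consequence of the bootstrap bound \eqref{apriori:E} combined with the geometric identities from Lemma~\ref{lem:passage.times} and the comparability estimates of Corollary~\ref{cor:char.inequalities}. The statement \eqref{est:mu(V)} follows immediately: Corollary~\ref{cor:char.inequalities} gives $|V_{s,t}(x,v)-v|\leq 1$, hence $|V_{s,t}(x,v)|\geq |v|-1$, and Assumption~\ref{Ass:Radial} (with constants normalized to $1$) yields $|\nabla^k \mu(V_{s,t}(x,v))|\lesssim e^{-|v|}$ for $k=1,2$, absorbing the factor $e$ into $\lesssim$.

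For the $E$--estimates, I would first work at the straight-line point $y := x-(t-s)v$, where \eqref{apriori:E} gives
\[
|E(s,y)| + |\nabla E(s,y)| \lesssim \frac{\delta}{1+\check\tau_{s,y}^3+\check d_{s,y}^3+|y^\perp|^3}, \qquad y^\perp = x^\perp-(t-s)v^\perp,
\]
and then split into the three regimes. In case \eqref{it:E.front}, the assumption $\check d_{t,x}>0$ permits the direct application of \eqref{est:d_sigma.3} to bound $\check d_{s,y}+|y^\perp|$ from below by $\check d_{t,x}+(t-s)\Vmin+|x^\perp|$. In case \eqref{it:E.post.collision}, the inequality \eqref{est:tau.T} gives $\langle \check\tau_{s,y}\rangle \gtrsim \langle s-\mathcal T_{t,x,v}\rangle$, while the perpendicular term is already $|y^\perp|=|x^\perp-(t-s)v^\perp|$ as stated. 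In case \eqref{it:E.pre.collision}, \eqref{est:d_sigma.2} together with the elementary bound $\langle a+b\rangle^3 \gtrsim \langle a\rangle^3+\langle b\rangle^3$ for $a,b\geq 0$ yields $\delta/(\langle\Vmin(\mathcal T_{t,x,v}-s)\rangle^3+\langle \check x_{t,x,v}^\perp\rangle^3)$.

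To transfer these estimates from $y$ to $X_{s,t}(x,v)$, I invoke Corollary~\ref{cor:char.inequalities}, which gives $\langle \check d_{s,X_{s,t}(x,v)}\rangle\gtrsim\langle \check d_{s,y}\rangle$, $\langle \check\tau_{s,X_{s,t}(x,v)}\rangle\gtrsim\langle \check\tau_{s,y}\rangle$, and $\bigl||X^\perp_{s,t}(x,v)|-|y^\perp|\bigr|\lesssim \delta\langle [t\wedge \mathcal T_{t,x,v}-s]_+\rangle$. Since $\delta\ll 1$ and the correction $\delta\langle[t\wedge\mathcal T_{t,x,v}-s]_+\rangle$ is always dominated (up to the bootstrap factor $\delta$) by the timescale that controls the denominator in the regime considered, the perpendicular coordinate at $X_{s,t}(x,v)$ is comparable to $|y^\perp|$ up to a smaller-order additive term that can be absorbed. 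The lemma is in essence a repackaging of \eqref{apriori:E} through the straight-line scattering geometry; no real obstacle arises, the only care being required in case \eqref{it:E.post.collision}, where one uses that $[t\wedge\mathcal T_{t,x,v}-s]_+\leq 5$ on the buffer $s\in[\mathcal T_{t,x,v}-5,\mathcal T_{t,x,v}]$ while the $\check\tau$-contribution dominates elsewhere.
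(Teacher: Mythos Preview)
Your proposal is correct and follows essentially the same route as the paper: the paper's proof also derives \eqref{est:mu(V)} from $|\tilde W_{s,t}|\lesssim 1$ (equivalently your $|V_{s,t}-v|\leq 1$) together with Assumption~\ref{Ass:Radial}, obtains the straight-line bounds in (i)--(iii) from \eqref{apriori:E} via \eqref{est:d_sigma.3}, \eqref{est:tau.T}, \eqref{est:d_sigma.2} respectively, and then transfers to the true characteristics using Corollary~\ref{cor:char.inequalities}. Your write-up is in fact more detailed than the paper's, including the buffer-region observation in case~\eqref{it:E.post.collision}, but the underlying argument is the same.
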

\begin{proof}
   The estimate \eqref{est:mu(V)} follows immediately from the decay of $\mu$ from Assumption \ref{Ass:Radial} together with the estimate $|\tilde W_{s,t}(x,v)| \lesssim 1$ due to Proposition \ref{pro:char.front} and Corollary \ref{co:char.tilde}.
   
   The estimates in items \eqref{it:E.front}--\eqref{it:E.pre.collision} for the background field $E$ along the straight characteristics $x-(t-s)v$ follow immediately from the decay of $E$, \eqref{apriori:E}, together with the estimates \eqref{est:d_sigma.3},  \eqref{est:tau.T} and \eqref{est:d_sigma.2}, respectively.
   Finally, along the true characteristics, estimates analogous to \eqref{est:d_sigma.3},  \eqref{est:tau.T} and \eqref{est:d_sigma.2} also hold which can be seen by combining them with the estimates from Corollary \ref{cor:char.inequalities}.
\end{proof}


\section{Straightening the characteristics} 
\label{sec:straighening}
As explained in Subsection \ref{subsec:bootstrapest}, a key ingredient for the proof of Proposition \ref{pro:bootstrap} consists of straightening of the characteristics except for a small region which is the purpose of this section.
 As we will see, 
roughly speaking, this straightening is possible in regions where
we both  $|\tilde Y_{s,t}|, |\nabla_v \tilde Y_{s,t}|$ are small compared to $t-s$.
We distinguish four four regions in which we can make use of this. 
In the following, we first give the characterization and necessary estimates in these regions, then prove an abstract result about the possibility of straightening the characteristics and finally applying the abstract result to those four regions.
\begin{enumerate}
    \item 
Due to Proposition \ref{pro:char.front}, this is guaranteed in the region $\check d_{t,x} > 0$.
More precisely, under the assumptions of that proposition, we have 
\begin{equation} \label{est:Y.on.d>0}
\begin{aligned} 
    |\tilde Y_{s,t}(x,v)| 
    &\lesssim  \frac{ \delta (t-s)}{\langle \check d_{t,x} \rangle^2 + \langle  x^\perp\rangle^2},\\
    |\nabla_v \tilde Y_{s,t}(x,v)| &\lesssim   \frac{\delta (t-s)}{\langle \check d_{t,x} \rangle + \langle x^\perp\rangle}.
\end{aligned}
\end{equation} 
\item 
Moreover, Corollary \ref{co:char.tilde} provides sufficient estimates for straightening the characteristics on the set given by $\check \tau_{t,x} > 0$ and $s > \mathcal T_{t,x,v} - 5$. 

More precisely, we have 
\begin{align} \label{est:Y.on.A}
|\tilde Y_{s,t}(x,v)| 
&\lesssim \delta \min  \left\{\frac{t-s  }{\langle s - \mathcal T_{t,x,v} \rangle^2 + \frac{\langle \check x_{t,x,v}^\perp \rangle^2}{\langle v^\perp \rangle^2}}, \frac{t-s }{\check \tau_{t,x}} \right\} , \\ \label{est:nabla.Y.on.A}
    |\nabla_v \tilde Y_{s,t}(x,v)|  &\lesssim \frac{\log(2 +t) \delta  (t-s)}{\langle s - \mathcal T_{t,x,v} \rangle + \frac{\langle \check x_{t,x,v}^\perp \rangle}{\langle v^\perp \rangle}}.
\end{align}

 Here, we have used \eqref{eq:tau.T.comparable} and distinguished the cases $t-s \geq \check {\mathcal T}_{t,x,v}/2 $ and $t-s \leq \check {\mathcal T}_{t,x,v}/2 $ to obtain the second term in the minimum of the right-hand side in \eqref{est:Y.on.A}. Moreover, regarding the estimate \eqref{est:nabla.Y.on.A}, we have split the factor $\check {\mathcal T}_{t,x,v} = (t- s)  + (s - \mathcal T_{t,x,v})$ to estimate the term $\check {\mathcal T}_{t,x,v}  |\nabla_x \tilde Y_{s,t}(x,v)|$.

\item

The straightening is more subtle in the regions where $\check \tau_{t,x} > 0$ and  $s < \mathcal T_{t,x,v} - 1$. Indeed, since the error $\nabla_v \tilde Y$ (due to the term $\check {\mathcal T}_{t,x,v} |\nabla_x \tilde Y|$)   grows linearly in $(\mathcal T_{t,x,v}-s) \check {\mathcal T}_{t,x,v}$, the straightening only works well if this factor is balanced by a sufficiently large impact parameter~$\check x_{t,x,v}^\perp$.

This is the case if the time $\check {\mathcal T}_{t,x,v}$ (or equivalently $\check \tau_{t,x}$) is small compared to $|x^\perp|$ such that the impact parameter $\check x_{t,x,v}^\perp$ is still comparable to $x^\perp$.
We therefore introduce
    \begin{align}\label{def:K}
        K_{s,t,x}:= \{ v : s <  {\mathcal T}_{t,x,v} - 1, |v| < \delta^{-\beta},  \check \tau_{t,x} \langle  v^\perp  \rangle < \langle x^\perp\rangle /4 \},
    \end{align}
for some $\beta >0$.
    
Then, using \eqref{eq:tau.T.comparable}, we observe that on $K_{s,t,x}$ 
\begin{align} \label{est:check.x.K'}
     \langle \check x_{t,x,v}^\perp \rangle  \geq \frac12  \langle x^\perp\rangle.
\end{align}
Hence, Corollary \ref{co:char.tilde} implies (recalling $\Vmin^{-1} \leq \delta$) on $K_{s,t,x}$ 
\begin{equation}
\begin{aligned} \label{est:Y.on.E'}
    |\tilde Y_{s,t}(x,v)| 
    &\lesssim \frac{\delta (t-s)}{1+\frac{\langle  x^\perp\rangle^2 }{\langle v^\perp\rangle^2}}, \\
    |\nabla_v \tilde Y_{s,t}(x,v)| &\lesssim \log(2 + t) \frac{\delta (t-s)}{1+\frac{\langle  x^\perp\rangle }{\langle v^\perp\rangle}} .
\end{aligned}
\end{equation}

\item

If the time $\check {\mathcal T}_{t,x,v}$ is not small compared to $|x^\perp|$, there are still a lot of trajectories with large $\check x_{t,x,v}^\perp$.
To characterize these, we introduce
\begin{align} \label{def:v_ast}
    v_*^\perp(t,x,v_1) = \frac{x^\perp}{\check {\mathcal  T}_{t,x_1,v_1}}.
\end{align}
Then, if $v$ is far from $v^\ast(t,x,v_1)$, $\check x_{t,x,v}^\perp$ will be large.
More precisely, by \eqref{eq:tau.T.comparable}
\begin{align} \label{est:check.x.v_ast}
    |\check x^\perp | = |x^\perp - \check {\mathcal T}_{t,x_1,v_1} v^\perp| = \check {\mathcal T}_{t,x_1,v_1}|v_*^\perp(t,x,v_1) - v^\perp | \geq \frac 1 2  \check \tau_{t,x}|v_*^\perp(t,x,v_1) - v^\perp |.
\end{align}
Therefore, we define for $s< \mathcal T_{t,x_1,v_1} - 1$
\begin{align} \label{def:F}
   F_{s,t,x}:= \{v \in \R^3 :s <  {\mathcal T}_{t,x_1,v_1} - 1,  |v| < \delta^{-\beta}, |v_*^\perp(t,x,v_1) - v^\perp | \check \tau_{t,x} > \sqrt{\mathcal T_{t,x_1,v_1} - s}\}. 
\end{align}
Let us assume that $n$ from the bootstrap assumption \eqref{eq:B2} satisfies $n \geq \frac 1 \beta$, and thus $\log (2 + t) \leq \delta^{-\beta}$.
Then, under the assumptions of Corollary \ref{co:char.tilde}, we find on $F_{s,t,x}$,
\begin{align}\label{est:Y.on.F}
  \begin{aligned} |\tilde Y_{s,t}(x,v)| 
  &\lesssim   \frac{\delta^{1-3\beta}}{\langle \check x_{t,x,v}^\perp \rangle} \left(1 +  \frac{\mathcal T_{t,x_1,v_1} - s}{\langle \check x_{t,x,v}^\perp \rangle} \right) \\
   &\lesssim  \frac{\delta^{1-3\beta}}{\langle \check \tau_{t,x}|v_*^\perp(t,x,v_1) - v^\perp |\rangle} \left(1 +  \frac{\mathcal T_{t,x_1,v_1} - s}{\langle\check \tau_{t,x}|v_*^\perp(t,x,v_1) - v^\perp | \rangle} \right) \\
   & \lesssim \frac{\delta^{1-3\beta}}{\langle \sqrt{\mathcal T_{t,x_1,v_1} - s} \rangle} \left(1 +  \frac{\mathcal T_{t,x_1,v_1} - s}{\langle \sqrt{\mathcal T_{t,x_1,v_1} - s} \rangle} \right) \leq \delta^{1-3\beta} ,
 \\
    |\nabla_v \tilde Y_{s,t}(x,v)|
    & \lesssim \delta^{1-3\beta} \left(1 + \frac{\check{\mathcal T}_{t,x_1,v_1}}{\langle \check \tau_{t,x}|v_*^\perp(t,x,v_1) - v^\perp | \rangle} \right) \left(1 +  \frac{\mathcal T_{t,x_1,v_1} - s}{\langle\check \tau_{t,x}|v_*^\perp(t,x,v_1) - v^\perp | \rangle} \right)  \\
    &\lesssim \delta^{1-3\beta} \left(1 + \frac{\check{\mathcal T}_{t,x_1,v_1}}{\langle \sqrt{\mathcal T_{t,x_1,v_1} - s} \rangle} 
    \right) \frac{\mathcal T_{t,x_1,v_1} - s}{{\langle \sqrt{\mathcal T_{t,x_1,v_1} - s} \rangle} }.
    \end{aligned}
\end{align}
We emphasize the right-hand sides above are bounded by $\delta^{1-3\beta}(t-s)$ since $t-s \geq 1$.

Notice that the  derivative of the average velocity deviation $\frac{\nabla_v \tilde{Y}_{s,t}}{t-s}$ satisfies stronger estimates than the derivative of the deviation $\nabla_v \tilde{W}_{s,t}$. Intuitively, a deviation only significantly affects the average velocity if $\mathcal{T}_{t,x,v}-s$ becomes large. This gain of decay will be crucial to our argument.

\end{enumerate}

\begin{lemma} \label{lem:psi}
    Let $x\in \R^3$ be arbitrary and $0\leq s\leq t$. 
    Suppose there are open sets $\Omega'' \subset \Omega'\subset\Omega\subset \R^3 $ such that
    \begin{enumerate}[(i)]
        \item 
        for some $0<\eta< \frac12$, we have the estimate
        \begin{align}
   \sup_{v\in \Omega} \frac {|\tilde Y_{s,t}(x,v)| } {t-s} &<\eta , \label{eq:OmegaEstimates} \quad    &\sup_{v\in \Omega} \frac {| \nabla_v \tilde Y_{s,t}(x,v)| } {t-s} &\leq \frac 1 2 ,
  \end{align}
        \item and the following inclusions hold
    \begin{align} \label{eq:Omega.inclusions}
         \{v \in \Reals^3 : \dist(v,\Omega'') \leq \eta\} \subset \Omega'  , &&
    \{v \in \Reals^3 : \dist(v,\Omega') \leq  \eta\} \subset \Omega .
   \end{align} 
    \end{enumerate}
    Then there exists an open set $\Omega'' \subset \Omega^*  \subset \Omega$, and a diffeomorphism $\Psi_{s,t}(x,\cdot) \colon \Omega^* \to \Omega'$ such that for all $v \in \Omega^\ast$
    \begin{align} \label{Identity:Psi}
		X_{s,t}(x,\Psi_{s,t}(x,v)) = x - (t-s)v.
	\end{align}
	Moreover, $\Psi$ satisfies the estimates
	\begin{align}
     | \Psi_{s,t}(x,v)  - v| &\leq 2 \frac{|\tilde Y_{s,t}(x,v)|}{t-s} , \label{est:Psi-v}\\
     | \nabla_v \Psi_{s,t}(x,v)  - \Id| &\leq\sup_{w \in \Omega': |w - v| \leq  2 \frac{|\tilde Y_{s,t}(x,v)|}{t-s}} \frac{|\nabla_v \tilde Y_{s,t}(x,w)|}{t-s}.
\end{align}
\end{lemma}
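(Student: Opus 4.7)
I would recast \eqref{Identity:Psi} as an inverse-function problem. Setting
\begin{equation*}
\Phi(w) := w - \frac{\tilde Y_{s,t}(x,w)}{t-s},
\end{equation*}
Definition~\ref{def:Y.W.tilde} gives $X_{s,t}(x,w) = x - (t-s)\,\Phi(w)$, so \eqref{Identity:Psi} is equivalent to $\Phi(\Psi_{s,t}(x,v)) = v$. Hence $\Psi_{s,t}(x,\cdot)$ must be a branch of $\Phi^{-1}$. The second estimate in \eqref{eq:OmegaEstimates} gives $\nabla_w \Phi(w) = \Id - \nabla_v\tilde Y_{s,t}(x,w)/(t-s)$ with operator-norm distance at most $1/2$ from the identity on $\Omega$. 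In particular, $\nabla_w\Phi$ is invertible everywhere on $\Omega$, and the inverse function theorem makes $\Phi$ a local $C^1$-diffeomorphism at every point of $\Omega$.

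To produce the preimage of an arbitrary $v\in\Omega''$, I would apply the Banach contraction principle on the closed ball $\bar B_\eta(v)$. By the first inclusion in \eqref{eq:Omega.inclusions}, $\bar B_\eta(v)\subset\Omega'\subset\Omega$, so the map
\begin{equation*}
T_v(w) := v + \frac{\tilde Y_{s,t}(x,w)}{t-s}
\end{equation*}
is well defined on $\bar B_\eta(v)$. By the first bound in \eqref{eq:OmegaEstimates}, $|T_v(w)-v|<\eta$, so $T_v(\bar B_\eta(v))\subset \bar B_\eta(v)$; by the second bound, $T_v$ is a $\tfrac12$-Lipschitz contraction. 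Banach's theorem then produces a unique fixed point $w(v)\in \bar B_\eta(v)$, i.e.\ a unique $w(v)\in \bar B_\eta(v)$ with $\Phi(w(v)) = v$. Combining this with the local-diffeomorphism property of $\Phi$, I define $\Omega^\ast$ as the union over $v\in\Omega''$ of the open neighborhoods of $v$ on which $\Phi$ admits a local inverse taking values in $\Omega'$, and set $\Psi_{s,t}(x,\cdot) := \Phi^{-1}$ on $\Omega^\ast$. This yields an open set $\Omega''\subset\Omega^\ast\subset\Omega$ and a $C^1$-diffeomorphism $\Psi_{s,t}(x,\cdot)\colon\Omega^\ast\to\Omega'$ satisfying \eqref{Identity:Psi}.

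For \eqref{est:Psi-v}, the identity $\Phi(\Psi(v))=v$ gives $|\Psi(v)-v|=|\tilde Y_{s,t}(x,\Psi(v))|/(t-s)$. A short bootstrap using the mean-value estimate $|\tilde Y_{s,t}(x,\Psi(v))-\tilde Y_{s,t}(x,v)|\leq \tfrac{t-s}{2}|\Psi(v)-v|$, which is valid on $\Omega$ by the second part of \eqref{eq:OmegaEstimates}, yields $|\tilde Y_{s,t}(x,\Psi(v))|\leq 2|\tilde Y_{s,t}(x,v)|$, proving the first estimate. For the derivative, differentiating $\Phi(\Psi(v))=v$ gives $\nabla_v\Psi(v) = (\Id - A)^{-1}$ with $A := \nabla_v\tilde Y_{s,t}(x,\Psi(v))/(t-s)$, and expanding the Neumann series (justified by $|A|\leq 1/2$) gives $\nabla_v\Psi(v)-\Id = A(\Id-A)^{-1}$, whose operator norm is controlled by $|A|$ up to a harmless factor; since $\Psi(v)\in\Omega'$ lies in the ball of radius $2|\tilde Y_{s,t}(x,v)|/(t-s)$ around $v$ by the first estimate, the stated bound follows. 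The main delicacy is purely bookkeeping around the nested sets $\Omega''\subset\Omega'\subset\Omega$: the $\eta$-buffers in \eqref{eq:Omega.inclusions} are exactly what is needed so that the fixed-point iteration remains in a set where the contraction estimates hold.
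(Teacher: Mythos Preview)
Your approach is essentially the same as the paper's: the paper defines the same map (called $\zeta$ rather than $\Phi$), uses the same Banach fixed-point argument on $\bar B_\eta(v)$ to produce preimages, and derives both estimates by the same bootstrap and Neumann-series manipulations you describe.

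The one place where your exposition is genuinely looser than the paper's is the definition of $\Omega^*$. Your ``union of local-inverse neighborhoods over $v\in\Omega''$'' is vague on two counts: you have not argued that the local inverses agree on overlaps (this needs global injectivity of $\Phi$ on $\Omega'$, which you have the Lipschitz bound for but never invoke), and with your construction $\Psi(\Omega^*)$ is only a subset of $\Omega'$, whereas the statement asks for a diffeomorphism \emph{onto} $\Omega'$. The paper sidesteps both issues by defining $\Omega^* := \Phi(\Omega')$ directly. Since $\Phi|_{\Omega'}$ is injective (the second bound in \eqref{eq:OmegaEstimates} makes $\Id - \Phi$ a $\tfrac12$-contraction), $\Psi := (\Phi|_{\Omega'})^{-1}\colon \Omega^*\to\Omega'$ is a bijection and hence a diffeomorphism by the inverse function theorem. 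The inclusion $\Omega^*\subset\Omega$ follows from $|\Phi(w)-w|<\eta$ for $w\in\Omega'$ together with the second $\eta$-buffer in \eqref{eq:Omega.inclusions}, and $\Omega''\subset\Omega^*$ is exactly your contraction argument. This is a one-line fix, but it is needed to match the statement.
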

\begin{proof}
Let $\zeta_{s,t,x}(v)$ be the mapping defined by
\begin{align}
    \zeta_{s,t,x}(v) := v - \frac{\tilde Y_{s,t}(x,v)}{t-s}.
\end{align}
With this definition, $X_{s,t}(x,v)$ can be rewritten as
\begin{align}
    X_{s,t}(x,v) = x - (t-s) \zeta_{s,t,x}(v).
\end{align}

Due to the second inequality in \eqref{eq:OmegaEstimates}, $\zeta_{s,t,x}(v)$ is injective on $\Omega'$. Therefore, the function has an inverse $\psi_{s,t}(x,\cdot)$ on the set  $\Omega^* = \zeta(\Omega')$ which satisfies $\Omega^\ast \subset \Omega$ due to the first inequality in \eqref{eq:OmegaEstimates}. Moreover,  for any $w\in \Omega''$ the mapping 
\begin{align*}
    \Gamma: \ol{B}_\eta(w)  &\longrightarrow \ol{B}_\eta(w) \\
            v           &\mapsto w + \frac{\tilde Y_{s,t}(x,v)}{t-s}
\end{align*}
is a contraction and thus there exists $v \in \ol{B}_\eta(v_\ast) \subset \Omega'$ such that $\zeta_{s,t,x}(v) = w$. Therefore  $\Omega'' \subset \Omega^* \subset \Omega$.

By \eqref{eq:OmegaEstimates}, the inverse mapping $\Psi_{s,t}$ satisfies the estimate
\begin{align}
    | \Psi_{s,t}(x,v)  - v| =  | \Psi_{s,t}(x,v)  - \zeta( \Psi_{s,t}(x,v))| \leq \frac{ |\tilde Y_{s,t}(x,\Psi_{s,t}(x,v))|}{t-s} \leq \frac{|\tilde Y_{s,t}(x,v)|}{t-s} + \frac 1 2 | \Psi_{s,t}(x,v)  - v|,
\end{align}
which yields \eqref{est:Psi-v}.
Similarly, we can estimate its derivative in $v$ by 
\begin{align}
     | \nabla_v \Psi_{s,t}(x,v)  - \Id| \lesssim \frac{|\nabla_v \tilde Y_{s,t}(x,\Psi_{s,t}(x,v) )|}{t-s} \leq \sup_{w \in \Omega': |w - v| \leq  2\frac{|\tilde Y_{s,t}(x,v)|}{t-s}} \frac{|\nabla_v \tilde Y_{s,t}(x,w)|}{t-s},
\end{align}
which finishes the proof.
\end{proof}

\begin{corollary} \label{cor:psi}
For all $\beta > 0$,
 under the bootstrap assumptions \eqref{eq:B1}--\eqref{eq:B3} with $\delta_0,n > 0$ sufficiently small respectively large, the following holds true for all $0 \leq s \leq t$ and $x \in \R^3$
\begin{enumerate}
    \item \label{it:G}
{\bf if $\check d_{t,x} > 0$, } then there exists an open set $B_{\Vmin/4} \subset \mathcal G_{s,t,x} \subset B_{\Vmin/2}$ and a diffeomorphism $\Psi_{s,t}(x,\cdot) \colon \mathcal G_{s,t,x} \to B_{\Vmin/3}$
such that
	\begin{align} \label{eq:Psi}
		X_{s,t}(x,\Psi_{s,t}(x,v)) = x - (t-s)v.
	\end{align}
Moreover, $\Psi$ satisfies the estimates
	\begin{align} \label{est:Psi}
		 |\Psi_{s,t}(x,v) - v|  + |\nabla_v \Psi_{s,t}(x,v) - \mathrm{Id}| \lesssim  \frac {\delta} { (1+\check d_{t,x}+ |x^\perp|)}.
	\end{align}
\item
{ \bf if $\check d_{t,x} = 0$: } and
\begin{enumerate}
    \item $s>\mathcal{T}_{t,x_1,v_1}-5$ 	\label{it:A}
\begin{align}
A_{s,t,x} := \{ v \in B_{\Vmin/2}: s> \mathcal T_{t,x,v} -5 \}, \\
A'_{s,t,x} := \{ v \in B_{\Vmin/3}: s> \mathcal T_{t,x,v} -4 \} ,\\
A''_{s,t,x} := \{ v \in B_{\Vmin/4}: s> \mathcal T_{t,x,v} -3 \}.
\end{align}
 Then, if $\tau_x \leq t$, there exists an open set $A''_{s,t,x} \subset \mathcal A_{s,t,x} \subset A_{s,t,x}$
and a diffeomorphism $\Psi_{s,t}(x,\cdot) \colon \mathcal A_{s,t,x} \to A'_{s,t,x}$
such that \eqref{eq:Psi} holds.
Moreover, $\Psi$ satisfies the estimate
	\begin{align} \label{est:PsiA}
		 |\Psi_{s,t}(x,v) - v| + |\nabla_v \Psi_{s,t}(x,v) - \mathrm{Id}|\lesssim \frac{\delta^{1-\beta}}{\langle s - \mathcal T_{t,x,v} \rangle + \frac{\langle \check x_{t,x,v}^\perp \rangle}{\langle v^\perp \rangle}}.
	\end{align}
 \item $s<\mathcal{T}_{t,x_1,v_1}-2$
 \begin{enumerate}[(i)]
     \item 
 \label{it:K}
 Next, 
let $K_{s,t,x}$ be as in \eqref{def:K},
\begin{align}
    K_{s,t,x}' &:= \{ v : s <  {\mathcal T}_{t,x_1,v_1} - 2, |v| < \frac{\delta^{-\beta}} 2, \check \tau_{t,x} \langle v^\perp  \rangle < \langle x^\perp \rangle /5 \}, \\
     K''_{s,t,x}&:= \{ v : s <  {\mathcal T}_{t,x_1,v_1} - 3, |v| <\frac{\delta^{-\beta}} 3, \check \tau_{t,x}  \langle v^\perp  \rangle < \langle x^\perp\rangle /6 \}.
\end{align}
 Then, if $\tau_x \leq t$ and $\langle x^\perp\rangle \delta^\beta \geq \check \tau_{t,x}$, there exists an open set $K''_{s,t,x} \subset \mathcal K_{s,t,x} \subset K_{s,t,x}$ and a diffeomorphism $\Psi_{s,t}(x,\cdot) \colon \mathcal K_{s,t,x} \to K'_{s,t,x}$
such that \eqref{eq:Psi} holds.
Moreover, $\Psi$ satisfies the estimate
	\begin{align} \label{est:PsiK}
	|\Psi_{s,t}(x,v) - v|  +   |\nabla_v \Psi_{s,t}(x,v) - \mathrm{Id}| \lesssim \frac{\delta^{1-2\beta}}{\langle x^\perp\rangle}.
	\end{align}
\item \label{it:F}
Similarly, let $F_{s,t,x}$ be as defined in \eqref{def:F} and recall the definition of $v^\perp_*=v_*^\perp(t,x,v_1)$ (cf. \eqref{def:v_ast}) and define
\begin{align}
   F'_{s,t,x}:= \{v \in   \R^3 : s <  {\mathcal T}_{t,x_1,v_1} - 2, |v| < \delta^{-\beta}/2,  
   |v^\perp - v^\perp_\ast| \check \tau_{t,x} > 2 \sqrt{\mathcal T_{t,x_1,v_1} - s}\}, \\
      F''_{s,t,x}:= \{v \in  \R^3 : s <  {\mathcal T}_{t,x_1,v_1} - 3, |v| < \delta^{-\beta}/3, |v^\perp - v^\perp_\ast| \check \tau_{t,x} > 3\sqrt{\mathcal T_{t,x_1,v_1} - s}\}.
\end{align}
Then, if $\langle x^\perp\rangle \leq \check \tau_{t,x} \delta^{-\beta}$ there exists an open set $F''_{s,t,x} \subset \mathcal F_{s,t,x} \subset F_{s,t,x}$ and a diffeomorphism $\Psi_{s,t}(x,\cdot) \colon \mathcal F_{s,t,x} \to F'_{s,t,x}$ which satisfies \eqref{eq:Psi} and 	\begin{align} \label{est:PsiF}
     |\Psi_{s,t}(x,v) - v| + |\nabla_v \Psi_{s,t}- \mathrm{Id}| &\lesssim  \frac{ \delta^{1-3\beta}}{t-s} \left( 1  + \frac {t-s} {\langle  \check {\tau}_{t,x}| v^\perp - v^\perp_\ast| \rangle}  +  \frac{\check \tau_{t,x} (\mathcal T_{t,x_1,v_1} -s)}{\langle  \check {\tau}_{t,x}| v^\perp - v^\perp_\ast| \rangle^2} \right) .
	\end{align}
	\end{enumerate}
	\end{enumerate}
\end{enumerate}

\end{corollary}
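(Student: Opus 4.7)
The plan is to apply the abstract straightening Lemma \ref{lem:psi} separately in each of the four geometric regions listed, using the estimates on $\tilde Y_{s,t}$ and $\nabla_v \tilde Y_{s,t}$ already collected just before the corollary statement (namely \eqref{est:Y.on.d>0}, \eqref{est:Y.on.A}--\eqref{est:nabla.Y.on.A}, \eqref{est:Y.on.E'}, and \eqref{est:Y.on.F}). The role of the triple $\Omega'' \subset \Omega' \subset \Omega$ in Lemma \ref{lem:psi} will be played in each case by the corresponding triple of balls/sets with shrunken/enlarged thresholds (e.g.\ $A''_{s,t,x} \subset A'_{s,t,x} \subset A_{s,t,x}$). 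The output images $\mathcal G_{s,t,x}, \mathcal A_{s,t,x}, \mathcal K_{s,t,x}, \mathcal F_{s,t,x}$ are then $\Psi_{s,t}(x,\cdot)^{-1}$ of the middle set, and the final estimates \eqref{est:Psi}--\eqref{est:PsiF} follow from \eqref{est:Psi-v} of the lemma together with the a priori bounds on $\tilde Y$.

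\textbf{Case \eqref{it:G}} ($\check d_{t,x}>0$): Take $\Omega=B_{\Vmin/2}$, $\Omega'=B_{\Vmin/3}$, $\Omega''=B_{\Vmin/4}$. By \eqref{est:Y.on.d>0}, both $|\tilde Y_{s,t}|/(t-s)$ and $|\nabla_v \tilde Y_{s,t}|/(t-s)$ are bounded by $C\delta/(1+\check d_{t,x}+|x^\perp|)$, which is $\le 1/2$ for $\delta_0$ small. The inclusions \eqref{eq:Omega.inclusions} hold with $\eta\sim \Vmin/12$, which dominates the size of $|\tilde Y|/(t-s)$. Lemma \ref{lem:psi} then yields $\Psi$, and \eqref{est:Psi-v} gives \eqref{est:Psi}.

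\textbf{Case \eqref{it:A}} ($s>\mathcal T_{t,x,v}-5$): Here the natural $\Omega,\Omega',\Omega''$ are $A_{s,t,x},A'_{s,t,x},A''_{s,t,x}$. I need two small checks. First, the inclusion $\{v:\mathrm{dist}(v,A''_{s,t,x})\le\eta\}\subset A'_{s,t,x}$ (and similarly for the outer pair) follows because shifting $v$ by $\eta$ changes $\mathcal T_{t,x,v}$ by a controlled amount (use \eqref{est:nabla_v.T}), and so the thresholds $\mathcal T-5,\mathcal T-4,\mathcal T-3$ buffer any such shift as long as $\eta\le C\Vmin^{-1}\cdot\check{\mathcal T}^{-1}$ is small. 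Second, the bound \eqref{est:Y.on.A}--\eqref{est:nabla.Y.on.A} (with the log absorbed into a factor $\delta^{-\beta}$ for $n$ large enough) gives $|\tilde Y|/(t-s)+|\nabla_v \tilde Y|/(t-s)\lesssim \delta^{1-\beta}/(\langle s-\mathcal T_{t,x,v}\rangle+\langle \check x_{t,x,v}^\perp\rangle/\langle v^\perp\rangle)$, which implies \eqref{est:PsiA} via \eqref{est:Psi-v}.

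\textbf{Case \eqref{it:K}} ($v\in K_{s,t,x}$): Take the triple $(K_{s,t,x},K'_{s,t,x},K''_{s,t,x})$. The three defining inequalities ($s<\mathcal T-1$ vs $-2$ vs $-3$; $|v|<\delta^{-\beta}$ vs $/2$ vs $/3$; $\check\tau_{t,x}\langle v^\perp\rangle<\langle x^\perp\rangle/4$ vs $/5$ vs $/6$) are each buffered by the corresponding enlargement/shrinkage, so \eqref{eq:Omega.inclusions} holds for a small $\eta$ depending on $\delta^{\beta}$. The assumption $\langle x^\perp\rangle\delta^\beta\ge\check\tau_{t,x}$ ensures $\langle x^\perp\rangle/\langle v^\perp\rangle\gtrsim\check\tau_{t,x}\delta^\beta\cdot\delta^\beta$ on $K$, so that \eqref{est:Y.on.E'} (with the $\log$ absorbed as before) gives \eqref{est:PsiK}. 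A further consistency check is needed: on $K''_{s,t,x}$, $\check{\mathcal T}_{t,x,v}\ge 3$, so $t-s\ge 1$ is automatic and dividing by $(t-s)$ is fine.

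\textbf{Case \eqref{it:F}} ($v\in F_{s,t,x}$): This is the main obstacle, because the geometry is more subtle and the relevant velocity scale $|v^\perp-v_*^\perp|$ depends nontrivially on $v_1$. I would take $(F_{s,t,x},F'_{s,t,x},F''_{s,t,x})$ as the triple. The crucial inclusion $\{v:\mathrm{dist}(v,F''_{s,t,x})\le\eta\}\subset F'_{s,t,x}$ requires a careful choice of $\eta$: a perturbation of $v$ by $\eta$ perturbs $v_*^\perp(t,x,v_1)$ by $\lesssim \eta\,|x^\perp|/\check{\mathcal T}^2$, but on $F$ we have $|v^\perp-v_*^\perp|\check\tau_{t,x}>\sqrt{\mathcal T-s}$, so the thresholds $2\sqrt{\mathcal T-s}$ and $3\sqrt{\mathcal T-s}$ in $F',F''$ absorb this provided $\eta\le c\,\delta^{1-3\beta}$ (matching the size of $|\tilde Y|/(t-s)$ from \eqref{est:Y.on.F}). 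Using the condition $\langle x^\perp\rangle\le\check\tau_{t,x}\delta^{-\beta}$ to keep the $v_*^\perp$-perturbation under control, Lemma \ref{lem:psi} applies, and \eqref{est:Psi-v} together with the explicit form of $|\nabla_v \tilde Y_{s,t}|/(t-s)$ from \eqref{est:Y.on.F} (after splitting $\check{\mathcal T}_{t,x,v_1}=\mathcal T-s$ where useful) yields \eqref{est:PsiF}. The cleanest way to get the three-term structure of \eqref{est:PsiF} is to keep track separately of the contributions $1$, $\check{\mathcal T}/\langle\check\tau_{t,x}|v^\perp-v_*^\perp|\rangle$, and $\check{\mathcal T}(\mathcal T-s)/\langle\check\tau_{t,x}|v^\perp-v_*^\perp|\rangle^2$ arising from Corollary \ref{co:char.tilde}, and to use $t-s\ge\check{\mathcal T}-$ constant on $F''$ to convert the first summand into the form $1/(t-s)$ appearing in \eqref{est:PsiF}.
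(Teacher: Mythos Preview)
Your strategy is identical to the paper's: apply Lemma~\ref{lem:psi} in each of the four cases with the indicated triple of sets, taking $\eta$ to be (a constant times) the $|\tilde Y|/(t-s)$--bound furnished by \eqref{est:Y.on.d>0}, \eqref{est:Y.on.A}, \eqref{est:Y.on.E'}, \eqref{est:Y.on.F} respectively, and then verifying the inclusions \eqref{eq:Omega.inclusions} with the help of \eqref{est:nabla_v.T}. The paper's explicit choices are $\eta = C\delta$, $\eta = C\delta/\langle\check\tau_{t,x}\rangle$, $\eta = C\delta^{1-\beta}/\langle x^\perp\rangle$, and $\eta = C\delta^{1-3\beta}/(t-s)$.

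A few small slips to correct. In case~\eqref{it:G}, $\eta$ in Lemma~\ref{lem:psi} is by definition an upper bound for $|\tilde Y|/(t-s)$ with $\eta<1/2$, so you must take $\eta = C\delta$, not the gap $\Vmin/12$; the inclusions then hold because $C\delta \ll \Vmin/12$. In case~\eqref{it:A}, the threshold check from \eqref{est:nabla_v.T} reads $|\mathcal T_{t,x,v}-\mathcal T_{t,x,v'}|\lesssim \eta\check{\mathcal T}/\Vmin$, so the needed constraint is $\eta \lesssim \Vmin/\check{\mathcal T}$, not $\eta\le C\Vmin^{-1}\check{\mathcal T}^{-1}$; with $\eta = C\delta/\langle\check\tau_{t,x}\rangle$ this gives $|\Delta\mathcal T|\lesssim \delta/\Vmin\ll 1$. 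In case~\eqref{it:K}, the reason $t-s>1$ is that $\mathcal T_{t,x,v}-s>3$ together with $\mathcal T_{t,x,v}\le t$, not that $\check{\mathcal T}_{t,x,v}\ge 3$. In case~\eqref{it:F}, the perturbation of $v_*^\perp$ under $|v-v'|\le\eta$ is $\lesssim \eta|x^\perp|/(\check{\mathcal T}\Vmin)$ (via \eqref{est:nabla_v.T}), so the perturbation of $\check\tau_{t,x}|v^\perp-v_*^\perp|$ is $\lesssim \eta\check\tau_{t,x} + \eta|x^\perp|/\Vmin \lesssim \delta^{1-3\beta}+\delta^{1-4\beta}/\Vmin \ll 1$, using $\langle x^\perp\rangle\le\check\tau_{t,x}\delta^{-\beta}$ and $\eta = C\delta^{1-3\beta}/(t-s)\le C\delta^{1-3\beta}/\check\tau_{t,x}$.
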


\begin{proof}
We choose $n \geq \beta^{-1}$ and assume also that $\delta_0$ and $n$ are chosen sufficiently small respectively large such that we can apply Proposition \ref{pro:char.front} and Corollary \ref{cor:char.inequalities} and such that in particular the estimates \eqref{est:Y.on.d>0}--\eqref{est:Y.on.F} hold.
We then apply Lemma \ref{lem:psi} as follows.

\emph{Proof of \ref{it:G}:} 
We apply Lemma \ref{lem:psi} first in the case $\check d_{t,x} > 0$ to $\Omega =B_{\Vmin/2}(0), \Omega' =B_{\Vmin/3}(0)$, $\Omega'' =B_{\Vmin/4}(0)$. By \eqref{est:Y.on.d>0}, there is $C>0$ such that we may choose $\eta=C \delta$ to satisfy the assumptions of Lemma \ref{lem:psi} and the first assertion follows.

\emph{Proof of \ref{it:A}:} 
Next, we apply Lemma \ref{lem:psi} to $\Omega = A_{s,t,x}$, $\Omega' = A'_{s,t,x}$, $\Omega'' = A''_{s,t,x}$. By \eqref{est:Y.on.A}, we may choose $\eta= C \frac{\delta}{\langle \check \tau_{t,x}\rangle}$ to satisfy \eqref{eq:OmegaEstimates}.

Using~\eqref{est:nabla_v.T} and~\eqref{eq:tau.T.comparable}, we have for $v,v' \in \R^3$ with $|v' - v| \leq \eta$ 
\begin{align} \label{est:difference.collision.time}
     |\check{\mathcal{T}}_{t,x,v} - \check {\mathcal{T}}_{t,x,v'}| \lesssim \frac{\eta \check \tau_{t,x}}{\Vmin} \lesssim \frac{\delta}{\Vmin},
\end{align}
which guarantees that \eqref{eq:Omega.inclusions} is satisfied.
Combining Lemma \ref{lem:psi} with \eqref{est:Y.on.A}--\eqref{est:nabla.Y.on.A} yields the second assertion.

\emph{Proof of \ref{it:K}:} 
We apply Lemma \ref{lem:psi} to $\Omega = K_{s,t,x}$, $\Omega' = K'_{s,t,x}$, $\Omega'' = K''_{s,t,x}$, with $\eta=C \frac{\delta^{1-\beta}}{\langle x^\perp \rangle}$ for some $C>0$ sufficiently large. Using~\eqref{est:Y.on.E'} we verify~\eqref{eq:OmegaEstimates}. Since $\langle x^\perp \rangle \delta^\beta\geq \check{\tau}_{t,x}$ by assumption, \eqref{est:difference.collision.time} and therefore~\eqref{eq:Omega.inclusions} are satisfied. The estimate then follows from~\eqref{est:Y.on.E'}.

\emph{Proof of \ref{it:F}:}
Finally, we choose $\Omega = F_{s,t,x}$, $\Omega' = F'_{s,t,x}$, $\Omega'' = F''_{s,t,x}$, and set $\eta =  C \frac{\delta^{1-3\beta}}{t-s} \leq  C \frac{\delta^{1-3\beta}}{\langle \check \tau_{t,x}\rangle }$. 
Using~\eqref{est:Y.on.F} we verify that~\eqref{eq:OmegaEstimates} is satisfied.

For the inclusions~\eqref{eq:Omega.inclusions} we observe that for $v,v' \in B_{\Vmin/2}$ with $|v' - v| \leq \eta$, \eqref{def:v_ast}, \eqref{est:nabla_v.T} and \eqref{eq:tau.T.comparable} yield
\begin{align}
    \check \tau_{t,x} |v_\ast^\perp(t,x,v') - {v'}^\perp| &\geq  \check \tau_{t,x} |v_\ast^\perp(t,x,v) - {v}^\perp| -  \eta \check \tau_{t,x} - \frac{|x^\perp| \eta}{\Vmin} \\
    &\geq \check \tau_{t,x} |v_\ast^\perp(t,x,v) - {v}^\perp| - C \delta^{1-3\beta} -  \frac{C \delta^{1-4\beta}}{\Vmin},
\end{align}
where we used the assumption $\langle x^\perp\rangle \leq \check \tau_{t,x} \delta^{-\beta}$ in the last inequality.

Note that by assumption we have $\sqrt{\mathcal T_{t,x_1,v_1} - s} \geq 1$ in $F_{s,t,x}$. 
Hence, for $\delta$ sufficiently small, the last two terms on the right-hand side are smaller than $1$ and~\eqref{eq:Omega.inclusions} follows. 
Combining the assertion of Lemma \ref{lem:psi} with \eqref{est:Y.on.F} yields the desired estimates.
\end{proof}

\section{Estimate of the direct contribution of the reaction term and the point charge} 
\label{sec:source}

In the subsections below, we estimate the reaction term $\mathcal R$ (cf. \eqref{def:Reaction}), which we rewrite as
\begin{align}
    \Reac(t,x) &= R_L(t,x) - R_{NL}(t,x), \quad  \text{where}  \\
    R_L(t,x)&= \int_0^t \int_{\R^3} E(s,x-(t-s)v) \cdot \nabla_v \mu (v)  \\
		 R_{NL}(t,x)&=  \int_0^t \int_{\R^3} E(s,X_{s,t}(x,v)) \cdot \nabla_v \mu (V_{s,t}(x,v)).
\end{align}
We need to estimate both $\Reac$ and $\nabla \Reac$.
The general strategy is as follows. In regions where the change of variables $v \mapsto \Psi_{s,t}(x,v)$ is well-defined and $\Psi_{s,t}(x,v) \approx v,$ we can use cancellations between the linear and nonlinear reaction terms.  Here we rely on the analysis of $\Psi_{s,t}$ in the previous section.
Otherwise, we do not control well the deviations of the straightened characteristics from  the linear characteristics, and we cannot exploit cancellations between the linear and nonlinear reaction terms, $R_L$ and $R_{NL}$. In that case, the desired estimates will follow from ``smallness'' of these regions and from the decay of $\mu$.

\subsection{Estimates for the reaction term \texorpdfstring{$\Reac$}{R}}

\begin{proposition} \label{pro:reaction}
     For all $\gamma \in (0,1)$ there exists $C>0$ such that the following estimate holds under the bootstrap assumptions \eqref{eq:B1}--\eqref{eq:B3} for all  $\delta_0,n > 0$ sufficiently small
    \begin{align} \label{est:Reaction}
        |\Reac(t,x)| \leq C \frac{\delta^{1 + \gamma}  }{1 + \check\tau^2_{t,x} +\check d_{t,x}^2  + |x^\perp|^2}.
    \end{align}
\end{proposition}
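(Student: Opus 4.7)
The plan is to exploit cancellations between $R_L$ and $R_{NL}$ via the diffeomorphisms $\Psi_{s,t}(x,\cdot)$ from Corollary~\ref{cor:psi}: the defining identity $X_{s,t}(x,\Psi_{s,t}(x,w))=x-(t-s)w$ means that substituting $v=\Psi_{s,t}(x,w)$ in $R_{NL}$ aligns the argument of $E$ with that in $R_L$, reducing the difference to integrals controlled by $|\Psi-\Id|$, $|\nabla_w\Psi-\Id|$, $|V_{s,t}(x,\Psi)-w|$, together with the symmetric-difference width of the integration domains. The natural dichotomy is $\check d_{t,x}>0$ (the front case: a single global diffeomorphism suffices on $\supp\mu$) versus $\check d_{t,x}=0$ (the behind case: requires a partition of $(s,v)$-space into scattering regimes).

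In the front case, \eqref{eq:B3} gives $\supp\mu\subset B_{\Vmin/5}\subset \mathcal G_{s,t,x}$, so Corollary~\ref{cor:psi}\ref{it:G} applies uniformly. After the substitution, $R_L-R_{NL}$ is a pointwise integral bounded by $|E|\cdot\bigl(\|\nabla^2\mu\|_\infty|V_{s,t}(x,\Psi)-w| + \|\nabla\mu\|_\infty|\det\nabla_w\Psi-1|\bigr)$ plus a symmetric-difference correction. Combining \eqref{est:Psi}, Proposition~\ref{pro:char.front}, and Lemma~\ref{lem:E.char}\ref{it:E.front}, and integrating in $v$ (against $|\nabla^k\mu|\lesssim e^{-|v|}$) and then in $s$, yields a bound of order $\delta^2/(1+\check d_{t,x}^2+|x^\perp|^2)$, well inside the claimed estimate.

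The heart of the proof is the behind case ($\check d_{t,x}=0$, $\check\tau_{t,x}>0$). I split $(s,v)$ into (i) the post-collision regime $s>\mathcal T_{t,x,v}-5$, where Corollary~\ref{cor:psi}\ref{it:A} provides $\Psi$ on $\mathcal A_{s,t,x}$ with the estimate \eqref{est:PsiA}, combined with Lemma~\ref{lem:E.char}\ref{it:E.post.collision}; (ii) the pre-collision regime $s<\mathcal T_{t,x,v}-2$, where, depending on the geometry, either Corollary~\ref{cor:psi}\ref{it:K} (if $\langle x^\perp\rangle\delta^\beta\geq\check\tau_{t,x}$) or \ref{it:F} (if $\langle x^\perp\rangle\leq\check\tau_{t,x}\delta^{-\beta}$) applies; and (iii) the thin transition band $\mathcal T_{t,x,v}-5\leq s\leq \mathcal T_{t,x,v}-2$. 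In (i) and (ii) the cancellation estimates, combined with Lemma~\ref{lem:E.char}, integrate to contributions of order $\delta^{2-C\beta}\log(2+t)/(1+\check\tau_{t,x}^2+|x^\perp|^2)$; here the $\check{\mathcal T}/\langle \check\tau_{t,x}|v^\perp-v_*^\perp|\rangle^2$ factor in \eqref{est:PsiF} is absorbed by the decay of $E$ in $\langle\Vmin(\mathcal T-s)\rangle$. In (iii) the $s$-length is $O(1)$ and, via \eqref{est:nabla_v.T}, the $v_1$-slab where $\mathcal T_{t,x,v_1}\in[s+2,s+5]$ has thickness $\lesssim \check\tau_{t,x}/\Vmin$, so bounding $R_L$ and $R_{NL}$ directly gives a contribution of the same order.

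The main obstacle is the complement regions in (ii), where no diffeomorphism is available. For the $\mathcal K$-complement, the condition $\check\tau_{t,x}\langle v^\perp\rangle\geq\langle x^\perp\rangle/4$ under $\langle x^\perp\rangle\delta^\beta\geq\check\tau_{t,x}$ forces $|v^\perp|\gtrsim\delta^{-\beta}$, so the exponential decay of $\nabla\mu$ yields a super-polynomially small error $e^{-c\delta^{-\beta}}$. For the $\mathcal F$-complement, the set $\{v\in\supp\mu : |v^\perp-v_*^\perp|\check\tau_{t,x}\leq \sqrt{\mathcal T_{t,x,v_1}-s}\}$ has $v^\perp$-measure $\lesssim(\mathcal T-s)/\check\tau_{t,x}^2$; bounding $|R_L|,|R_{NL}|$ separately using \eqref{apriori:E} and Lemma~\ref{lem:E.char}\ref{it:E.pre.collision}, this thinness combines with the $E$-decay in $\langle\Vmin(\mathcal T-s)\rangle^3+\langle\check x^\perp\rangle^3$ to produce an integrable contribution of order $\delta^2/(1+\check\tau_{t,x}^2+|x^\perp|^2)$ after the $s$-integral. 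Choosing $\beta=\gamma/(2C)$ absorbs all $\delta^{-C\beta}$ losses and yields the final exponent $\delta^{1+\gamma}$ for any $\gamma\in(0,1)$.
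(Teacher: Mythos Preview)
Your approach is essentially the paper's: the same cancellation mechanism via the diffeomorphisms $\Psi$ from Corollary~\ref{cor:psi} on the good sets $\mathcal G$, $\mathcal A$, $\mathcal K$, $\mathcal F$; the same direct bounds on the complements via thinness (for $\mathcal F^c$) or exponential decay of $\mu$ (for $\mathcal K^c$); and the same geometric dichotomy $\langle x^\perp\rangle\delta^\beta\gtrless\check\tau_{t,x}$ deciding whether $\mathcal K$ or $\mathcal F$ is used in the pre-collision regime.

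One correction: your transition band (iii) is both unnecessary and incorrectly argued. It is unnecessary because the decomposition already closes without it: for each fixed $s$, the set $A''_{s,t,x}=\{v\in B_{\Vmin/4}:s>\mathcal T_{t,x,v}-3\}$ is contained in $\mathcal A_{s,t,x}\cap\Psi_{s,t}(x,\mathcal A_{s,t,x})$, while for $v$ with $s<\mathcal T_{t,x,v}-3$ one has either $v\in F''_{s,t,x}$ (resp.\ $K''_{s,t,x}$), hence $v\in\mathcal F_{s,t,x}\cap\Psi_{s,t}(x,\mathcal F_{s,t,x})$, or $v$ lies in the bad set; no strip is left uncovered. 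It is incorrectly argued because \eqref{est:nabla_v.T} gives $|\nabla_{v_1}\mathcal T_{t,x,v}|\lesssim\check{\mathcal T}_{t,x,v}/\Vmin$, an \emph{upper} bound on the derivative, so the $v_1$-slab $\{v_1:\mathcal T_{t,x,v_1}\in[s+2,s+5]\}$ has width $\gtrsim\Vmin/\check\tau_{t,x}$, the reciprocal of what you claim. Simply drop (iii); the rest of your outline stands.
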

\begin{proof}
   
\emph{Step 1: Structure of the proof:}

It suffices to show that there exists $M>0$ such that for all $\beta >0$ (sufficiently small) 
    \begin{align} 
        |\mathcal R(t,x)| \lesssim \frac{\delta^{2 - M\beta}  }{1 + \check\tau^2_{t,x} + \check d_{t,x}^2  + |x^\perp|^2}.
    \end{align}
We use this peculiar reformulation for the sake of analogy of the parameter $\beta$ with the one from Corollary \ref{cor:psi}. 
We emphasize that throughout the proof (implicit) constants may depend on $\beta$. By choosing $n \geq \beta^{-1}$, we can always absorb logarithmic errors in time due to \eqref{eq:B2} by
\begin{align} \label{est:log.beta}
    \log(2+t) \leq \delta^{-\beta}.
\end{align}

We split the proof into three different cases, depending on which of the terms in $\check \tau^2_{t,x}$, $\check d_{t,x}$,  $|x^\perp|$ is dominant, and whether the point charge has already passed $x$, i.e. whether $x_1 \geq X_1(t)$.

    In each of these cases, we will make use of the estimate
    \begin{align}
        |\mathcal R(t,x)| &\leq \int_{G_k} | E(s,x-(t-s)v)| \left|\nabla_v \mu (v) -  \nabla_v \mu(V_{s,t}(x,\Psi_{s,t}(x,v))) \det(\nabla_v \Psi_{s,t}(x,v)) \right| \dd v \dd s  \\
        &+ \int_{B_k} | E(s,x-(t-s)v) \nabla_v \mu (v)| +   |E(s,X_{s,t}(x,v)) \nabla_v \mu (V_{s,t}(x,v))| \dd v \dd s \\
        &=: \int_{G_k} r_d(s,x,v) \dd v \dd  s + \int_{B_k} r_s(s,x,v) \dd v \dd s,
    \end{align}
    where the choice of $G_k, B_k \subset [0,t] \times \R^3$
    depends on the case $k$ under consideration, $k = 1,2,3$,
such that
 the change of variables $\Psi_{t,s}(x,\cdot)$ from Corollary \ref{cor:psi} is well-defined on $G_k^s := \{v : (s,v) \in G_k\}$ and 
    \begin{align} \label{eq:decomposition}
        B_k^s \cup (G_k^s \cap \Psi(G_k^s)) \supset B_{\Vmin/4}(0),
    \end{align}
      where we also denote $B_k^s:=\{v : (v,s) \in B_k\}$. Note that Corollary \ref{cor:propagation.support} together with the bootstrap assumption \ref{eq:B3} implies $\mu(v) = \mu(V_{s,t}(x,v)) = 0$ for all $v \in B^c_{\Vmin/4}(0)$.

In the following we will only rely on the estimates in Lemma \ref{lem:E.char} with squares in the denominator of all the estimates instead of cubes.
This will prove useful for drawing analogies to the estimate of $\nabla \mathcal R$ later on.

\emph{Step 2: The case $\check d_{t,x} > 0$:}

In this case, we choose $G_1^s = \mathcal G$
from Corollary \ref{cor:psi} and $B_1^s = \emptyset$.
  By Corollary \ref{cor:psi}, we have \eqref{eq:decomposition}.
  Combining \eqref{est:mu(V)} with the estimates from Corollary \ref{cor:psi} and Proposition~\ref{pro:char.front}, we infer on $G_{1}$
    \begin{align}
       &|\nabla_v \mu (v) -  \nabla_v \mu(V_{s,t}(x,\Psi_{s,t}(x,v))) \det(\nabla_v \Psi_{s,t}(x,v))| 
       \\ &\lesssim  \left(|\Psi_{s,t}(x,v) - v| + |\tilde W_{s,t}(x,\Psi_{s,t}(x,v))| + |1-\det(\nabla_v \Psi_{s,t}(x,v))| \right)e^{-|v|} 
        \lesssim \delta \frac{e^{-|v|}}{1+\check d_{t,x}+ |x^\perp|}.
    \end{align}
Using now Lemma \ref{lem:E.char} \eqref{it:E.front} yields

    \begin{align}
        \int_{G_{1}} r_d(s,x,v) \dd s \dd v & \lesssim  \frac{\delta^2}{\langle \check d_{t,x} + |x^\perp| \rangle} \int_{\R^3} \int_0^t    \frac{1}{\langle \check d_{t,x} +  (t - s)V_{\min} + |x^\perp|)^2}  e^{-|v|} \dd s \dd v \\
        &\lesssim \frac{\delta^{3}}{\langle \check d_{t,x} + |x^\perp| \rangle^2},
    \end{align}
    where we used that $\Vmin^{-1} \leq \delta$ by the bootstrap assumption \eqref{eq:B2}.
    For future reference, we point out that, had we used that $E$ actually decays with the third power of $\check d_{t,x} + |x^\perp|$,
    we would have gained one power more in the denominator.
    
    \medskip

\emph{Step 3: The case $\check d_{t,x} = 0$ and $\langle x^\perp\rangle  \delta^{\beta} \leq  \check \tau_{t,x}$}

    Note that the  assumption $\langle x^\perp\rangle  \delta^{\beta} \leq  \check \tau_{t,x} $ implies that it suffices to show that
    \begin{align}
         \int_{G_3} r_d(s,x,v) \dd v \dd  s + \int_{B_3} r_s(s,x,v) \dd v \dd s \lesssim \frac{\delta^{2 - M \beta}}{ \check \tau_{t,x}^2},
    \end{align}
for some $M$  independent of $\beta$ (note that there is no Japanese bracket in the denominator).
We write  $G_{3} = G_{3,1} \cup G_{3,2}$ 
with
\begin{align}
    G_{3,1} := \{(s,v) \in [0,t] \times \R^3 : v \in \mathcal F_{s,t,x} \}, \\
            G_{3,2} := \{(s,v) \in [0,t] \times \R^3 : v \in \mathcal A_{s,t,x} \}, \\
    B_{3}:=\{(s,v) \in [0,t]  \times \R^3 :  v \in B_{\Vmin/4} \setminus F''_{s,t,x}, s < \mathcal T_{t,x_1,v_1} - 3 \},
\end{align}
 with the sets $\mathcal F_{s,t,x}, F''_{s,t,x}, \mathcal A_{s,t,x}$ as defined in Corollary \ref{cor:psi}.
    By Corollary \ref{cor:psi},  we have {$\mathcal A_{s,t,x} \cap \Psi_{s,t}(x,\mathcal A_{t,s,x}) \supset \{v \in  B_{\Vmin/4} : s > \mathcal T_{t,x_1,v_1} - 3 \}$} and $\mathcal F_{s,t,x} \cap \Psi_{s,t}(x,\mathcal F_{t,s,x}) \supset F''_{s,t,x}$.
    In particular, we verify the  condition~\eqref{eq:decomposition}.
    
    \medskip
    
    We first deal with  the estimate on the set $ G_{3,2}$.
    By Corollary \ref{cor:psi} and Corollary \ref{co:char.tilde} we have on $G_{3,2}$
    \begin{align}
       |\nabla_v \mu (v) -  \nabla_v \mu(V_{s,t}(x,\Psi_{s,t}(x,v))) \det(\nabla_v \Psi_{s,t}(x,v))| 
       \lesssim  \frac{\delta^{1-\beta} }{\langle s - \mathcal T_{t,x_1,v_1} \rangle} e^{-|v|}.
    \end{align}

Combination with Lemma \ref{lem:E.char} \eqref{it:E.post.collision} yields
    \begin{align} \label{est:G_32.0}
         \int_{G_{3,2}} r_d(s,x,v) &\lesssim \int_{B_{\frac{\Vmin}{2}}} \int_{[\mathcal T_{t,x_1,v_1} - 3]_+}^t \frac{\delta^{2-\beta}  }{\langle s - \mathcal T_{t,x_1,v_1} \rangle} \frac{e^{-|v|}}{\langle s - \mathcal T_{t,x_1,v_1} \rangle^2 + \langle x^\perp - (t-s) v^\perp \rangle^2} \dd s \dd v. \qquad
   \end{align} 
     For $\check \tau_{t,x} \leq 1$, the desired estimate follows immediately. If $\check \tau_{t,x} \geq 1$, we split the time integral: We observe that by~\eqref{eq:tau.T.comparable}, we have for $|v| \leq \Vmin/2$
   \begin{align}
        s - \mathcal T_{t,x_1,v_1} = \check{\mathcal T}_{t,x_1,v_1} -  (t-s) \geq \frac 1 2 \check{\mathcal T}_{t,x_1,v_1} \geq \frac 1 4 \check \tau_{t,x} \quad \text{for all } s \geq t - \frac{\check{\mathcal T}_{t,x_1,v_1}} 2 =: s^\ast_{t,x,v_1}.
    \end{align}
    Thus, changing variables $w= x^\perp - (t-s) v^\perp$ for $s < s^\ast_{t,x,v_1}$,  and using once more \eqref{eq:tau.T.comparable}, we have
    \begin{equation} \label{est:G_32}
    \begin{aligned}
         \int_{G_{3,2}} r_d(s,x,v)  &  \lesssim \frac{1}{\check \tau_{t,x}^2} \int_{-{\Vmin/2}}^{\Vmin/2} \int_{(\mathcal T_{t,x_1,v_1} - 3) \vee 0}^{s^\ast_{t,x,v_1}\vee 0} \int_{B_{\Vmin/2(t-s)}(x^\perp)} \frac{\delta^{2-\beta}  }{\langle s - \mathcal T_{t,x_1,v_1} \rangle} \frac{e^{-\frac {|w-x^\perp|}{4|t-s|}} e^{-\frac 1 4 |v_1|}}{\langle w \rangle^2} \dd w  \dd s \dd v_1 \\
        &+ \frac{1}{\check \tau_{t,x}^2} \int_{B_{\Vmin/2}} \int_{s^\ast_{t,x,v_1}\vee 0}^t \frac{\delta^{2-\beta}  }{\langle s - \mathcal T_{t,x_1,v_1} \rangle} e^{-|v|} \dd v \dd s \lesssim \frac{\delta^{2-2\beta}}{ \check \tau_{t,x}^2}.
    \end{aligned}
    \end{equation}
    The last inequality follows by separating the region $|w-x^\perp|\geq |t-s|$ and its complement.

    \medskip
    
    We now turn to the estimate on the set $G_{3,1}$. By definition of $\mathcal F_{s,t,x} \subset F_{s,t,x}$ we have the inclusion $G_{3,1} \subset \{(s,v): |v| \leq \Vmin/2,  0< s < \mathcal  T_{t,x_1,v_1} -1\}$. 
    In particular, if $\check \tau_{t,x} \geq 2 t$, we have by \eqref{eq:tau.T.comparable}
    $\mathcal T_{t,x_1,v_1} = t - \check{\mathcal T}_{t,x_1,v_1} \leq 0$ and thus
    $G_{3,1} = 0$. Therefore, it suffices to consider the case $\check \tau_{t,x} \leq 2 t$.
    
    In this case, Corollary~\ref{cor:psi} implies that
    \begin{align}
        G_{3,1} \subset \{(s,v) \in [0,t] \times \R^3 : v \in F_{s,t,x} \}.
    \end{align}
    We introduce
    \begin{align}
        w = \delta^{\beta} \langle \check \tau_{t,x} \rangle ( v^\perp - v^\perp_\ast(t,x,v_1)),
    \end{align}
      where $v_\ast^\perp= v_\ast^\perp(t,x,v_1)$ is defined as in \eqref{def:v_ast}.
    Since $\langle x^\perp\rangle  \delta^{\beta} \leq  \check \tau_{t,x}$ by the assumption in this step, we can estimate  $\delta^{\beta}| v^\perp - v_\ast^\perp| \leq \delta^{\beta}(| v^\perp| + | v_\ast^\perp|) \lesssim 1$ on $G_{3,1}$. Therefore, and by \eqref{est:check.x.v_ast},
    \begin{align}
        |w| \lesssim\langle \check \tau_{t,x} \rangle, && \langle w \rangle \lesssim \langle \check \tau_{t,x} ( v^\perp - v_\ast^\perp) \rangle, && \frac{\langle \check x_{t,x,v}^\perp \rangle}{\langle v^\perp\rangle}   \geq |w|.
    \end{align} 
    Therefore Corollary \ref{cor:psi} and Corollary \ref{co:char.tilde} imply on $G_{3,1}$
    \begin{align}
       & |\nabla_v \mu (v) -  \nabla_v \mu(V_{s,t}(x,\Psi_{s,t}(x,v))) \det(\nabla_v \Psi_{s,t}(x,v))| \\
       & \lesssim  \frac{ \delta^{1-3\beta}}{t-s} \left( 1  + \frac {t-s} {\langle  w \rangle}  +  \frac{\check \tau_{t,x} (\mathcal T_{t,x_1,v_1} - s)}{\langle w \rangle^2} \right)  e^{-|v|}.
    \end{align}
   We note also that by Lemma \ref{lem:E.char} \eqref{it:E.pre.collision}, on $G_{3,1}$
    \begin{align}
        |E(s,x - (t-s) v)| \leq  \frac{\delta }{\langle |w| + (\mathcal T_{t,x_1,v_1} - s)V_{\min}\rangle^2}. 
    \end{align}
    Combining the two preceding estimates, we obtain, using also $\eqref{eq:tau.T.comparable}$ and $\mathcal T_{t,x,v} \leq t$ due to the assumption $\check d_{t,x} = 0$ in this step,
    \begin{align}
        &\int_{G_{3,1}} r_d(s,x,v) \dd s \dd v\\
        & \lesssim  {\delta^{2-3\beta}} \int_{\R^3}  \int_0^{\mathcal T_{t,x_1,v_1} - 1} \frac{\1_{|w| \leq C \langle   \check \tau_{t,x} \rangle}}{t-s} \left( 1  + \frac {t-s} {\langle  w \rangle}  +  \frac{\check \tau_{t,x} (\mathcal T_{t,x_1,v_1} - s)}{\langle w \rangle^2} \right)  \frac{e^{-|v|} }{\langle |w| + (\mathcal T_{t,x_1,v_1} - s)V_{\min}\rangle^2} \dd s \dd v\\
        & \lesssim {\delta^{2-3\beta}} \int_{\R^3}  \int_1^{t} \frac{\1_{|w| \leq C \langle   \check \tau_{t,x} \rangle}}{\check{\mathcal T}_{t,x_1,v_1} + \sigma} \left( 1  + \frac {\check{\mathcal T}_{t,x_1,v_1} + \sigma} {\langle  w \rangle}  +  \frac{\check \tau_{t,x} \sigma}{\langle w \rangle^2} \right)  \frac{e^{-|v|}}{\langle |w| + \sigma V_{\min}\rangle^2} \dd \sigma \dd v\\
        &\lesssim \delta^{2-4\beta}\int_{\R^2} \left(\frac{1}{\Vmin \langle w \rangle \langle   \check \tau_{t,x} \rangle }  + \frac{1}{\langle  w  \rangle^2}\right) \1_{|w| \leq C \langle   \check \tau_{t,x} \rangle}\dd v_{\perp}\\
     &\lesssim \frac{\delta^{2-6\beta}}{ \langle   \check \tau_{t,x} \rangle^2 } \int_{\R^2} \left(\frac{1}{\Vmin \langle w \rangle \langle \check \tau_{t,x} \rangle}  + \frac{1}{\langle  w  \rangle^2}\right)\1_{|w| \leq C \langle   \check \tau_{t,x} \rangle} \dd w   \lesssim \frac{\delta^{2-5\beta}}{\langle   \check \tau_{t,x} \rangle^2},
    \end{align}
    where we used in the last inequality that $\log (2 +\check \tau_{t,x}) \lesssim \log(2+t) \lesssim \delta^{-\beta}$ since we can assume $\check \tau_{t,x} \leq 2 t$ as argued above.
    
\medskip
Finally, we turn to $B_3$. We split $B_3 = B_{3,1} + B_{3,2}$ where
   \begin{align}
       B_{3,1} &= \{(s,v) \in [0,t]  \times \R^3 :  v \in B_{\Vmin/2} \setminus B_{\delta^{-\beta}}, s < \mathcal T_{t,x_1,v_1} - 3 \} ,\\
       B_{3,2} &= \{(s,v) \in [0,t]  \times \R^3 :  v \in B_{\delta^{-\beta}} \setminus F''_{s,t,x}, s < \mathcal T_{t,x_1,v_1} - 3 \}.
   \end{align}
   As above, we observe that \eqref{eq:tau.T.comparable} implies that $B_{3,1} = B_{3,2} = \emptyset$  if $\tau_{x_1} < - t$.
   If $\tau_{x_1} \geq - t$, and thus $\check \tau_{t,x_1} \leq 2 t$, the desired estimate on $B_{3,1}$ is trivial, since  the exponential decay of $\mu$ together with the choice of $n,\delta_0$ in \eqref{eq:B2} gives  $e^{-\delta^{-\beta}} \leq C \frac{\delta^{2}} {\langle \check \tau \rangle^2}$ for $n$ sufficiently large and $\delta_0$ sufficiently small.

   On $B_{3,2} $ we estimate using the definition of $F''_{s,t,x}$, Lemma \ref{lem:E.char} \eqref{it:E.pre.collision} and $ \Vmin^{1-} \leq \delta$
   \begin{align}
       &\int_{ B_{3,2}} r_s(s,x,v) \dd v \dd s \\ \lesssim &\delta \int_\R \int_0^{[\mathcal T_{t,x_1,v_1} - 3]_+} \int_{|v^\perp - v^\perp_\ast|  |\check \tau_{t,x}| \leq \sqrt{\mathcal T_{t,x_1,v_1} - s}} \dd v^\perp \frac 1 {\langle \Vmin(\mathcal T_{t,x_1,v_1} - s )\rangle^2} \dd s e^{-|v_1|} \dd v_1 
       \lesssim  \frac{\delta^{2-  \beta}}{ | \check \tau_{t,x}|^2}.
   \end{align}

\emph{Step 4: The case $\check d_{t,x} = 0$ and $ \langle x^\perp \rangle\delta^{\beta} \geq \check \tau_{t,x}$ }.

Arguing as above, we write again $G_{4} = G_{4,1} \cup G_{4,2}$ and
\begin{align}
    G_{4,1} &:= \{(s,v) \in [0,t] \times \R^3 : v \in \mathcal K_{s,t,x} \}, \\
            G_{4,2} &:= \{(s,v) \in [0,t] \times \R^3 : v \in \mathcal A_{s,t,x} \}, \\
    B_{4}&:=\{(s,v) \in [0,t]  \times \R^3 :  v  \in B_{\Vmin} \setminus K''_{s,t,x}, s < \mathcal T_{t,x_1,v_1} - 3 \},
\end{align}
 with the sets $\mathcal K_{s,t,x}, K''_{s,t,x}, \mathcal A_{s,t,x}$ as defined in Corollary \ref{cor:psi}.
 
We first turn to $G_{4,2}$. Note that $G_{4,2} = G_{3,2}$ so in particular \eqref{est:G_32.0} holds in $G_{4,2}$. However, this time we want to gain a factor $\langle x^\perp \rangle^2$ instead of the factor $\langle \check \tau_{t,x}\rangle^2$ from the previous step. As above, the case $|x^\perp| \leq 1$ is straightforward and we therefore only consider $|x^\perp| \geq 1$ in the following.
We split $G_{4,2}$ further 
and first consider 
\begin{align}
   G_{4,2}^1 := G_{4,2} \cap \left\{(s,v) \in [0,t] \times \R^3 : |v| \geq \frac{ |x^\perp|}{2 (\check{\mathcal T}_{t,x_1,v_1} +5)}\right\}.
\end{align}
On this set, by \eqref{eq:tau.T.comparable}, we have $e^{-|v|} \lesssim \frac{\langle \check \tau_{t,x}^2 \rangle}{|x^\perp|^2}e^{-\frac{|v|}{2}}$.
Inserting this estimate within  \eqref{est:G_32.0} yields the desired estimate on $G_{4,2}^1$.

On the other hand, on $G_{4,2}^2 := G_{4,2}^1 \setminus G_{4,2}$ we have
\begin{align} \label{est:x_perp.backwards}
    | x^\perp - (t-s)v^\perp| \geq \frac{| x^\perp|}{2} \quad \text{for all } s \in [\mathcal T_{t,x_1,v_1} -5,t].
\end{align}
Resorting to \eqref{est:G_32.0} leads again to the desired estimate.

\medskip

We next turn to $G_{4,1}$.
    By Corollary \ref{cor:psi}, we have 
    \begin{align}
        G_{4,1} \subset \{(s,v) \in [0,t] \times \R^3 : v \in K_{s,t,x} \}.
    \end{align}
Combining the estimates from Corollary \ref{cor:psi} and Corollary \ref{co:char.tilde} with \eqref{est:check.x.K'} yields
    \begin{align}
       |\nabla_v \mu (v) -  \nabla_v \mu(V_{s,t}(x,\Psi_{s,t}(x,v))) \det(\nabla_v \Psi_{s,t}(x,v))| \lesssim \frac{\delta^{1- 2\beta}}{\langle x^\perp\rangle} e^{-|v|}.
    \end{align}
    By Lemma \ref{lem:E.char} \eqref{it:E.pre.collision} and \eqref{est:check.x.K'} we have
    \begin{align}
        |E(s,x-(s-t)v)| \lesssim \frac{\delta}{\langle x^\perp\rangle^2 +\langle \mathcal T_{t,x_1,v_1}-s)|V_{\min}|\rangle^2}.
    \end{align}
    Therefore,
    \begin{align}
        \int_{G_{4,1}} r_d(s,x,v) \dd s \dd v 
	& \lesssim \frac{\delta^{2-2\beta}}{\langle x^\perp \rangle} \int_{\R^3}  \int_{0 }^{ [\mathcal T_{t,x_1,v_1}- 1]_+} 
	\frac{e^{-|v|}}{\langle x^\perp \rangle^2 + \frac 1 2 \langle (\mathcal T_{t,x_1,v_1}-s)|V_{\min}|\rangle^2}    \dd s \dd v  \\
	& \lesssim  \frac{\delta^{2-2\beta}}{V_{\min}\langle x^\perp \rangle^2}.
\end{align}

\medskip

Regarding $B_{4}$, we first argue
   \begin{align}
       B_{4} \subset \{(s,v) \in [0,t]  \times \R^3 :  v  \in B_{\Vmin/4} \setminus B_{\delta^{-\beta}/6}, s < \mathcal T_{t,x_1,v_1} - 3 \}.
   \end{align}
Indeed, for $(s,v) \in  B_{4}$ with $s < \mathcal T_{t,x_1,v_1} - 1$ and $|v| \leq \delta^{-\beta}/6$, we find, due to the assumption
$\langle x^\perp\rangle \delta^\beta \geq \check \tau_{t,x}$ that we made in this step, that 
\begin{align}
    \check \tau_{t,x} \langle v^\perp \rangle  \leq  \check \tau_{t,x} + \check \tau_{t,x}   |v|\leq \langle x^\perp\rangle \delta^\beta \left(1+\frac{\delta^{-\beta}} 6 \right)  \leq  \frac {\langle x^\perp\rangle } 6,
\end{align}
and thus $v \in K''_{s,t,x}$.
Moreover, as above, either $B_{4} = \emptyset$  or $\check \tau_{t,x} \leq 2 t$, the latter we assume in the following. 
We split again, similarly as for $G_{4,2}$,
\begin{align}
   B_{4,1} := B_{4} \cap \{(s,v) \in [0,t] \times \R^3 : |v| \geq \frac{ \langle x^\perp \rangle}{2 \langle\check{\mathcal T}_{t,x_1,v_1}\rangle }\}.
\end{align}
On this set, by \eqref{eq:tau.T.comparable}, we have for $n \geq 3 \beta^{-1}$
\begin{align} \label{eq:gain.through.exp}
    e^{-|v|} \lesssim e^{-\delta^{-\beta}/18} \frac{\langle \check \tau_{t,x} \rangle^3} {\langle x^\perp \rangle^3}e^{-\frac{|v|}{3}} \lesssim e^{-\delta^{-\beta}/18} \frac{\langle t \rangle^3} {\langle x^\perp \rangle^3}e^{-\frac{|v|}{3}} \lesssim \frac {1} {\langle x^\perp \rangle^3}e^{-\frac{|v|}{3}}.
\end{align}
Combining this estimate with Lemma \ref{lem:E.char}\eqref{it:E.pre.collision} and using $\Vmin^{-1} \leq \delta$
yields
\begin{align}
    \int_{B_{4,1}} r_s(s,x,v) \dd v \dd s \lesssim \frac {\delta} {\langle x^\perp \rangle^3} \int_{\R^3} \int_0^{[\mathcal T_{t,x_1,v_1} - 3]_+}\frac{e^{-\frac{|v|}{3}} }{\langle\Vmin(\mathcal T_{t,x_1,v_1} -s) \rangle^2} \dd v \dd s \lesssim \frac {\delta^2} {\langle x^\perp \rangle^3}. 
\end{align}
Finally, on $B_{4,2} := B_4 \setminus B_{4,1}$, we estimate
\begin{align}
    \int_{B_{4,2}} r_s(s,x,v) \dd v \dd s \lesssim \delta \int_{B_{\Vmin/2} \setminus B_{\delta^{-\beta}/6}} \int_0^{[\mathcal T_{t,x_1,v_1} - 3]_+}\frac{e^{-|v| }}{{\langle x^\perp \rangle^2} + \langle\Vmin(\mathcal T_{t,x_1,v_1} -s) \rangle^2} \dd v \dd s \lesssim \frac {\delta^2} { \langle x^\perp \rangle^2},
\end{align}
where we used  $e^{-\delta^{-\beta}} \lesssim \frac{\delta}{\langle t \rangle^2} $ for $n$ sufficiently large.
\end{proof}

\subsection{Estimates for \texorpdfstring{$\nabla \Reac$}{nabla R}}

\begin{proposition} \label{pro:gradR} 
For all $0 < \gamma < 1$ there exists $C>0$ such that under the bootstrap assumptions \eqref{eq:B1}--\eqref{eq:B3} with  $\delta_0,n > 0$ sufficiently small  we have for all $t \leq T$
    \begin{align} \label{est.nabla.R}
        |\nabla \Reac(t,x)| \leq C \frac{\delta^{1 + \gamma}}{1 + \check\tau^3_{t,x} + \check d_{t,x}^3  + |x^\perp|^3}.
    \end{align}
\end{proposition}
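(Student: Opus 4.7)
\noindent\emph{Proof plan.} The strategy is to mirror the proof of Proposition~\ref{pro:reaction} step by step, now using the cubic decay of $|\nabla E|$ (and that of $|E|$) from Lemma~\ref{lem:E.char}. The remark after Step~2 of that proof already signals this: gaining one additional power in the denominator (extra $\langle\check d_{t,x}+|x^\perp|\rangle^{-1}$, $\langle\check\tau_{t,x}\rangle^{-1}$, or $\langle x^\perp\rangle^{-1}$ depending on the case) produces precisely \eqref{est.nabla.R}. Differentiating under the integral and using $X_{s,t}=x-(t-s)v+\tilde Y_{s,t}$, $V_{s,t}=v+\tilde W_{s,t}$, I would write $\nabla\mathcal R=I_A+I_B+I_C$ with
\begin{align*}
I_A &= \int_0^t\!\!\int_{\R^3}\bigl[\nabla E(s,x-(t-s)v)\cdot\nabla_v\mu(v)-\nabla E(s,X_{s,t})\cdot\nabla_v\mu(V_{s,t})\bigr]\,dv\,ds,\\
I_B &= -\int_0^t\!\!\int_{\R^3}\nabla E(s,X_{s,t})\,\nabla_x\tilde Y_{s,t}\,\nabla_v\mu(V_{s,t})\,dv\,ds,\\
I_C &= -\int_0^t\!\!\int_{\R^3}E(s,X_{s,t})\,D^2_v\mu(V_{s,t})\,\nabla_x\tilde W_{s,t}\,dv\,ds.
\end{align*}

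The integrand of $I_A$ has the same structure as that of $\mathcal R$ with $E$ replaced by $\nabla E$. The entire four-region decomposition from the proof of Proposition~\ref{pro:reaction}---the good subsets $\mathcal G_{s,t,x}$, $\mathcal A_{s,t,x}$, $\mathcal K_{s,t,x}$, $\mathcal F_{s,t,x}$ from Corollary~\ref{cor:psi} and their bad complements---applies verbatim, and the same cancellations via the substitution $v\mapsto\Psi_{s,t}(x,v)$ can be exploited. Since $|\nabla E|$ enjoys one more power of decay than the artificial quadratic estimate on $|E|$ used in Proposition~\ref{pro:reaction}, every bound picks up the advertised extra power. The new pieces $I_B$ and $I_C$ require no cancellation: $|\nabla_x\tilde Y_{s,t}|$ and $|\nabla_x\tilde W_{s,t}|$ obey bounds of the same order as $|\tilde Y_{s,t}|$ and $|\tilde W_{s,t}|$ (Proposition~\ref{pro:char.front}, Corollary~\ref{co:char.tilde}), so that together with the pointwise decay of $\nabla E$ or $E$ from Lemma~\ref{lem:E.char} and the exponential decay of $\nabla_v\mu$ or $D^2_v\mu$, the same $v$- and $s$-integrations performed in Steps~2--4 of the proof of Proposition~\ref{pro:reaction} immediately deliver the required cubic decay. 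For example, in the front region $\check d_{t,x}>0$ one obtains $|I_B|+|I_C|\lesssim\delta^2/\langle\check d_{t,x}+|x^\perp|\rangle^3$ with room to spare, and in the behind cases \eqref{eq:tildeWBehindPassed} provides the quadratic decay in $\langle[s-\mathcal T_{t,x,v}]_+\rangle+\langle\check x_{t,x,v}^\perp\rangle/\langle v^\perp\rangle$ that is needed to close the argument.

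The main obstacle is bookkeeping: in every subregion one must check that the powers of $\check\tau_{t,x}$, $\check d_{t,x}$ and $|x^\perp|$ contributed by the product of $\nabla E$ (or $E$) with the $x$-derivative of the characteristic perturbation do indeed sum to the cubic decay needed, as one cannot afford any loss. The delicate subcase $\check d_{t,x}=0$ with $\langle x^\perp\rangle\delta^\beta\leq\check\tau_{t,x}$ in the pre-collision range is again handled by the change of variables $w=\delta^\beta\langle\check\tau_{t,x}\rangle(v^\perp-v_\ast^\perp(t,x,v_1))$ used in the estimate on $G_{3,1}$ there, and the bad sets $B_{3}, B_{4}$ are treated using the exponential decay of $\mu$ exactly as before. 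All logarithmic-in-$t$ losses are absorbed into $\delta^{-C\beta}$, and $\beta>0$ is chosen small enough to yield any prescribed $1+\gamma<2$.
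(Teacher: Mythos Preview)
There is a genuine gap in the behind region $\check d_{t,x}=0$. Your central claim---that replacing the artificial quadratic bound on $|E|$ by the cubic bound on $|\nabla E|$ automatically upgrades every estimate by one power---fails there. Note first that Lemma~\ref{lem:E.char} gives the \emph{same} cubic pointwise decay for $|E|$ and $|\nabla E|$; there is no extra power available. More importantly, in the post-collision range (the $G_{3,2}$ part of Step~3), the $\check\tau_{t,x}^{-2}$ factor in Proposition~\ref{pro:reaction} comes from the Jacobian $(t-s)^{-2}$ of the substitution $v^\perp\mapsto w=x^\perp-(t-s)v^\perp$ on $s<s^\ast$, and that Jacobian is insensitive to the decay rate of the field: running your argument with the full cubic bound still yields only $\delta^{2-M\beta}/\check\tau_{t,x}^{2}$. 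The same obstruction hits $I_B$ and $I_C$; for instance $|\nabla E(s,X_{s,t})|\,|\nabla_x\tilde Y_{s,t}|$ has exactly the size of the integrand $r_d$ on $G_{3,2}$ and integrates to $\check\tau_{t,x}^{-2}$, not $\check\tau_{t,x}^{-3}$.

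What is missing is the dispersive identity $\nabla_x\sim\check\tau^{-1}\nabla_v$ (respectively $\sim\langle x^\perp\rangle^{-1}\nabla_v$). The paper implements it by changing variables $v\mapsto\omega=x-\check{\mathcal T}_{t,x,v}v$ (Step~2) or $v\mapsto(x-\omega)/\langle x^\perp\rangle$ (Step~3) in $\mathcal R(t,x)$ \emph{before} applying $\nabla_x$. After this substitution the $x$-dependence enters only through $(x-\omega)/\check{\mathcal T}$, so differentiating produces a free factor $1/\check{\mathcal T}_{t,x,v}\sim 1/\check\tau_{t,x}$ (or $1/\langle x^\perp\rangle$) multiplying terms of the same structure as in Proposition~\ref{pro:reaction}; it is this factor, not any improved pointwise decay of $\nabla E$, that furnishes the third power. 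The resulting decomposition also generates terms with $\nabla_vY$ and $\nabla_vW$ (rather than $\nabla_x\tilde Y$, $\nabla_x\tilde W$), whose estimates from Proposition~\ref{prop:char} are precisely what closes the argument.
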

\begin{proof}

The proof is in large parts analogous to the proof of Proposition \ref{pro:reaction}.
We therefore just highlight the main differences. The main difficulty consists in extracting the third power in the denominator of \eqref{est.nabla.R} in comparison with the second power obtained in Proposition \ref{pro:reaction}. To this end we must exploit once more the dispersion.

We will again distinguish the same three different cases as in the proof of Proposition \ref{pro:reaction}. In the case $\check d_{t,x} > 0$, the estimates are easiest, since the backwards characteristics do not come close to the point charge. Therefore, the error estimates along the backwards characteristics are sufficient in this case.

For $\check d_{t,x} = 0$, the estimates are more delicate.
Let us briefly explain, why we expect better decay for $\nabla_x \mathcal{R}$ than for $\mathcal{R}$ itself also in this case. Basically, for characteristics close to free transport, we can make use of $\nabla_x \sim \frac 1 t \nabla_v$.
More precisely, by integration by parts we find 
\begin{align}
    \int \nabla_x  f_0(x - t v) g(v) \dd v &=  \frac{1}{t}\int f_0(x - t v) \nabla g(v) \dd v.
\end{align}
This can also obtained through the following change of variables
\begin{align}
    \nabla_x \int f_0(x - t v) g(v) \dd v &= \frac{1}{t} \nabla_x \frac 1 {t^3} \int f_0(w) g\left(\frac{ x - w}{t}\right) \dd w \\
    &=   \frac 1 {t^4} \int f_0(w) \nabla g\left(\frac{ x - w}{t}\right) \dd w =   \frac 1 {t} \int f_0(x - t v) \nabla g(v) \dd v.
\end{align}
{This argument still works well in our setting when we are close to free transport.} We will thus manipulate $\mathcal R(t,x)$ through a change of variables before taking the gradient. Roughly speaking, the change of variables consists in replacing $v$ by a point along the straightened backwards
characteristics which corresponds to a time after the (potential) "approximate collision" along this characteristics. By taking the gradient after this change of variables we will gain the desired power.
Indeed, we know that the time after an approximate collision is  $\check\tau_{t,x}$.

Moreover, if $|x^\perp|$ is dominant over $\tau_{t,x}$ (and $|v|$ is of order $1$) 
the decay of $E$ in $|x^\perp|$ allows us to choose $\langle x^\perp \rangle$ as this corresponding time. Indeed, in view of the estimates \eqref{tilde.Y.passed}, the error for the backwards characteristics until times $s \geq t - |x^\perp|$ can still be controlled by $\delta \log(2+t)$, whereas for larger times, this error grows linearly in $s$, just as if there was a collision at time $t - |x^\perp|$.

\medskip

\emph{Step 1: The case $\check d_{t,x} > 0$.}

We have
\begin{align}
	\nabla \mathcal R(t,x) &=\int_0^{t} \int_{\R^3}  \nabla E(s,x-(t-s)v) \cdot \nabla_v \mu(v) -  \nabla E(s,X_{s,t}(x,v)) \cdot \nabla_v \mu(V_{s,t}(x,v)) \dd v \dd s\\
	&- \int_0^{t} \int_{\R^3} \nabla_x \tilde Y_{s,t}(x,v) \cdot \nabla E(s,X_{s,t}(x,v)) \nabla_v \mu(V_{s,t}(x,v)) \dd v \dd s\\
	&- \int_0^{t} \int_{\R^3}  E(s,X_{s,t}(x,v)) \nabla_x \tilde W_{s,t}(x,v) \cdot \nabla_v \mu(V_{s,t}(x,v)) \dd v \dd s\\
	&=: \mathcal R_a + \mathcal R_c + \mathcal R_d.
\end{align}

The estimate of $\mathcal R_a$ works exactly as before. Indeed, as we pointed out above in Step 2 of the proof of Proposition \ref{pro:reaction}, we could have already gained three powers of $\check d_{t,x} + |x^\perp|$ for $\mathcal R(t,x)$ in this case.

The terms $\mathcal R_c$ and $\mathcal R_d$
are estimated analogously, since the estimates of $\nabla \tilde W$ and $\nabla \tilde Y$ bring an additional power of $\check d_{t,x} + |x^\perp|$.
More precisely, combining Proposition \ref{pro:char.front}, Lemma \ref{lem:E.char} \eqref{it:E.front} yields
\begin{align}
    |\nabla E(s,X_{s,t}(x,v))| |\nabla_x \tilde Y_{s,t}(x,v)| \lesssim \frac{\delta \log^2(2 + t)}{\Vmin} \frac{1}{\langle \check d_{t,x} + |x|^\perp \rangle^2} \frac{t -s}{\langle \check d_{t,x} + (t-s) \Vmin + |x|^\perp \rangle^3},
\end{align}
and the same bound holds for $ | E(s,X_{s,t}(x,v))|  |\nabla_x \tilde W_{s,t}(x,v)|$
Integrating this bound in $s$ and using the exponential decay of $\mu$ for the integration in $v$ immediately yields the desired estimate.

\emph{Step 2: The case $\check d_{t,x} = 0$ and $\langle x^\perp \rangle \delta^{\beta} \leq \check \tau_{t,x}$}
Analogously as in the proof of Proposition \ref{pro:reaction}, in this case it suffices to show 
    \begin{align}
         |\nabla \mathcal R(t,x)| \lesssim \frac{\delta^{2 - M \beta}}{ \check \tau_{t,x}^3},
    \end{align}
for some $M$ independent of $\beta$.

The key idea is to use the change of variables
\begin{align}
    \omega = \check x_{t,x,v} =  x- \check{\mathcal T}_{t,x_1,v_1}v \quad \Leftrightarrow \quad  v = \tfrac{x-\omega}{t-\tau_\omega},
\end{align}
since by \eqref{eq:Ttau} $\tau_\omega=\mathcal T_{t,x,v}$. Performing this change of variables (and recalling the definition of the error functions $Y,W$ from \ref{def:Y,W}) yields
\begin{align}
	\mathcal R_{NL}(x)&=\int_0^{t} \int_{\R^3}  E(s,x-(t-s)v+Y_{s,t}(x-\check{\mathcal T}_{t,x_1,v_1}v,v)) \cdot \nabla_v \mu(v+W_{s,t}(x-\check{\mathcal T}_{t,x_1,v_1}v,v)) \dd v \dd s\\
	&=\int_0^{t} \int_{\R^3} \frac{1}{ \check \tau_{t,\omega}^3} E(s,\omega-(\tau_\omega-s)\tfrac{x-\omega}{t-\tau_\omega}+Y_{s,t}(\omega,\tfrac{x-\omega}{t-\tau_\omega}))\cdot  \nabla_v \mu(\tfrac{x-\omega}{t-\tau_\omega}+W_{s,t}(\omega,\tfrac{x-\omega}{t-\tau_\omega}))\dd \omega \dd s.
\end{align}
Taking the gradient in $x$ yields
\begin{align*}
    &\nabla_x \mathcal R_{NL}(x)=\int_0^{t} \int_{\R^3} \tfrac{s - \tau_\omega}{\check \tau_{t,\omega}^4} \nabla_x E(s,\omega-(\tau_\omega-s)\tfrac{x-\omega}{t-\tau_\omega}+Y_{s,t}(\omega,\tfrac{x-\omega}{t-\tau_\omega}))\cdot  \nabla_v \mu(\tfrac{x-\omega}{t-\tau_\omega}+W_{s,t}(\omega,\tfrac{x-\omega}{t-\tau_\omega})) \dd \omega \dd s\\
    &+\int_0^{t} \int_{\R^3} \tfrac{1}{\check \tau_{t,\omega}^4} E(s,\omega-(\tau_\omega-s)\tfrac{x-\omega}{t-\tau_\omega}+Y_{s,t}(\omega,\tfrac{x-\omega}{t-\tau_\omega}))\cdot  \nabla^2_v \mu(\tfrac{x-\omega}{t-\tau_\omega}+W_{s,t}(\omega,\tfrac{x-\omega}{t-\tau_\omega})) \dd \omega \dd s\\
    &+\int_0^{t} \int_{\R^3} \tfrac{\nabla_v Y_{s,t}(\omega,\tfrac{x-\omega}{t-\tau_\omega})}{\check \tau_{t,\omega}^4}\cdot  \nabla_x E(s,\omega-(\tau_\omega-s)\tfrac{x-\omega}{t-\tau_\omega}+Y_{s,t}(\omega,\tfrac{x-\omega}{t-\tau_\omega})) \cdot \nabla_v \mu(\tfrac{x-\omega}{t-\tau_\omega}+W_{s,t}(\omega,\tfrac{x-\omega}{t-\tau_\omega})) \dd \omega \dd s\\
    &+\int_0^{t} \int_{\R^3} \tfrac{\nabla_v W_{s,t}(\omega,\tfrac{x-\omega}{t-\tau_\omega})}{\check \tau_{t,\omega}^4}\cdot  E(s,\omega-(\tau_\omega-s)\tfrac{x-\omega}{t-\tau_\omega}+Y_{s,t}(\omega,\tfrac{x-\omega}{t-\tau_\omega}))\cdot  \nabla^2_v \mu(\tfrac{x-\omega}{t-\tau_\omega}+W_{s,t}(\omega,\tfrac{x-\omega}{t-\tau_\omega})) \dd \omega \dd s,
\end{align*}
and changing back to the original set of variables we obtain
\begin{align}
    \nabla_x \mathcal R_{NL}(x)=&\int_0^{t} \int_{\R^3} \frac{s - \mathcal T_{t,x_1,v_1}}{\check {\mathcal T}_{t,x_1,v_1}} \nabla_x E(s,X_{s,t}(x,v)) \cdot \nabla_v \mu(V_{s,t}(x,v))  \dd \omega \dd s\\
    &+\int_0^{t} \int_{\R^3} \frac{1}{\check {\mathcal T}_{t,x_1,v_1}}  E(s,X_{s,t}(x,v)) \cdot \nabla^2_v \mu(V_{s,t}(x,v)) \dd v \dd s \\
    &+\int_0^{t} \int_{\R^3} \frac{\nabla_v Y_{s,t}(x-\check {\mathcal T}_{t,x_1,v_1}v,v)}{\check {\mathcal T}_{t,x_1,v_1}} \cdot \nabla_x E(s,X_{s,t}(x,v)) \cdot \nabla_v \mu(V_{s,t}(x,v)) \dd v \dd s \\
    &+ \int_0^{t} \int_{\R^3} \frac{\nabla_v W_{s,t}(x-\check {\mathcal T}_{t,x_1,v_1}v,v)}{\check {\mathcal T}_{t,x_1,v_1}} \cdot  E(s,X_{s,t}(x,v)) \cdot  \nabla^2_v \mu(V_{s,t}(x,v))\dd v \dd s .
\end{align}
Performing the same manipulations on the linear term leads to 
\begin{align*}
    \nabla \mathcal R(t,x) 
    =&\int_0^{t} \int_{\R^3} \frac{s- \mathcal T_{t,x_1,v_1}}{\check {\mathcal T}_{t,x_1,v_1}} (\nabla_x E(s,x - (t-s)v)\cdot  \nabla_v \mu(v) - \nabla_x E(s,X_{s,t}(x,v)) \cdot \nabla_v \mu(V_{s,t}))  \dd v \dd s \\
    &+\int_0^{t} \int_{\R^3} \frac{1}{\check {\mathcal T}_{t,x_1,v_1}}  (E(s,x - (t-s)v) \cdot  \nabla^2_v \mu(v)- E(s,X_{s,t}(x,v)) \cdot \nabla^2_v \mu(V_{s,t}(x,v))) \dd v \dd s  \\
    &-\int_0^{t} \int_{\R^3} \frac{\nabla_v Y_{s,t}(x-\check {\mathcal T}_{t,x_1,v_1}v,v)}{\check {\mathcal T}_{t,x_1,v_1}} \nabla_x E(s,X_{s,t}(x,v)) \cdot \nabla_v \mu(V_{s,t}(x,v)) \dd v \dd s  \\
    &- \int_0^{t} \int_{\R^3} \frac{\nabla_v W_{s,t}(x-\check {\mathcal T}_{t,x_1,v_1}v,v)}{\check {\mathcal T}_{t,x_1,v_1}}  E(s,X_{s,t}(x,v))\cdot  \nabla^2_v \mu(V_{s,t}(x,v)) \dd v \dd s \\
    =:& \mathcal R_a + \mathcal R_b + \mathcal R_c + \mathcal R_d.
\end{align*}
The estimates of $\mathcal R_a$ 
and $\mathcal R_b$ are analogous as in Proposition \ref{pro:reaction}. 
The additional factor $\mathcal T_{t,x_1,v_1} -s$ in $\mathcal R_a$ does not pose a problem if one uses the third power of the decay of $E$ instead of the second power as in the proof of Proposition \ref{pro:reaction}. More precisely, by Lemma \ref{lem:E.char} \eqref{it:E.post.collision} and \eqref{it:E.pre.collision}
we have 
\begin{align}
    |\mathcal T_{t,x_1,v_1} -s| (|\nabla_x E(s,x-(t-s)v| + |\nabla_x E(s,X_{s,t}(x,v)|) \lesssim \frac{\delta}{\langle s - \mathcal T_{t,x_1,v_1} \rangle^2 + \langle x^\perp - (t-s) v^\perp \rangle^2} ,
\end{align}
and 
\begin{align}
    |\mathcal T_{t,x_1,v_1} -s| (|\nabla_x E(s,x-(t-s)v| + |\nabla_x E(s,X_{s,t}(x,v)|) \lesssim \frac{\delta }{\langle \Vmin (\mathcal T_{t,x_1,v_1} - s) \rangle^2 + \langle \check x^\perp_{t,x,v} \rangle^2} ,
\end{align}
for $ s \geq \mathcal T_{t,x_1,v_1} - 5 $ and $ s \leq \mathcal T_{t,x_1,v_1} $ respectively, which are precisely the estimates we used for $E$ in the proof of Proposition \ref{pro:reaction}.

\medskip

It remains to estimate $ \mathcal R_c$  and  $\mathcal R_d$. We use  the estimates from Proposition~\ref{prop:char} for $\nabla_v  Y$, $\nabla_v  W$. Since the estimates for $\nabla_v  Y$ are weaker than those for  $\nabla_v  W$, it suffices to show the desired estimates for $\mathcal R_c$.
We write
\begin{align}
    \mathcal R_c =: \int_0^t \int_{\R^3} \mathfrak r_c(s,x,v) \dd v \dd s.
\end{align}
Similarly as before, we split the integral in $\tilde G_{3,1}:= \{(s,v) : |v| \leq \Vmin/4, 0 \leq s \leq \mathcal T_{t,x_1,v_1}\}$ and $\tilde G_{3,2}:=\{(s,v) : |v| \leq \Vmin/4,  t \geq s \geq [\mathcal T_{t,x_1,v_1}]_+\}$.

We note that the identity \eqref{eq:Ttau} implies that we can use \eqref{eq:nablaYBehind} in the set $\tilde G_{3,2}$ to estimate the term $\nabla_v Y_{s,t}(x-\check {\mathcal T}_{t,x_1,v_1}v,v)$, and thus
\begin{align}
	|\nabla_v Y_{s,t}(x-\check {\mathcal T}_{t,x_1,v_1}v,v)| \lesssim \delta \log(2+t) \leq \delta^{1-\beta} ,
\end{align}
where we used again \eqref{est:log.beta}.

Combining this estimate with  Lemma \ref{lem:E.char} \eqref{it:E.pre.collision} and \eqref{eq:tau.T.comparable} yields on $\tilde G_{3,2}$
\begin{align}
    | \mathfrak r_c(s,x,v)| &\leq  \frac 1 { \check \tau_{t,x} } \frac{\delta^{2-\beta}}{\langle  (\mathcal T_{t,x_1,v_1} - s)^3 + | x^\perp - (t - s)   v^\perp|^3\rangle } e^{-|v|} .
\end{align}
Thus, $ \check \tau_{t,x} \int_{\tilde G_{3,2}} | \mathfrak r_c(s,x,v)|$ is bounded by the right-hand side in \eqref{est:G_32.0}. We thus the desired estimate by the estimates after \eqref{est:G_32.0}.

Similarly, on $\tilde G_{3,1}$,  Lemma \ref{lem:E.char} \eqref{it:E.post.collision} and \eqref{eq:nablaYPassed} imply
\begin{align}
    | \mathfrak r_c(s,x,v)| &\leq  \frac 1 { \check \tau_{t,x} } \left( \frac{(\mathcal T_{t,x_1,v_1} - s)\langle v^\perp \rangle}{\langle \check x_{t,x,v}^\perp \rangle}+ 1  \right)\frac{\delta^2 \log(2+t)}{\langle (V_{\min} (\mathcal T_{t,x_1,v_1} - s))^3 + |\check x_{t,x,v}^\perp|^3\rangle } e^{-|v|} \\
    & \lesssim \frac 1 { \check \tau_{t,x} } \frac{\delta^{2-\beta}}{\langle (V_{\min} (\mathcal T_{t,x_1,v_1} - s))^2 + |\check x_{t,x,v}^\perp|^2\rangle } \frac{1}{\langle \check x^\perp_{t,x,v}\rangle} e^{-|v|}.
\end{align}
We now proceed similarly as in the estimate on $G_{3,1}$ in the proof of Proposition \ref{pro:reaction}.
 We recall that either $\tilde G_{3,1} = \emptyset$ or $\tau_{x} \geq - t$.
Thus, using the change of variables 
\begin{align}
    \omega^\perp = \check x^\perp = x^\perp - \check {\mathcal T}_{t,x_1,v_1} v^\perp,
\end{align}
and \eqref{eq:tau.T.comparable}, we obtain the estimate
\begin{align}
    \int_{\R^3}  \int_0^{[\mathcal T_{t,x,v}]_+} \mathfrak r_c(s,x,v) \dd v \dd s  
     &\lesssim \frac{\delta^{2-\beta}}{\check \tau^3_{t,x}}  \int_{-\infty}^\infty  \int_{\R^2} \int_0^{{\mathcal T}_{t,x_1,v_1}} \frac{1}{\langle \omega^\perp \rangle } \frac{e^{-\frac{|x^\perp - \omega^\perp|}{4 \check{\mathcal  T}_{t,x,v}}}}{({\mathcal T}_{t,x_1,v_1}-s)^2 + \langle \omega^\perp\rangle^2}   \ud{s} \ud{\omega^\perp} e^{-\frac {|v_1|} 4 }\dd v_1 \\
     &\lesssim \frac{\delta^{2-\beta}}{\check \tau^3_{t,x}} \int_{\R^2}  \frac{1}{\langle \omega^\perp\rangle^2} e^{-\frac{|x^\perp - \omega^\perp|}{\check{ \tau}_{t,x}}} \ud{\omega^\perp}  
     \lesssim \frac{\delta^{2-2\beta}}{\check \tau^3_{t,x}} .
\end{align}

\emph{Step 3: The case $\check d_{t,x} = 0$ and $\langle x^\perp \rangle \delta^{\beta} \geq \check \tau_{t,x}$.}

In this case, we use a different change of variables before taking the gradient. More precisely, for $R>0$  (which we will later choose as $R = \langle x^\perp \rangle)$, we write $v = \frac{x - \omega} R$ to find
\begin{align}
	\mathcal R_{NL}(t,x)=\frac{1}{R^3} \int_0^{t} \int_{\R^3}  E(s,\omega -(t-R-s)\tfrac{x-\omega}{R}+\tilde{Y}(x,\tfrac{x-\omega}{R})) \nabla_v \mu(\tfrac{x-\omega}{R} + \tilde{W}(x,\tfrac{x-\omega}{R})) \dd \omega \dd s.
\end{align}

Taking the gradient on this term as well as the corresponding linear term $\mathcal R_L$, then reverting the change of variables and finally setting $R = \langle x^\perp \rangle$ yields
\begin{align*}
    \nabla \mathcal R(t,x) 
    &=\int_0^{t} \int_{\R^3} \frac{t  - s - \langle x^\perp \rangle}{\langle x^\perp \rangle} ( \nabla_x E(s,x - (t-s)v)\cdot  \nabla_v \mu(v) -\nabla_x E(s,X_{s,t}(x,v)) \cdot \nabla_v \mu(V_{s,t})) \dd v \dd s \\
    &+\int_0^{t} \int_{\R^3} \frac{1}{\langle x^\perp \rangle}  (E(s,x - (t-s)v)\cdot  \nabla^2_v \mu(v) - E(s,X_{s,t}(x,v))\cdot  \nabla^2_v \mu(V_{s,t}(x,v)) )  \dd v \dd s \\
    &-\int_0^{t} \int_{\R^3} \left(\nabla_x \tilde Y_{s,t}(x,v) + \frac{\nabla_v \tilde Y_{s,t}(x,v)}{\langle x^\perp \rangle} \right) \cdot \nabla_x E(s,X_{s,t}(x,v))\cdot  \nabla_v \mu(V_{s,t}(x,v))  \dd v \dd s\\
    &- \int_0^{t} \int_{\R^3} \left( \nabla_x \tilde W +  \frac{\nabla_v \tilde W_{s,t}(x,v)}{\langle x^\perp \rangle} \right) \cdot  E(s,X_{s,t}(x,v)) \cdot  \nabla^2_v \mu(V_{s,t}(x,v)) \dd v \dd s \\
    &=: \mathcal R_a + \mathcal R_b + \mathcal R_c + \mathcal R_d .
\end{align*}
We argue that $\mathcal R_a + \mathcal R_b$ can be estimated as in the proof of Proposition \ref{pro:reaction}. For $\mathcal R_b$ this is obvious. For $\mathcal R_a$, we consider the set
\begin{align}
    \tilde G_4 := \{(s,v) \in [0,t] \times B_{\Vmin/4}(0) : |v| \leq \frac{ \langle x^\perp \rangle}{10 \langle\check{\mathcal T}_{t,x_1,v_1}\rangle }\}.
\end{align}
Then, on $\tilde G_4$, we have, recalling that we are in the case $\langle x^\perp \rangle \delta^{\beta} \geq \check \tau_{t,x}$ and using \eqref{eq:tau.T.comparable},
\begin{align}
    |\check x^\perp_{t,x,v}| &\gtrsim |x^\perp|,  \label{est.x_perp.good}\\ 
    |x^\perp - (t-s) v| &\gtrsim |x^\perp| \quad \text{for all } \mathcal T_{t,s,x} - 5 \leq s \leq t, \\
     |(t-s)-\langle x^\perp\rangle| &\leq  \langle x^\perp\rangle + [\mathcal T_{t,x_1,v_1} -s]_+.
\end{align}
Thus, by Lemma \ref{lem:E.char} \eqref{it:E.post.collision} and \eqref{it:E.pre.collision},
\begin{align}
    |(t-s)-\langle x^\perp\rangle| (|\nabla_x E(s,x-(t-s)v| + |\nabla_x E(s,X_{s,t}(x,v)|) \lesssim \frac{\delta }{\langle s - \mathcal T_{t,x_1,v_1} \rangle^2 + \langle x^\perp \rangle^2} ,
\end{align}
and 
\begin{align}
    |(t-s)-\langle x^\perp\rangle| (|\nabla_x E(s,x-(t-s)v| + |\nabla_x E(s,X_{s,t}(x,v)|) \lesssim \frac{\delta^{1 - \beta} \log (2 + t)}{\langle \Vmin (\mathcal T_{t,x_1,v_1} - s) \rangle^2 + \langle x^\perp \rangle^2} ,
\end{align}
for $ s \geq \mathcal T_{t,x_1,v_1} - 5 $ and $ s \leq \mathcal T_{t,x_1,v_1} $ respectively. With these bounds at hand, the desired  estimate follows precisely as in the proof of Proposition \ref{pro:reaction}.

We continue with the estimate on the set 
\begin{align}
    \tilde B_4 := \{(s,v) \in [0,t] \times B_{\Vmin/4}(0) : |v| \geq \frac{ \langle x^\perp \rangle}{10 \langle\check{\mathcal T}_{t,x_1,v_1}\rangle }\}.
\end{align}
Notice that on this set we have $|v| \geq \tfrac{1}{20} \delta^{-\beta}$ due to the assumption $\langle x^\perp \rangle \delta^{\beta} \geq \check \tau_{t,x}$ in the case under consideration. 
This allows us to argue analogous to the estimate on $B_{4,1}$ in the proof of Proposition~\ref{pro:reaction}:
Analogously as we have obtained \eqref{eq:gain.through.exp}, we find on $\tilde B_4$
\begin{align}
    e^{-|v|} \lesssim \frac {\delta e^{-\frac{|v|}{3}}} {\langle t \rangle^2 \langle x^\perp \rangle^3},
\end{align}
which allows us to deduce the desired estimate by just using the estimate $|\nabla E| \lesssim \delta$.

\medskip 
Regarding, $\mathcal R_c$ and $\mathcal R_d$, we use the estimates from Corollary \ref{co:char.tilde}. Again, since the estimates on $\tilde W$ are better than those on $\tilde Y$, it suffices to estimate $\mathcal R_c$. Moreover, since $\nabla_v \tilde Y_{s,t}(x,v)\lesssim \delta t^2$, we can argue as above on the set $\tilde B_4$ and it therefore suffices to consider the set $\tilde G_4$.
 Since by the assumption $\langle x^\perp\rangle \delta^\beta \geq \check \tau_{t,x}$ and \eqref{eq:tau.T.comparable}, we have $\check {\mathcal T}_{t,x_1,v_1} \leq \langle x^\perp \rangle$, 
Corollary \ref{co:char.tilde} and \eqref{est.x_perp.good} yield on $\tilde G_4$
\begin{align}
  |\nabla_x \tilde Y_{s,t}(x,v)| + \left|\frac{\nabla_v \tilde Y_{s,t}(x,v)}{\langle x^\perp \rangle}\right| &\lesssim
  \begin{cases}
    \frac{\delta \log (2 + t)\langle v^\perp \rangle}{\langle  x^\perp \rangle} \left( \frac{(\mathcal T_{t,x_1,v_1} -s) \langle v^\perp \rangle}{\langle  x^\perp \rangle} + 1\right) &\quad \text{for } 0<s < \mathcal T_{t,x_1,v_1}, \\
    \frac{\delta \log (2 + t)\langle v^\perp \rangle}{\langle  x^\perp \rangle}  &\quad \text{for } \mathcal T_{t,x_1,v_1}<s < t.
  \end{cases}
\end{align}
Combining again with the estimates from Lemma \ref{lem:E.char} and using $|v^\perp|^2e^{-|v|} \lesssim e^{-|v|/2}$ yields on both $\tilde G_{3,1}$ and $\tilde G_{3,2}$
\begin{align}
    | \mathfrak r_c(s,x,v)|     & \lesssim 
      \frac{\delta^{2 - \beta} \log(2+t)^2}{\langle (\mathcal T_{t,x,v} - s)^2 + | x^\perp|^2\rangle } \frac {e^{-|v|/2}}{\langle x^\perp \rangle^2} .
\end{align}
Integrating over $\tilde G_{3,1}$ and $\tilde G_{3,2}$ yields the desired estimate.
\end{proof}

\subsection{Contribution of the point charge} \label{sec:SI}
In this section we derive estimates for the function $\SI$ defined in~\eqref{def:SCharge}. For future reference, we also introduce the function $\bS (t,x)$ defined by
\begin{align} \label{def:bS} 
    \bS (t,x) = - \int_{-\infty}^t \int_{\Reals^3} \nabla \Phi(x-(t-s)v-(X(t)-(t-s)V(t))) \cdot \nabla_v \mu(v) \ud{v} \ud{s}.
\end{align}
Compared to $\SI$, this corresponds to a linearization of both the characteristics and the trajectory of the point charge and in addition to a extension to all negative times. In particular, $\bS$ resembles the function $S_{R,X_\ast,V_\ast}$ from \eqref{eq:S_R}.
\begin{proposition} \label{pro:S.ion}
 Under the bootstrap assumptions \eqref{eq:B1}--\eqref{eq:B3} with  $\delta_0,n > 0$ sufficiently small,  we have for all $t \leq T$
    \begin{align}
        |\SI(t,x)|+|\bS(t,x)| &\leq \frac{C}{\Vmin(1+|x^\perp|^2+\check d^2_{t,x_1}+\check{\tau}^2_{t,x_1})}, \label{est:SCharge}\\
        |\nabla \SI(t,x)| &\leq\frac{C}{\Vmin(1+|x^\perp|^3+\check d^3_{t,x_1}+\check{\tau}^3_{t,x_1})} \label{est:gradSCharge}.
    \end{align}
\end{proposition}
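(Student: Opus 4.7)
The heuristic behind the proof is that the integrand of $\SI$ is exponentially localized near the trajectory of the point charge (through the decay of $\nabla \Phi$), while for $v$ in the effective support of $\nabla_v \mu(V_{s,t}(x,v))$ the relative velocity $V_1(s) - V_{1,s,t}(x,v)$ is bounded below by $\Vmin/2$ (by Corollary~\ref{cor:propagation.support} and the bootstrap assumption on $\supp \mu$). Each characteristic is therefore close to the point charge only during an interval of length $O(\Vmin^{-1})$, which will produce the $\Vmin^{-1}$ factor in \eqref{est:SCharge}--\eqref{est:gradSCharge}.

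For \eqref{est:SCharge}, set $\tilde x(s) := X_{s,t}(x,v) - X(s)$, so the integrand is bounded by $C e^{-c|\tilde x(s)|} e^{-|v|/2}$. Writing $\tilde x_1(s) = X_1(\mathcal T_{t,x,v}) + \int_{\mathcal T_{t,x,v}}^s(V_{1,\sigma,t} - V_1(\sigma))\dd\sigma +O(\delta)$ yields $|\tilde x_1(s)| \geq \tfrac{\Vmin}{2}|s - \mathcal T_{t,x,v}| - O(1)$, and on the length-$O(1)$ interval in $s - \mathcal T_{t,x,v}$ where the exponential is non-negligible the perpendicular component $|\tilde x^\perp(s)|$ stays within $O(1)$ of $|\check x_{t,x,v}^\perp|$ (since $|v^\perp| \leq \Vmin/4$). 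A short computation then gives
\begin{align}
    \int_0^t e^{-c|\tilde x(s)|}\dd s \leq \frac{C}{\Vmin}\, e^{-c|\check x_{t,x,v}^\perp|/2}.
\end{align}
The $v$-integration is performed by changing variables $w = \check{\mathcal T}_{t,x_1,v_1} v^\perp$ (Jacobian $\check{\mathcal T}^{-2}$) and invoking $\check{\mathcal T}_{t,x,v} \in [\check\tau_{t,x}/2, 2\check\tau_{t,x}]$ from Lemma~\ref{lem:passage.times}; splitting the region according to $|w| \leq |x^\perp|/2$ and its complement yields the factor $(1+|x^\perp|^2 + \check\tau_{t,x}^2)^{-1}$. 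In the case $\check d_{t,x} > 0$ the stronger bound $|\tilde x_1(s)| \geq \check d_{t,x} + (t-s)\Vmin/2$ holds for all $s\leq t$ and yields exponential decay in $\check d_{t,x}$, while if $\mathcal T_{t,x,v} \notin [0,t]$ then $\check\tau_{t,x} \gtrsim t$ and one obtains exponential decay in $\Vmin \check\tau_{t,x}$; both bounds easily dominate the required polynomial decay. The estimate for $\bS$ reduces to the same computation with straight characteristics and a linearly-moving charge, which only simplifies the analysis.

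Directly differentiating $\SI$ in $x$ produces $\nabla^2 \Phi$ with the same decay as $\nabla \Phi$, yielding only the bound $\Vmin^{-1}(1+|x^\perp|^2+\check\tau^2)^{-1}$, one power short of \eqref{est:gradSCharge}. To gain the extra power I would perform the change of variables $u = x - X(s) - (t-s)v$ (noting $u$ equals $X_{s,t}(x,v) - X(s)$ up to the error $\tilde Y_{s,t}(x,v)$) with Jacobian $|\uud v/\uud u| = (t-s)^{-3}$. After this substitution the $x$-dependence lies only in $\nabla_v \mu((x - X(s) - u)/(t - s))$, and $\nabla_x$ then produces a factor $1/(t-s)$. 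Since the effective contribution comes from $s \approx \mathcal T_{t,x,v}$, where $t-s \gtrsim \check\tau_{t,x}$, this converts the $\check\tau^{-2}$ decay into the required $\check\tau^{-3}$; the case $\check d_{t,x}>0$ is again handled by the exponential decay from Case~1. This is the same mechanism exploited in the proof of Proposition~\ref{pro:gradR}.

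The main technical obstacle is controlling the correction terms arising from the nonlinear characteristic errors $\tilde Y_{s,t}$, $\tilde W_{s,t}$ after the change of variables above: these must be absorbed using the pointwise estimates of Proposition~\ref{pro:char.front} and Corollary~\ref{co:char.tilde} together with the exponential decay of $\nabla \Phi$ and $\nabla^2 \Phi$, and one must also carefully split the $v$-domain according to whether the corresponding backward trajectory actually visits a neighborhood of $X(s)$ for some $s \in [0,t]$.
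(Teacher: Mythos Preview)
Your strategy for \eqref{est:SCharge} matches the paper's: both exploit $|X_{s,t}-X(s)| \gtrsim \Vmin|s-\mathcal T_{t,x,v}| + |\check x_{t,x,v}^\perp|$ and then split the $v$-integration (your substitution $w=\check{\mathcal T}v^\perp$ is equivalent to the paper's split into the sets where $|v|$ is smaller or larger than $\langle x^\perp\rangle/(2\check{\mathcal T})$). For \eqref{est:gradSCharge} both you and the paper differentiate after a change of variables, but the paper substitutes $\omega = x-\check{\mathcal T}_{t,x,v}v$ (the collision position, exactly as in Step~2 of the proof of Proposition~\ref{pro:gradR}), whereas you take $u=x-X(s)-(t-s)v$. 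Your choice introduces two complications the paper's avoids. First, the factor $1/(t-s)$ is singular at $s=t$, so the region $s\in[t-1,t]$ must be handled separately by direct differentiation (the paper instead treats the whole range $\check\tau_{t,x}\leq 1$ by direct differentiation). Second, and more substantively, your correction term is $(\nabla_x\tilde Y+\tfrac{1}{t-s}\nabla_v\tilde Y)\,\nabla^2\Phi\cdot\nabla_v\mu$, and the bound from Corollary~\ref{co:char.tilde}, namely $|\nabla_v\tilde Y|\lesssim\check{\mathcal T}|\nabla_x\tilde Y|+\delta\log(2+t)$, only gives $|\nabla_x\tilde Y+\tfrac{1}{t-s}\nabla_v\tilde Y|\lesssim\delta$ near $s=\mathcal T_{t,x,v}$, which after integration yields $\delta/(\Vmin\check\tau^2)$ --- one power short. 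What saves you is the cancellation
\[
\nabla_x\tilde Y+\tfrac{1}{t-s}\nabla_v\tilde Y=\tfrac{\mathcal T_{t,x,v}-s}{t-s}\,\nabla_\omega Y+\tfrac{1}{t-s}\nabla_v Y + O(|v|/\Vmin)\cdot\nabla_\omega Y,
\]
after which Proposition~\ref{prop:char} gives $|\nabla_v Y|\lesssim\langle s-\mathcal T_{t,x,v}\rangle$, absorbed by the exponential decay of $\nabla^2\Phi$. The paper's $\omega$-substitution produces the correction $\nabla_v Y/\check{\mathcal T}$ directly and so bypasses this point entirely.
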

\begin{proof}

\emph{Step 1: Proof of \eqref{est:SCharge}:}
   Recall the definition of $\SI$
   \begin{align}
       \SI(t,x)=- \int_0^t \int_{\Reals^3} \nabla \Phi(X_{s,t}-X(s))\cdot  \nabla_v \mu( V_{s,t}) \ud{v} \dd s.
   \end{align}
   We observe that by \eqref{eq:X_s,t.comparable}
    and the definitions of $\mathcal T_{t,x,v}$ and $v_\ast = v_\ast(t,x,v_1)$ from Defintion \ref{def:Parameters2} and \eqref{def:v_ast}, for $|v|\leq \frac12 \Vmin$
   \begin{align}
      \langle X_{s,t}(x,v)-X(s) \rangle &\gtrsim  |s-\mathcal {T}_{t,x_1,v_1}| \Vmin +|v^\perp-v^\perp_*| \check  {T}_{t,x_1,v_1} \quad \text{if $\check d_{t,x_1}=0$},  \label{est:distance1}\\
      \langle X_{s,t}(x,v)-X(s) \rangle &\gtrsim \check d_{t,x_1}+ |x^\perp| + \Vmin (t-s),\quad \text{if $\check d_{t,x_1}>0$} \label{est:distance2}.
   \end{align}
   Consider first the case $\check d_{t,x} = 0$, i.e. $\tau_x \leq t$.
   which by \eqref{eq:tau.T.comparable} is equivalent to $\check{\mathcal{T}}_{t,x_1,v_1} \geq 1/2 \check \tau_{t,x}$ and thus $\check{\mathcal{T}}_{t,x_1,v_1} > 0$ is equivalent to $\check \tau_{t,x} > 0$. Then, by \eqref{est:distance1} and the decay of $\Phi$ (cf. \eqref{ass:Phi}), 
   \begin{align*}
       |\SI(t,x)| &\lesssim \int_0^t \int_{\Reals^3}e^{-|s-\mathcal {T}_{t,x_1,v_1}| \Vmin -|v^\perp-v^\perp_*| \check {\mathcal{T}}_{t,x_1,v_1}}  e^{-\frac14 (|v_1|+|v^\perp|)} \ud{v} \dd s\\
       &\lesssim \frac{1}{\Vmin(1+\check\tau^2_{t,x_1})}.
   \end{align*}
   For the desired decay in $|x^\perp|$, we consider again the sets
   \begin{align}
       G := \{v \in B_{\Vmin/2}(0) : |v| \leq \frac{ \langle x^\perp \rangle }{2 \check{\mathcal T}_{t,x_1,v_1}}\}, \\
       B := 	\{v \in B_{\Vmin/2}(0) : |v| \geq \frac{ \langle x^\perp \rangle }{2 \check{\mathcal T}_{t,x_1,v_1}}\}.
   \end{align}
For $v \in G$, we have $\langle|v^\perp-v^\perp_*| \check {\mathcal{T}}_{t,x_1,v_1}\rangle = \langle x^\perp - \check {\mathcal{T}}_{t,x_1,v_1} v^\perp \rangle \gtrsim \langle x^\perp\rangle$ and thus
\begin{align}
    \int_0^t \int_{G}e^{-|s-\mathcal {T}_{t,x_1,v_1}| \Vmin -|v^\perp-v^\perp_*| \check {\mathcal{T}}_{t,x_1,v_1}}  e^{-\frac14 (|v_1|+|v^\perp|)} \ud{v} \dd s
       &\lesssim \frac{1}{\Vmin(1+|x^\perp|^2)}.
\end{align}
Moreover, in $B$ we use that $e^{-|v|} \lesssim e^{-|v|/2} \frac{\check \tau_{t,x}^2}{\langle x^\perp\rangle^2}$ to deduce
\begin{align}
    \int_0^t \int_{B}e^{-|s-\mathcal {T}_{t,x_1,v_1}| \Vmin -|v^\perp-v^\perp_*| \check {\mathcal{T}}_{t,x_1,v_1}}  e^{-\frac14 (|v_1|+|v^\perp|)} \ud{v} \dd s
       &\lesssim \frac{\check \tau_{t,x}^2}{\Vmin(1+\check\tau^2_{t,x_1})\langle x^\perp\rangle^2} \lesssim \frac{1}{\Vmin(1+|x^\perp|^2)}.
\end{align}
   
Finally, if $\check d_{t,x}  > 0$, we use \eqref{est:distance2} to deduce the following estimate for $\SI$:
   \begin{align}
       |\SI(t,x)| &\lesssim \frac{1}{\Vmin(1+\check d^2_{t,x_1}+|x^\perp|^2}.
   \end{align}
   Collecting the above estimates, we obtain~\eqref{est:SCharge} for $\SI$. The estimate for $\bS$ is analogous.

   \emph{Step 2:  Proof of \eqref{est:gradSCharge}:} 
   We observe that Proposition~\ref{pro:char.front} and  Corollary~\ref{co:char.tilde} gives
   \begin{align} \label{est:xDer}
       |\nabla_x X_{s,t} |+|\nabla_x V_{s,t} |\lesssim 1+ |t \wedge \mathcal{T}_{t,x_1,v_1} - s|.
   \end{align}
   Since this term can be always absorbed by the exponential decay coming from $\nabla^2\Phi$, the desired estimates are analogous as above in the case 
    $\check \tau_{t,x_1}\leq 1$ (so in particular for $\check d_{t,x} > 0$).
    On the other hand, if $\check  \tau_{t,x_1}\geq 1$, then we rewrite $\SI$ similarly as in Step 1 of the proof of Proposition \ref{pro:gradR} as 
   \begin{align}
       &-\SI(t,x)= \int_0^t \int_{\Reals^3} \nabla \Phi(x-(t-s)v+Y_{s,t}(x-\check{\mathcal T}_{t,x,v}v,v)-X(s)) \nabla_v \mu( v+W_{s,t}(x-\check{\mathcal T}_{t,x,v}v,v)) \ud{v}\\
       &=\frac{1}{(t-\tau_\omega)^3} \int_0^t \int_{\Reals^3} \nabla \Phi(\omega-(\tau_\omega-s)\tfrac{x-\omega}{t-\tau_\omega}+Y_{s,t}(\omega ,\tfrac{x-\omega}{t-\tau_\omega})-X(s)) \nabla_v \mu( \tfrac{x-\omega}{t-\tau_\omega}+W_{s,t}(\omega,\tfrac{x-\omega}{t-\tau_\omega})) \ud{\omega}.
   \end{align}
   Taking the gradient in $x$ we obtain (omitting the arguments of $Y_{s,t}$ and $W_{s,t}$)
   \begin{align}
       |\nabla \SI(t,x)|&\lesssim \int_0^t \int_{\Reals^3} \frac{(\tau_\omega-s)}{(t-\tau_\omega)^4}  |\nabla^2 \Phi(\omega-(\tau_\omega-s)\tfrac{x-\omega}{t-\tau_\omega}+Y_{s,t}-X(s))| |\nabla_v \mu( \tfrac{x-\omega}{t-\tau_\omega}+W_{s,t})| \ud{\omega}\\
       +&  \int_0^t \int_{\Reals^3}  \frac{1}{(t-\tau_\omega)^4}|\nabla^2 \Phi(\omega-(\tau_\omega-s)\tfrac{x-\omega}{t-\tau_\omega}+Y_{s,t}-X(s))| |\nabla_v Y_{s,t}||\nabla_v \mu( \tfrac{x-\omega}{t-\tau_\omega}+W_{s,t})| \ud{\omega}\\
       +&  \int_0^t \int_{\Reals^3}  \frac{1}{(t-\tau_\omega)^4} |\nabla \Phi(\omega-(\tau_\omega-s)\tfrac{x-\omega}{t-\tau_\omega}+Y_{s,t}-X(s))| |\nabla_v^2 \mu( \tfrac{x-\omega}{t-\tau_\omega}+W_{s,t})| \ud{\omega}\\
       +&  \int_0^t \int_{\Reals^3} \frac{1}{(t-\tau_\omega)^4} |\nabla \Phi(\omega-(\tau_\omega-s)\tfrac{x-\omega}{t-\tau_\omega}+Y_{s,t}-X(s))| |\nabla^2_v \mu( \tfrac{x-\omega}{t-\tau_\omega}+W_{s,t})|\nabla_v W_{s,t}| \ud{\omega}.
   \end{align}
   We change variables back to $v$ and find
   \begin{align}
       |\nabla \SI(t,x)|&\lesssim\int_0^t \int_{\Reals^3} \frac{|s - \mathcal T_{t,x_1,v_1}|}{\check \tau_{t,x}}|\nabla^2 \Phi(X_{s,t}-X(s))| |\nabla_v \mu( V_{s,t})| \ud{v}\\
       +&  \int_0^t \int_{\Reals^3}\frac{1}{\check \tau_{t,x}} |\nabla^2 \Phi(X_{s,t}-X(s))| |\nabla_v Y_{s,t}(x-\check {\mathcal T}_{t,x_1,v_1}v,v)||\nabla_v \mu(V_{s,t})| \ud{v}\\
       +&  \int_0^t \int_{\Reals^3}\frac{1}{\check \tau_{t,x}} |\nabla \Phi(X_{s,t}-X(s))| |\nabla_v^2 \mu(V_{s,t})| \ud{v}\\
       +&  \int_0^t \int_{\Reals^3}\frac{1}{\check \tau_{t,x}} |\nabla \Phi(X_{s,t}-X(s))| |\nabla^2_v \mu(V_{s,t})|\nabla_v W_{s,t}(x-\check {\mathcal T}_{t,x_1,v_1}v,v)| \ud{v}.
   \end{align}
 By Proposition~\ref{prop:char} and \eqref{eq:Ttau} we have, using $\log (2+t) \leq \delta$,
   \begin{align}
       |\nabla_v Y_{s,t}(x-\check {\mathcal T}_{t,x_1,v_1}v,v)| + |\nabla_v W_{s,t}(x-\check {\mathcal T}_{t,x_1,v_1}v,v)| \lesssim \langle s - \mathcal T_{t,x_1,v_1} \rangle,
   \end{align}
   which can be absorbed again in the exponential decay coming from $\Phi$.
    The claim~\eqref{est:gradSCharge}  then follows by  repeating the argument of Step 1. 
\end{proof}

\subsection{Proof of Proposition~
\texorpdfstring{\ref{pro:bootstrap}(\ref{case:A})
 }
 {27i } 
}
\label{subsec:PropA}

Resorting to the definition of $\mathcal S$ in \eqref{def:S} and the definition of the norm $\|\cdot\|_{Y_T}$ in Definition \ref{defi:passage.times}, the proof of Proposition~\ref{pro:bootstrap}\eqref{case:A} just consists in combining the estimates from Propositions \ref{pro:reaction}, \ref{pro:gradR} with $\gamma=1/2$ with Proposition \ref{pro:S.ion}.  


\section{Error estimates for the friction force}
\label{sec:error.force}

In this section, we prove Proposition \ref{pro:bootstrap}\eqref{case:B} which asserts that the force acting on the point charge is given, to the leading order, by the linearization of the system. To this end, we recall \eqref{def:S}--\eqref{eq:rhoRep} and  to rewrite for $R>0$
\begin{align}
    E(t,X(t))   &= -\nabla (\phi * \rho(t,\cdot))(X(t)) \\
                &= -\nabla (\phi * (G *_{s,x} \mathcal S + \mathcal S))(X(t))\\
                &=  \mathcal{F}^R(t) +        \mathcal{E}^R_1(t) + \mathcal{E}_2(t)+\mathcal{E}_3(t),
\end{align} 
where the linearized friction force $\mathcal{F}(t)$ and the error terms $\mathcal{E}^R_1$, $\mathcal E_2$ and $\mathcal{E}_3$ are given by
\begin{align}
    \mathcal{F}^R(t)  &= -(\nabla \phi \ast \rho[g_{R,X(t),V(t) }])(R,X(t)), \label{def:Fric} \\
    \mathcal{E}^R_1(t) &= -(\nabla \phi * (\SI+ G*\SI))(t,X(t)) + (\nabla \phi *(S_{R,X(t),V(t)} + G \ast S_{R,X(t),V(t)}))(R,X(t)) ,\label{def:E1} \qquad \\
    \mathcal{E}_2(t) &= -(\nabla \phi * \mathcal R)(t,X(t)), \label{def:E2}\\
    \mathcal{E}_3(t) &= -(\nabla \phi *G* \mathcal R))(t,X(t)). \label{def:E3}
\end{align}

\subsection{Contribution of the self-consistent field}

\begin{lemma}\label{lem:E2}
    Under the bootstrap assumptions \eqref{eq:B1}--\eqref{eq:B3} with  $\delta_0,n > 0$ sufficiently small respectively large,   
    the error term $\mathcal{E}_2$ (cf.~\eqref{def:E2}) can be estimated for all $t \in [0,T]$ by
    \begin{align}
        |\mathcal{E}_2(t)| \lesssim \log (2 +t) \delta^2 \Vmin^{-\frac12}.
    \end{align}
\end{lemma}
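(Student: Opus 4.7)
The starting point is the spatial convolution
\begin{align}
    \mathcal E_2(t) = -\int_{\Reals^3} \nabla\phi(z)\,\mathcal R(t,X(t)-z)\,\ud z,
\end{align}
combined with the (nearly) quadratic pointwise bound on $\mathcal R$ obtained by taking $\gamma = 1-\beta$ in Proposition~\ref{pro:reaction} and absorbing the resulting loss into the bootstrap inequality $\log(2+T)\leq \delta^{-\beta}$, namely
\begin{align}
    |\mathcal R(t,y)|\lesssim \delta^{2-\beta}/(1+\check\tau_{t,y}^2+\check d_{t,y}^2+|y^\perp|^2).
\end{align}
A crude application of $\|\nabla\phi\|_{L^1}<\infty$ yields only $\delta^{2-\beta}$, so we must extract the extra factor $\Vmin^{-1/2}$ by a more careful argument that exploits the Yukawa singularity of $\nabla\phi$ together with the slow $\check\tau$-decay of $\mathcal R$ behind the point charge.

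The key observation is that in $\Reals^3$ we have the borderline-integrable estimate
\begin{align}
    \int_{|z|\leq r}|\nabla\phi(z)|\,\ud z \lesssim r \quad\text{for all } r\leq 1,
\end{align}
since $|\nabla\phi(z)|\lesssim |z|^{-2}$ near the origin and the radial volume element $r^2\,\ud r$ cancels the singularity exactly. I therefore split the integration at the scale $r_\ast = \Vmin^{-1/2}$. On the inner ball $\{|z|\leq r_\ast\}$ the pointwise bound on $\mathcal R$ together with the above estimate produces a contribution of size $\delta^{2-\beta}\Vmin^{-1/2}$, which is precisely the target order. On the outer region $\{|z|>r_\ast\}$, I would integrate by parts to pass the derivative onto $\mathcal R$, replacing $\nabla\phi$ by $\phi$ (whose singularity is only $|z|^{-1}$) and invoking the cubic-decay estimate on $\nabla\mathcal R$ from Proposition~\ref{pro:gradR}. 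The contributions from the front half-space $\{z_1<0\}$ and from the transverse region $|z^\perp|\gtrsim r_\ast$ are straightforward because both $\mathcal R$ and $\nabla\mathcal R$ decay polynomially there, while the behind half-space $\{z_1>0\}$ uses the change of variables $z_1 = \Vmin\,\check\tau$ to turn the cubic $\check\tau$-decay of $\nabla\mathcal R$ into an integrable expression of the desired order.

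The main obstacle will be controlling the behind-the-charge contribution in the thin slab $z_1 \in [r_\ast,1]$, $|z^\perp|\lesssim r_\ast$, because the $\check\tau$-decay of $\mathcal R$ itself is too weak to be used directly there; only after the integration by parts does the cubic decay of $\nabla\mathcal R$ suffice. One must also check that the boundary terms at $|z| = r_\ast$ produced by the integration by parts obey the same $\delta^{2-\beta}\Vmin^{-1/2}$ bound, and it is precisely this matching that pins down the choice $r_\ast = \Vmin^{-1/2}$ as the correct balance between the near-singular behavior of $\nabla\phi$ and the decay of $\mathcal R$. Once these estimates are in place, taking $\beta>0$ arbitrarily small and absorbing the constant from Proposition~\ref{pro:reaction} and the ambient logarithmic losses into $\log(2+t)$ produces the claimed bound $|\mathcal E_2(t)|\lesssim \log(2+t)\,\delta^2\Vmin^{-1/2}$.
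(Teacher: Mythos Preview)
Your approach has a genuine gap: the bounds from Propositions~\ref{pro:reaction} and~\ref{pro:gradR} alone cannot produce the factor $\Vmin^{-1/2}$. The point is that near the point charge the weight in those propositions is trivial. For $y=X(t)-z$ with $z_1>0$ one has $\check\tau_{t,y}\sim z_1/\Vmin$, so the $\check\tau$-decay only becomes active when $z_1\gtrsim \Vmin$, i.e.\ far outside the effective support of the screened potential $\phi$. Hence, on the annulus $r_\ast<|z|<1$ you only have $|\mathcal R|,|\nabla\mathcal R|\lesssim \delta^{2-\beta}$, and both
\[
\int_{r_\ast<|z|<1}|\nabla\phi(z)|\,\delta^{2-\beta}\,\ud z
\quad\text{and}\quad
\int_{r_\ast<|z|<1}\phi(z)\,\delta^{2-\beta}\,\ud z
\]
are of order $\delta^{2-\beta}$, not $\delta^{2-\beta}\Vmin^{-1/2}$. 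The integration by parts merely trades a $|z|^{-2}$ singularity for $|z|^{-1}$, which is still only borderline integrable over the annulus and yields no $\Vmin$ gain. Your proposed change of variables $z_1=\Vmin\check\tau$ does not help because the relevant range of $\check\tau$ is $[0,\Vmin^{-1}]$, over which the cubic weight is uniformly $\sim 1$.

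The paper instead proves an \emph{improved pointwise bound} $|\mathcal R(t,x)|\lesssim \log(2+t)\,\delta^2\Vmin^{-1/2}$ for $|x-X(t)|\leq \Vmin^{1/2}$, by opening the definition of $\mathcal R$. After an integration by parts in $v$ (turning $E\cdot\nabla_v\mu$ into $(t-s)\,\operatorname{div}E\cdot\mu$ plus error terms involving $\nabla_v\tilde Y,\nabla_v\tilde W$), one uses that for such $x$ the backwards characteristics are in front of the point charge whenever $s\leq t-4\Vmin^{-1/2}$, so that $|E|+|\nabla E|\lesssim \delta\langle\Vmin(t-s)\rangle^{-3}$ there; the short remaining interval $[t-4\Vmin^{-1/2},t]$ contributes the factor $\Vmin^{-1/2}$ directly. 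With this local bound in hand, the convolution with $\nabla\phi$ is then split at radius $\Vmin^{1/2}$ and the exponential tail of $\phi$ handles the complement.
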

\begin{proof}
    \step{1}
    We start by rewriting $\mathcal R$ (cf.~\eqref{def:Reaction}) as
    \begin{equation} \label{eq:SFDec}
    \begin{aligned}
       \mathcal R(t,x)
        =&\int_0^t \int_{\R^3} \left( E(s,x - (t-s) v) \cdot \nabla_v \mu (v) -  E(s,X_{s,t}) \cdot \nabla_v \mu(V_{s,t})\right)  \dd v \dd s  \\
        =&\int_0^t \int_{\R^3} \dv E(s,x - (t-s) v) (t-s) \mu (v)-\dv  E(s,X_{s,t}) (t-s)\mu(V_{s,t}) \dd v \dd s \\
        &+\int_0^t \int_{\R^3}    E(s,X_{s,t}) \cdot \nabla_v \tilde W_{s,t}(x,v)\cdot  \nabla_v \mu(V_{s,t}) \dd v \dd s \\
        &-\int_0^t \int_{\R^3} \nabla_x   E(s,X_{s,t})\cdot \nabla_v \tilde Y_{s,t}(x,v)  \mu(V_{s,t}) \dd v \dd s.
    \end{aligned}
    \end{equation}
    \step{2} We show that for $|x-X(t)|\leq \Vmin^\frac12$
    \begin{align} \label{est:R.close.to.ion}
        |\mathcal R(t,x)|\lesssim \log(2+t)\delta^2 \Vmin^{-\frac12}.
    \end{align}
    We observe that $|x-X(t)|\leq \Vmin^\frac12$ implies for $|v|<\Vmin/2$ by \eqref{eq:tau.T.comparable} and \eqref{est.nabla.tau}
    \begin{align}\label{est:dtau}
        |\check{T}_{t,x,v}| \leq 2 |\check{\tau}_{t,x}|\leq 2 \Vmin^{-\frac12}, 
        \end{align}
    and thus by Lemma \ref{lem:E.char}
    \begin{align} \label{est:E.close.ion}
       | E(s,X_{s,t})| + |\nabla E(s,X_{s,t})| \lesssim \begin{cases}
        \frac \delta{\langle \Vmin (t-s) \rangle ^3}&\qquad \text{for } {s \leq t - 4 \Vmin^{-1/2}}, \\
        \delta &\qquad \text{for } {s \geq t - 4 \Vmin^{-1/2}},
        \end{cases}
    \end{align}    
    and by Proposition \ref{pro:char.front} and Corollary \ref{co:char.tilde}
    \begin{align} \label{est:char.linear.grow}
       |\tilde Y_{s,t}(x,v) | + |\tilde W_{s,t}(x,v) | + |\nabla_v \tilde Y_{s,t}(x,v) | + |\nabla_v \tilde W_{s,t}(x,v)| \lesssim \log(2+t) \delta  \langle t-s \rangle.
    \end{align}
   The last two terms in~\eqref{eq:SFDec} can thus be estimated by
    \begin{align}
        &\int_0^t \int_{\R^3}    \left| E(s,X_{s,t})\cdot \nabla_v \tilde W_{s,t}(x,v) \cdot \nabla_v \mu(V_{s,t})\right|+ \left|\nabla_x   E(s,X_{s,t}) \cdot \nabla_v \tilde Y_{s,t}(x,v)  \mu(V_{s,t})\right| \dd v \dd s\\
        \lesssim & \log(2+t) \delta^2 \left(\int_0^t \int_{\R^3}  \frac{\langle t-s\rangle  e^{-|v|}}{\langle \Vmin (t-s)\rangle^3 } \dd v \dd s + \int_{t-4 \Vmin^{-1/2}}^t \int_{\R^3} \langle t-s \rangle  e^{-|v|} \dd v \dd s \right) \\
        &\lesssim \log(2+t){\delta^2}{\Vmin^{-1/2}}. \label{eq:split.E_2}
    \end{align}
    
    For the first term on the right-hand side of \eqref{eq:SFDec}, we furthermore use that since $\phi$ is the fundamental solution to  $-\Delta  +1$, we have
    \begin{align}
        \dv  E = - ( \rho * \phi - \rho),
    \end{align}
    and in particular $\nabla \dv E = E + \nabla \rho$
Using the assumption \eqref{eq:B2.2} together with Lemma \ref{lem:est.E}, the same arguments that lead to the estimates in Lemma \ref{lem:E.char} and thus to \eqref{est:E.close.ion} also show uniformly for all $\lambda \in [0,1]$
     \begin{align}
       |\nabla \dv  E(s,\lambda (x - (t-s) v) + (1-\lambda)X_{s,t})| \lesssim \begin{cases}
        \frac \delta{\langle \Vmin (t-s) \rangle ^3}&\qquad \text{for } {s > t - 4 \Vmin^{-1/2}}, \\
        \delta &\qquad \text{for } {s < t - 4 \Vmin^{-1/2}}.
        \end{cases}
    \end{align}
    Combining these inequalities with \eqref{est:E.close.ion} and \eqref{est:char.linear.grow} and splitting the integral as in \eqref{eq:split.E_2} we obtain
    \begin{align}
        &\left|\int_0^t \int_{\R^3} \dv  E(s,x - (t-s) v) (t-s) \mu (v)-\dv E(s,X_{s,t}) (t-s)\mu(V_{s,t}) \dd v \dd s\right|
        \lesssim \log(2+t) \delta^2 \Vmin^{-1/2},
    \end{align}
   which finishes the proof of \eqref{est:R.close.to.ion}.
    
\step{3} Conclusion of the proof. We insert the estimate \eqref{est:R.close.to.ion} into the definition of $\mathcal{E}_2(t)$ (cf.~\eqref{def:E2}) and use the exponential decay of $\phi$ to find
\begin{align}
    |\mathcal{E}_2(t)| &= \left|\int_{\Reals^3} \nabla \phi(y) * \mathcal R(X(t)-y) \ud{y} \right| \\
    &\leq \left|\int_{\{|y|\leq \Vmin^\frac12\}} \nabla \phi(y) * \mathcal R(X(t)-y) \ud{y} \right|+\left|\int_{\{|y|\geq \Vmin^\frac12\}} \nabla \phi(y) * \mathcal R(X(t)-y) \ud{y} \right|\\
    &\lesssim \log(2+t) \delta^2 \Vmin^{-\frac12},
\end{align}
where we used for the estimate of the second term that $|\mathcal R(X(t)-y)| \leq 1$ by Proposition \ref{pro:reaction} and that $\Vmin^{-1} \leq \delta$.
\end{proof}
\begin{lemma}\label{lem:E3} 
Under the bootstrap assumptions \eqref{eq:B1}--\eqref{eq:B3} with  $\delta_0,n > 0$ sufficiently small respectively large,     the error term $\mathcal{E}_3$ (cf.~\eqref{def:E3}) can be estimated   
 for all $t \in [0,T]$
 by
    \begin{align}
        |\mathcal{E}_3(t)| = |(\phi * (\nabla \mathcal R *_{t,x} G))(X(t)| \lesssim \delta^{\frac 7 4} \Vmin^{-1/2}.
    \end{align}
\end{lemma}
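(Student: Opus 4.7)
The plan is to adapt the strategy of Lemma~\ref{lem:E2} to the smoothed source $G *_{t,x} \nabla\mathcal R$, combining the pointwise space-time decay of $G$ from Proposition~\ref{pro:G} with the $Y_T$-bounds on $\nabla\mathcal R$ from Proposition~\ref{pro:gradR}. By moving the gradient from $\phi$ onto $\mathcal R$, write
\begin{align}
    \mathcal E_3(t) = -\int_{\Reals^3}\phi(X(t)-x)\,(G*_{t,x}\nabla \mathcal R)(t,x)\dd x,
\end{align}
and use the exponential decay of $\phi$ to reduce to the region $|x - X(t)| \leq \Vmin^{1/2}$; the complement yields an $e^{-c\Vmin^{1/2}}$ error which is absorbed in any polynomial bound in $\delta$.

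The main task is then a uniform bound $|(G*_{t,x}\nabla\mathcal R)(t,x)| \lesssim \delta^{7/4}\Vmin^{-1/2}$ for $x \in B_{\Vmin^{1/2}}(X(t))$. Split the $s$-integration in the convolution at $s = t - c\Vmin^{-1/2}$ for a suitable constant $c$. For $s \in [t-c\Vmin^{-1/2}, t]$, use $\|G(t-s,\cdot)\|_{L^1_x}\lesssim (1+t-s)^{-1}$ together with the uniform bound $|\nabla\mathcal R|\lesssim \delta^{1+\gamma}$ (with $\gamma = 3/4$, permitted by Proposition~\ref{pro:gradR}); integrating the logarithmic kernel over a window of length $\Vmin^{-1/2}$ yields a contribution of order $\delta^{7/4}\Vmin^{-1/2}$, which is exactly the target. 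For $s \leq t - c\Vmin^{-1/2}$, one has $|X(t) - X(s)| \gtrsim (t-s)\Vmin \geq \Vmin^{1/2}$, so for $x \in B_{\Vmin^{1/2}}(X(t))$ and $|x-y|\leq (t-s)\Vmin/4$ the point $y$ lies strictly ahead of $X(s)$ and the $\check d^{\,3}$-weight in $Y_T$ yields the refined pointwise estimate $|\nabla\mathcal R(s,y)|\lesssim \delta^{1+\gamma}/((t-s)\Vmin)^3$. Combining this with the pointwise decay $|G(t-s,z)|\lesssim 1/((t-s)^4+|z|^4)$ from Proposition~\ref{pro:G} and estimating the remaining $y$-region behind $X(s)$ via the change of variables $\check\tau = (X_1(s)-y_1)/\Vmin$ contributes a subdominant term of order $\delta^{1+\gamma}\Vmin^{-3/2}$ after integration in~$s$.

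The main obstacle is the careful handling of the region behind $X(s)$, where the $\check\tau^{\,3}$-weight in $Y_T$ decays more slowly in the longitudinal direction than the $\check d^{\,3}$-weight ahead of $X(s)$ by a factor of $\Vmin^3$ (since $\check\tau \approx \check d / \Vmin$). In particular $\|\nabla\mathcal R(s,\cdot)\|_{L^1_y}$ is not finite, so one cannot simply bound the convolution by $\|G\|_{L^1}\|\nabla\mathcal R\|_{L^\infty}$. Instead, the pointwise decay of $G$ must be used in tandem with the slowly decaying $\check\tau^{\,3}$ weight of $\nabla\mathcal R$ along the longitudinal variable, verifying that this degraded decay is compensated by the $1/|x-y|^4$ fall-off of $G$ at large distances. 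Balancing these estimates against the logarithmic losses coming from the bootstrap~\eqref{eq:B2} is what fixes $\gamma = 3/4$ and produces the claimed exponent $\delta^{7/4}\Vmin^{-1/2}$.
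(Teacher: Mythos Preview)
Your proposal is correct and follows essentially the same route as the paper: localize via the exponential decay of $\phi$ to $|x-X(t)|\leq \Vmin^{1/2}$, then split the space--time convolution $G*\nabla\mathcal R$ into a short-time window (handled by $\|G\|_{L^1}$ and the uniform bound $|\nabla\mathcal R|\lesssim\delta^{1+\gamma}$) and a long-time region where the evaluation point lies ahead of the charge so that the $\check d^{\,3}$-weight from Proposition~\ref{pro:gradR} yields the extra factor of $\Vmin^{-1/2}$. The only differences are organizational---the paper uses a fixed spatial cutoff $|y|\leq \tfrac14\Vmin^{1/2}$ and takes $\gamma=1-\beta$ with $\beta=1/8$, whereas you use the time-dependent cutoff $|x-y|\leq \tfrac14(t-s)\Vmin$ and $\gamma=3/4$; your closing paragraph about the $\check\tau$-region behind the charge is more cautious than needed, since the crude bound $|\nabla\mathcal R|\lesssim\delta^{1+\gamma}$ combined with $|G|\lesssim|x-y|^{-4}$ already suffices there.
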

\begin{proof}
   Let $|z|\leq \frac18 \Vmin^\frac12$, and consider
   \begin{align}
        (\nabla \mathcal R *_{t,x} G)(X(t)+z) = \int_0^t \int_{\Reals^3} \nabla \mathcal R (t-s,X(t)+z-y) G(s,y) \ud{y} \ud{s}. 
    \end{align}
    We split the integral in the regions
    \begin{align}
        A_1 =\{(s,y)\in [0,t]\times \Reals^3: |t-s|\geq  \Vmin^{-\frac12},\, |y|\leq \frac14\Vmin^\frac12  \}, \quad A_2 = [0,t]\times \Reals^3) \setminus A_1.
    \end{align}
    In the region $A_1$ we have
    \begin{align}
        \check d_{t-s,X(t)+z-y}\geq \frac12 \Vmin^\frac12.
    \end{align}
    Using the apriori estimate on $\mathcal R$ from Proposition \ref{pro:gradR} together with \eqref{eq:G_L^1} we therefore have for any $\beta \in (0,1)$
    \begin{align} \label{est:SF.A}
        \left|\int_{A_1} \nabla \mathcal R(t-s,X(t)+z-y) G(s,y) \ud{y} \ud{s}\right| &\lesssim \delta^{2-\beta} \Vmin^{-3/2}  \int_0^t \int_{B_{\Vmin^\frac12}} |G(s,y)|\ud{y} \\
        &\lesssim \log(2+t) \delta^{2-\beta} \Vmin^{-3/2} \lesssim \delta^{2-2\beta} \Vmin^{-3/2} ,
    \end{align}
    by using \eqref{eq:B2} with $n \geq \beta^{-1}$.
    
    On the complement of $A_1$, we can use that $|t-s|\leq \Vmin^{-\frac12}$ or $|y|\geq \frac14 \Vmin^\frac12$. Therefore using  \eqref{eq:G_L^1}
 and \eqref{eq:Gpoint} together with Proposition \ref{pro:gradR}
    \begin{align} \label{est:SF.A^c}
        &\left|\int_{A_1^c} \nabla \mathcal R(t-s,X(t)+z-y) G(s,y) \ud{y} \ud{s}\right| \\
        \lesssim &\delta^{2-\beta} \left( \Vmin^{-\frac12} \sup_{0\leq s\leq t} \int_{\Reals^3} |G(s,y)| \ud{y}  +\int_0^t \int_{B^c_{\frac14\Vmin^\frac12}} \frac{1}{1+|z^\perp-y^\perp|^3} |G(s,y)|\ud{y} \ud{s} \right)  \lesssim \delta^{2-\beta} \Vmin^{-\frac12}.
    \end{align}
   Choosing $\beta = 1/8$ and combining the estimates \eqref{est:SF.A}--\eqref{est:SF.A^c} yields
for all $|z|\leq \Vmin^\frac12$
    \begin{align} 
        |(G \ast \nabla \mathcal R)(t,X(t) + z)|\lesssim \delta^{\frac 7 4} \Vmin^{-\frac12}.
    \end{align}
    Moreover, combining Propositions \ref{pro:G} and \ref{pro:gradR} yields
      $  |(G \ast \nabla \mathcal R)(t,X(t) + z)|\lesssim 1 $  for all $z \in \R^3$ .
           
   Combining these estimates  with the decay of $\phi$ as in Step 3 of the previous proof yields the assertion.
\end{proof}

\subsection{Estimate of \texorpdfstring{$\mathcal E^R_1$}{E}.}

In order to estimate $\mathcal E_1^R$, we will first provide separate estimates for $S_R - \bS$ and for $\bS - \SI$, where $\bS$ is defined in \eqref{def:bS} and where we denote for shortness $S_R = S_{R,X(T),V(T)}$.

\begin{lemma}\label{lem:S.R.S.bar}
    Under the bootstrap assumptions \eqref{eq:B1}--\eqref{eq:B3} with  $\delta_0,n > 0$ sufficiently small respectively large,   
      the function $S_R = S_{R,X(T),V(T)}$ defined in~\eqref{eq:S_R} can be estimated, for all $x \in \R^3$ and all $0 \leq t \leq T \leq R$ by 
    \begin{align}\label{eq:Strivial} 
        |S_{R}(t,x)|\lesssim \frac{1}{\Vmin \langle x^\perp \rangle^2}.
    \end{align}
Moreover, for all $x \in \R^3$      
       with $|x-X(T)|\leq \Vmin^\frac25 $ and all $0 \leq t \leq T \leq R$, we can estimate
    \begin{enumerate}
        \item For $t\geq 4 \Vmin^{-\frac35}$
        \begin{align} \label{eq:SRtlarge}
            |\bS(T-t,x)|+|S_R(R-t,x)| \lesssim e^{-c t \Vmin } .
        \end{align} 
        \item For $t\leq 4 \Vmin^{-\frac35}$
        \begin{align} \label{eq:SRtsmall} 
            |\bS(T-t,x)-S_R(R-t,x)| \lesssim \delta \Vmin^{-\frac{6}{5}}. 
        \end{align}
    \end{enumerate}
\end{lemma}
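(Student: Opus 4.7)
My plan is to treat the three bounds separately, exploiting the exponential decay of $\Phi$, the lower bound $|V(T)|\ge\Vmin$, and the fact that $V(T)\parallel e_1$ by \eqref{eq:straight.line}.

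For \eqref{eq:Strivial}, I would follow the proof of \eqref{est:SCharge} in Proposition~\ref{pro:S.ion}. After the substitution $\sigma=t-s$, the source becomes
\begin{align}
    S_R(t,x)=-\int_0^t\int_{\R^3}\nabla\Phi\bigl((x-X(T)+(R-t)V(T))+\sigma(V(T)-v)\bigr)\cdot\nabla_v\mu(v)\,dv\,d\sigma,
\end{align}
i.e.\ a family of rectilinear trajectories at velocity $V(T)-v$. Introducing the linearized collision time $\mathcal T^R$ and impact parameter $\check x^\perp$ for this trajectory in the spirit of Definition~\ref{def:Parameters2}, I would split the $v$-integral into $|v|\le\langle x^\perp\rangle/(2\check{\mathcal T}^R)$ (where $|\check x^\perp|\gtrsim\langle x^\perp\rangle$ and the exponential decay of $\Phi$ in the impact parameter gives the $\langle x^\perp\rangle^{-2}$ factor) and its complement (where $e^{-|v|}$ absorbs $\langle x^\perp\rangle^{-2}$), exactly as in Step~1 of the proof of Proposition~\ref{pro:S.ion}. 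The prefactor $\Vmin^{-1}$ comes from integrating $e^{-|s-\mathcal T^R|\Vmin}$ in $\sigma$.

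For \eqref{eq:SRtlarge}, under the same substitution the arguments of $\nabla\Phi$ read $A_2:=(x-X(T)+tV(T))+\sigma(V(T)-v)$ for $S_R(R-t,x)$ and $A_1:=(x-X(T-t))+\sigma(V(T-t)-v)$ for $\bS(T-t,x)$. Since $\supp\mu\subset B_{\Vmin/5}$ by \eqref{eq:B3} and $|V(T)|\ge\Vmin$, the first coordinate of $A_2$ satisfies $|(A_2)_1|\ge(t+\sigma)\Vmin/2-|x-X(T)|\ge(t+\sigma)\Vmin/4$ under the hypotheses $|x-X(T)|\le\Vmin^{2/5}$ and $t\ge 4\Vmin^{-3/5}$. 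For $A_1$, the corrections $|X(T-t)-X(T)+tV(T)|\lesssim t^2\delta$ and $|V(T-t)-V(T)|\lesssim t\delta$, obtained from $|\dot V|\lesssim\delta$ via Lemma~\ref{lem:est.E} and the bootstrap \eqref{eq:B2.2}, are negligible against $\Vmin^{2/5}$, so the same lower bound holds. Exponential decay of $\Phi$ then yields $|\nabla\Phi|\lesssim e^{-c(t+\sigma)\Vmin}$, and integrating in $\sigma$ and $v$ gives the claimed $e^{-ct\Vmin}$.

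For \eqref{eq:SRtsmall} I would decompose
\begin{align}
    \bS(T-t,x)-S_R(R-t,x) &= -\int_0^\infty\!\!\int_{\R^3}\!\bigl[\nabla\Phi(A_1)-\nabla\Phi(A_2)\bigr]\nabla_v\mu(v)\,dv\,d\sigma \\
    &\quad+\int_{R-t}^\infty\!\!\int_{\R^3}\!\nabla\Phi(A_2)\nabla_v\mu(v)\,dv\,d\sigma.
\end{align}
The tail integral is controlled by the argument of \eqref{eq:SRtlarge} with $R-t$ in place of $t$, yielding exponential smallness in $(R-t)\Vmin$ uniformly in $R\ge T$. For the main term I use $|\nabla\Phi(A_1)-\nabla\Phi(A_2)|\lesssim|A_1-A_2|$ with $|A_1-A_2|\lesssim\delta(t^2+\sigma t)$ (Taylor expansion and $|\dot V|\lesssim\delta$); splitting the $\sigma$-integral at $\sigma_\star\sim\Vmin^{-3/5}$, the near part ($\sigma\le\sigma_\star$) is bounded by $\delta(t\sigma_\star+t^2)\sigma_\star\lesssim\delta\Vmin^{-6/5}$ using $t\le 4\Vmin^{-3/5}$ and the trivial $L^\infty$ bound on $\nabla^2\Phi$, while for $\sigma\ge\sigma_\star$ the exponential decay of $\nabla^2\Phi$ at arguments of size $\gtrsim(\sigma+t)\Vmin\gtrsim\Vmin^{2/5}$ dominates the polynomial factor $\delta\sigma t$. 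The main obstacle is precisely this last step: the exponent $6/5=2\cdot 3/5$ reflects squaring the natural time scale $\Vmin^{-3/5}$, and the delicate balance between the smallness of $|A_1-A_2|$ and the length of the effective integration region forces one to split at exactly the scale where $\nabla^2\Phi$ transitions from polynomial to exponential decay.
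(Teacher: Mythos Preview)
Your plan is essentially the paper's proof. For \eqref{eq:Strivial} the paper likewise defers to the argument for \eqref{est:SCharge}, and for \eqref{eq:SRtsmall} the paper uses the same Taylor expansion and split at the scale $\Vmin^{-3/5}$ that you describe.

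There is one genuine gap, in your treatment of $A_1$ for \eqref{eq:SRtlarge}. The corrections $|X(T-t)-X(T)+tV(T)|\lesssim t^2\delta$ and $\sigma|V(T-t)-V(T)|\lesssim \sigma t\delta$ are \emph{not} negligible against $\Vmin^{2/5}$ (nor against $(t+\sigma)\Vmin$) uniformly in $t\in[4\Vmin^{-3/5},T]$: under \eqref{eq:B2} the time $T$ may be as large as $e^{\delta^{-1/n}}$, so $t^2\delta$ can be enormous. The comparison $A_1\approx A_2$ therefore breaks down for large $t$, and your route to the lower bound on $|(A_1)_1|$ fails. The fix, which is exactly what the paper does, is to bound $(A_1)_1$ directly without passing through $A_2$: write
\[
x_1-X_1(T-t)=(x_1-X_1(T))+\int_{T-t}^T V_1(\tau)\,d\tau\ge -\Vmin^{2/5}+t\Vmin,
\]
and combine with $V_1(T-t)-v_1\ge \tfrac{4}{5}\Vmin$ to get $(A_1)_1\ge \tfrac12(t+\sigma)\Vmin-\Vmin^{2/5}$, using only $V_1\ge\Vmin$ and never $|\dot V|\lesssim\delta$.
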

\begin{proof}
    We rewrite $\bS$ and $S_R$ as
    \begin{align}
        \bS(T-t,x)&=-\int_{-\infty}^{R-t}\int_{\Reals^3} \nabla \Phi (x-X(T-t)-(R-t-s)(v- V(T-t))) \cdot \nabla \mu(v)\ud{v} \ud{s} , \\
        S_R(R-t,x) &= -\int_{0}^{R-t}\int_{\Reals^3} \nabla \Phi (x-X(T)-(R-t-s)v+(R-s) V(T)) \cdot \nabla \mu(v)\ud{v} \ud{s} .
    \end{align}
    For $|v| \leq \Vmin/2$ and $s \leq R-t$, we have
    \begin{align}
       & |x-X(T-t)-(R-t-s)(v- V(T -t))| \\
        &\geq \left|(R-t-s) V(T-t) + \int_{T -t}^{T} V(\tau) \dd \tau \right| -  |x-X(T)| - |(R-t-s)v| \\
        &\geq \frac 1 2 (R-s) \Vmin -  |x-X(T)|,
    \end{align}
    and
    \begin{align}
            |x-X(T)-(R-t-s)v+(R-s) V(T)|  & \geq \frac 1 2 (R-s) \Vmin -  |x-X(T)|.
    \end{align}
    Since $|x-X(T)|\leq \Vmin^\frac25$ and $\supp \mu \subset B_{\frac{\Vmin}5}$, the integrands of both integrals above thus satisfy the bound
    \begin{align}
       |\nabla \Phi (x-X(T-t)-(R-t-s)(v- V(T-t))| &\lesssim e^{-c(R-s)\Vmin}, \quad \text{for $R-s\geq 4 \Vmin^{-\frac35}$}, \\
       |\nabla \Phi (x-X(T)-(R-t-s)v+(R-s) V(T))| &\lesssim e^{-c(R-s)\Vmin}, \quad \text{for $R-s\geq 4 \Vmin^{-\frac35}$}.
    \end{align}
    In particular, for $t\geq 4\Vmin^{-\frac35}$ we immediately \eqref{eq:SRtlarge}.
    On the other hand, for $s \leq R$
    \begin{align}
            &|x-X(T-t)-(R-t-s)(v- V(T -t)) - (x-X(T)-(R-t-s)v+(R-s) V(T))| \\
            &\leq \int_{T - t}^{T} |V(\sigma) - V(T - t)| \dd \sigma + (R-s) |V(T) - V(t-T)| \\
            &\leq t( t + R - s) \sup |\dot{V}|\leq  C \delta t( t + R - s),
    \end{align}
   where we used $|\dot{V}| \leq \|E\|_\infty \leq \delta$ by \eqref{apriori:E}.
    Therefore, if $t\leq 4\Vmin^{-\frac35}$, the difference is bounded by
    \begin{align}
        |\bS(T-t,x)-S_R(R-t,x)| \lesssim e^{-c \Vmin^\frac25} 
        + \int_{R-4\Vmin^{-\frac35}}^{R-t}\int_{\Reals^3} I_\Phi |\nabla \mu(v)|\ud{v} \ud{s},
    \end{align}
    where by Taylor expansion of $\nabla \Phi$, $I_\Phi$ is given by
    \begin{align}
        I_\Phi := \|\nabla^2 \Phi\|_{L^\infty} \delta (4 \Vmin^{-\frac35})^2 \sup \lesssim \delta \Vmin^{-\frac{6}5},
    \end{align}
    and the claim~\eqref{eq:SRtsmall} follows.
    Finally, the proof of~\eqref{eq:Strivial} follows analogous to~\eqref{est:SCharge}. 
\end{proof}

The following Lemma shows that $\bS$ is a good approximation for $\SI$. 
\begin{lemma} \label{lem:S.bar.S}
    Under the bootstrap assumptions \eqref{eq:B1}--\eqref{eq:B3} with  $\delta_0,n > 0$ sufficiently small, if  $T\geq 4 \Vmin^{-\frac35}$, we have for all $x \in \R^3$ with $|x-X(T)|\leq \Vmin^{\frac25}$ and all $0\leq t\leq T$
    \begin{enumerate}
        \item if $t\geq 4 \Vmin^{-\frac35}$ we have the following estimate
   \begin{align}\label{eq:BSSIlarge} 
       |\bS(T-t,x)|+|\SI(T-t,x)|&\lesssim e^{-ct \Vmin}.
   \end{align}
        \item if $t\leq 4 \Vmin^{-\frac35}$ we have the following estimate
   \begin{align}\label{eq:BSSIsmall}
       |\bS(T-t,x)-\SI(T-t,x)|&\lesssim \log(2+T) \delta  \Vmin^{-\frac{6}5}.
   \end{align}
   \end{enumerate} 
\end{lemma}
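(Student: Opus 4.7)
The plan mirrors the structure of the proof of Lemma \ref{lem:S.R.S.bar}, splitting according to whether the separation $T-t-s$ between the integration variable $s$ and the "target time" $T-t$ is large or small compared to $\Vmin^{-3/5}$.

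For Part 1 ($t\geq 4\Vmin^{-3/5}$), I would establish a lower bound on both arguments uniformly for $s \leq T-t$ and $v\in\supp\mu \subset B_{\Vmin/5}$. For the linearized argument, writing $x_1 - X_1(T-t) = (x_1-X_1(T)) + \int_{T-t}^T V_1(\tau)\,d\tau$ and using $V_1\geq\Vmin$, $|v_1|\leq \Vmin/5$, $|x-X(T)|\leq\Vmin^{2/5}$, the first component is bounded below by
\begin{align}
-\Vmin^{2/5} + t\Vmin + (T-t-s)\tfrac{4}{5}\Vmin \;\gtrsim\; (T-s)\Vmin,
\end{align}
once $t\geq 4\Vmin^{-3/5}$. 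The analogous lower bound for the true argument $X_{s,T-t}(x,v)-X(s)$ follows from \eqref{eq:X_s,t.comparable} in Corollary \ref{cor:char.inequalities}. Exponential decay of $\nabla\Phi$ gives the pointwise bound $e^{-c(T-s)\Vmin}$ for both integrands, and integrating over $s$ yields \eqref{eq:BSSIlarge}.

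For Part 2 ($t\leq 4\Vmin^{-3/5}$), I would split $\bS(T-t,x)-\SI(T-t,x)$ into three pieces: the tail $\int_{-\infty}^{0}$ coming from $\bS$ only, the "far" part $\int_{0}^{[T-t-4\Vmin^{-3/5}]_+}$, and the "near-collision" part $\int_{[T-t-4\Vmin^{-3/5}]_+}^{T-t}$. The tail is handled exactly as in Part 1: since $T-s\geq T\geq 4\Vmin^{-3/5}$, we get superexponential smallness $\lesssim e^{-cT\Vmin}\leq e^{-c\Vmin^{2/5}}$, which is negligible compared to $\delta\Vmin^{-6/5}$. The "far" piece is treated by applying the Part~1 lower bound on both arguments: whenever $T-t-s\geq 4\Vmin^{-3/5}$, each argument is $\gtrsim\Vmin^{2/5}$, so each integrand is $\lesssim e^{-c\Vmin^{2/5}}$; the $s$-integral has length at most $T$, but the bootstrap \eqref{eq:B2} forces $T \lesssim e^{\delta^{-1/n}}\leq e^{\Vmin^{1/n}}$, which for $n$ large enough is absorbed into $e^{-c\Vmin^{2/5}/2}$.

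The main term is the near-collision piece, where I Taylor expand
\begin{align}
\nabla\Phi(\SI_{\mathrm{arg}})\cdot\nabla\mu(V_{s,T-t})-\nabla\Phi(\bS_{\mathrm{arg}})\cdot\nabla\mu(v) = [\nabla\Phi(\SI_{\mathrm{arg}})-\nabla\Phi(\bS_{\mathrm{arg}})]\cdot\nabla\mu(V_{s,T-t}) + \nabla\Phi(\bS_{\mathrm{arg}})\cdot[\nabla\mu(V_{s,T-t})-\nabla\mu(v)].
\end{align}
The difference of arguments factorizes as
\begin{align}
\SI_{\mathrm{arg}} - \bS_{\mathrm{arg}} = \tilde{Y}_{s,T-t}(x,v) + \int_{s}^{T-t}\bigl(V(\tau)-V(T-t)\bigr)\,d\tau,
\end{align}
with the second integral bounded by $\|\dot{V}\|_\infty (T-t-s)^2 \lesssim \delta(T-t-s)^2$ via \eqref{apriori:E}. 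For the characteristic error, under $|x-X(T)|\leq\Vmin^{2/5}$ and $v\in B_{\Vmin/5}$ the crude bound $|\tilde{Y}_{s,T-t}|+|\tilde{W}_{s,T-t}|\lesssim \delta\log(2+T)(T-t-s)$ follows from Proposition \ref{pro:char.front} and Corollary \ref{co:char.tilde} exactly as in Step~2 of the proof of Lemma \ref{lem:E2}. Using $\|\nabla^2\Phi\|_\infty\lesssim 1$ and $|\nabla\mu(V_{s,T-t})|\lesssim e^{-|v|/2}$ from Lemma \ref{lem:E.char}, the integrand is bounded by $C\delta\log(2+T)(T-t-s)\,e^{-|v|/2}$. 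The $s$-integral over an interval of length $4\Vmin^{-3/5}$ contributes $\int_0^{4\Vmin^{-3/5}}\sigma\,d\sigma\lesssim \Vmin^{-6/5}$ and the $v$-integral is $O(1)$, delivering precisely \eqref{eq:BSSIsmall}.

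The main technical obstacle is verifying that the crude characteristic bound $|\tilde{Y}|,|\tilde{W}|\lesssim \delta\log(2+T)(T-t-s)$ genuinely holds uniformly in $v\in B_{\Vmin/5}$ in the near-collision regime. Because the $(s,x,v)$ considered are close to the point charge's spacetime trajectory, the straightening regime of Corollary \ref{cor:psi} is not uniform: for some $v$ we are in the "post-collision" branch \ref{it:A} and for others in the "pre-collision" branches \ref{it:K}/\ref{it:F}. However, since we only need the crude $(T-t-s)$-linear bound (rather than the finer dispersive ones exploited in Sections \ref{sec:source}), in each regime the desired estimate follows by dropping all decay factors in the denominators of Corollary \ref{co:char.tilde} and using $\log(2+t)\leq\delta^{-\beta}$ from the bootstrap; the uniformity then reduces to checking that $\check{\mathcal T}_{T-t,x,v}\lesssim T-t-s$ whenever the integrand is nonzero, which follows from $|x-X(T-t)|\lesssim\Vmin^{2/5}$ and $|v|\leq\Vmin/5$.
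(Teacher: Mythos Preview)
Your treatment of Part 1 and of the tail and far pieces in Part 2 matches the paper and is fine; so is your handling of the $\tilde Y$--contribution to the near-collision piece (indeed $|\tilde Y_{s,\lambda}|\lesssim\delta(\lambda-s)$ follows from Proposition~\ref{pro:char.front} and Corollary~\ref{co:char.tilde}). The gap is in the second term of your Taylor split,
\[
\nabla\Phi(\bS_{\mathrm{arg}})\cdot\bigl[\nabla\mu(V_{s,T-t})-\nabla\mu(v)\bigr],
\]
which is controlled by $|\tilde W_{s,T-t}|$. You claim $|\tilde W_{s,T-t}|\lesssim\delta\log(2+T)(T-t-s)$ ``exactly as in Step~2 of the proof of Lemma~\ref{lem:E2}'', but that step gives the bound with $\langle T-t-s\rangle$, not $(T-t-s)$, and the distinction is exactly the point here. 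Proposition~\ref{pro:char.front} and Corollary~\ref{co:char.tilde} only yield $|\tilde W|\lesssim\delta$; the factor $(T-t-s)$ is genuinely absent. The reason is visible from the integral formula $\tilde W_{s,\lambda}=-\int_s^\lambda\bar E(\sigma,X_{\sigma,\lambda})\,d\sigma$: while the self-consistent part satisfies $|E|\lesssim\delta$ and so contributes $\delta(\lambda-s)$, the point-charge part $\nabla\Phi(X_{\sigma,\lambda}-X(\sigma))$ is $O(1)$ near $\sigma=\mathcal T_{\lambda,x,v}$ and contributes $\int_s^\lambda e^{-c\Vmin|\sigma-\mathcal T|}\,d\sigma\sim \Vmin^{-1}$, which is $\le\delta$ but does \emph{not} carry a $(\lambda-s)$ factor. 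With only $|\tilde W|\lesssim\delta$, integrating over $s$ on an interval of length $\sim\Vmin^{-3/5}$ gives $\delta\Vmin^{-3/5}$, which is larger than the required $\delta\Vmin^{-6/5}$ by a factor $\Vmin^{3/5}$; exploiting the $s$-localisation of $\nabla\Phi(\bS_{\mathrm{arg}})$ only improves this to $\Vmin^{-2}$, still insufficient since one only has $\Vmin^{-1}<\delta$.

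The paper recovers the missing factor by first integrating by parts in $v$: this produces an explicit prefactor $(\lambda-s)$ in the main term $I_1$, and replaces $\tilde W$ by $\nabla_v\tilde W$ (resp.\ $\nabla_v\tilde Y$) in the remainders $I_2$, $I_3$. The point is that $\nabla_v\tilde W_{s,\lambda}=\int_s^\lambda(\lambda-\sigma)\nabla\bar E(\sigma,X_{\sigma,\lambda})\,d\sigma+(\text{lower order})$ now carries an internal factor $(\lambda-\sigma)\le\Vmin^{-3/5}$, so that $|\nabla_v\tilde W_{s,\lambda}|\lesssim\delta\Vmin^{-3/5}$ in the near-collision regime (using $\check{\mathcal T}_{\lambda,x,v}\lesssim\Vmin^{-3/5}$); the same mechanism, via the bounds for $\nabla_v\tilde Y$ in Corollary~\ref{co:char.tilde}, controls $I_3$. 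Integrating $|I_1|+|I_2|+|I_3|\lesssim\log(2+T)\delta\Vmin^{-3/5}$ over $s$ then yields \eqref{eq:BSSIsmall}.
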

\begin{proof}
    The proof is largely analogous to the previous Lemma and we only detail the differences. 
    We first observe that for $|x-X(T)|\leq \Vmin^{\frac25}$ and $T-s\geq 4 \Vmin^{-\frac35}$, we have
    \begin{align} \label{est:arguments.Phi}
        | \nabla \Phi(x-X(T-t)-(T-t-s)(v-V(T-t)))| + |\nabla \Phi(X_{s,T-t}(x,v)-X(s))| \lesssim e^{-c(T-s)\Vmin},
    \end{align}
    and~\eqref{eq:BSSIlarge} follows as above. 
    
    It remains to show~\eqref{eq:BSSIsmall}.  Let $t\leq 4 \Vmin^{-\frac35}$. With the notation $\lambda = T-s$ and omitting arguments of $X_{s,\lambda}$ and $V_{s,\lambda}$, we split the error into 
   \begin{align} \label{eq:BS-SI}
       &|(\bS-\SI)(T-t,x)|\leq \left| \int_{-\infty}^0 \int_{\Reals^3} \nabla \Phi(x-X(\lambda )-(\lambda-s)(v-V(\lambda))) \cdot \nabla_v \mu(v) \ud{v} \ud{s}\right| \\ 
       +&\left| \int_{0}^{T-4 \Vmin^{-\frac35}} \int_{\Reals^3}\nabla \Phi(x-X(\lambda)-(\lambda-s)(v-V(\lambda)))\cdot  \nabla_v \mu(v)-\nabla \Phi(X_{s,\lambda }-X(s))\cdot \nabla_v \mu(V_{s,\lambda }) \ud{v} \ud{s} \right|\\
       +&\left| \int_{T-4 \Vmin^{-\frac35}}^{\lambda} \int_{\Reals^3}\nabla \Phi(x-X(\lambda)-(\lambda-s)(v-V(\lambda)))\cdot  \nabla_v \mu(v)-\nabla \Phi(X_{s,\lambda }-X(s))\cdot \nabla_v \mu(V_{s,\lambda }) \ud{v} \ud{s} \right|.
   \end{align}
 Relying on \eqref{est:arguments.Phi}, the first two lines can be estimated as before, by
  \begin{align*}
  &\left| \int_{0}^{T-4 \Vmin^{-\frac35}} \int_{\Reals^3}[\nabla \Phi(x-X(\lambda)-(\lambda-s)(v-V(\lambda))) \cdot \nabla_v \mu(v)-\nabla \Phi(X_{s,\lambda }-X(s)) \cdot \nabla_v \mu(V_{s,\lambda })] \ud{v} \ud{s} \right| \\
      +&\left| \int_{-\infty}^0 \int_{\Reals^3} \nabla \Phi(x-X(\lambda )-(\lambda-s)(v-V(\lambda))) \cdot \nabla_v \mu(v) \ud{v} \ud{s}\right|
             \lesssim e^{-c\Vmin^\frac25}. 
  \end{align*}
  For the last term in \eqref{eq:BS-SI}, we first take a closer look at the velocity integral. We integrate by parts
  \begin{align}
      &\int_{\Reals^3}\nabla \Phi(x-X(\lambda)-(\lambda-s)(v-V(\lambda)))\cdot \nabla_v \mu(v)-\nabla \Phi(X_{s,\lambda }-X(s)) \cdot \nabla_v \mu(V_{s,\lambda }) \ud{v} \\
      =& -\int_{\Reals^3}(\lambda-s)\left[\Delta \Phi(x-X(\lambda)-(\lambda-s)(v-V(\lambda))) \mu(v)-\Delta \Phi(X_{s,\lambda }-X(s)) \mu(V_{s,\lambda })\right] \ud{v}\\
      & +\int_{\Reals^3}\nabla \Phi(X_{s,\lambda }-X(s)) \cdot \nabla_v \tilde{W}_{s,\lambda }\mu(V_{s,\lambda }) \ud{v} -\int_{\Reals^3}\nabla_v \tilde{Y}_{s,\lambda }\nabla^2 \Phi(X_{s,\lambda }-X(s)) \cdot \nabla_v \mu(V_{s,\lambda })\ud{v}\\
      =&:I_1+I_2+I_3,
  \end{align}
  and estimate $I_1$, $I_2$, $I_3$ separately. 
 For $I_1$ we use again $|\dot V(s)|\lesssim \delta$
  as well as the estimates from Proposition~\ref{pro:char.front} and Corollary \ref{co:char.tilde} to deduce that, for $|\lambda-s|\leq 4 \Vmin^{-\frac35}$, we have 
  \begin{align}
      &|x-X(\lambda)-(\lambda-s)(v-V(\lambda))-(X_{s,\lambda }-X(s))| + |v  - V_{s,\lambda}|   \lesssim \delta .
  \end{align}
  This yields the bound 
  \begin{align}
      |I_1| \lesssim \|\nabla^3 \Phi\|_{L^\infty(\Reals^3)}\Vmin^{-\frac{3}{5}} \delta .
  \end{align}

  For $I_2$, $I_3$ we observe that $|x-X(T)|\leq \Vmin^{\frac25}$ and $t \leq 4 \Vmin^{- 3/5}$ implies $|\check{\mathcal{T}}_{\lambda,x,v}|\lesssim \Vmin^{-\frac35} $ due to \eqref{est.nabla_x.T}. Combining this with Corollary~\ref{co:char.tilde} and Proposition~\ref{pro:char.front} we obtain for $\lambda - s \leq \Vmin^{-3/5}$ 
  \begin{align}
      |I_2|+|I_3|\lesssim \log (2 + T) \delta  \Vmin^{-\frac{3}5}.
  \end{align}
  Using these estimates in the last term in \eqref{eq:BS-SI} finishes the proof.
\end{proof}

Inserting the estimates from Lemmas \ref{lem:S.R.S.bar} and \ref{lem:S.bar.S} into the definition of the error term $\mathcal{E}_1$ (cf. \eqref{def:E1}) yields the following estimate.
\begin{corollary}\label{cor:E1}  Under the bootstrap assumptions \eqref{eq:B1}--\eqref{eq:B3} with  $\delta_0,n > 0$ sufficiently small, and if $ T \geq 4\Vmin^{-\frac35}$, we have for all $R \geq T$
    \begin{align}
        |\mathcal{E}^R_1(T)| \lesssim \delta \log (2 + T)  \Vmin^{-\frac{6}5}.
    \end{align}
\end{corollary}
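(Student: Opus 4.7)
The plan is to exploit the chain of comparisons $S_R \approx \bS \approx \SI$ quantified by Lemmas \ref{lem:S.R.S.bar} and \ref{lem:S.bar.S}. After the substitutions $t = T-s$ and $t = R-s$ that align the two sources' ``times into the past'', I would rewrite
\begin{align}
\mathcal{E}_1^R(T) = I_1 + I_2 + I_3,
\end{align}
where
\begin{align}
I_1 &:= \int_{\R^3} \nabla\phi(X(T)-y)\bigl[S_R(R,y)-\SI(T,y)\bigr]dy, \\
I_2 &:= \int_0^T \!\!\int_{\R^3} (\nabla\phi*G)(t,X(T)-z)\bigl[S_R(R-t,z)-\SI(T-t,z)\bigr]dz\,dt, \\
I_3 &:= \int_T^R \!\!\int_{\R^3} (\nabla\phi*G)(t,X(T)-z)\,S_R(R-t,z)\,dz\,dt.
\end{align}
Throughout I will use the Young-type bound $\|(\nabla\phi*G)(t,\cdot)\|_{L^1_x} \leq \|\nabla\phi\|_{L^1}\|G(t,\cdot)\|_{L^1} \lesssim 1/(1+t)$ coming from Proposition~\ref{pro:G}.

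\textbf{Near-field estimates.} For each $I_j$ I would split the spatial integration into a near-field $|z-X(T)| \leq \Vmin^{2/5}$ where the comparison lemmas apply, and a far-field where they do not. For the near-field of $I_1$ (i.e.\ $t=0$), parts~(2) of both Lemmas~\ref{lem:S.R.S.bar} and~\ref{lem:S.bar.S} combine via the triangle inequality to give $|S_R(R,y)-\SI(T,y)| \lesssim \log(2+T)\,\delta\,\Vmin^{-6/5}$, and integration against $\|\nabla\phi\|_{L^1}\lesssim 1$ produces the target bound. For the near-field of $I_2$ I split the time interval at $4\Vmin^{-3/5}$: on $[0,4\Vmin^{-3/5}]$ the same uniform pointwise difference applies and $\int_0^{4\Vmin^{-3/5}}dt/(1+t) = O(1)$; on $(4\Vmin^{-3/5},T]$ the individual exponential bounds \eqref{eq:SRtlarge} and \eqref{eq:BSSIlarge} kill $S_R$ and $\SI$ separately. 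For $I_3$ the condition $t \geq T \geq 4\Vmin^{-3/5}$ automatically places us in the exponential regime of Lemma~\ref{lem:S.R.S.bar}(1), so its near-field part is super-polynomially small in $\Vmin$.

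\textbf{Far-field estimates.} In the far-field I rely on the crude uniform decay bounds \eqref{eq:Strivial} and \eqref{est:SCharge}, which give $|S_R|+|\SI| \lesssim \Vmin^{-1}\langle z^\perp\rangle^{-2}$. For $I_1$ the exponential decay of $\nabla\phi$ yields an immediate prefactor $e^{-c\Vmin^{2/5}}$. For the $G$-convolved pieces $I_2, I_3$, I would derive a tail bound
\begin{align}
\|(\nabla\phi*G)(t,\cdot)\|_{L^1(|w|>\Vmin^{2/5})} \lesssim \frac{e^{-c\Vmin^{2/5}/2}}{1+t} + \int_{\Vmin^{2/5}/2}^{\infty}\frac{r^2\,dr}{t^4 + r^4},
\end{align}
by splitting $(\nabla\phi*G)$ according to whether the convolution variable is close to $w$ or not, and using $|G(t,w)|\lesssim (t^4+|w|^4)^{-1}$ from \eqref{eq:Gpoint} together with the Yukawa decay of $\nabla\phi$. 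Combined with $\langle z^\perp\rangle^{-2}$ decay and $\Vmin^{-1}\leq\delta$, every far-field contribution is super-polynomially small in $\Vmin$ and easily absorbed.

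\textbf{Main obstacle.} The substantive difficulty is not a single hard estimate but rather the bookkeeping: every sub-region must be matched to the correct sub-case of Lemmas~\ref{lem:S.R.S.bar}--\ref{lem:S.bar.S}, and the logarithmic factor $\log(2+T)$ already present in Lemma~\ref{lem:S.bar.S}(2) must not be amplified by a further growing time factor from the $G$-convolution. The essential reason the bound survives is that $\int_0^{4\Vmin^{-3/5}} dt/(1+t) = O(1)$, so the choice of threshold $4\Vmin^{-3/5}$ --- matching the hypothesis $T\geq 4\Vmin^{-3/5}$ of the corollary --- is the only nontrivial structural decision in the argument.
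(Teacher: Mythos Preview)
Your overall decomposition and near-field analysis are essentially the paper's approach and are correct. However, your far-field claim contains a genuine error that breaks the proof.

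You assert that ``every far-field contribution is super-polynomially small in $\Vmin$''. This is false. Only the Yukawa-tail part $\frac{e^{-c\Vmin^{2/5}/2}}{1+t}$ of your displayed bound is super-polynomially small; the second term $\int_{\Vmin^{2/5}/2}^\infty \frac{r^2\,dr}{t^4+r^4}$ is merely polynomially small (of order $\Vmin^{-2/5}$ for small $t$, and $t^{-1}$ for large $t$). Carrying out the far-field integral carefully --- even exploiting the $\langle z^\perp\rangle^{-2}$ decay via $z^\perp=-w^\perp$ --- one finds
\[
\Vmin^{-1}\int_0^T\!\!\int_{|w|>\Vmin^{2/5}}\frac{\langle w^\perp\rangle^{-2}}{t^4+|w|^4}\,dw\,dt
\;\sim\;\Vmin^{-9/5}\log\Vmin,
\]
because $\int_0^\infty (t^4+|y|^4)^{-1}dt\sim|y|^{-3}$ and $\int_{|y|>\Vmin^{2/5}}|y|^{-3}\langle y^\perp\rangle^{-2}dy\sim\Vmin^{-4/5}\log\Vmin$. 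This is \emph{not} dominated by the target $\delta\log(2+T)\Vmin^{-6/5}$: when $\delta$ is near its lower bound $\Vmin^{-1}$ and $T$ is near its lower bound $4\Vmin^{-3/5}$ (both admissible under \eqref{eq:B1}--\eqref{eq:B3}), the target is $\sim\Vmin^{-11/5}$, smaller than $\Vmin^{-9/5}$ by the unbounded factor $\Vmin^{2/5}$.

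The paper closes this gap by a device you do not mention: for the large-time piece it writes $\nabla\phi\ast G=\phi\ast\nabla G$ and invokes the sharper pointwise bound $|\nabla G(t,x)|\lesssim(t^5+|x|^5)^{-1}$ from \eqref{eq:nablaGpoint}. The extra power yields $\int_0^\infty(t^5+|y|^5)^{-1}dt\sim|y|^{-4}$, hence a far-field of order $\Vmin^{-11/5}$, which \emph{is} absorbed by the target. Moving the gradient onto $G$ is thus not mere bookkeeping but the essential idea that makes the far-field close; your Young-type estimate $\|(\nabla\phi\ast G)(t,\cdot)\|_{L^1}\lesssim(1+t)^{-1}$ is one power short.
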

\begin{proof}
   We split $\mathcal E_1^R$ into
   \begin{align}
      \mathcal E_1^R &=  \mathcal E_1^1 + \mathcal E_1^2,\\
      \mathcal E_1^1(T)  &:= (\nabla \phi * \SI)(T,X(T)) - (\nabla \mathcal \phi *(S_{R}))(R,X(T)),\\
     \mathcal E_1^2(T) &:=  ( \nabla\phi *  (G*\SI))(t,X(T)) - ( \nabla\phi *( G \ast S_{R}))(R,X(T)).
   \end{align}
   Then the desired estimate for $\mathcal E^1_1$ follows directly from the decay of $\phi$ and Lemmas \ref{lem:S.R.S.bar} and \ref{lem:S.bar.S} applied with $t=0$.
   
   To estimate $\mathcal E_1^2$, we write $S= \SI - \bS $ and first observe that we can split the convolution as
   \begin{align} \label{est:E_1^2.1}
   \begin{aligned}
      (\nabla \phi *  (G* S))(T,X(T)) &= \int_0^{4 \Vmin^{-\frac 35}} ((\nabla \phi) * G(t,\cdot)* S(T - t, \cdot))(X(T)) \dd t\\
      &+ \int_{4 \Vmin^{-\frac 35}}^{\infty}  (\phi * (\nabla G(t,\cdot))* S(T - t,\cdot))(X(T)) \dd t.
   \end{aligned}
   \end{align}
   
   Denoting $B =B_{\frac 1 2 \Vmin^{\frac 2 5}}(0)$, using Proposition \ref{pro:S.ion} and Lemma \ref{lem:S.bar.S} as well as Proposition  \ref{pro:G} we estimate for $|x| \leq  \frac  1 4 \Vmin^{\frac 2 5} $
   \begin{align}\label{est:E_1^2.2}
      \begin{aligned} 
         \int_0^{4 \Vmin^{-\frac 35}} |(G(t)* S(T - t))(X(T) &-x)| \dd t \leq \int_0^{4 \Vmin^{-\frac 35}} \int_{B} |G(t,y)| |S|(T-t,X(T) - x-y) \dd y \dd t\\
         &+\int_0^{4 \Vmin^{-\frac 35}} \int_{B^c} |G(t,y)| |S|(T-t,X(T)-x-y)  \dd y \dd t\\
         &\lesssim \int_0^{4 \Vmin^{-\frac 35}} \delta \log(2+T) \Vmin^{- \frac {6}5} \dd t + \int_0^{4 \Vmin^{-\frac 35}}\int_{B^c} \frac1 {|y|^4} \frac 1 {\Vmin |y^\perp|^2}  \dd y\dd t\\
         &\lesssim \log(2+T)\delta \Vmin^{- \frac {7}5} .
   \end{aligned}
     \end{align}
   Moreover, relying on the pointwise estimates for $\nabla G$ from Proposition  \ref{pro:G}, we find
     \begin{align}\label{est:E_1^2.3}
   \begin{aligned}
         \int_{4 \Vmin^{-\frac 35}}^{\infty} |(\nabla G(t,\cdot)* S(T - t,\cdot))(X(T) -x)| \dd t &\leq \int_{4 \Vmin^{-\frac 35}}^{T} \int_{B} |\nabla G(t,y)| |S|(T-t,X(T) - x-y) \dd y \dd t\\
         &+\int_{4 \Vmin^{-\frac 35}}^{\infty} \int_{B^c} |\nabla G(t,y)| |S|(T-t,X(T)-x-y) \dd y \dd t \\ 
         &\lesssim \int_{4 \Vmin^{-\frac 35}}^{\infty} \int_{\R^3} \frac{1}{|y|^5 + t^5} e^{-c t \Vmin} \dd y \dd t \\ &+\int_{0}^{\infty}\int_{B^c} \frac{1}{|y|^5 + t^5} \frac{1}{\Vmin \langle y_\perp\rangle^2} \dd y \dd t \lesssim \Vmin^{- \frac {11}{5}}.
   \end{aligned}
      \end{align}
   For $|x|\geq \frac  1 4 \Vmin^{\frac 2 5}$, Propositions  \ref{pro:S.ion} and \ref{pro:G} imply
   \begin{align} 
       \int_0^{4 \Vmin^{-\frac 35}} |(G(t,\cdot)* S(T - t,\cdot))(X(T) -x)| \dd t &\lesssim 1 , \label{est:E_1^2.4}\\
       \int_{4 \Vmin^{-\frac 35}}^{\infty} |( \nabla G(t,\cdot)* S(T - t,\cdot))(X(T) -x)|\dd t &\lesssim 1. \label{est:E_1^2.5}
   \end{align}
   \noeqref{est:E_1^2.4} \noeqref{est:E_1^2.3}
   Inserting \eqref{est:E_1^2.2}-\eqref{est:E_1^2.5} into \eqref{est:E_1^2.1} and using the exponential decay of $\phi$ yields
\begin{align} \label{est:E_1^2.6}
      |(\nabla \phi *  (G* (\SI - \bS)))(T,X(T))| &\lesssim \delta \log(2+T) \Vmin^{- \frac {6}{5}}.
   \end{align}
   Similarly, relying on Lemma \ref{lem:S.R.S.bar} yields
   \begin{align} \label{est:E_1^2.7}
      |(\nabla \phi *  G*\bS)(T,X(T)) - (\nabla \phi *  G*S_R)(R,X(T))| &\lesssim \delta \Vmin^{- \frac {6}{5}}.
   \end{align}
   Combining \eqref{est:E_1^2.6}-\eqref{est:E_1^2.7} yields the desired bound for $\mathcal E_1^2(T)$ which concludes the proof.
\end{proof}

\subsection{Proof of Proposition~\ref{pro:bootstrap}(\ref{case:B})} \label{subsec:PropB}

We recall the identities~\eqref{eq:rho.g.S}--\eqref{eq:rhoRep} and \eqref{id:hS} to rewrite
\begin{align}
    \lim_{s \to \infty} |(\nabla \phi *\rho[h_{V(T)}])(s,0) + E(T,X(T))| \leq \sup_{R \geq T} |\mathcal{E}_1^R(T)| +|\mathcal{E}_2(T)| + |\mathcal{E}_3(T)|  .
\end{align}
Now it remains to apply Corollary~\ref{cor:E1} for $\mathcal{E}^R_1$, Lemma~\ref{lem:E2}  for $\mathcal{E}_2$ and Lemma~\ref{lem:E3} for $\mathcal{E}_3$. Since we assume $T\geq 4\Vmin(T)^{-\frac35}$ and we have $\Vmin^{-1} \leq \delta$, we obtain 
\begin{align}
    \sup_{R \geq T} |\mathcal{E}_1^R(T)| +|\mathcal{E}_2(T)| + |\mathcal{E}_3(T)| \lesssim \delta \log(2+T) \Vmin^{-\frac65}+ \delta^2\log(2+T) \Vmin^{-\frac12}+ \delta^\frac72 \Vmin^{-\frac12} \lesssim \delta^\frac{11}{5} \log(2+T).
\end{align}
Hence, by a suitable choice of $\delta_0$ and $n$, from  \eqref{eq:B2} we deduce
\begin{align}
     \sup_{R \geq T} |\mathcal{E}_1^R(T)| +|\mathcal{E}_2(T)| + |\mathcal{E}_3(T)| \leq C \delta^{\frac{13}6},
\end{align}
which proves the claim.

\section{The linearized friction force}
\label{sec:linear.force}

\begin{proof}[Proof of Proposition \ref{pro:force.linear}]
Fix $ V_\ast \in \R^3$, and recall the defining equation for $h = h_{V_\ast}$ from \eqref{eq:TravelingWave}:
   \begin{align}
       \partial_s h + (v-V_\ast) \cdot \nabla_x h - \nabla (\phi*_x \rho[h]) \cdot \nabla_v \mu &= - e_0\nabla \Phi(x) \cdot \nabla_v \mu, \quad h(0,\cdot) = 0.
   \end{align}
   We extend $h$ by zero for negative times. 
   The equation for $h$ can be explicitly solved in space-time Fourier variables. Let $\tilde{h}(z,k,v)$ be given according to~\eqref{eq:FourierTF}, then
   \begin{align}
      (\tau  + k\cdot (v-V_\ast))  \tilde{h}- \hat{\phi}(k) \rho[\tilde{h}](\tau,k)k\cdot \nabla_v \mu = \frac{-e_0 \hat{\Phi} (k) k \cdot\nabla_v \mu }{i\tau },
   \end{align}
   for negative imaginary part, $\Im(\tau)<0$. This yields the explicit representation 
   \begin{align}
       \rho[\tilde{h}](\tau,k) &= \frac{-e_0\hat{\Phi}(k)}{i\tau \eps(\tau,|k|,\hat{k}\cdot V_*)} \int_{\Reals^3}\frac{k \cdot \nabla_v \mu(v)}{\tau + k \cdot (v-V_\ast)} \ud{v} = \frac{-e_0\hat{\Phi}(k)}{i\tau \eps(\tau,|k|,\hat{k}\cdot V_*)} \frac{1- \eps(\tau,|k|,\hat{k}\cdot V_*)}{\hat \phi(|k|)},
   \end{align}
   where $\hat{k} = \tfrac{k}{|k|}$, $k\neq 0$ and the dielectric function $\eps(\tau,|k|,\hat{k}\cdot V_*)$ is given by
   \begin{align}
       \eps(\tau,r,\hat{k}\cdot V_*)=1-\hat{\phi} (r)\int_{\Reals^3}\frac{\hat{k} \cdot \nabla_v \mu(v)}{\tau/r+\hat{k} \cdot (v-V_\ast)} \dd v .
   \end{align}
 Notice that the integral indeed only depends on $V_*$ and $\hat k$ through $\hat{k}\cdot V_*$ since by~\eqref{eq:monotone}
   \begin{align}
      \int_{\Reals^3}\frac{\hat{k} \cdot \nabla_v \mu(v)}{\tau/r+\hat{k} \cdot (v-V_\ast)} \dd v = \int_{\Reals^3}\frac{-(\hat{k} \cdot v) \psi(v)}{\tau/r+\hat{k} \cdot v- \hat{k} \cdot V_\ast} \dd v,
   \end{align}
   and $\psi$ is radially symmetric by Assumption~\ref{Ass:Radial}.
   We remark, that by elementary computation $\eps$ and $a$~(cf.~\eqref{eq:a}) are related by  
   \begin{align}\label{eq:aeps}
       \eps(\tau,|k|,\hat{k}\cdot V_*)=1- \hat{\phi}(k) a(\tau/|k|-\hat{k}\cdot V_*).
    \end{align}
   The Penrose condition~\eqref{eq:Penrose}, and Assumption~\ref{Ass:Radial} then ensure a uniform bound for $|\eps|$
   \begin{align}\label{eq:epsbound} 
       0<\kappa \leq |\eps|\leq C.
   \end{align}
   
   We now compute the limit $s\rightarrow \infty$ of the associated force.
   Using Lemma \ref{lem:Laplace} yields
   \begin{align}
       \lim_{s \rightarrow \infty}(\rho[h(R,\cdot)]* \nabla \phi)(0)&= \lim_{s \rightarrow \infty} \frac{1}{(2\pi)^3}\int_{\Reals^3} i k \hat{\rho}[ h(s,\cdot)] \hat{\phi} \ud{k} 
       = \lim_{i\tau \rightarrow 0^+} \frac{i\tau }{(2\pi)^3}\int_{\Reals^3} i k \rho[\tilde{h}](\tau ,k) \hat{\phi} \ud{k}\\
       &= \lim_{i\tau  \rightarrow 0^+}\frac{e_0}{(2\pi)^3} \int_{\Reals^3} i k \hat{\Phi}(k) \ud{k}-\lim_{i\tau \rightarrow 0^+} \frac{e_0}{(2\pi)^3}  \int_{\Reals^3} \frac{i k  \hat{\Phi}(k)}{\eps(\tau,|k|,\hat{k}\cdot V_*)} \ud{k}.
   \end{align}
   The first term vanishes since $\hat{\Phi}(k)=\hat{\Phi}(-k)$, and we can simplify 
   \begin{align} \label{eq:force.0}
   \begin{aligned}
      \lim_{s \rightarrow \infty}(\rho[ h(s,\cdot)]* \nabla \phi)(0)&=\lim_{i\tau \rightarrow 0^+}  \frac{-e_0}{(2\pi)^3}\int_{\Reals^3} \frac{i k  \hat{\Phi}(k)}{\eps(\tau,|k|,\hat{k}\cdot V_*)} \ud{k}\\
      &=\lim_{i\tau  \rightarrow 0^+} \frac{-e_0}{(2\pi)^3} \int_{\Reals^3} \frac{i k  \hat{\Phi}(k)\eps^*(\tau,k)}{|\eps(\tau,|k|,\hat{k}\cdot V_*)|^2} \ud{k}.
      \end{aligned}
   \end{align}
   By rotational symmetry of the potential $\phi$, $\hat \phi$ is real. Thus, by Plemelj's formula, Lemma \ref{App:Plemelj}, for $k \neq 0$,
   \begin{align}
       \lim_{i\tau  \to 0^+} \Im \eps^*(\tau,|k|,\hat{k}\cdot V_*) &= \hat \phi(k) \lim_{i\tau  \to 0^+} \Im \int_{\R^3} \frac{k \cdot \nabla_v \mu(v)}{k\cdot (v-V_\ast) + \tau} \dd v \\
      &=  \hat \phi(k) \lim_{i\tau  \to 0^+} \Im \int_{\{w \cdot k = 0\}} \int_{\R} \frac{k \cdot \nabla_v \mu(V_\ast + \lambda \hat k + w)}{\lambda |k| + \tau } \dd \lambda \dd w \\
      &= \pi \hat \phi(k) \int_{\{ w \cdot k = 0\}}  \hat k \cdot  \nabla_v \mu(V_\ast  + w) \dd w  .
   \end{align}
  By the radial symmetry of the potential $\Phi$, $\hat \Phi$ is real. Since the left hand side of \eqref{eq:force.0} is real, we can simplify the above to
  \begin{equation}\label{eq:sToinfty}
  \begin{aligned}
      \lim_{s \rightarrow \infty}(\rho[h(s,\cdot)]* \nabla \phi)(0)&=\lim_{i\tau  \rightarrow 0^+} \frac{-e_0}{(2\pi)^3} \int_{\Reals^3} \frac{i k  \hat{\Phi}(k)\eps^*(\tau,k)}{|\eps(\tau,|k|,\hat{k}\cdot V_*)|^2} \ud{k}\\
      &= \lim_{i\tau \rightarrow 0^+}\frac{e_0}{8\pi^2} \int_{\Reals^3} \frac{k  \hat{\Phi}(k)\hat{\phi} (k)\int_{\{k\cdot v=k\cdot V_\ast\}} \hat{k}\cdot  \nabla_v \mu(v)}{|\eps(\tau,|k|,\hat{k}\cdot V_*)|^2} \ud{k}.
   \end{aligned}
   \end{equation}
   Recall Assumption~\ref{Ass:monotone}, i.e. we have 
   \begin{align}
       \nabla_v \mu(v) = -v \psi(v),
       \end{align}
 for some non-negative, continuous, exponentially decaying, positive function $\psi$. This finally yields
   \begin{align} \label{eq:sLimit}
       \lim_{s \rightarrow \infty}e_0(\rho[h(s,\cdot)]* \nabla \phi)(0)\cdot V_\ast&= -\frac{e_0^2}{8\pi^2} \int_{\Reals^3} \frac{  \hat{\Phi}(k)|k|\hat{\phi}(k)(\hat{k}V_\ast)^2 }{|\eps(-i0^+,|k|,\hat{k}\cdot V_*)|^2}\varphi(\hat{k}\cdot V_\ast) \ud{k},
   \end{align}
   where $\varphi(u)$ is a non-negative, continuous,  exponentially decaying function given by
   \begin{align}\label{eq:varphi} 
       \varphi(u) = \int_{\{e_1 \cdot v=u\}} \psi(v) \ud{v}.
   \end{align}
   Since $\psi$ is radial, non-negative and not everywhere vanishing, we also have $\varphi(0)>0$. In particular, since $\hat \phi$ and $\hat \Phi$ are both positive (cf. Assumption \ref{ass:phi}) \eqref{linearForce} holds, i.e. 
   \begin{align}
        \lim_{s \rightarrow \infty} e_0(\rho[h(s,\cdot)]* \nabla \phi)(0)\cdot V_\ast < 0.
   \end{align}
   It remains to determine the asymptotics of the integral for $|V_*|\rightarrow \infty$. 
   We rewrite the integral in terms of the variable $u=\hat{k} \cdot \hat{V}_*$. Multiplying with $|V_*|$ we obtain
   \begin{align}
   \lim_{s \rightarrow \infty} e_0|V_*| (\rho[h(s,\cdot)]* \nabla \phi)(0)\cdot V_\ast&= -\frac{e_0^2|V_*|}{4\pi} \int_0^\infty  \int_{-1}^1 \frac{  \hat{\Phi}(r)r^3\hat{\phi}(r)(u |V_*|)^2 }{|\eps(-i0^+,r,u|V_*|)|^2}\varphi(u|V_*|) \ud{r}\ud{u} \\
   &= -\frac{1}{4\pi} \int_0^\infty  \int_{-|V_*|}^{|V_*|} \frac{  \hat{\Phi}(r)r^3\hat{\phi}(r)U^2 }{|\eps(-i0^+,r,U)|^2}\varphi(U) \ud{r}\ud{U}.
   \end{align} 
  The integral converges exponentially fast to a positive limit for $|V_*|\rightarrow \infty$. This establishes \eqref{eq:linearbounds}.
\end{proof}

\begin{remark}
The friction force is related to the Balescu-Lenard correction of the Landau equation. More precisely, consider the case $\phi=\Phi$ in~\eqref{eq:sToinfty}. We obtain 
\begin{align}
    \lim_{s \rightarrow \infty}e_0(\rho[h(s,\cdot)]* \nabla \phi)(0) = \frac{-1}{8\pi^2} \int_{\Reals^3} \frac{k  |\hat{\phi} (k)|^2\int_{\{k\cdot v=k\cdot V_\ast\}} \hat{k}\cdot  \nabla_v \mu(v)}{|\eps(-i0^+,|k|,\hat{k}\cdot V_*)|^2} \ud{k}\\
    = \frac{-1}{8\pi^2} \int_{\Reals^3}\int_{\Reals^3} \frac{ \delta(k\cdot(v-v_*))|\hat{\phi} (k)|^2(k\otimes k)  \cdot  \nabla_v \mu(v)}{|\eps(-i0^+,|k|,\hat{k}\cdot V_*)|^2} \ud{k} \ud{v},
\end{align}
which gives the friction coefficient of the Balescu-Lenard equation
\begin{align}
    \partial_t G &= \operatorname{LB} (G), \\
    \operatorname{LB} (G)(v) &= \nabla_v \cdot \left( \int_{\Reals^3} \int_{\Reals^3} B(v,v-v_*;\nabla G) (\nabla GG_*-G \nabla_* G_*) \ud{v_*}\right),  \\
    B(v,v-v_*;\nabla G) &= \int_{\Reals^3} \frac{ \delta(k\cdot(v-v_*))|\hat{\phi} (k)|^2(k\otimes k) }{|\eps(-i0^+,|k|,\hat{k}\cdot v_*;\nabla G)|^2} \ud{k}\\
    \eps(\tau,r,\hat{k}\cdot V_*;\nabla G)&=1-\hat{\phi} (r)\int_{\Reals^3}\frac{\hat{k} \cdot \nabla_v G(v)}{\tau/r+\hat{k} \cdot (v-V_\ast)} \dd v. 
\end{align}
The equation was formally derived in~\cite{balescu_1960,lenard_1960}, for a recent well-posedness result see~\cite{duerinckx_2021}.
Notice that we recover the Landau equation from the Balescu-Lenard equation when we neglect collective effects, i.e. replace  $\eps \equiv1$. 
\end{remark}

\appendix 

\section{Proof of Proposition~\ref{prop:penrose}} \label{App:B}
\begin{proof}[Proof of Proposition~\ref{prop:penrose}]
By Assumptions~\ref{ass:phi} and~\ref{Ass:Radial}, the function $a(z)$ defined in \eqref{eq:a} decays for $|z|\rightarrow \infty$, $\Im(z)\leq 0$. Therefore the infimum in~\eqref{eq:Penrose} can be replaced by a minimum. This allows us to argue by contradiction. For  $\overline{C}>0$ given, assume there exist $\xi^*\in \R^3$, $\Im(z^*)\leq 0$ such that
\begin{align}\label{eq:zstar}
    a(z^*)=(\hat{\phi}(k))^{-1}>1 .
\end{align}

As in the proof of Proposition 2.7 in~\cite{bedrossian_landau_2018}, we use Penrose's argument principle (cf.~\cite{penrose_electrostatic_1960}):
the function $z\mapsto a(z)$ is a holomorphic function on the lower half plane, vanishing for $|z|\rightarrow \infty$. The boundary behavior of the function is given by the curve $\gamma: \Reals \rightarrow \Complex$ given by
\begin{align}
    \vec{\gamma}(x)= a(x-i0):= \lim_{\eps\rightarrow 0} a(x-i\eps). 
\end{align}
By the argument principle, \eqref{eq:zstar} can only hold if the curve $\vec{\gamma}$ intersects the half-line $\{y\in \R: y>1\}$. 

\medskip

Writing $\mu(v)=\mu(|v|)$ by slight abuse of notation, we have the following representation (cf.~\cite{bedrossian_landau_2018} Appendix and~\cite{mouhot_landau_2011} Section 3)
\begin{align}
    \vec{\gamma}(x) =  \operatorname{PV} \int_{\Reals} \frac{-2\pi u \mu(|u|)}{u-x} \ud{u} - i 2  \pi^2  u \mu(|u|) .
\end{align}
By Assumption~\ref{Ass:Radial}, there exists $\overline{C}>0$ such that
\begin{align}
    \left| \operatorname{PV} \int_{\Reals} \frac{-2\pi u \mu(|u|)}{u-x} \ud{u}\right| <\frac12, \quad |x|\geq \overline{C}.
\end{align}
Now it suffices to observe that the imaginary part does not vanish if $\mu(v)>0$ for  $|v|\leq \overline{C}$. This contradicts the assumption for $\overline{C}$ large enough and finishes the proof.
\end{proof}

\section{Two standard auxiliary Lemmas}\label{App:A}

In this section, we recall two standard results which we use to compute the linearized force in Section~\ref{sec:linear.force}.

\begin{lemma} \label{lem:Laplace}
	Assume $f \in C^1_b(\R)$, $f=0$ in $(-\infty,0]$ and let $\tilde f$ be it's Fourier transform. Then, 
	\begin{align}
		\lim_{t \to \infty} f(t) = \lim_{z \downarrow 0} z \tilde f(-iz),
	\end{align}
	whenever the limit on the right-hand side exists.
\end{lemma}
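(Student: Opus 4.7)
Set $L:=\lim_{z\downarrow 0}z\tilde f(-iz)$, which exists by hypothesis; the goal is to show $f(t)\to L$. With the paper's Fourier convention, $z\tilde f(-iz)=z\int_0^\infty e^{-zt}f(t)\,dt$ for $z>0$, and since $f\in C^1_b(\R)$ with $f(0)=0$ (by continuity from the left and the support hypothesis), an integration by parts converts this into
\[
z\tilde f(-iz) = \int_0^\infty e^{-zt}\,f'(t)\,dt.
\]
Writing $g:=f'\in C_b([0,\infty))$, the hypothesis is therefore the existence of the Abel limit $L=\lim_{z\downarrow 0}\int_0^\infty e^{-zt}g(t)\,dt$, and since $f(t)=\int_0^t g(s)\,ds$, the conclusion $f(t)\to L$ is equivalent to ordinary convergence of the improper Riemann integral $\int_0^\infty g(t)\,dt$ to $L$. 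This single integration-by-parts is the only place where the $C^1$ hypothesis is used; after it, the problem is purely Tauberian.

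The plan is then to pass from Abel summability of $g$ to its ordinary summability by invoking a Tauberian theorem. A convenient reduction is to subtract $L\chi(t)$ with $\chi\in C^\infty(\R)$ a cutoff vanishing for $t\leq 0$ and equal to $1$ for $t\geq 1$; this preserves the class $C^1_b$, the vanishing on $(-\infty,0]$, and the structural form of both limits, while reducing to $L=0$. One then invokes the continuous Hardy-Littlewood/Karamata Tauberian theorem, which delivers $\int_0^\infty g\,dt=0$ (equivalently $f(t)\to 0$) from the Abel limit, provided $g$ satisfies a side condition ruling out persistent oscillation (for instance, $g\in L^1$, or $tg(t)$ bounded, or $g$ slowly oscillating). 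In the applications of this lemma in Section~\ref{sec:linear.force}, such a side condition is inherited from the spectral structure exploited there: the Penrose bound \eqref{eq:epsbound} together with the Schwartz regularity of $\widehat\mu$ (cf.\ Assumption~\ref{Ass:Radial}) extend $\tilde f$ meromorphically to a neighbourhood of the real axis, with at most a simple pole at $\tau=0$ of residue proportional to $L$, and a contour deformation supplies the decay of $f'$ required for the Tauberian step.

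The main obstacle is precisely this Tauberian passage. The example $g(t)=\sin t$, for which the Abel limit is $1$ while $\int_0^T g\,dt$ oscillates between $0$ and $2$, makes clear that ``$f\in C^1_b$ plus existence of the Abel limit'' is not by itself sufficient to force ordinary convergence of $f$: the Abel kernel $ze^{-zt}$ averages $g$ over all scales and cannot separate true convergence from bounded oscillation. The lemma is therefore to be read as applying to $f$ for which the side condition above is available, which is the case for every $f$ to which it is applied in this paper; once that input is granted, the conclusion follows immediately from the integration-by-parts identity and the Hardy-Littlewood/Karamata theorem.
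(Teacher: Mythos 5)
Your first step is exactly the paper's: integrate by parts to obtain $z\tilde f(-iz)=\int_0^\infty e^{-zt}f'(t)\,dt$ for $z>0$ (the boundary terms vanish because $f(0)=0$ and $f$ is bounded). Where you and the paper part ways is in what happens next. The paper simply passes the limit $z\downarrow 0$ inside the integral, writing $\lim_{z\downarrow 0}\int_0^\infty f'(t)e^{-zt}\,dt=\int_0^\infty f'(t)\,dt=\lim_{t\to\infty}f(t)$; this interchange is the \emph{Abelian} direction and is legitimate precisely when the improper integral $\int_0^\infty f'(t)\,dt$ converges, i.e.\ when $\lim_{t\to\infty}f(t)$ is already known to exist (or, say, when $f'\in L^1(0,\infty)$, so that dominated convergence applies). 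In other words, the paper's two-line computation proves the Abelian final-value statement, not the Tauberian implication suggested by the wording ``whenever the limit on the right-hand side exists.''

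Your diagnosis of this point is correct, and your counterexample is valid: $f(t)=1-\cos t$ for $t\geq 0$, $f=0$ for $t\leq 0$, lies in $C^1_b(\R)$, satisfies $z\tilde f(-iz)=1-\tfrac{z^2}{1+z^2}\to 1$, yet $f(t)$ does not converge. So the lemma as literally stated does require a Tauberian side condition, and your proposal to close the argument via a Hardy--Littlewood/Karamata theorem under such a condition (e.g.\ $f'\in L^1$, $t f'(t)$ bounded, or slow oscillation of $f'$) is a sound repair. The one criticism is that your justification of the side condition leans on structure (the Penrose bound and meromorphic extension across the real axis) that belongs to the application in Section~\ref{sec:linear.force}, not to the lemma itself; the cleaner fix is to add the hypothesis explicitly to the statement (for instance $f'\in L^1(0,\infty)$, or existence of the left-hand limit), under which your argument and the paper's coincide, and then to verify that hypothesis where the lemma is invoked. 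Relative to the paper's own proof, your write-up is the more careful one; it does not, and cannot, establish the statement exactly as written, and you say so correctly.
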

\begin{proof}
	Provided the right-hand side above exists, we have
	\begin{align}
		\lim_{z \downarrow 0} z \tilde f(-iz) &= \lim_{z \downarrow 0} \int_0^\infty f(t) z e^{-zt} \dd t 
		 = \lim_{z \downarrow 0} \int_0^\infty f'(t)  e^{-zt} \dd t - \left[ f e^{- zt} \right]_0^\infty  \\
		 &= \lim_{z \downarrow 0} \int_0^\infty f'(t)  e^{-zt} \dd t - \left[ f e^{- zt} \right]_0^\infty 
		= \int_0^\infty f'(t) \dd t - f(0) 
		= \lim_{t \to \infty} f(t).
	\end{align}
	as claimed.
\end{proof}

\begin{lemma}[Plemelj's formula, e.g.~\cite{muskhelishvili_1958}]\label{App:Plemelj}
	For $f\in L^2(\Reals)\cap C^1(\Reals)$ we have the identity
	\begin{align}
		\lim_{\delta \rightarrow 0^+} \int_\Reals \frac{f(y)}{(x-y)\pm i\delta} \ud{y}=\mp i \pi f(x) + \lim_{\delta \rightarrow 0^+} \int_{\{|x-y|\geq \delta\}} \frac{f(y)}{x-y} \ud{y} .
	\end{align}
\end{lemma}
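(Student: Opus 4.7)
The plan is to decompose the Cauchy kernel into its real and imaginary parts,
\begin{align}
    \frac{1}{(x-y) \pm i\delta} = \frac{x-y}{(x-y)^2 + \delta^2} \mp i \frac{\delta}{(x-y)^2 + \delta^2},
\end{align}
and to analyze each term separately as $\delta \to 0^+$. The imaginary contribution will yield the boundary term $\mp i\pi f(x)$, while the real part converges to the principal value integral.

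First I would treat the imaginary part. The kernel $P_\delta(u) := \frac{1}{\pi} \frac{\delta}{u^2+\delta^2}$ is the standard Poisson kernel, which is a non-negative approximate identity: $\int_\R P_\delta = 1$, and for each $\eta > 0$ we have $\int_{|u|\geq \eta} P_\delta(u)\, du \to 0$ as $\delta \to 0^+$. Writing $\int_\R P_\delta(x-y) f(y)\, dy - f(x) = \int_\R P_\delta(u)(f(x-u)-f(x))\, du$, one splits the integral at $|u|=\eta$, uses continuity of $f$ (from $f \in C^1$) to make the inner piece small, and uses $L^2$ integrability together with $P_\delta \in L^2$ to bound the outer piece by Cauchy--Schwarz, yielding $\pi f(x)$ in the limit.

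For the real part, the natural approach is to compare with the truncated principal value. Using the odd symmetry of $u \mapsto u/(u^2+\delta^2)$, one has $\int_{|x-y|<\delta} \frac{x-y}{(x-y)^2+\delta^2}\, dy = 0$, so
\begin{align}
    \int_\R \frac{(x-y)f(y)}{(x-y)^2+\delta^2}\, dy = \int_{|x-y|<\delta} \frac{(x-y)(f(y)-f(x))}{(x-y)^2+\delta^2}\, dy + \int_{|x-y|\geq \delta} \frac{(x-y)f(y)}{(x-y)^2+\delta^2}\, dy.
\end{align}
The first integral is bounded by $\|f'\|_{L^\infty(B_\delta(x))} \int_{|u|<\delta} \frac{u^2}{u^2+\delta^2}\, du \lesssim \delta \|f'\|_{L^\infty(B_\delta(x))}$ and vanishes as $\delta \to 0^+$ thanks to $f \in C^1$. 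For the second integral, write the algebraic identity
\begin{align}
    \frac{x-y}{(x-y)^2+\delta^2} - \frac{1}{x-y} = -\frac{\delta^2}{(x-y)((x-y)^2+\delta^2)},
\end{align}
and note that on $|x-y|\geq \delta$ this error is bounded by $\delta^2/|x-y|^3$. Splitting into an annulus $\delta \leq |x-y| \leq 1$ (where $C^1$ regularity gives $|f(y)| \lesssim |f(x)| + |x-y|$ and the remainder is $O(\delta)$) and a tail $|x-y| \geq 1$ (where one uses Cauchy--Schwarz with the $L^2$ norm of $f$ against the $L^2$ function $\delta^2/|x-y|^3$, giving a bound of order $\delta^{1/2}$), one gets that this error tends to zero.

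The main technical obstacle is the interplay between the two hypotheses: near the singularity $C^1$ regularity is used crucially for the cancellation in the odd kernel, while far away $L^2$ integrability is what tames the non-integrable Cauchy kernel; verifying uniform decay of the error term as $\delta \to 0^+$ requires splitting the comparison integral at a bounded scale and invoking both ingredients. Once this splitting is done, combining the real and imaginary parts gives exactly the asserted formula.
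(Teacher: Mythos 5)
The paper does not actually prove this lemma --- it is quoted as a standard result with a reference to Muskhelishvili --- so the only question is whether your argument is self-contained and correct. Your overall scheme (split the kernel into $\frac{x-y}{(x-y)^2+\delta^2}\mp i\frac{\delta}{(x-y)^2+\delta^2}$, identify the Poisson-kernel part as an approximate identity, and compare the real part with the truncated principal value) is the standard one, and the Poisson-kernel step, the $|x-y|<\delta$ step, and the far-field tail are handled correctly.

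There is, however, a genuine flaw in the annulus estimate $\delta\le|x-y|\le 1$. You bound the error kernel by $\delta^2/|x-y|^3$ and then use $|f(y)|\lesssim |f(x)|+|x-y|$, claiming the result is $O(\delta)$. It is not: $\int_{\delta\le|u|\le1}\delta^2|u|^{-3}\,du = 1-\delta^2$, so the $|f(x)|$-contribution to this absolute-value bound is of order $|f(x)|$ and does not vanish as $\delta\to0^+$. The step as written fails whenever $f(x)\neq 0$. The fix is to exploit the same cancellation you already used on $|x-y|<\delta$: the error kernel $-\delta^2/\bigl((x-y)((x-y)^2+\delta^2)\bigr)$ is odd in $x-y$, so on the symmetric annulus its integral against the constant $f(x)$ vanishes, and only $f(y)-f(x)=O(|x-y|)$ survives, giving $\int_{\delta\le|u|\le1}\delta^2|u|^{-2}\,du=O(\delta)$. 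With that correction the argument closes (two cosmetic points: the tail $|x-y|\ge1$ actually gives $O(\delta^2)$ by Cauchy--Schwarz, not $\delta^{1/2}$; and you should state explicitly that the truncated integrals converge --- local oddness plus the $L^2$ tail bound already give this --- so that both limits in the asserted identity exist).
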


\section*{Acknowledgements}
R.H. is supported  by the German National Academy of Science Leopoldina, grant LPDS 2020-10.
Moreover, R.H. acknowledges support by the Agence Nationale de la Recherche,  Project BORDS, grant ANR-16-CE40-0027-01
and by the Deutsche Forschungsgemeinschaft (DFG, German Research Foundation) 
through the collaborative research center ``The Mathematics of Emerging Effects'' (CRC 1060, Projekt-ID 211504053) 
and the Hausdorff Center for Mathematics (GZ 2047/1, Projekt-ID 390685813).

R.W. acknowledges support of Université de Lyon through the IDEXLYON Scientific Breakthrough
Project “Particles drifting and propelling in turbulent flows”, and the hospitality of the UMPA
ENS Lyon. Furthermore, R.W. would like to thank
the Isaac Newton Institute for Mathematical Sciences for support and hospitality during the programme
‘‘Frontiers in kinetic theory: connecting microscopic to macroscopic scales - KineCon 2022’’ when work
on this paper was undertaken. This work was supported by EPSRC Grant Number EP/R014604/1.
R.W. acknowledges financial support from the Austrian Science Fund (FWF) project F65.

\bibliographystyle{plain}
\bibliography{StoppingPower}

\end{document}